\newtheorem{lemma}{Lemma}[section]
\newtheorem{theorem}[lemma]{Theorem}
\newtheorem{corollary}[lemma]{Corollary}
\newtheorem{proposition}[lemma]{Proposition}
\theoremstyle{definition}
\newtheorem{definition}[lemma]{Definition}
\newtheorem{remark}[lemma]{\sc Remark}
\newtheorem{example}[lemma]{\sc Example}
\newtheorem{examples}[lemma]{\sc Examples}
\newtheorem{assumption}{\sc Assumptions}
\newcommand{\nocontentsline}[3]{}
\newcommand{\tocless}[2]{\bgroup\let\addcontentsline=\nocontentsline#1{#2}\egroup}
\author{Coline Emprin}
\title{Kaledin classes and formality criteria}
\address{Coline Emprin, D\'epartement de math\'ematiques et applications, \'Ecole normale sup\'erieure,
	\indent  45 rue d’Ulm, 75230 Paris, France. \textcolor{white}{cccccccccccccccccccccccccccccccccccccccccccccccc}	\indent Laboratoire de G\'eom\'etrie, Analyse et Applications, Universit\'e Sorbonne Paris Nord, UMR \indent 7539, 93430, Villetaneuse, France.}
\email{\noindent \href{mailto:coline.emprin@ens.psl.eu}{coline.emprin@ens.psl.eu}}
\date{April 26, 2024}
\thanks{2024 \emph{Mathematics Subject Classification.} 
18D50, 18G55, 17B60 16W25, 13D10 .\\
	\indent The author was supported by the project ANR-20-CE40-0016 HighAGT and \'Ecole Normale Sup\'erieure.}
\begin{document}
		% tikz colors (good for colorblind)
\definecolor{red}{RGB}{230,97,0}
\definecolor{blue}{RGB}{93,58,155}
\definecolor{Chocolat}{rgb}{0.36, 0.2, 0.09}
\definecolor{BleuTresFonce}{rgb}{0.215, 0.215, 0.36}
\definecolor{BleuMinuit}{RGB}{0, 51, 102}
\definecolor{bordeau}{rgb}{0.5,0,0}
\definecolor{turquoise}{RGB}{6, 62, 62}
\definecolor{rose}{RGB}{235, 62, 124}

\newcommand{\coline}[1]{\textcolor{rose}{#1}}

\newcommand{\hooklongrightarrow}{\lhook\joinrel\longrightarrow}

% CATEGORY

\newcommand{\chdiscr}{\mathsf{discr.\ Ch}}
\newcommand{\sSe}{\mathsf{sSet}}
\newcommand{\sLialg}{\ensuremath{\mathcal{sL}_\infty\text{-}\,\mathsf{alg}}}
\newcommand{\isLialg}{\ensuremath{\infty\text{-}\,\mathcal{sL}_\infty\text{-}\,\mathsf{alg}}}
\newcommand{\sLiealg}{\ensuremath{\mathcal{s}\lie\,\text{-}\,\mathsf{alg}}}
\def\cD{\Delta}

\newcommand{\ucomnalg}{\ensuremath{\mathrm{uCom}_{\leslant 0}\text{-}\,\mathsf{alg}}}
\newcommand{\fnQsp}{\ensuremath{\mathsf{ft}\mathbb{Q}\text{-}\mathsf{ho}(\mathsf{sSet})}}
\newcommand{\fnsp}{\ensuremath{\mathsf{fn}\text{-}\mathsf{Sp}}}
\newcommand{\nQsp}{\ensuremath{\mathsf{n}\mathbb{Q}\text{-}\mathsf{Sp}}}
\newcommand{\nsp}{\ensuremath{\mathsf{n}\text{-}\mathsf{Sp}}}
\newcommand{\fQalg}{\ensuremath{\mathsf{ft}\text{-}\mathsf{ho}(\mathrm{uCom}_{\leqslant 0}\text{-}\,\mathsf{alg})}}

\newcommand{\coker}{\operatorname{coker}}
\newcommand{\Ran}{\operatorname{Ran}}
\newcommand{\ho}{\operatorname{ho}}
\newcommand{\Mal}{\operatorname{Mal}}
\newcommand{\Loc}{\operatorname{Loc}}

% OPERADS

\newcommand{\sLi}{\ensuremath{\mathcal{sL}_\infty}}
\newcommand{\ccoLi}{\ensuremath{\mathrm{cBCom}}}
\newcommand{\sLie}{\ensuremath{\mathcal{s}\mathrm{Lie}}}
\newcommand{\com}{\ensuremath{\mathrm{Com}}}
\newcommand{\ucom}{\ensuremath{\mathrm{uCom}}}
\newcommand{\Cobar}{\ensuremath{\Omega}}
\newcommand{\hatCobar}{\ensuremath{\widehat{\Omega}}}
\renewcommand{\Bar}{\ensuremath{\mathrm{B}}}
\newcommand{\hatBar}{\ensuremath{\widehat{\mathrm{B}}}}
\newcommand{\lie}{\ensuremath{\mathrm{Lie}}}
\newcommand{\uhocom}{\ensuremath{\mathrm{u}\Omega\mathrm{BCom}}}

\newcommand{\Sh}{\ensuremath{\mathrm{Sh}}}

% ALGEBRAS

\newcommand{\g}{\ensuremath{\mathfrak{g}}}
\newcommand{\wsLi}{\ensuremath{\widehat{\mathcal{sL}_\infty}}}

% FUNCTORS 

\newcommand{\Rh}{\ensuremath{\mathrm{R^h}}}
\renewcommand{\L}{\ensuremath{\mathscr{L}}}
\newcommand{\Li}{\mathfrak{L}}
\newcommand{\APL}{\mathrm{A_{PL}}}
\newcommand{\CPL}{\mathrm{C_{PL}}}

% AUTRES 

\newcommand{\antishriek}{\text{\raisebox{\depth}{\textexclamdown}}}
\newcommand{\RT}{\mathrm{RT}}
\newcommand{\LRT}{\mathrm{LRT}}
\newcommand{\Sy}{\mathbb{S}}
\renewcommand{\d}{\ensuremath{\mathrm{d}}}
\newcommand{\PP}{\ensuremath{\mathrm{P}}}
\def\Ho#1#2{\Lambda^{#2}_{#1}}
\def\De#1{\Delta^{#1}}
\newcommand{\NN}{\mathbb{N}}
\newcommand{\RR}{\mathbb{R}}
\def\BCH{\mathrm{BCH}}
\newcommand{\wPT}{\ensuremath{\mathrm{wPT}}}
\newcommand{\oPT}{\ensuremath{\overline{\mathrm{PT}}}}
\newcommand{\PaRT}{\ensuremath{\mathrm{PaRT}}}
\newcommand{\PaPT}{\ensuremath{\mathrm{PaPT}}}
\newcommand{\PaPRT}{\ensuremath{\mathrm{PaPRT}}}
\def\rmC{\mathrm{C}}
\newcommand{\berglund}{\ensuremath{\mathcal{B}}}
\def\hot{\widehat{\otimes}} 
\def\whk{\widehat{k}}

\def\colim{\mathop{\mathrm{colim}}}

%%%%%%%%
\newcommand{\CC}{\ensuremath{\mathrm{CC}_\infty}}

%%%%%%%%%%%%%%%%%%%%%%%%%%%%%%%%%

\newcommand{\Lalg}{\ensuremath{\mathscr{L}_\infty\text{-}\mathsf{alg}}}

\newcommand{\F}{\ensuremath{\mathrm{F}}}
\newcommand{\h}{\ensuremath{\mathfrak{h}}}

\newcommand{\QQ}{\ensuremath{\mathbb{Q}}}
\newcommand{\D}{\ensuremath{\mathscr{D}}}
\renewcommand{\P}{\ensuremath{\mathcal{P}}}
\newcommand{\C}{\ensuremath{\mathcal{C}}}
\newcommand{\VdL}{\ensuremath{\mathrm{VdL}}}
\newcommand{\ch}{\ensuremath{\mathrm{Ch}}}
\newcommand{\End}{\ensuremath{\mathrm{End}}}
\newcommand{\Aut}{\ensuremath{\mathrm{Aut}}}
\newcommand{\eend}{\ensuremath{\mathrm{end}}}
\newcommand{\susp}{\ensuremath{\mathscr{S}}}
\newcommand{\T}{\ensuremath{\mathcal{T}}}
\newcommand{\vdl}{\ensuremath{\mathrm{VdL}}}
\newcommand{\Tw}{\ensuremath{\mathrm{Tw}}}
\newcommand{\Hom}{\ensuremath{\mathrm{Hom}}}
\renewcommand{\S}{\ensuremath{\mathbb{S}}}
\renewcommand{\k}{\ensuremath{\mathbb{K}}}
\newcommand{\id}{\ensuremath{\mathrm{id}}}
\newcommand{\MC}{\ensuremath{\mathrm{MC}}}
\newcommand{\mc}{\ensuremath{\mathfrak{mc}}}
\newcommand{\mclie}{\ensuremath{\overline{\mathfrak{mc}}}}
\newcommand{\dgl}{\ensuremath{\mathsf{dgLie}}}

\newcommand{\B}{\mathcal{B}}
\newcommand{\Z}{\mathbb{Z}}

\newcommand{\ad}{\operatorname{ad}}
\newcommand{\PTt}{\ensuremath{\widetilde{\mathrm{PT}}}}
\newcommand{\PT}{\ensuremath{\mathrm{PT}}}

\newcommand{\A}{\ensuremath{\mathrm{A}}}

\makeatletter

	\begin{abstract}
We develop a general obstruction theory to the formality of algebraic structures over any commutative ground ring. It relies on the construction of Kaledin obstruction classes that faithfully detect the formality of differential graded algebras over operads or properads, possibly colored in groupoids. The present treatment generalizes the previous obstruction classes in two directions: outside characteristic zero and including a wider range of algebraic structures. This enables us to establish novel formality criteria, including formality descent with torsion coefficients, formality in families, intrinsic formality, and criteria in terms of chain-level lifts of homology automorphism.  
\end{abstract}

	\maketitle

\makeatletter
\def\@tocline#1#2#3#4#5#6#7{\relax
	\ifnum #1>\c@tocdepth % then omit
	\else
	\par \addpenalty\@secpenalty\addvspace{#2}%
	\begingroup \hyphenpenalty\@M
	\@ifempty{#4}{%
		\@tempdima\csname r@tocindent\number#1\endcsname\relax
	}{%
		\@tempdima#4\relax
	}%
	\parindent\z@ \leftskip#3\relax \advance\leftskip\@tempdima\relax
	\rightskip\@pnumwidth plus4em \parfillskip-\@pnumwidth
	#5\leavevmode\hskip-\@tempdima
	\ifcase #1
	\or\or \hskip 1em \or \hskip 2em \else \hskip 3em \fi%
	#6\nobreak\relax
	\hfill\hbox to\@pnumwidth{\@tocpagenum{#7}}\par% <---- \dotfill -> \hfill
	\nobreak
	\endgroup
	\fi}

\newcommand{\enableopenany}{%
	\@openrightfalse%
}
\makeatother
	
	\setcounter{tocdepth}{1}
	\tableofcontents

\section*{\textcolor{bordeau}{Introduction}}

The notion of formality originated in the field of rational homotopy theory. In this context, a topological space $X$ is \emph{formal} if one can recover all its rational homotopy type from its cohomology ring, i.e. if there exists a zig-zag relating the singular cochains to its cohomology
  \[ C_{\mathrm{sing}}^*(X; \mathbb{Q}) \overset{\sim}{\longleftarrow} \cdot \overset{\sim}{\longrightarrow} \cdots \overset{\sim}{\longleftarrow} \cdot \overset{\sim}{\longrightarrow} H_{\mathrm{sing}}^*(X; \mathbb{Q})\ , \] through quasi-isomorphisms, i.e. morphisms of associative algebras inducing isomorphisms in cohomology. When dealing with a formal space, the singular cochains can be replaced by the cohomology, which is usually a more calculable object. For instance, the formality of spheres allowed Sullivan in \cite{Sul77} to recover the calculation of rational homotopy groups of spheres established by Serre in \cite{Ser53}. A seminal formality result was given by Deligne, Griffiths, Morgan, and Sullivan who proved in \cite{DGMS75}, that any compact Kähler manifold is formal.  The formality of topological spaces appears as a particular case of a much more general notion: the one of algebraic structures. Let $A$ be a chain complex over a commutative ring $R$. Let us consider a certain differential graded (dg) algebra structure $\varphi$ over $A$ (e.g. a dg associative algebra, a dg Hopf algebra, a dg Poisson algebra,etc.). It induces a structure of the same type $\varphi_*$ at the homology level. The same question then arises : does this induced structure keeps all the homotopical information that was at the level of the original algebra? In other words, does there exist a zig-zag of quasi-isomorphisms preserving the algebraic structures and relating the algebra to the induced structure in homology? If so, the algebra is said formal.   \bigskip

\paragraph{\bf Massey products} In \cite{Mas58}, Massey associated to any dg associative algebra, higher order homology operations, known today as the \emph{Massey products}. These products turn out to be obstructions to formality: the non-vanishing of only one of them prevents a dg associative algebra from being formal, see \cite[Theorem~4.1]{DGMS75}. Unfortunately, the converse is false: the vanishing of all the Massey products does not imply formality, see e.g. Halperin--Stasheff \cite{HJ79}. To prove the formality of compact Kähler manifolds, the authors of \cite{DGMS75} mainly use Hodge theory. Nevertheless, their intuition came from Massey products and Weil’s conjectures insights. Let $(A, \varphi)$ be a dg associative algebra and let $\alpha$ be a unit of infinite order in $R$. The grading automorphism $\sigma_{\alpha}$ is the homology automorphism defined as the multiplication by $\alpha^k$ in homological degree $k$. Sullivan proved in \cite{Sul77} that if $(A, \varphi)$ satisfies a purity property, i.e. if $\sigma_{\alpha}$ admits a chain level lift, the algebra is formal. The heuristic is the following one: if there exists such a lift, the Massey products have to be compatible with it. Thus, they are going to intertwine multiplication by different powers of $\alpha$. If the unit has infinite order, all Massey products have to vanish. This is nonetheless not sufficient to ensure formality, and the proof of \cite{Sul77} do not make use of Massey products. Sullivan's result was latter widely generalized to other types algebraic structures and to any coefficient ring in \cite{GNPR05}, \cite{CH20},   \cite{DCH21} and \cite{CH22}. \bigskip

\noindent \textbf{Kaledin classes.} In order to construct a refinement of the Massey products which would characterize formality, Kaledin introduced in \cite{Kal07} new obstruction classes, nowadays called after his name. He associates to any dg associative algebra its \emph{Kaledin class} in Hochschild cohomology, which vanishes if and only if the algebra is formal. He was motivated by establishing formality criteria for families of dg associative algebras, for which the methods of \cite{DGMS75} do not apply. The work of Kaledin was extended by Lunts in \cite{Lun07} for homotopy associative algebras over a $\mathbb{Q}$-algebra and by Melani and Rubió in \cite{MR19} for algebras over a binary Koszul operad in characteristic zero. \medskip

\noindent One can ask for the formality of a wide range of other algebraic structures: operads themselves, structures involving operations with several inputs but also several outputs such as dg Frobenius bialgebras, dg involutive Lie bialgebras etc. Such structures are encoded by generalizations of operads: colored operads and properads. The present article generalizes the Kaledin classes construction to study formality 
\begin{itemize}
		\item[$\centerdot$] of any algebra encoded by a groupoid colored operad or properad;
		\item[$\centerdot$] over any commutative ground ring $R$. 
\end{itemize}

  \noindent We use the resulting Kaledin classes to establish new formality criteria: formality descent, an intrinsic formality criterium, formality in families, and formality criteria in terms of chain level lifts of certain homology automorphisms. On the one hand, this enables us to recover and incorporate previous results into a single theory. On the other hand, this allows us to address new formality problems, such as the formality of algebras over properads and formality results with coefficients in any commutative ring. \bigskip
 
   \noindent \textbf{Formality as a deformation problem.} 
   The Kaledin classes generalization relies on an operadic approach which reduces the formality to a deformation problem. In general, the induced structure forgets a part of the homotopical information: most structures are not formal. Under some hypotheses, for instance if $R$ is a field, there is another way to transfer an algebraic structure to the homology without loss of homotopical information. This is the homotopy transfer theorem, see e.g. \cite[Section~10.3]{LodayVallette12}, \cite{berglund}, \cite[Section~4]{PHC}, \cite[Section~5.5]{LRL23} and references therein. This leads to a transferred structure on the homology  \[\varphi_t \coloneqq \varphi_* + \varphi^{(2)} + \varphi^{(3)} + \cdots\] made up of the induced structure $\varphi_*$ and higher order products. The algebra is then formal if these higher order products are trivial in a suitable sense. Thus, formality boils down to the following deformation problem:
 
 \begin{center}
 	\emph{Is the deformation $\varphi_t$ of the induced structure $\varphi_*$ trivial?}
 \end{center}
 \noindent The philosophy of deformation theory which goes back to Deligne, Gerstenhaber, Goldman, Grothendieck, Millson, Schlessinger, Stasheff, and many others, tells that any deformation problem is encoded by a dg Lie algebra, see \cite{Pri10} and \cite{Lurie11}. This is the case for the formality of algebraic structures. Thanks to the operadic calculs, one can treat algebraic structures of a certain type $\C$ on the homology $H(A)$ as Maurer--Cartan elements of a convolution dg Lie algebra $\mathfrak{g}_{\C ,H(A)}$ see \cite[Section~10]{LodayVallette12} and \cite[Proposition 11]{Merkulov_2009}. This dg Lie algebra is for instance the Hochschild complex in the case of associative structures or the Harrison complex in the case of commutative structures. As a dg Lie algebra, the set of degree zero elements of the convolution dg Lie algebra forms a group under the Backer--Campbell--Hausdorff formula, called the gauge group. Its acts on the set of Maurer--Cartan elements through the gauge action. Two elements in the same orbit are said gauge equivalent. This leads to the following definition. A differential graded algebra structure $(A, \varphi)$ admitting a transferred structure is
 
 \begin{itemize}
 	\item[$\centerdot$] \emph{gauge formal} if the structures $\varphi_t$ and $\varphi_*$ are gauge equivalent;
 	\item[$\centerdot$] \emph{gauge $n$-formal} for $n \geqslant 0$, if $\varphi_t$ is gauge equivalent to a structure of the form \[\psi = \varphi_* + \psi^{(n+2)} + \cdots\]
 \end{itemize}
\noindent Over a characteristic zero field, an algebra is formal if an only if it is gauge formal. In the general case, these two notions are related but none of them imply the other, see Proposition \ref{formality-gauge}. Gauge formality generalizes to any weight-graded dg Lie algebra: a Maurer--Cartan element in a weight-graded dg Lie algebras is gauge formal if it is gauge equivalent to its weight one component.

 \bigskip

\noindent \textbf{Outline of the paper.} In Section \ref{section1}, we study the gauge formality of any Maurer--Cartan element $\varphi$ in a weight-graded dg Lie algebra over a $\mathbb{Q}$-algebra. To so do, we construct a certain homology class, its \emph{Kaledin class} $K_{\varphi}$, which vanishes if and only if $\varphi$ is gauge formal, see Theorem \ref{A}. Over a $\mathbb{Q}$-algebra, the gauge formality of algebraic structures can be addressed using Theorem \ref{A} in $\mathfrak{g}_{\C ,H(A)} $. This particular dg Lie algebra comes equipped with additional structures: a convolution product and a Lie-admissible product whose skew-symmetrization defines the Lie bracket. By reformulating the problem in this setting,
it turns out that the characteristic zero can be dispensed with. In Section \ref{section2}, after having generalized operadic calculus results to any coefficient ring, we establish the following theorem. \bigskip

\noindent \textbf{Theorem \ref{B}.} \textit{Let $R$ be a commutative ring. Let $\mathbb{V}$ be a groupoid and let $\C$ be a $\mathbb{V}$-colored reduced weight-graded dg cooperad over $R$. Let $(A, \varphi)$ be a $\Cobar \C$-algebra structure admitting a transferred structure.} 
	
\begin{enumerate}
	\item \textit{ Let $n \geqslant 1$ be an integer such that $n !$ is a unit in $R$. The algebra $(A,\varphi)$ is gauge $n$-formal if and only if the reduction of Kaledin class modulo $\hbar^n$ is zero.}
	
	\item \textit{If $R$ is a $\mathbb{Q}$-algebra, the algebra $(A,\varphi)$ is gauge formal if and only if its Kaledin class vanishes.}  
\end{enumerate}
\bigskip

\noindent In Section \ref{3}, we establish a properadic version of Theorem \ref{B} by defining Kaledin classes for algebras over properads. Theorem \ref{B} and its properadic version are used in Section \ref{4} to establish new formality criteria and results. Section \ref{4:1.4} establishes formality descent results which apply in particular for colored operads and properads or with torsion coefficients. This allows us to prove new formality results with torsion coefficients, see e.g. Theorem \ref{new} dealing with $\mathbb{Z}_{(p)}$ coefficients. Section \ref{4:3.4} studies the problem motivating the introduction of Kaledin classes in the first place: formality in families. Section \ref{4:2.4} presents an intrinsic formality criterion, allowing us to recover standard intrinsic formality results, such as the one at the heart of Tamarkin's proof of Kontsevich formality theorem, see \cite{Hinich03bis}.  \bigskip

\noindent Section \ref{critere} is dedicated to formality criteria in terms of chain-level lifts of certain homology automorphisms. Using Kaledin classes, we generalize the grading automorphism criteria to any algebra encoded by a groupoid colored operad or a properad. We also address the following question: is the grading automorphism $\sigma_{\alpha}$, the only homology automorphism satisfying this property? In Theorem \ref{C}, we settle conditions on an homology automorphism so that the existence a chain level lift implies formality. This condition is satisfied by the Frobenius action in the $\ell$-adic cohomology of any smooth projective variety thanks to the Weil conjectures and Riemann hypothesis for finite fields. This leads to the following result. \bigskip

\noindent\textbf{Theorem \ref{smooth proj2}.} \textit{Let $\mathbb{V}$ be a groupoid and let $\P$ be a $\mathbb{V}$-colored operad in sets. Let $p$ be a prime number. Let $K$ be a finite extension of $\mathbb{Q}_p$ and let $K \hookrightarrow \mathbb{C}$ be an embedding. Let $X$ be a $\P$-algebra in the category of smooth and proper schemes over $K$ of good reduction, i.e. for which there exists a smooth and proper model $\mathcal{X}$ over the ring of integers $\mathcal{O}_K$. The dg $\P$-algebra of singular chains $C_*(X_{\mathrm{an}}, \mathbb{Q})$ is formal.	} 
\bigskip

\noindent As an application to this theorem, one recovers the formality of the cyclic operad of moduli spaces of stable algebraic curves established in \cite{GNPR05}. This also opens the doors to establishing new formality results for algebraic structures in smooth and proper schemes, which will be the subject of a forthcoming article.

\bigskip

\noindent \textbf{Notations and conventions}. 

\begin{itemize}
	\item[$\centerdot$] We work over a commutative ground ring $R$. All tensor products are taken over $R$ and every morphism is $R$-linear unless otherwise specified.
	\item[$\centerdot$]  We adopt a homological point of view and we work in the symmetric monoidal category of chain complexes over $R$ with the Koszul sign rule. 
	\item[$\centerdot$] If $A$ is a chain complex and $x \in A$ is a homogeneous element, we denote by $|x|$ its homological degree.
	\item[$\centerdot$] The abbreviation ``dg'' stands for the words ``differential graded''.
	\item[$\centerdot$]  We use the notations of \cite{LodayVallette12} for operads and the ones of \cite{PHC} for properads.  \bigskip
\end{itemize}

\noindent \textbf{Acknowledgments}.
I would like to thank my advisors, Geoffroy Horel and Bruno Vallette for their constant support, the many fruitful discussions and their invaluable guidance. I am grateful to Victor Roca i Lucio for useful discussions of results of \cite{LRL23} and to Olivier Benoist and Arnaud Vanhaecke for enlightening conversations around \cite{SGA4}.

\section{\textcolor{bordeau}{Kaledin classes in differential graded Lie algebras}}\label{section1}

In this section, we construct Kaledin classes in any weight-graded dg Lie algebra. This allows us to detect whether a given Maurer--Cartan element is gauge equivalent to its first component. In all this section, the ground ring $R$ is a $\mathbb{Q}$-algebra.

\subsection{Deformation theory with dg Lie algebras}\label{1.1}
We start by recalling the deformation theory controlled by a dg Lie algebra, see e.g. \cite[Chapter~1]{DSV22} for more details.

\begin{definition}[Differential graded Lie algebra]
	A \emph{differential graded (dg) Lie algebra} $\left(\mathfrak{g}, [-,-], d, \right)$ is a graded $R$-module $\mathfrak{g}$ equipped with 
	
	\begin{itemize}
		\item[$\centerdot$] a Lie bracket $[-,-] : \mathfrak{g} \otimes \mathfrak{g} \to \mathfrak{g}$~, i.e. homogeneous map of degree $0$~, satisfying the Jacoby identity and the antisymmetry properties; 
		\item[$\centerdot$] a differential $d : \mathfrak{g} \to \mathfrak{g}$~, i.e. a degree $-1$ derivation that squares to zero.
	\end{itemize}
	A dg Lie algebra $\mathfrak{g}$ said \emph{complete} if it admits a complete descending filtration $\mathcal{F}$~, i.e. a decreasing chain of sub-complexes \[\mathfrak{g} = \mathcal{F}^1 \mathfrak{g} \supset \mathcal{F}^2 \mathfrak{g} \supset \mathcal{F}^3 \mathfrak{g} \supset \cdots  \] compatible with the bracket $\left[\mathcal{F}^n \mathfrak{g}, \mathcal{F}^m \mathfrak{g} \right] \subset \mathcal{F}^{n+m} \mathfrak{g}$ and such that the canonical projections $\pi_n : \mathfrak{g} \twoheadrightarrow  \mathfrak{g} /\mathcal{F}^n \mathfrak{g}$ induce an isomorphism \[\pi : \mathfrak{g} \to \lim_{n \in \mathbb{N}} \mathfrak{g} / \mathcal{F}^n\mathfrak{g} \ . \] In the sequel, we will use the same notation for the underlying chain complex $\mathfrak{g}$ and the full data of a complete dg Lie algebra.
\end{definition}

\begin{example}
	Let $\mathfrak{g}$ be a dg Lie algebra. The set of its derivations forms a dg Lie algebra denoted $\mathrm{Der} (\mathfrak{g})$. Its grading comes from the degree of derivations, the bracket is defined by \[\lbrace \delta, \delta' \rbrace \coloneqq \delta \circ \delta' - (-1)^{|\delta | |\delta '|} \delta ' \circ \delta \ ,\] and the differential is given by $\delta \mapsto \lbrace d , \delta \rbrace$. The dg Lie subalgebra of \emph{inner derivations} $\mathrm{ad}(\mathfrak{g}) \subset \mathrm{Der} (\mathfrak{g})$ is the image of the dg Lie algebras morphism \[\mathrm{ad} : \mathfrak{g} \to \mathrm{Der} (\mathfrak{g}), \quad x \mapsto \mathrm{ad}_{x}(-) \coloneqq [x, -] \ .\] 
\end{example}

\begin{definition}[Maurer--Cartan element]
A \emph{Maurer--Cartan element} of a dg Lie algebra $\mathfrak{g}$ is a degree $-1$ element $\varphi \in \mathfrak{g}_{-1}$ satisfying the Maurer--Cartan equation: \[d \left(\varphi\right) + \tfrac{1}{2}\left[\varphi, \varphi \right] = 0 \ .\] The set of Maurer–Cartan elements in $\mathfrak{g}$ is denoted by $\mathrm{MC}(\mathfrak{g})$~. 
\end{definition}

\begin{proposition}
Let $\varphi \in \mathrm{MC}(\mathfrak{g})$ be a Maurer--Cartan elements in a dg Lie algebra $\mathfrak{g}$.  The map $d^{\psi} \coloneqq d + \mathrm{ad}_{\psi}$ is a differential and defines a \emph{twisted dg Lie algebra} \[\mathfrak{g}^{\psi} \coloneqq \left(\mathfrak{g},  [-,-], d^{\psi} \right) \ .\]  
\end{proposition}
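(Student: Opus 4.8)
The plan is to verify separately the two properties that make $d^{\varphi} \coloneqq d + \mathrm{ad}_{\varphi}$ a differential turning $\mathfrak{g}$ into a dg Lie algebra: first, that $d^{\varphi}$ is a degree $-1$ derivation of the bracket; and second, that it squares to zero. The first point is essentially formal, and the entire content of the statement is concentrated in the second, which is exactly where the Maurer--Cartan equation is used.

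First I would record the derivation property. The map $d$ is a degree $-1$ derivation by the very definition of a dg Lie algebra, while $\mathrm{ad}_{\varphi} = [\varphi, -]$ is a derivation of degree $|\varphi| = -1$: it is the inner derivation associated with $\varphi$ through the morphism $\mathrm{ad} \colon \mathfrak{g} \to \mathrm{Der}(\mathfrak{g})$ of the preceding Example, and the graded Leibniz rule it satisfies is just a reformulation of the graded Jacobi identity. Since a sum of two degree $-1$ derivations is again a degree $-1$ derivation, $d^{\varphi}$ is one.

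Next I would compute the square. Expanding yields
\[(d^{\varphi})^2 = d^2 + \bigl(d \circ \mathrm{ad}_{\varphi} + \mathrm{ad}_{\varphi} \circ d\bigr) + \mathrm{ad}_{\varphi} \circ \mathrm{ad}_{\varphi}\ .\]
The first term vanishes as $d^2 = 0$. For the middle term, the compatibility of $\mathrm{ad}$ with the differentials, equivalently the graded Leibniz rule $d[\varphi, x] = [d\varphi, x] - [\varphi, dx]$ (the minus sign coming from $|\varphi|$ odd), gives $d \circ \mathrm{ad}_{\varphi} + \mathrm{ad}_{\varphi} \circ d = \mathrm{ad}_{d\varphi}$. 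For the last term, applying the graded Jacobi identity to $[\varphi, [\varphi, x]]$ with $\varphi$ of odd degree produces $[\varphi, [\varphi, x]] = \tfrac{1}{2}[[\varphi, \varphi], x]$, that is $\mathrm{ad}_{\varphi} \circ \mathrm{ad}_{\varphi} = \tfrac{1}{2}\mathrm{ad}_{[\varphi, \varphi]}$. Collecting the three contributions and using the linearity of $\mathrm{ad}$, I obtain
\[(d^{\varphi})^2 = \mathrm{ad}_{d\varphi} + \tfrac{1}{2}\mathrm{ad}_{[\varphi, \varphi]} = \mathrm{ad}_{d\varphi + \frac{1}{2}[\varphi, \varphi]}\ ,\]
which vanishes precisely because $\varphi$ solves the Maurer--Cartan equation $d\varphi + \tfrac{1}{2}[\varphi, \varphi] = 0$.

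There is no genuine obstacle here: the result is a standard consequence of the Jacobi identity together with the Maurer--Cartan equation. The only step deserving care is the Koszul sign bookkeeping, since the odd degree of $\varphi$ is exactly what makes the two cross terms add up to $\mathrm{ad}_{d\varphi}$ rather than cancel, and what forces the factor $\tfrac{1}{2}$ in $\mathrm{ad}_{\varphi} \circ \mathrm{ad}_{\varphi} = \tfrac{1}{2}\mathrm{ad}_{[\varphi, \varphi]}$. I would also remark that completeness of $\mathfrak{g}$ plays no role in this statement; it will matter only later, when one exponentiates to define the gauge action.
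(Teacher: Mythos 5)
Your proof is correct: the paper itself gives no argument for this proposition but simply cites \cite[Proposition~1.43]{DSV22}, and your computation --- the derivation property of $d + \mathrm{ad}_{\varphi}$ together with $(d^{\varphi})^2 = \mathrm{ad}_{d\varphi} + \tfrac{1}{2}\mathrm{ad}_{[\varphi,\varphi]} = \mathrm{ad}_{d\varphi + \frac{1}{2}[\varphi,\varphi]} = 0$ --- is precisely the standard argument that the cited reference contains, with the Koszul signs handled correctly. The only point worth making explicit is that the identity $\mathrm{ad}_{\varphi} \circ \mathrm{ad}_{\varphi} = \tfrac{1}{2}\mathrm{ad}_{[\varphi,\varphi]}$ involves dividing by $2$, which is harmless here since Section~\ref{section1} works over a $\mathbb{Q}$-algebra (as the $\tfrac{1}{2}$ in the Maurer--Cartan equation already presupposes).
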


\begin{proof} See \cite[Proposition~1.43]{DSV22}.
\end{proof}

\begin{definition}[Baker--Campbell--Hausdorff formula]
The \emph{Baker--Campbell--Hausdorff} formula is the element in the associative algebra of formal power series on two variables $x$ and $y$ given by $\mathrm{BCH}(x,y)\coloneqq \mathrm{log}(e^xe^y) \ .$
\end{definition}

\begin{proposition}\label{gauge}
The \emph{gauge group} of a complete dg Lie algebra $\mathfrak{g}$ is the group obtained from the set of degree $0$ elements via the Baker--Campbell--Hausdorff formula : \[\left(\mathfrak{g}_0, \mathrm{BCH}, 0\right) \ .\] It acts on the set of Maurer--Cartan elements through the \emph{gauge action} defined by  \[  \lambda \cdot \varphi \coloneqq e^{ \mathrm{ad}_{\lambda}}(\varphi) - \frac{ e^{ \mathrm{ad}_{\lambda}} - \mathrm{id}}{\mathrm{ad}_{\lambda}} (d \lambda)     \ , \] for every Maurer--Cartan element $\varphi \in \mathrm{MC}(\mathfrak{g} )$ and every gauge $\lambda \in \mathfrak{g}_0$.   
\end{proposition}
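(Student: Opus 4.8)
The plan is to verify two separate assertions: that $\mathrm{BCH}$ endows $\mathfrak{g}_0$ with a group structure, and that the displayed formula defines an action of this group on $\mathrm{MC}(\mathfrak{g})$. I would treat them in this order, since the action axioms are phrased in terms of $\mathrm{BCH}$.

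First, for the group structure, I would start with convergence. Because the filtration is exhaustive with $\mathcal{F}^1\mathfrak{g} = \mathfrak{g}$ and compatible with the bracket, any $k$-fold iterated bracket of elements of $\mathfrak{g}_0$ lies in $\mathcal{F}^k\mathfrak{g}$; completeness then guarantees that the series $\mathrm{BCH}(x,y) = x + y + \tfrac{1}{2}[x,y] + \cdots$ converges to a degree $0$ element. The group axioms are formal consequences of the identity $\mathrm{BCH}(x,y) = \log(e^x e^y)$: associativity follows from associativity of the product of exponentials, $0$ is a two-sided unit since $\mathrm{BCH}(x,0) = \mathrm{BCH}(0,x) = x$, and the inverse of $x$ is $-x$ since $\mathrm{BCH}(x,-x) = 0$. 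These identities hold in the pro-nilpotent completion and hence transport to $\mathfrak{g}_0$.

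Second, for the gauge action, the key device I would use is to realize the differential as an inner derivation. I would form the complete dg Lie algebra $\widehat{\mathfrak{g}} := \mathfrak{g} \oplus R\,\delta$ obtained by adjoining a degree $-1$ element $\delta$ with $[\delta,\delta] = 0$ and $[\delta, x] = d(x)$, extending the filtration by placing $\delta$ in $\mathcal{F}^0\widehat{\mathfrak{g}}$. A direct expansion shows that $\varphi$ satisfies the Maurer--Cartan equation in $\mathfrak{g}$ if and only if the element $D := \delta + \varphi$ is flat, i.e. $\tfrac{1}{2}[D,D] = 0$. Since $\lambda \in \mathfrak{g}_0$ has degree $0$, the operator $\mathrm{ad}_\lambda$ is a degree $0$ derivation and $e^{\mathrm{ad}_\lambda}$ is an automorphism of $\widehat{\mathfrak{g}}$, hence preserves flatness. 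Computing $\mathrm{ad}_\lambda(\delta) = [\lambda,\delta] = -d\lambda$ and summing the exponential series gives
\[e^{\mathrm{ad}_\lambda}(\delta) = \delta - \frac{e^{\mathrm{ad}_\lambda} - \mathrm{id}}{\mathrm{ad}_\lambda}(d\lambda),\]
so that $e^{\mathrm{ad}_\lambda}(D) = \delta + (\lambda \cdot \varphi)$, with $\lambda \cdot \varphi$ exactly the displayed formula. This simultaneously shows that $\lambda \cdot \varphi$ is again a Maurer--Cartan element and identifies the gauge action with conjugation by $e^{\mathrm{ad}_\lambda}$. The action axioms then follow: $0 \cdot \varphi = \varphi$ is immediate, and since $\mathrm{ad} : \mathfrak{g}_0 \to \mathrm{Der}(\widehat{\mathfrak{g}})$ is a morphism of Lie algebras, one has $e^{\mathrm{ad}_\mu}\, e^{\mathrm{ad}_\lambda} = e^{\mathrm{ad}_{\mathrm{BCH}(\mu,\lambda)}}$, giving $\mu \cdot (\lambda \cdot \varphi) = \mathrm{BCH}(\mu,\lambda)\cdot \varphi$.

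The main obstacle, and the point requiring the most care, will be the bookkeeping of convergence and filtration compatibility underlying every manipulation of infinite series. One must check that adjoining $\delta$ keeps $\widehat{\mathfrak{g}}$ complete, that $\mathrm{ad}_\lambda$ strictly raises filtration weight on $\mathfrak{g}$ while $d\lambda \in \mathcal{F}^1\mathfrak{g}$, so that both $e^{\mathrm{ad}_\lambda}(\varphi)$ and $\tfrac{e^{\mathrm{ad}_\lambda}-\mathrm{id}}{\mathrm{ad}_\lambda}(d\lambda)$ converge, and that the formal identity $e^{\mathrm{ad}_\mu}\,e^{\mathrm{ad}_\lambda} = e^{\mathrm{ad}_{\mathrm{BCH}(\mu,\lambda)}}$ is applied inside the pro-nilpotent Lie algebra of filtration-raising operators, where $\mathrm{BCH}$ genuinely converges. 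Once these convergence points are secured, all the remaining algebraic identities are purely formal, and the characteristic zero hypothesis is used only to make sense of the divisions by $k!$ appearing in the exponential and in $\tfrac{e^{\mathrm{ad}_\lambda}-\mathrm{id}}{\mathrm{ad}_\lambda}$.
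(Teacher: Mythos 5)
Your proof is correct, and it is worth noting that the paper does not actually write out a proof of this proposition: it simply cites \cite[Theorem~1.53]{DSV22}. The argument you give --- adjoining a degree $-1$ element $\delta$ with $[\delta,x]=d(x)$ and $[\delta,\delta]=0$ so that the differential becomes inner, identifying Maurer--Cartan elements $\varphi$ with square-zero elements $\delta+\varphi$, and then reading off the gauge action as conjugation $e^{\mathrm{ad}_\lambda}(\delta+\varphi)=\delta+\lambda\cdot\varphi$ --- is precisely the ``differential trick'' used in that reference, so in substance you have reproduced the proof the paper delegates to. Your treatment of the convergence issues is also the right one: $\mathcal{F}^1\mathfrak{g}=\mathfrak{g}$ together with $[\mathcal{F}^n,\mathcal{F}^m]\subset\mathcal{F}^{n+m}$ makes $\mathrm{ad}_\lambda$ filtration-raising, which is exactly what is needed for $\mathrm{BCH}$, $e^{\mathrm{ad}_\lambda}$, and $\tfrac{e^{\mathrm{ad}_\lambda}-\mathrm{id}}{\mathrm{ad}_\lambda}(d\lambda)$ to converge, and the axioms $\mu\cdot(\lambda\cdot\varphi)=\mathrm{BCH}(\mu,\lambda)\cdot\varphi$ then follow formally from $e^{\mathrm{ad}_\mu}e^{\mathrm{ad}_\lambda}=e^{\mathrm{ad}_{\mathrm{BCH}(\mu,\lambda)}}$. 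The only route that is genuinely different from yours is the one sketched in the paper's subsequent remark (and in \cite[Theorem~1.42]{DSV22}): there the formula for $\lambda\cdot\varphi$ is obtained by integrating the flow of the vector field $\Upsilon_{-\lambda}(\varphi)=-(d\lambda+[\varphi,\lambda])$ on the variety $\mathrm{MC}(\mathfrak{g})$. That dynamical viewpoint explains where the formula comes from, while your algebraic conjugation argument verifies preservation of the Maurer--Cartan locus and the action axioms in one stroke, without any ODE integration; the two are complementary, and yours is the one that actually proves the proposition as stated.
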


\begin{proof}
	See \cite[Theorem~1.53]{DSV22}. 
\end{proof}

\begin{remark}
The gauge action's formula comes from the flow of certain vector fields on the set of Maurer--Cartan elements $\mathrm{MC}(\mathfrak{g})$ which is a variety
(an intersection of quadrics) in the finite dimensional case, see \cite[Theorem~1.42]{DSV22}. For every $\lambda \in \mathfrak{g}_0$, one can consider the vector field $\Upsilon_{\lambda}$ characterized for all $\varphi \in \mathrm{MC}(\mathfrak{g} )$ by 
	\begin{equation}\label{eq1}
		\Upsilon_{\lambda} (\varphi) \coloneqq d \lambda + [\varphi, \lambda] \ .
	\end{equation}   
The integration of the flow associated to $\Upsilon_{-\lambda}$, with the initial condition $\gamma_{\lambda}(0)= \varphi $, leads to 
	\[ \gamma_{\lambda}(t) = e^{ t  \mathrm{ad}_{\lambda}}(\varphi ) - \frac{ e^{ t\mathrm{ad}_{\lambda}} - \mathrm{id}}{t\mathrm{ad}_{\lambda}} (td \lambda) \in \mathrm{MC}(\mathfrak{g}) \ .  \] Thus, we have $\lambda \cdot \varphi = \psi $ if and only if the previous integration gives $\psi$ at time $t =1$~.
\end{remark}

\begin{definition}[Gauge equivalence] \label{gauge equivalences}Two Maurer--Cartan elements $\varphi, \psi \in  \mathrm{MC(\mathfrak{g})}$ in a complete dg Lie algebra $\mathfrak{g}$ are \emph{gauge equivalent} if there exists an element $\lambda \in \mathfrak{g}_0$ such that \[ \lambda \cdot \varphi = \psi \ .\] We say that $\lambda$ is a \emph{gauge} between $\varphi$ and $\psi$~. The \emph{moduli space of Maurer–Cartan elements} $\mathcal{MC}(\mathfrak{g})$ is the coset of Maurer–Cartan elements modulo the gauge action. 
\end{definition}

\begin{lemma}\label{technique0}
For every gauge $\lambda \in \mathfrak{g}_0$ in a complete dg Lie algebra $\mathfrak{g}$, the morphism  \[e^{\mathrm{ad}_{\lambda}} : \mathfrak{g}^{\varphi} \rightarrow \ \mathfrak{g}^{\lambda \cdot \varphi}, \quad x \mapsto  e^{\mathrm{ad}_{\lambda}}(x) \ , \] is an isomorphism of dg Lie algebras whose inverse is given by $e^{\mathrm{ad}_{-\lambda}}$. 
\end{lemma}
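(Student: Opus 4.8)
The plan is to verify separately the three properties that make $e^{\mathrm{ad}_{\lambda}}$ an isomorphism of dg Lie algebras: that the operator is well defined, that it preserves the bracket, and that it intertwines the twisted differentials $d^{\varphi}$ and $d^{\lambda \cdot \varphi}$; the claim about the inverse then follows formally.

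First I would record that $e^{\mathrm{ad}_{\lambda}} = \sum_{n \geq 0}\tfrac{1}{n!}(\mathrm{ad}_{\lambda})^{n}$ converges: since the filtration is complete and satisfies $[\mathcal{F}^{n}\mathfrak{g}, \mathcal{F}^{m}\mathfrak{g}] \subset \mathcal{F}^{n+m}\mathfrak{g}$, and $\lambda \in \mathfrak{g}_0 = \mathcal{F}^{1}\mathfrak{g}_0$, the operator $\mathrm{ad}_{\lambda}$ raises the filtration degree, so the series converges in the complete topology. Next, because $|\lambda| = 0$, the graded Jacobi identity says exactly that $\mathrm{ad}_{\lambda}$ is a degree-zero derivation of the bracket; the Leibniz expansion $(\mathrm{ad}_{\lambda})^{n}[x,y] = \sum_{k}\binom{n}{k}[(\mathrm{ad}_{\lambda})^{k}x, (\mathrm{ad}_{\lambda})^{n-k}y]$ then yields $e^{\mathrm{ad}_{\lambda}}[x,y] = [e^{\mathrm{ad}_{\lambda}}x, e^{\mathrm{ad}_{\lambda}}y]$, so $e^{\mathrm{ad}_{\lambda}}$ is an automorphism of the graded Lie algebra $(\mathfrak{g}, [-,-])$.

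The heart of the argument is the chain-map property, which I would phrase as the operator identity $e^{\mathrm{ad}_{\lambda}} \circ d^{\varphi} \circ e^{-\mathrm{ad}_{\lambda}} = d^{\lambda \cdot \varphi}$. Writing $d^{\varphi} = d + \mathrm{ad}_{\varphi}$, the automorphism property of the previous step gives $e^{\mathrm{ad}_{\lambda}}\,\mathrm{ad}_{\varphi}\, e^{-\mathrm{ad}_{\lambda}} = \mathrm{ad}_{e^{\mathrm{ad}_{\lambda}}(\varphi)}$. For the remaining term I would apply the Hadamard expansion $e^{\mathrm{ad}_{\lambda}}\, d\, e^{-\mathrm{ad}_{\lambda}} = \sum_{n \geq 0}\tfrac{1}{n!}C^{n}(d)$, where $C(-) \coloneqq [\mathrm{ad}_{\lambda}, -]$ is the commutator of operators, together with the key computation $C(d) = [\mathrm{ad}_{\lambda}, d] = -\mathrm{ad}_{d\lambda}$ (a one-line consequence of $d$ being a degree $-1$ derivation and $|\lambda| = 0$) and its iterate $C^{n}(d) = -\mathrm{ad}_{(\mathrm{ad}_{\lambda})^{n-1}(d\lambda)}$ for $n \geq 1$. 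Summing produces $e^{\mathrm{ad}_{\lambda}}\, d\, e^{-\mathrm{ad}_{\lambda}} = d - \mathrm{ad}_{\frac{e^{\mathrm{ad}_{\lambda}} - \mathrm{id}}{\mathrm{ad}_{\lambda}}(d\lambda)}$, and adding the $\mathrm{ad}_{\varphi}$ contribution assembles exactly the gauge-action formula of Proposition \ref{gauge}, giving $d + \mathrm{ad}_{\lambda \cdot \varphi} = d^{\lambda \cdot \varphi}$.

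The step I expect to require the most care is recognizing the gauge-action formula inside this conjugation, namely matching the series $\sum_{n \geq 1}\tfrac{1}{n!}(\mathrm{ad}_{\lambda})^{n-1}(d\lambda) = \tfrac{e^{\mathrm{ad}_{\lambda}} - \mathrm{id}}{\mathrm{ad}_{\lambda}}(d\lambda)$ with the second summand of $\lambda \cdot \varphi$; the rest is bookkeeping of signs and convergence. Finally, since $\mathrm{ad}_{-\lambda} = -\mathrm{ad}_{\lambda}$ commutes with $\mathrm{ad}_{\lambda}$, one has $e^{\mathrm{ad}_{\lambda}} e^{\mathrm{ad}_{-\lambda}} = \mathrm{id}$, so $e^{\mathrm{ad}_{\lambda}}$ is bijective with inverse $e^{\mathrm{ad}_{-\lambda}}$; being a bijective, bracket-preserving chain map, it is an isomorphism of dg Lie algebras, and its inverse is then automatically one as well.
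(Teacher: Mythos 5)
Your proof is correct, and it reaches the paper's key identity by a genuinely different route. The paper argues element-wise: it expands $d^{\lambda\cdot\varphi}\left(e^{\mathrm{ad}_\lambda}(x)\right)$ using the gauge-action formula and then proves, by induction on $n$, the Leibniz-type identity (\ref{the induction}), namely $d\left(\mathrm{ad}_\lambda^n(x)\right) = \mathrm{ad}_\lambda^n(dx) + \sum_{k+l=n-1}\frac{n!}{l!(k+1)!}\left[\mathrm{ad}_\lambda^k(d\lambda),\mathrm{ad}_\lambda^l(x)\right]$, whose double sum exactly cancels the contribution of the term $\frac{e^{\mathrm{ad}_\lambda}-\mathrm{id}}{\mathrm{ad}_\lambda}(d\lambda)$ in $\lambda\cdot\varphi$. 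You instead work at the operator level: your key fact is the commutator identity $[\mathrm{ad}_\lambda,d]=-\mathrm{ad}_{d\lambda}$, whose iterates collapse to $C^n(d)=-\mathrm{ad}_{\mathrm{ad}_\lambda^{n-1}(d\lambda)}$, so that the Hadamard expansion of $e^{\mathrm{ad}_\lambda}\, d\, e^{-\mathrm{ad}_\lambda}$ reassembles the gauge term directly, and the combinatorial coefficients $\frac{n!}{l!(k+1)!}$ never have to be produced or matched by hand. What your route buys is structural clarity: it isolates in one line the precise failure of $d$ to commute with $e^{\mathrm{ad}_\lambda}$ as the adjoint of the gauge correction. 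What the paper's route buys is reusability of the explicit formula: equation (\ref{the induction}) is invoked again, with the derivation $\partial_\hbar$ playing the role of $d$, in the proof of Lemma \ref{technique1}, whereas your operator-level identity would have to be restated in that setting. The remaining ingredients coincide: both proofs use completeness of the filtration for convergence of the exponential series, the same derivation argument for bracket preservation, and the same observation that $\mathrm{ad}_{-\lambda}$ commutes with $\mathrm{ad}_\lambda$ to identify the inverse as $e^{\mathrm{ad}_{-\lambda}}$.
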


\begin{proof}
The morphism $e^{\mathrm{ad}_{\lambda}}$ preserves the Lie bracket and is a morphism of Lie algebras as the exponential of a derivation. Furthermore, for all $x \in \mathfrak{g}$~, one has \[ \begin{split}
	d^{\lambda \cdot \varphi} \left(e^{\mathrm{ad}_{\lambda}}(x)\right) &  = \sum_{n \geqslant0} \frac{1}{n!} d\left(\mathrm{ad}_{\lambda}^{n} (x)\right) - \sum_{k, l \geqslant0} \frac{1}{ l !(k +1)!} \left[\mathrm{ad}_{\lambda}^{k}(d \lambda), \mathrm{ad}_{\lambda}^{l}(x)  \right] + e^{\mathrm{ad}_{\lambda}} (\left[ \varphi, x \right] )  . 
\end{split}   \] Since $d$ is a derivation, one can prove by induction on $n \geqslant1$~, that \begin{equation}\label{the induction}
	d \left(\mathrm{ad}_{\lambda}^{n} (x)\right)  = \mathrm{ad}^n_{\lambda}(d x) + \sum_{k + l = n-1} \frac{n!}{l !(k +1)! } \left[\mathrm{ad}_{\lambda}^{k}(d \lambda), \mathrm{ad}_{\lambda}^{l}(x)  \right] 
\end{equation}	for all $x \in \mathfrak{g}$~. This leads to \[ 
d^{\lambda \cdot \varphi} \left(e^{\mathrm{ad}_{\lambda}}(x)\right) 
=  \sum_{k \geqslant0 } \frac{1}{k!} \mathrm{ad}_{\lambda}^{k} (d(x)) + e^{\mathrm{ad}_{\lambda}} \left(\left[ \varphi, x \right] \right)  = e^{\mathrm{ad}_{\lambda}} \left( d^{\varphi}(x) \right) \ ,  \] and $e^{\mathrm{ad}_{\lambda}}$ is an isomorphism of dg Lie algebras whose inverse is given by $e^{\mathrm{ad}_{-\lambda}}$~. 
\end{proof}

\noindent Let $R[\![\hbar]\!]$ be the ring of formal power series in the variable $\hbar$ and let $\mathfrak{g}$ be any dg Lie algebra. We consider the complete dg Lie algebra of formal power series \[\mathfrak{g}[\![\hbar]\!] := \mathfrak{g} \ \widehat{\otimes} \ R[\![\hbar]\!] \ .\] We write any element $x \in \mathfrak{g}[\![\hbar]\!]$ as $x = \sum_{i \geqslant0} x_i \hbar^i \ .$

\begin{definition}[Formal deformations]
Let $\varphi_0$ a Maurer--Cartan element in $\mathfrak{g}$~. A \emph{formal deformation of $\varphi_0$} is a Maurer--Cartan element \[\Phi = \varphi_0 + \varphi_1 \hbar + \varphi_2 \hbar^2 + \cdots  \in \mathrm{MC}(\mathfrak{g}[\![\hbar]\!])\] which reduces to $\varphi_0$ modulo the maximal ideal $(\hbar)$~. We denote by $\mathrm{Def}_{\varphi_0}(R[\![\hbar]\!])$ the set of all formal deformations of $\varphi_0$~. Two formal deformations $\Phi$ and $\Psi$ are \emph{gauge equivalent}, if there exists a gauge $\lambda \in \mathfrak{g}_0 \otimes (\hbar)$ such that \[\Psi = \lambda \cdot \Phi \coloneqq e^{ \mathrm{ad}_{\lambda}}(\Phi) - \frac{e^{  \mathrm{ad}_{\lambda}} - \mathrm{id}}{\mathrm{ad}_{\lambda}} (d \lambda)  \ . \] 
\end{definition}

\begin{example}
Every $\varphi_0 \in \mathrm{MC}(\mathfrak{g})$ admits a trivial formal deformation $\varphi_0 + 0 \in \mathrm{MC}(\mathfrak{g}[\![\hbar]\!])$~. 
\end{example}

\subsection{Kaledin classes for formal deformations}\label{1.2} Let $\varphi_0$ be a Maurer--Cartan element in a dg Lie algebra $\mathfrak{g}$ and let $\Phi$ be a formal deformation of $\varphi_0$. Is $\Phi$ gauge equivalent to the trivial deformation? To answer this question, we generalize the construction of \cite[Section~7]{Lun07}, which addresses the particular case $\varphi_{0} = 0$~. 

\begin{definition}[Differentiation operator]
	For any graded $R$-module $V$~, the \emph{differentiation operator} $\partial_{\hbar} : V[\![\hbar]\!] \longrightarrow V[\![\hbar]\!]$ is defined by \[\sum_{i \geqslant0} x_i \hbar^i \longmapsto \sum_{i \geqslant1} i x_i \hbar^{i-1} \ .\]
\end{definition}

\begin{remark} Let $\mathfrak{g}$ be a dg Lie algebra,
\begin{itemize}
	\item[$\centerdot$] the operator $\partial_{\hbar} : \mathfrak{g}[\![\hbar]\!] \to \mathfrak{g}[\![\hbar]\!]$ is a derivation of degree zero, 
	\item[$\centerdot$] the underlying differential $d$ of $\mathfrak{g}$~, viewed in  $\mathrm{Der}(\mathfrak{g})[\![\hbar]\!]$~, satisfies $\partial_{\hbar}(d) = 0$~. 
\end{itemize}
\end{remark}

\begin{proposition}\label{c'est un cycle}
Any formal deformation $\Phi \in \mathrm{Def}_{\varphi_0}(R[\![\hbar]\!])$ satisfies \[\partial_{\hbar} \Phi \in Z_{-1}\left(\mathfrak{g}[\![\hbar]\!]^{\Phi}\right)\ ,\] i.e. the image of $\Phi$ under the operator $\partial_{\hbar}$ is a cycle in the twisted dg Lie algebra $\mathfrak{g}[\![\hbar]\!]^{\Phi}$~.
\end{proposition}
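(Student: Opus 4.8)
The plan is to derive the vanishing $d^{\Phi}(\partial_{\hbar}\Phi) = 0$, where the twisted differential is $d^{\Phi} = d + \mathrm{ad}_{\Phi}$, directly by differentiating with respect to $\hbar$ the Maurer--Cartan equation satisfied by $\Phi$. Since $\Phi \in \mathrm{MC}(\mathfrak{g}[\![\hbar]\!])$ we have $d(\Phi) + \tfrac{1}{2}[\Phi,\Phi] = 0$, and the strategy is to show that applying the operator $\partial_{\hbar}$ to this identity produces precisely the equation $d^{\Phi}(\partial_{\hbar}\Phi) = 0$ expressing that $\partial_{\hbar}\Phi$ is a cycle in the twisted dg Lie algebra.

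First I would record the two properties of $\partial_{\hbar}$ from the preceding Remark: it is a degree-zero derivation of the Lie bracket on $\mathfrak{g}[\![\hbar]\!]$, and it commutes with $d$, since $\partial_{\hbar}(d) = 0$ reflects that $d$ carries no $\hbar$-dependence. Commutation with $d$ gives $\partial_{\hbar}(d\Phi) = d(\partial_{\hbar}\Phi)$, while the derivation property applied to the quadratic term yields $\partial_{\hbar}\bigl(\tfrac12[\Phi,\Phi]\bigr) = \tfrac12\bigl([\partial_{\hbar}\Phi,\Phi] + [\Phi,\partial_{\hbar}\Phi]\bigr)$.

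The one point requiring care is the sign bookkeeping in the bracket term. Since $\partial_{\hbar}$ preserves degree, both $\Phi$ and $\partial_{\hbar}\Phi$ lie in degree $-1$, so graded antisymmetry gives $[\partial_{\hbar}\Phi,\Phi] = -(-1)^{(-1)(-1)}[\Phi,\partial_{\hbar}\Phi] = [\Phi,\partial_{\hbar}\Phi]$; the two summands reinforce rather than cancel, and the quadratic contribution collapses to $[\Phi,\partial_{\hbar}\Phi] = \mathrm{ad}_{\Phi}(\partial_{\hbar}\Phi)$. Combining the two contributions then gives $0 = \partial_{\hbar}\bigl(d\Phi + \tfrac12[\Phi,\Phi]\bigr) = d(\partial_{\hbar}\Phi) + \mathrm{ad}_{\Phi}(\partial_{\hbar}\Phi) = d^{\Phi}(\partial_{\hbar}\Phi)$, which is the claim, and the degree count places $\partial_{\hbar}\Phi$ in $Z_{-1}\bigl(\mathfrak{g}[\![\hbar]\!]^{\Phi}\bigr)$.

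The argument is short and essentially formal, so I do not anticipate a genuine obstacle. The only delicate spot is confirming that the odd degree of $\Phi$ makes the two bracket terms add, which is exactly what rescues the identity: had the signs produced a cancellation, the conclusion would fail. Everything takes place in the $\hbar$-adically complete dg Lie algebra $\mathfrak{g}[\![\hbar]\!]$, where the series and the derivation $\partial_{\hbar}$ are well defined, so no completeness or convergence issue intervenes.
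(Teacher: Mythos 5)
Your proof is correct and takes essentially the same route as the paper: both differentiate the Maurer--Cartan equation with respect to $\hbar$, using that $\partial_{\hbar}$ commutes with $d$, that it is a derivation of the bracket, and the sign identity $[\partial_{\hbar}\Phi,\Phi]=[\Phi,\partial_{\hbar}\Phi]$ for degree $-1$ elements. The only (inessential) difference is that the paper first splits $\Phi=\varphi_0+\tilde{\Phi}$ and works with the $\varphi_0$-twisted Maurer--Cartan equation for $\tilde{\Phi}$, whereas you apply $\partial_{\hbar}$ directly to the equation for $\Phi$, which is if anything slightly cleaner.
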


\begin{proof}
	Let us decompose $\Phi$ as $\varphi_0 + \tilde{\Phi}$ where $\tilde{\Phi} \in \mathrm{MC}(\mathfrak{g}[\![\hbar]\!]^{\varphi_0})$. The result now follows from the following computation: \[	\begin{split}                                  
		d^{\Phi} \left(\partial_{\hbar} \Phi \right)
		 & = d \left(\partial_{\hbar} \tilde{\Phi} \right) + \left[\varphi_0 + \tilde{\Phi}, \partial_{\hbar} \tilde{\Phi} \right] \\
		& = \partial_{\hbar} \left( d \tilde{\Phi} \right) + \left[\varphi_0 , \partial_{\hbar} \tilde{\Phi} \right] + \left[\tilde{\Phi} , \partial_{\hbar} \tilde{\Phi} \right] \\
		& =  \partial_{\hbar} \left( d \tilde{\Phi} \right) + \partial_{\hbar} \left[\varphi_0 ,  \tilde{\Phi} \right] +  \tfrac{1}{2} \partial_{\hbar} \left[\tilde{\Phi} , \tilde{\Phi} \right] \\
		& =   \partial_{\hbar} \left( d \tilde{\Phi} + \left[\varphi_0 ,  \tilde{\Phi} \right] +  \tfrac{1}{2} \left[\tilde{\Phi} , \tilde{\Phi} \right] \right)\\
		& = 0 \ . \qedhere	\end{split}   \]
\end{proof}	

\begin{definition}[Kaledin class]
The \emph{Kaledin class} of a formal deformation $\Phi \in \mathrm{Def}_{\varphi_0}(R[\![\hbar]\!]) $ is the homology class \[K_{\Phi} \coloneqq \left[\partial_{\hbar} \Phi \right] \in H_{-1}\left(\mathfrak{g}[\![\hbar]\!]^{\Phi}\right) \ .\] 
\end{definition}

\begin{example}
The Kaledin class of the trivial deformation is trivial, i.e. $K_{\varphi_0} = 0$~. 
\end{example}

\begin{theorem}\label{Kaledin1}
Let $\Phi \in \mathrm{Def}_{\varphi_0}(R[\![\hbar]\!]) $ be a formal deformation. Its Kaledin class $K_{\Phi}$ is zero if and only if $\Phi$ is gauge equivalent to the trivial deformation $\varphi_0$~, i.e. there exists a gauge $\lambda \in \mathfrak{g}_0 \otimes (\hbar)$ such that \[\lambda \cdot \Phi = \varphi_0 \in \mathfrak{g}[\![\hbar]\!] \ .\] 
\end{theorem}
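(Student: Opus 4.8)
The plan is to prove both implications separately, with the backward direction being essentially a computation and the forward direction requiring the real work — an obstruction-theoretic induction on the $\hbar$-adic filtration.

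**Forward direction (vanishing implies gauge triviality).** Let me think about what the Kaledin class being zero gives us. By hypothesis there exists $\mu \in \mathfrak{g}_0[\![\hbar]\!]$ with $\partial_\hbar \Phi = d^\Phi(\mu)$. My plan is to build the gauge $\lambda$ iteratively, removing the deformation one $\hbar$-power at a time. The natural idea is to interpret the equation $\partial_\hbar \Phi = d^\Phi(\mu)$ as an infinitesimal statement: to first order in a parameter, gauging $\Phi$ by $-\mu$ changes it in the $\hbar$-direction in a way that can be absorbed. Concretely, I expect to use Lemma~\ref{technique0}, which says $e^{\mathrm{ad}_\lambda}$ is an isomorphism $\mathfrak{g}[\![\hbar]\!]^{\Phi}\to\mathfrak{g}[\![\hbar]\!]^{\lambda\cdot\Phi}$. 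The cleanest route is probably to consider the family $\Phi_t$ obtained by gauging with $t\mu$ (thinking of $\Phi$ as depending on $\hbar$) and show that the condition $K_\Phi=0$ lets us set up an ordinary differential equation in $\hbar$ whose solution is the gauge trivializing $\Phi$. Alternatively, and perhaps more robustly over an arbitrary $\mathbb{Q}$-algebra, I would induct: suppose $\lambda^{(n)}\cdot\Phi = \varphi_0 + \psi_{n}\hbar^{n} + O(\hbar^{n+1})$; the coefficient $\psi_n$ is forced by $\partial_\hbar$ applied to the reduced deformation to be a $d^{\varphi_0}$-boundary (this is where $K_\Phi=0$ enters, reducing modulo $\hbar^{n+1}$), so one corrects by a further gauge $\mu_n\hbar^{n}$, and completeness of $\mathfrak{g}[\![\hbar]\!]$ guarantees the infinite composite $\mathrm{BCH}$-product converges.

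**Backward direction (gauge triviality implies vanishing).** This is the easier half. Assume $\lambda\cdot\Phi=\varphi_0$. Apply $e^{\mathrm{ad}_{\lambda}}$, which by Lemma~\ref{technique0} is an isomorphism $\mathfrak{g}[\![\hbar]\!]^{\Phi}\xrightarrow{\sim}\mathfrak{g}[\![\hbar]\!]^{\varphi_0}$ of dg Lie algebras, hence induces an isomorphism on homology. Under this isomorphism I expect $\partial_\hbar\Phi$ to map to $\partial_\hbar\varphi_0$ up to a $d^{\varphi_0}$-exact term: since $\partial_\hbar\varphi_0=0$ (as $\varphi_0$ is constant in $\hbar$), the class $K_\Phi$ maps to $K_{\varphi_0}=0$, and injectivity on homology forces $K_\Phi=0$. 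The key technical point is commuting $e^{\mathrm{ad}_\lambda}$ past $\partial_\hbar$: since $\partial_\hbar$ is a degree-zero derivation and $\lambda$ itself may depend on $\hbar$, the discrepancy $\partial_\hbar(e^{\mathrm{ad}_\lambda}\Phi) - e^{\mathrm{ad}_\lambda}(\partial_\hbar\Phi)$ should be expressible as $d^{\varphi_0}$ of something built from $\partial_\hbar\lambda$, which is exactly the kind of identity the gauge-action formula is designed to produce.

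**The main obstacle** will be the forward direction, specifically making the integration/induction rigorous: one must show that the single cohomological condition $[\partial_\hbar\Phi]=0$ propagates through all orders of $\hbar$, rather than merely killing the leading obstruction. The cleanest way around this is the ODE-flow viewpoint hinted at in the remark after Proposition~\ref{gauge}: interpret $\partial_\hbar\Phi = d^\Phi\mu$ as saying the vector field generating $\hbar$-translation agrees with the gauge vector field $\Upsilon$, so that integrating the gauge flow exactly cancels the $\hbar$-dependence. I expect the author to exploit precisely this reformulation, reducing the whole statement to integrating a flow on $\mathrm{MC}(\mathfrak{g}[\![\hbar]\!])$ and invoking completeness for convergence.
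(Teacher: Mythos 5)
Your proposal matches the paper's proof: the backward direction is exactly Lemma \ref{technique1} (gauge invariance of the Kaledin class, proved by commuting $e^{\mathrm{ad}_{\lambda}}$ past $\partial_{\hbar}$ and exhibiting the discrepancy as a $d^{\lambda\cdot\Phi}$-exact term), and your inductive forward direction — killing the deformation one $\hbar$-power at a time with gauges $\mu_n\hbar^{n}$ concentrated in a single power of $\hbar$, then taking the infinite $\mathrm{BCH}$ composite — is precisely the paper's argument via the truncated classes (Theorem \ref{truncated1}, Remark \ref{toutes nulles0}, and Lemma \ref{technique12}). The only discrepancy is your closing prediction: the paper uses your ``more robust'' induction, not the ODE/flow integration you expected the author to favor.
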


\begin{remark}
The heuristic behind this obstruction class is the following one. The Kaledin class $K_{\Phi}$ is zero if and only if $\partial_{\hbar} \Phi$ is a boundary, i.e. there exits $\lambda \in \mathfrak{g}_0 \otimes (\hbar)$ such that \begin{equation*}
	\partial_{\hbar} \Phi =  d^{\Phi}(\lambda) = d \lambda + [\Phi, \lambda]  \ .
\end{equation*}
One recognizes the vector field (\ref{eq1}) characterizing the gauge action. 
\end{remark}

\noindent The proof of Theorem \ref{Kaledin1} requires the following lemma on the invariance of the Kaledin class under the gauge group action.

\begin{lemma}\label{technique1}
Every $\lambda \in \mathfrak{g}_0 \otimes (\hbar)$ induces an isomorphism of dg Lie algebras,  \[e^{\mathrm{ad}_{\lambda}} : \mathfrak{g}[\![\hbar]\!]^{\Phi} \longrightarrow \ \mathfrak{g}[\![\hbar]\!]^{\lambda \cdot \Phi}, \quad x \mapsto  e^{\mathrm{ad}_{\lambda}}(x) \ . \]
Furthermore, the classes $K_{\lambda \cdot \Phi}$ and $e^{\mathrm{ad}_{\lambda}}\left(K_{\Phi}\right)$ are equal in \[H_{-1}\left(\mathfrak{g}[\![\hbar]\!]^{\lambda \cdot \Phi}\right) \ ,\] where we still denote by $e^{\mathrm{ad}_{\lambda}}$ the induced isomorphism in homology. 
\end{lemma}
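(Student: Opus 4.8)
The plan is to obtain the first assertion as a direct instance of Lemma~\ref{technique0}, and the second---the comparison of Kaledin classes---through a flow argument that again reduces everything to Lemma~\ref{technique0}.

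For the isomorphism, I would note that since $\lambda \in \mathfrak{g}_0 \otimes (\hbar)$ has positive $\hbar$-adic valuation, the operator $\mathrm{ad}_{\lambda}$ raises this valuation, so $e^{\mathrm{ad}_{\lambda}}$ converges $\hbar$-adically and is a well-defined degree-zero automorphism of $\mathfrak{g}[\![\hbar]\!]$. The claim is then exactly Lemma~\ref{technique0} applied to the complete dg Lie algebra $\mathfrak{g}[\![\hbar]\!]$ with Maurer--Cartan element $\Phi$ and gauge $\lambda$: the map $e^{\mathrm{ad}_{\lambda}}$ intertwines $d^{\Phi}$ and $d^{\lambda \cdot \Phi}$, preserves the bracket, and has inverse $e^{\mathrm{ad}_{-\lambda}}$.

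For the comparison of classes, I would show that $\partial_{\hbar}(\lambda \cdot \Phi) - e^{\mathrm{ad}_{\lambda}}(\partial_{\hbar} \Phi)$ is a boundary in $\mathfrak{g}[\![\hbar]\!]^{\lambda \cdot \Phi}$. Introduce the rescaled path $\gamma(t) \coloneqq (t\lambda) \cdot \Phi$; by the formula recalled in the remark after Proposition~\ref{gauge} it coincides with the flow $\gamma_{\lambda}(t)$, so that $\gamma(0) = \Phi$, $\gamma(1) = \lambda \cdot \Phi$, and $\dot{\gamma} = \mathrm{ad}_{\lambda}(\gamma) - d\lambda$. Applying $\partial_{\hbar}$ to this equation---using that $\partial_{\hbar}$ commutes with $d$ and with $\tfrac{d}{dt}$ and is a degree-zero derivation of the bracket---yields, for $u(t) \coloneqq \partial_{\hbar}\gamma(t)$, the linear differential equation $\dot{u} = \mathrm{ad}_{\lambda}(u) - d^{\gamma(t)}(\partial_{\hbar}\lambda)$. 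I would then solve it by variation of constants: setting $u(t) = e^{t\,\mathrm{ad}_{\lambda}}(w(t))$ turns it into $\dot{w} = -e^{-t\,\mathrm{ad}_{\lambda}}\bigl(d^{\gamma(t)}(\partial_{\hbar}\lambda)\bigr)$. The key input is Lemma~\ref{technique0} for the gauge $t\lambda$: the inverse isomorphism $e^{-t\,\mathrm{ad}_{\lambda}} \colon \mathfrak{g}[\![\hbar]\!]^{\gamma(t)} \to \mathfrak{g}[\![\hbar]\!]^{\Phi}$ intertwines the twisted differentials, so $\dot{w} = -d^{\Phi}\bigl(e^{-t\,\mathrm{ad}_{\lambda}}(\partial_{\hbar}\lambda)\bigr)$ is a $d^{\Phi}$-boundary for every $t$. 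Integrating over $[0,1]$ gives $w(1) - w(0) = -d^{\Phi}(\nu)$ with $\nu \coloneqq \tfrac{\mathrm{id} - e^{-\mathrm{ad}_{\lambda}}}{\mathrm{ad}_{\lambda}}(\partial_{\hbar}\lambda)$, an element of degree zero. Since $w(0) = \partial_{\hbar}\Phi$ and $w(1) = e^{-\mathrm{ad}_{\lambda}}(\partial_{\hbar}(\lambda \cdot \Phi))$, applying the chain map $e^{\mathrm{ad}_{\lambda}}$ yields $\partial_{\hbar}(\lambda \cdot \Phi) - e^{\mathrm{ad}_{\lambda}}(\partial_{\hbar}\Phi) = -d^{\lambda \cdot \Phi}\bigl(e^{\mathrm{ad}_{\lambda}}(\nu)\bigr)$, which is exactly the desired equality of classes in $H_{-1}(\mathfrak{g}[\![\hbar]\!]^{\lambda \cdot \Phi})$.

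The main obstacle I anticipate is justifying the one-parameter calculus in the formal setting. This is harmless: because $\lambda$ has positive $\hbar$-valuation, the operators $e^{\pm t\,\mathrm{ad}_{\lambda}}$ and $\tfrac{\mathrm{id} - e^{-\mathrm{ad}_{\lambda}}}{\mathrm{ad}_{\lambda}}$ are $\hbar$-adically convergent power series in $\mathrm{ad}_{\lambda}$ whose coefficients are polynomial in $t$, so differentiating and integrating in $t$ are purely formal, legitimate operations. If one prefers to avoid $t$ altogether, the same element $\nu = \tfrac{\mathrm{id} - e^{-\mathrm{ad}_{\lambda}}}{\mathrm{ad}_{\lambda}}(\partial_{\hbar}\lambda)$ can be taken directly as an ansatz and the boundary identity verified from the explicit gauge formula of Proposition~\ref{gauge} together with the derivative-of-exponential expansion, at the cost of a longer computation.
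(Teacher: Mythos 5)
Your proof is correct, but for the key identity it takes a genuinely different route from the paper. For the first assertion both arguments coincide: the paper also obtains the isomorphism by invoking the proof of Lemma~\ref{technique0}, extended $\hbar$-linearly. For the comparison of classes, the paper proceeds by direct computation: it applies $\partial_{\hbar}$ to the explicit gauge-action formula and expands everything using the inductively-proven derivation identity $\partial_{\hbar}\left(\mathrm{ad}_{\lambda}^{n}(x)\right) = \mathrm{ad}^n_{\lambda}\left(\partial_{\hbar} x\right) + \sum_{k+l=n-1} \tfrac{n!}{l!(k+1)!}\left[\mathrm{ad}_{\lambda}^{k}(\partial_{\hbar}\lambda), \mathrm{ad}_{\lambda}^{l}(x)\right]$ (the analogue of Equation~(\ref{the induction})), applied twice plus once more to the term $\mathrm{ad}^n_{\lambda}(d(\partial_{\hbar}\lambda))$, and then regroups the terms into $d^{\lambda\cdot\Phi}\bigl(-\tfrac{e^{\mathrm{ad}_{\lambda}}-\mathrm{id}}{\mathrm{ad}_{\lambda}}(\partial_{\hbar}\lambda)\bigr)$. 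You instead run the flow $\gamma(t)=(t\lambda)\cdot\Phi$, differentiate the resulting ODE under $\partial_{\hbar}$, and solve by variation of constants, reducing the intertwining of twisted differentials at each $t$ back to Lemma~\ref{technique0}; note that your primitive $e^{\mathrm{ad}_{\lambda}}(\nu) = \tfrac{e^{\mathrm{ad}_{\lambda}}-\mathrm{id}}{\mathrm{ad}_{\lambda}}(\partial_{\hbar}\lambda)$ is exactly the element the paper exhibits, so the two computations agree on the nose. What your approach buys is conceptual transparency — the bounding element arises naturally as an integral, and the messy combinatorial bookkeeping disappears into the variation-of-constants formalism; what it costs is the need to justify the one-parameter formal calculus (differentiation and integration of polynomial-in-$t$ coefficients over $R[t]$, the latter requiring $R$ to be a $\mathbb{Q}$-algebra), which you do address correctly, and which is harmless since Section~\ref{section1} assumes $R$ is a $\mathbb{Q}$-algebra throughout. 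The paper's computation, by contrast, never introduces an auxiliary parameter and so transfers more directly to the truncated setting of Lemma~\ref{technique12} and to the variants later in the paper.
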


\begin{proof}
By the proof of Lemma \ref{technique0}, the exponential \[e^{\mathrm{ad}_{\lambda}} : \mathfrak{g}[\![\hbar]\!]^{\Phi} \rightarrow \ \mathfrak{g}[\![\hbar]\!]^{\lambda \cdot \Phi}\] is an isomorphism of dg Lie algebras by the proof of which holds \emph{mutatis mutandis} by $\hbar$-linearity. Let us prove that the cycles $e^{\mathrm{ad}_{\lambda}}(\partial_{\hbar} \Phi)$ and $\partial_{\hbar} (\lambda \cdot \Phi)$ are homologous. We have \[ 
		\partial_{\hbar} (\lambda \cdot \Phi) = \partial_{\hbar} \left(e^{ \mathrm{ad}_{\lambda}}(\Phi) - \frac{e^{  \mathrm{ad}_{\lambda}} - \mathrm{id}}{\mathrm{ad}_{\lambda}} (d \lambda)   \right) \ .  \] The operator $\partial_{\hbar}$ is a derivation and it satisfies an equation similar to Equation (\ref{the induction}). In other words, for all $n \geqslant1$ and all $x \in \mathfrak{g}[\![\hbar]\!]$~, one can prove that \begin{equation*}
			\partial_{\hbar} \left(\mathrm{ad}_{\lambda}^{n} (x)\right)  = \mathrm{ad}^n_{\lambda}\left(\partial_{\hbar} x\right) + \sum_{k + l = n-1} \frac{n!}{l !(k +1)! } \left[\mathrm{ad}_{\lambda}^{k}(\partial_{\hbar} \lambda), \mathrm{ad}_{\lambda}^{l}(x)  \right] .
		\end{equation*}	 Using twice this identity, one gets \[\begin{split}
		\partial_{\hbar} \left(e^{ \mathrm{ad}_{\lambda}}(\Phi) \right) - e^{\mathrm{ad}_{\lambda}}(\partial_{\hbar} \Phi) &  = - \left[ e^{\mathrm{ad}_{\lambda}}(\Phi),  \frac{e^{  \mathrm{ad}_{\lambda}} - \mathrm{id}}{\mathrm{ad}_{\lambda}} (\partial_{\hbar} \lambda)   \right] \ , \  \mathrm{and}
		\end{split}\]	\[ \begin{split}
	\partial_{\hbar} \left(\frac{e^{  \mathrm{ad}_{\lambda}} - \mathrm{id}}{\mathrm{ad}_{\lambda}} (d \lambda)   \right) & = \sum_{n \geqslant0} \frac{1}{(n+1)!} \mathrm{ad}^n_{\lambda}(\partial_{\hbar} (d \lambda) ) + \sum_{\substack{n \geqslant 0 \\ k + l = n}} \frac{1}{n (l !)(k +1)! } \left[\mathrm{ad}_{\lambda}^{k}(\partial_{\hbar} \lambda), \mathrm{ad}_{\lambda}^{l}(d \lambda)  \right] 
\end{split}	 \ .\] In the latter equation, the element $\mathrm{ad}^n_{\lambda}(\partial_{\hbar} (d \lambda) )= \mathrm{ad}^n_{\lambda}(d (\partial_{\hbar} \lambda) ) $ can be decomposed using the equation (\ref{the induction}). This leads to 
	\[\begin{split}
		\partial_{\hbar} \left(\frac{e^{  \mathrm{ad}_{\lambda}} - \mathrm{id}}{\mathrm{ad}_{\lambda}} (d \lambda)   \right)  &  = d \left(\frac{e^{  \mathrm{ad}_{\lambda}} - \mathrm{id}}{\mathrm{ad}_{\lambda}} (\partial_{\hbar} \lambda) \right)  - \left[  \frac{e^{  \mathrm{ad}_{\lambda}} - \mathrm{id}}{\mathrm{ad}_{\lambda}} (d \lambda)  ,  \frac{e^{  \mathrm{ad}_{\lambda}} - \mathrm{id}}{\mathrm{ad}_{\lambda}} (\partial_{\hbar} \lambda)   \right] \ .
	\end{split}\] In conclusion, the following equality holds \[\begin{split}
\partial_{\hbar} (\lambda \cdot \Phi) - e^{\mathrm{ad}_{\lambda}}(\partial_{\hbar} \Phi) & 			
= d^{\lambda \cdot \Phi}\left( -\frac{e^{  \mathrm{ad}_{\lambda}} - \mathrm{id}}{\mathrm{ad}_{\lambda}} (\partial_{\hbar} \lambda) \right) \ ,
\end{split}  \] and the classes $K_{\lambda \cdot \Phi}$ and $e^{\mathrm{ad}_{\lambda}}\left(K_{\Phi}\right)$ are equal in homology. \end{proof}

\noindent All the above definitions and properties can be adapted over the truncated ring $R[\![\hbar]\!]/(\hbar^{n})$~, for all $n \geqslant1$~. This leads to the following definition of the $n^{\text{th}}$-truncated Kaledin class. 

\begin{definition}[$n^{\text{th}}$-truncated Kaledin class]
For all $n \geqslant 1$~, the \emph{$n^{\text{th}}$-truncated Kaledin class} of the formal deformation $\Phi \in \mathrm{Def}_{\varphi_0}(R[\![\hbar]\!])$ is the homology class \[K_{\Phi}^n \coloneqq  \left[\varphi_{1} + 2 \varphi_2 \hbar + \cdots + n \varphi_n \hbar^{n-1}\right]   \in H_{-1}\left(\left(\mathfrak{g}[\![\hbar]\!]/ (\hbar^{n}) \right)^{\overline{\Phi}^n} \right) \ , \] where $\overline{\Phi}^n$ denotes the projection in $\mathfrak{g}[\![\hbar]\!]/ (\hbar^{n}) \ .$
\end{definition} 

\begin{remark}\label{toutes nulles0}
Let $\Phi$ be a formal deformation. For all $n \geqslant 1$~, the projection \[\mathfrak{g}[\![\hbar]\!]^{\Phi} 	\twoheadrightarrow \left(\mathfrak{g}[\![\hbar]\!]/ (\hbar^{n}) \right)^{\overline{\Phi}^n}\] is a morphism of dg Lie algebras. In particular, if the Kaledin class $K_{\Phi}$ is zero, then all the truncated Kaledin classes $K^n_{\Phi}$ are zero. Similarly, if the $n^{\text{th}}$-truncated Kaledin class $K^n_{\Phi}$ is zero then, for all $m \leqslant n$~, the class  $K^m_{\Phi}$ is zero.
\end{remark}

\begin{lemma}\label{technique12} 
Every $\lambda \in \mathfrak{g}_0 \otimes (\hbar)$ induces an isomorphism of dg Lie algebras,  \[e^{\mathrm{ad}_{\lambda}} : \left(\mathfrak{g}[\![\hbar]\!] /(\hbar^{n}) \right)^{\overline{\Phi}^n} \longrightarrow \ \left(\mathfrak{g}[\![\hbar]\!] /(\hbar^{n}) \right)^{\overline{\lambda \cdot \Phi}^n} \ . \] The classes $K_{\lambda \cdot \Phi}^n$ and $e^{\mathrm{ad}_{\lambda}}\left(K_{\Phi}^n\right)$ are equal in \[H_{-1}\left(\left(\mathfrak{g}[\![\hbar]\!]/(\hbar^{n})\right)^{\overline{\lambda \cdot \Phi}^n}\right) \ ,\] where we still denote by $e^{\mathrm{ad}_{\lambda}}$ the induced isomorphism on homology. 
\end{lemma}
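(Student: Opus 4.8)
The plan is to deduce this lemma from Lemma \ref{technique1} by reduction modulo $\hbar^n$, exploiting functoriality of the whole construction. The key observation is that the canonical projection
\[ p \colon \mathfrak{g}[\![\hbar]\!] \longrightarrow \mathfrak{g}[\![\hbar]\!]/(\hbar^n) \]
is a surjective, $\hbar$-linear morphism of complete dg Lie algebras, so it intertwines all the structures appearing in Lemma \ref{technique1}. Concretely, $p$ sends $\Phi$ to $\overline{\Phi}^n$ and $\lambda$ to $\overline{\lambda}^n$, hence carries the twisted differential $d^{\Phi}$ to $d^{\overline{\Phi}^n}$ and $d^{\lambda \cdot \Phi}$ to $d^{\overline{\lambda \cdot \Phi}^n}$; in particular it descends to morphisms of twisted dg Lie algebras $\mathfrak{g}[\![\hbar]\!]^{\Phi} \to (\mathfrak{g}[\![\hbar]\!]/(\hbar^n))^{\overline{\Phi}^n}$, and similarly for $\lambda \cdot \Phi$.

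For the isomorphism statement, I would first note that since $\lambda \in \mathfrak{g}_0 \otimes (\hbar)$ has no constant term, the operator $\mathrm{ad}_{\overline{\lambda}^n}$ raises the $\hbar$-valuation by at least one, so $\mathrm{ad}_{\overline{\lambda}^n}^{\,n} = 0$ on $\mathfrak{g}[\![\hbar]\!]/(\hbar^n)$ and the exponential $e^{\mathrm{ad}_{\overline{\lambda}^n}}$ is a well-defined finite sum. The computation of Lemma \ref{technique0}, reproduced in Lemma \ref{technique1}, then applies verbatim modulo $\hbar^n$: it uses only the derivation property of the differential together with the induction formula (\ref{the induction}), both of which are preserved by $p$. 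This yields that $e^{\mathrm{ad}_{\overline{\lambda}^n}}$ is an isomorphism of dg Lie algebras from $(\mathfrak{g}[\![\hbar]\!]/(\hbar^n))^{\overline{\Phi}^n}$ to $(\mathfrak{g}[\![\hbar]\!]/(\hbar^n))^{\overline{\lambda \cdot \Phi}^n}$, with inverse $e^{\mathrm{ad}_{-\overline{\lambda}^n}}$.

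For the equality of classes, the crucial point is that for any formal deformation $\Psi = \psi_0 + \psi_1 \hbar + \cdots$ one has $p(\partial_{\hbar} \Psi) = \psi_1 + 2 \psi_2 \hbar + \cdots + n \psi_n \hbar^{n-1}$, which is exactly the chosen representative of $K^n_{\Psi}$. Thus $K^n_{\lambda \cdot \Phi}$ is represented by $p(\partial_{\hbar}(\lambda \cdot \Phi))$ and, because $p$ intertwines $e^{\mathrm{ad}_{\lambda}}$ with $e^{\mathrm{ad}_{\overline{\lambda}^n}}$, the class $e^{\mathrm{ad}_{\overline{\lambda}^n}}(K^n_{\Phi})$ is represented by $p(e^{\mathrm{ad}_{\lambda}}(\partial_{\hbar} \Phi))$. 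I would then apply $p$ to the boundary identity established at the end of the proof of Lemma \ref{technique1},
\[ \partial_{\hbar} (\lambda \cdot \Phi) - e^{\mathrm{ad}_{\lambda}}(\partial_{\hbar} \Phi) = d^{\lambda \cdot \Phi}\left( -\frac{e^{\mathrm{ad}_{\lambda}} - \mathrm{id}}{\mathrm{ad}_{\lambda}} (\partial_{\hbar} \lambda) \right) . \]
Since $p$ carries $d^{\lambda \cdot \Phi}$ to $d^{\overline{\lambda \cdot \Phi}^n}$, the right-hand side becomes a $d^{\overline{\lambda \cdot \Phi}^n}$-boundary, so the two representatives are homologous and the classes agree in $H_{-1}\!\left((\mathfrak{g}[\![\hbar]\!]/(\hbar^n))^{\overline{\lambda \cdot \Phi}^n}\right)$.

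The only delicate point I anticipate is that $\partial_{\hbar}$ itself does \emph{not} descend to an operator on the truncated algebra $\mathfrak{g}[\![\hbar]\!]/(\hbar^n)$, because $\partial_{\hbar}(\hbar^n y)$ is nonzero modulo $\hbar^n$. The way around this is precisely the observation above: the truncated Kaledin class is defined through $p \circ \partial_{\hbar}$ applied to genuine (untruncated) deformations, so I never need $\partial_{\hbar}$ to be well-defined on the quotient — only the functoriality of the construction under the $\hbar$-linear projection $p$, which is what makes the reduction clean and lets all the work done in Lemma \ref{technique1} be reused without recomputation.
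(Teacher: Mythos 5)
Your proof is correct. It differs in route from the paper's, which simply reruns the computation of Lemma \ref{technique1} \emph{mutatis mutandis} over the truncated ring $R[\![\hbar]\!]/(\hbar^n)$ — i.e. the cited ``similar proof'' means repeating the derivation of the boundary identity with truncated coefficients. You instead deduce the truncated statement from the untruncated one by pushing everything forward along the $\hbar$-linear projection $p \colon \mathfrak{g}[\![\hbar]\!] \to \mathfrak{g}[\![\hbar]\!]/(\hbar^n)$: since $p$ is a surjective morphism of dg Lie algebras sending $\Phi \mapsto \overline{\Phi}^n$ and $\lambda \mapsto \overline{\lambda}^n$, it intertwines twisted differentials and exponentials, carries the representative $\partial_{\hbar}\Psi$ of $K_{\Psi}$ to the defining representative of $K^n_{\Psi}$, and transforms the boundary identity
\[ \partial_{\hbar} (\lambda \cdot \Phi) - e^{\mathrm{ad}_{\lambda}}(\partial_{\hbar} \Phi) = d^{\lambda \cdot \Phi}\left( -\tfrac{e^{\mathrm{ad}_{\lambda}} - \mathrm{id}}{\mathrm{ad}_{\lambda}} (\partial_{\hbar} \lambda) \right) \]
into the required boundary relation downstairs. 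What this buys is the avoidance of any recomputation, plus a clean resolution of the one genuinely delicate point — that $\partial_{\hbar}$ does not descend to the quotient — by observing that the truncated class is by definition $[\,p \circ \partial_{\hbar}\,]$ applied to honest deformations, so only functoriality of $p$ is needed. What the paper's route buys is self-containedness over the truncated ring (it never invokes the untruncated lemma, which matters if one wants to state the truncated theory independently); your argument instead makes the truncated lemma a formal corollary of the untruncated one. Both are complete proofs, and your nilpotency remark ($\mathrm{ad}_{\overline{\lambda}^n}^{\,n} = 0$, so the exponentials are finite sums on the quotient) correctly justifies that the induced map is a well-defined dg Lie isomorphism with inverse $e^{\mathrm{ad}_{-\overline{\lambda}^n}}$.
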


\begin{proof} 
The proof is similar to the one of Lemma \ref{technique1}.
\end{proof}

\begin{theorem}\label{truncated1}
Let $\Phi \in \mathrm{Def}_{\varphi_0}(R[\![\hbar]\!])$ be a formal deformation. For all $n \geqslant 1$, the following propositions are equivalent.
\begin{enumerate}
	\item The $n^{\text{th}}$-truncated Kaledin class $K_{\Phi}^n$ is zero.
	\item There exists $\lambda \in \mathfrak{g}_0 \otimes (\hbar)$ such that $\lambda \cdot \Phi \equiv \varphi_0  \pmod{\hbar^{n+1}} \ .$
\end{enumerate}
\end{theorem}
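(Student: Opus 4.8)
The statement is the truncated counterpart of Theorem \ref{Kaledin1}, and the plan is to treat the two implications separately: $(2)\Rightarrow(1)$ will follow formally from the gauge-invariance of the truncated class, whereas $(1)\Rightarrow(2)$ will be proved by an order-by-order induction on $n$ that successively kills the higher-order terms of $\Phi$.

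\textbf{The implication $(2)\Rightarrow(1)$.} Suppose there is a gauge $\lambda\in\mathfrak{g}_0\otimes(\hbar)$ with $\Psi\coloneqq\lambda\cdot\Phi\equiv\varphi_0\pmod{\hbar^{n+1}}$, so that $\psi_1=\cdots=\psi_n=0$. The cycle $\psi_1+2\psi_2\hbar+\cdots+n\psi_n\hbar^{n-1}$ representing $K^n_\Psi$ then vanishes identically, whence $K^n_\Psi=0$. By Lemma \ref{technique12} the isomorphism $e^{\mathrm{ad}_\lambda}$ carries $K^n_\Phi$ to $K^n_{\lambda\cdot\Phi}=K^n_\Psi=0$; being invertible, it forces $K^n_\Phi=0$.

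\textbf{The implication $(1)\Rightarrow(2)$.} I would induct on $n$, based at $n=0$ where one takes $\lambda=0$, since $\Phi\equiv\varphi_0\pmod{\hbar}$ by definition of a formal deformation. Assume $K^n_\Phi=0$. By Remark \ref{toutes nulles0} one also has $K^{n-1}_\Phi=0$, so the induction hypothesis produces $\lambda'\in\mathfrak{g}_0\otimes(\hbar)$ with $\Psi\coloneqq\lambda'\cdot\Phi\equiv\varphi_0\pmod{\hbar^n}$; write $\Psi=\varphi_0+\psi_n\hbar^n+O(\hbar^{n+1})$. By Lemma \ref{technique12}, $K^n_\Psi=e^{\mathrm{ad}_{\lambda'}}(K^n_\Phi)=0$. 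Since $\overline{\Psi}^n=\varphi_0$, the twisted differential on $(\mathfrak{g}[\![\hbar]\!]/(\hbar^n))^{\overline{\Psi}^n}$ is the $\hbar$-linear extension of $d^{\varphi_0}$, and the vanishing representing cycle of $K^n_\Psi$ is $n\psi_n\hbar^{n-1}$. Comparing $\hbar^{n-1}$-components, this yields $\rho\in\mathfrak{g}_0$ with $d^{\varphi_0}(\rho)=n\psi_n$; as $R$ is a $\mathbb{Q}$-algebra we may set $\rho_n\coloneqq\rho/n$, so that $\psi_n=d^{\varphi_0}(\rho_n)$.

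Finally I would apply the further gauge $\mu\coloneqq\rho_n\hbar^n$. Since $\mu=O(\hbar^n)$, expanding the gauge action and discarding the $O(\hbar^{2n})$ contributions gives, modulo $\hbar^{n+1}$,
\[\mu\cdot\Psi\equiv\Psi+\big([\rho_n,\varphi_0]-d\rho_n\big)\hbar^n=\varphi_0+\big(\psi_n-d^{\varphi_0}(\rho_n)\big)\hbar^n\equiv\varphi_0\pmod{\hbar^{n+1}}\ ,\]
using $[\rho_n,\varphi_0]-d\rho_n=-d^{\varphi_0}(\rho_n)$. Composing the two gauges by the Baker--Campbell--Hausdorff formula, $\lambda\coloneqq\mathrm{BCH}(\mu,\lambda')\in\mathfrak{g}_0\otimes(\hbar)$ satisfies $\lambda\cdot\Phi=\mu\cdot\Psi\equiv\varphi_0\pmod{\hbar^{n+1}}$, as required. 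The bulk of the work, and the only genuinely delicate point, is this inductive step: extracting from the vanishing of $K^n_\Psi$ a correcting gauge concentrated in weight $\hbar^n$, which relies on identifying the representing cycle as $n\psi_n\hbar^{n-1}$ and on dividing by $n$. This division is where the $\mathbb{Q}$-algebra hypothesis of the present section enters, and it foreshadows the ``$n!$ invertible'' assumption needed in the general-ground-ring version, Theorem \ref{B}(1).
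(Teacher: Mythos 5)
Your proof is correct and follows essentially the same route as the paper: the implication $(2)\Rightarrow(1)$ via the gauge invariance of Lemma \ref{technique12}, and $(1)\Rightarrow(2)$ by induction, using the vanishing of $K^n_\Psi$ to extract a primitive of $n\psi_n$, dividing by $n$ (the $\mathbb{Q}$-algebra hypothesis), correcting with a gauge concentrated in $\hbar^n$, and composing via $\mathrm{BCH}$. The only cosmetic differences are that you base the induction at the vacuous $n=0$ case instead of the paper's explicit $n=1$ case, and that you spell out the expansion $\mu\cdot\Psi\equiv\Psi+([\rho_n,\varphi_0]-d\rho_n)\hbar^n\pmod{\hbar^{n+1}}$ which the paper leaves implicit.
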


\begin{proof} If Point (2) holds, Lemma \ref{technique12} implies that \[K_{\Phi}^n = e^{\mathrm{ad}_{- \lambda}} \left(K_{\lambda \cdot \Phi}^n\right) = e^{\mathrm{ad}_{- \lambda}} \left(K_{\varphi_0}^n\right) = 0 \ . \] Let us prove the converse result by induction on $n$. If the first Kaledin class \[K^1_{\Phi} = \left[ \varphi_1 \right] \in H_{-1}\left(\mathfrak{g}^{\varphi_0}\right) \] is zero, there exists $\phi_0 \in \mathfrak{g}_0$ such that $d^{\varphi_0} (\phi_0)=\varphi_1 \ .$ The element $\lambda \coloneqq \phi_0 h$ satisfies
		\[
		\lambda \cdot \Phi  \equiv \varphi_0 + \varphi_1 h - d^{\varphi_0} (\phi_0) h   \equiv \varphi_0   \pmod{\hbar^{2}} \ .\] Suppose that $n > 1$ and that the result holds for $n-1$. If the class $K_{\Phi}^n$ is zero, so does $K_{\Phi}^{n-1}$. By the induction hypothesis, there exists $\nu \in  \mathfrak{g}_0 \otimes (\hbar)$ such that \[\nu \cdot \Phi \equiv \varphi_0  \pmod{\hbar^{n}} \ .\] Considering $\Psi \coloneqq \nu \cdot \Phi \ ,$ Lemma \ref{technique12} implies that $e^{\mathrm{ad}_{\nu}}(K_{\Phi}^n) = K_{\Psi}^n = 0$~. By construction \[\Psi \equiv \varphi_0 + \psi_n \hbar^n \pmod{\hbar^{n+1}} \quad \mbox{and} \quad K_{\Psi}^{n} = \left[n \psi_n \hbar^{n-1}\right] = 0 \ .\]  Thus, there exists $\phi \coloneqq \phi_0 + \phi_1 \hbar + \cdots + \phi_{n-1}\hbar^{n-1} \in \mathfrak{g}[\![\hbar]\!]$ of degree zero such that \[d^{\varphi_0}(\phi) \equiv n \psi_n \hbar^{n-1} \pmod{\hbar^{n}} \ .\] Looking at the coefficient of $\hbar^{n-1}$ on both sides, the following equality holds \[n \psi_n = d(\phi_{n-1}) + [\varphi_0, \phi_{n-1}] \ . \] The element $\upsilon \coloneqq \frac{1}{n}\phi_{n-1} \hbar^{n} \in \mathfrak{g}_0 \otimes (\hbar)$ satisfies \[\upsilon \cdot \Psi  \equiv \varphi_0 \pmod{\hbar^{n+1}} \ .\] Finally, the gauge $\lambda \coloneqq \mathrm{BCH}(\upsilon,\nu)$ is such that $\lambda \cdot \Phi = \upsilon \cdot (\nu \cdot \Phi)  \equiv \varphi_0 \pmod{\hbar^{n +1}} \ .$
\end{proof}

\begin{proof}[Proof of Theorem \ref{Kaledin1}] If $\Phi$ is gauge equivalent to $\varphi_0$, Lemma \ref{technique1}  implies that \[K_{\Phi} = K_{\varphi_0} = 0 \ . \] Conversely, suppose that $K_{\Phi} = 0$~. If $\Phi = \varphi_0$~, the result is immediate. Otherwise, let $n_1 \geqslant1$ be the smallest integer such that $\varphi_{n_1}$ is not equal to zero. Since $K_{\Phi}$ is zero, so does the class $K_{\Phi}^{n_1}$. By Theorem \ref{truncated1}, there exists  $\lambda^{1}  \in \mathfrak{g}_0 \otimes (\hbar)$ such that \[\lambda^{1} \cdot \Phi \equiv \varphi_0  \pmod{\hbar^{n_1 +1}} \ .\] The proof of Theorem \ref{truncated1} actually shows that one can find a gauge of the form  $ \lambda^{1} =  x_{n_1} \hbar^{n_1}$~. Let $\Phi^2 \coloneqq \lambda^{1} \cdot \Phi$ and let $n_2$ be the smallest integer ${n_2} > {n_1} $ such that $(\Phi^2)_{n_2}$ is not zero. By Lemma \ref{technique12}, we have \[K_{\Phi^2}^{n_2} = e^{\mathrm{ad}_{\lambda^{1}}}(K^{n_2}_{\Phi}) = 0 \ .\] Again, there exists a gauge of the form $\lambda^{2} = x_{n_2} \hbar^{n_2}$ such that \[\lambda^{2} \cdot  \Phi^2 \equiv \varphi_0 \pmod{\hbar^{n_2+1} }\ .\] By repeating the procedure, we obtain a increasing sequence \[n_1 < n_2 < n_3 < \cdots\] of integers and a sequence of gauges of the form $\lambda^{i} = x_{n_i} \hbar^{n_i}$~, for all $i$~, such that \[\lambda^{i} \cdot  \Phi^{i} \equiv \varphi_0 \pmod{\hbar^{n_i+1}}\] where $\Phi^i \coloneqq \lambda^{{i-1}} \cdot \Phi^{i-1}$ for all $n \geqslant 2$. It follows from the construction that \[\lambda = \mathrm{BCH}(\cdots \mathrm{BCH}(\lambda^{3}, \mathrm{BCH}(\lambda^{2}, \lambda^{1})) \cdots ) \ , \] is well defined, since each gauge $\lambda^{i}$ only has components in $\hbar^{n_i}$~, and is a desired gauge.
\end{proof}

\begin{proposition}\label{toutes nulles}
Let $\Phi \in \mathrm{Def}_{\varphi_0}(R[\![\hbar]\!])$ be a formal deformation. The Kaledin class $K_{\Phi}$ is zero if and only if, for all $n \geqslant 1$, the truncation $K^n_{\Phi}$ is zero. 
\end{proposition}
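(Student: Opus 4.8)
The plan is to treat the two implications separately, the forward one being immediate from what precedes and the converse being a direct reinterpretation of the proof of Theorem \ref{Kaledin1}. For the ``only if'' direction I would simply invoke Remark \ref{toutes nulles0}: for each $n \geqslant 1$ the canonical projection $\mathfrak{g}[\![\hbar]\!]^{\Phi} \twoheadrightarrow \left(\mathfrak{g}[\![\hbar]\!]/(\hbar^{n})\right)^{\overline{\Phi}^n}$ is a morphism of dg Lie algebras, hence induces a map in homology $H_{-1}\left(\mathfrak{g}[\![\hbar]\!]^{\Phi}\right) \to H_{-1}\left(\left(\mathfrak{g}[\![\hbar]\!]/(\hbar^{n})\right)^{\overline{\Phi}^n}\right)$ carrying $K_{\Phi} = [\partial_{\hbar}\Phi]$ to $K_{\Phi}^n$, since the truncation of $\partial_{\hbar}\Phi = \varphi_1 + 2\varphi_2\hbar + \cdots$ modulo $\hbar^n$ is exactly the representative of $K_{\Phi}^n$. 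Therefore $K_{\Phi}=0$ forces every $K_{\Phi}^n$ to vanish.

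For the converse, the key observation is that the nontrivial implication of Theorem \ref{Kaledin1} never uses the hypothesis $K_{\Phi}=0$ as such: it only ever invokes the vanishing of the truncated classes $K_{\Phi}^n$, which in that proof was deduced from $K_{\Phi}=0$ via Remark \ref{toutes nulles0}. Concretely, I would re-run that argument verbatim under the present hypothesis that all the $K_{\Phi}^n$ vanish. Let $n_1$ be the smallest index with $\varphi_{n_1}\neq 0$; since $K_{\Phi}^{n_1}=0$, Theorem \ref{truncated1} yields a gauge of the form $\lambda^{1}=x_{n_1}\hbar^{n_1}$ with $\lambda^{1}\cdot\Phi \equiv \varphi_0 \pmod{\hbar^{n_1+1}}$. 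Setting $\Phi^2 \coloneqq \lambda^{1}\cdot\Phi$ and letting $n_2>n_1$ be its first nonvanishing index above $n_1$, Lemma \ref{technique12} gives $K_{\Phi^2}^{n_2} = e^{\mathrm{ad}_{\lambda^{1}}}\left(K_{\Phi}^{n_2}\right) = 0$, so Theorem \ref{truncated1} produces $\lambda^{2}=x_{n_2}\hbar^{n_2}$ killing $\Phi^2$ modulo $\hbar^{n_2+1}$. Iterating yields a strictly increasing sequence $n_1 < n_2 < \cdots$ together with gauges $\lambda^{i}=x_{n_i}\hbar^{n_i}$; by $\hbar$-adic completeness the infinite composite $\lambda = \mathrm{BCH}(\cdots,\mathrm{BCH}(\lambda^{2},\lambda^{1}))$ is well defined and satisfies $\lambda\cdot\Phi = \varphi_0$. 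Theorem \ref{Kaledin1} then returns $K_{\Phi}=0$.

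I expect no genuine obstacle here: the entire content is the remark that the argument of Theorem \ref{Kaledin1} is local in the $\hbar$-adic filtration, in the sense that each step consumes only a single truncated class of $\Phi$ (passed through $e^{\mathrm{ad}_{\lambda^{i}}}$ via Lemma \ref{technique12}), so replacing the global hypothesis $K_{\Phi}=0$ by the full collection of truncated hypotheses changes nothing in the construction. The only point deserving a little care is the bookkeeping guaranteeing that each successive gauge affects only coefficients in degree $\geqslant n_i$, so that the $n_i$ strictly increase and the BCH composite converges; but this is precisely what the chosen form $\lambda^{i}=x_{n_i}\hbar^{n_i}$ ensures, exactly as in the proof of Theorem \ref{Kaledin1}.
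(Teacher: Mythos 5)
Your proposal is correct and follows the paper's own argument exactly: the forward direction via Remark \ref{toutes nulles0}, and the converse by re-running the iterative gauge construction of Theorem \ref{Kaledin1} (using Theorem \ref{truncated1} and Lemma \ref{technique12} to produce gauges $\lambda^{i}=x_{n_i}\hbar^{n_i}$, whose infinite BCH composite satisfies $\lambda\cdot\Phi=\varphi_0$), concluding with the easy direction of Theorem \ref{Kaledin1}. Your added emphasis that each step only consumes a single truncated class, so the global hypothesis $K_{\Phi}=0$ is never needed, is precisely the observation implicit in the paper's proof.
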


\begin{proof}  If Kaledin class $K_{\Phi}$ is zero then all the truncations $K^n_{\Phi}$ are zero by Remark \ref{toutes nulles0}. Conversely, suppose that for all $n \geqslant 1$~, the truncation $K^n_{\Phi}$ is zero. If $\Phi = \varphi_0$~, the result is immediate. Otherwise, let $n_1$ be the smallest integer such that $\varphi_{n_1} \neq 0$~. As in the proof of Theorem \ref{truncated1}, we can construct a sequence of integers $n_i$ and a sequence of gauges $\lambda^{i} = x_{n_i} \hbar^{n_i}$ for all $i$~, such that \[\lambda^{i} \cdot  \Phi^{i} \equiv \varphi_0 \pmod{\hbar^{n_i+1}}\] where $\Phi^i \coloneqq \lambda^{{i-1}} \cdot \Phi^{i-1}$~for $i \geqslant 2$ and $\Phi^1 \coloneqq \Phi$. The infinite composition \[\lambda = \mathrm{BCH}(\cdots \mathrm{BCH}(\lambda^{3}, \mathrm{BCH}(\lambda^{2}, \lambda^{1})) \cdots ) \ , \] is then such that $\lambda \cdot \Phi = \varphi_0 \in \mathfrak{g}[\![\hbar]\!]$ and $K_{\Phi} = 0$ by Theorem \ref{Kaledin1}.  
\end{proof}

\begin{theorem}[Descent] \label{descent1.0}
Let $S$ be a faithfully flat commutative $R$-algebra. Let $\varphi_0 \in \mathrm{MC}(\mathfrak{g})$ be a Maurer--Cartan element. A formal deformation $\Phi$ of $\varphi_0$ is gauge equivalent to $\varphi_0$ if and only if $\Phi \otimes 1$ is gauge equivalent to the trivial deformation $\varphi_0 \otimes 1$ in $\mathfrak{g}[\![\hbar]\!] \otimes S $. 
\end{theorem}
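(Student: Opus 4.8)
The plan is to reduce the two-sided statement to the vanishing of Kaledin classes and then to show that this vanishing is detected by faithfully flat base change. The forward implication is purely formal: if $\lambda \in \mathfrak{g}_0 \otimes (\hbar)$ is a gauge with $\lambda \cdot \Phi = \varphi_0$, then, since the gauge-action formula is assembled from the $R$-linear operations $d$ and $\mathrm{ad}$, applying $- \otimes_R S$ yields $(\lambda \otimes 1) \cdot (\Phi \otimes 1) = \varphi_0 \otimes 1$ in $\mathfrak{g}[\![\hbar]\!] \otimes S$. So all the content lies in the converse.

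For the converse, I would not work with the full class $K_{\Phi}$ directly but route everything through the truncations, using Theorem \ref{Kaledin1} together with Proposition \ref{toutes nulles}: the element $\Phi$ is gauge equivalent to $\varphi_0$ if and only if $K_{\Phi} = 0$, if and only if $K_{\Phi}^n = 0$ for every $n \geqslant 1$, and the same equivalences hold for $\Phi \otimes 1$ over $S$. The reason to pass to the truncated classes is that $\mathfrak{g}[\![\hbar]\!]/(\hbar^n) = \mathfrak{g} \otimes_R R[\hbar]/(\hbar^n)$ is, in each homological degree, a \emph{finite} direct sum of copies of $\mathfrak{g}$. Hence the truncated twisted complex $C^{(n)} \coloneqq \left(\mathfrak{g}[\![\hbar]\!]/(\hbar^{n})\right)^{\overline{\Phi}^n}$ base-changes cleanly, and there is a natural identification of $C^{(n)} \otimes_R S$ with the truncated twisted complex over $S$ computing $K_{\Phi \otimes 1}^n$, its differential $d^{\overline{\Phi}^n}$ being sent to $d^{\overline{\Phi \otimes 1}^n}$.

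Next I would establish the base-change compatibility of the classes. As $S$ is flat over $R$, the functor $- \otimes_R S$ is exact and therefore commutes with homology, giving a natural isomorphism $H_{-1}(C^{(n)}) \otimes_R S \cong H_{-1}(C^{(n)} \otimes_R S)$. Under this isomorphism the defining cycle $\varphi_1 + 2\varphi_2 \hbar + \cdots + n\varphi_n \hbar^{n-1}$ of $K_{\Phi}^n$ is sent to the corresponding cycle for $\Phi \otimes 1$, so that $K_{\Phi}^n \otimes 1$ is identified with $K_{\Phi \otimes 1}^n$. Faithful flatness then closes the argument: for any $R$-module $M$ the canonical map $M \to M \otimes_R S$, $m \mapsto m \otimes 1$, is injective, because its kernel $K$ sits in an exact sequence $0 \to K \to M \to M \otimes_R S$ which, after the exact functor $- \otimes_R S$, shows $K \otimes_R S = \ker\!\left(M \otimes_R S \to M \otimes_R S \otimes_R S\right) = 0$ (the latter map is split by the multiplication $S \otimes_R S \to S$), whence $K = 0$ by faithful flatness. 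Applying this to $M = H_{-1}(C^{(n)})$, the hypothesis $K_{\Phi \otimes 1}^n = 0$ forces $K_{\Phi}^n \otimes 1 = 0$, hence $K_{\Phi}^n = 0$, for every $n \geqslant 1$. By Proposition \ref{toutes nulles} this gives $K_{\Phi} = 0$, and Theorem \ref{Kaledin1} concludes that $\Phi$ is gauge equivalent to $\varphi_0$.

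The step I expect to be the main obstacle is precisely the interaction between the $\hbar$-adic completion and the non-completed tensor product $\otimes_R S$: the full class $K_{\Phi}$ lives in the homology of $\mathfrak{g}[\![\hbar]\!]^{\Phi}$, and $- \otimes_R S$ does not in general commute with the infinite product hidden in $[\![\hbar]\!]$, nor is it obvious it commutes with its homology. Funnelling the whole comparison through the truncations $K_{\Phi}^n$, which involve only finite direct sums, sidesteps this difficulty entirely, and I expect it to be the decisive organizational point of the proof.
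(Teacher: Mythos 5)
Your proposal is correct and follows essentially the same route as the paper's proof: both establish the forward implication by tensoring the gauge, and prove the converse by reducing to the truncated classes $K^n_{\Phi}$, using flat base change on the truncated twisted complexes to identify $K^n_{\Phi}\otimes 1$ with $K^n_{\Phi\otimes 1}$, descending the vanishing by faithful flatness, and concluding via Proposition \ref{toutes nulles} and Theorem \ref{Kaledin1}. Your explicit proof that $M \to M \otimes_R S$ is injective, and your remark on why the uncompleted class $K_{\Phi}$ must be avoided, merely spell out steps the paper leaves implicit.
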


\begin{proof} If there exists $\lambda \in \mathfrak{g}_0 \otimes (\hbar)$ satisfying $\lambda \cdot \Phi = \varphi_0$~, then \[(\lambda \otimes 1) \cdot (\Phi \otimes 1) = \varphi_0 \otimes 1 \ .\] Conversely, suppose that $\Phi \otimes 1$ is gauge equivalent to the trivial deformation $\varphi_0 \otimes 1$~. For all $n \geqslant 1$~, Theorem \ref{truncated1} implies that the Kaledin classes $K^n_{\Phi \otimes 1}$ are zero. The isomorphism \[(\mathfrak{g} \otimes S)[\![\hbar]\!]/(\hbar^{n}) \cong  \mathfrak{g}[\![\hbar]\!]/(\hbar^{n}) \otimes_{R[\![\hbar]\!]/(\hbar^{n})} S[\![\hbar]\!]/(\hbar^{n})\] leads to an isomorphism \[H_{-1}\left(\left((\mathfrak{g} \otimes S) [\![\hbar]\!]/(\hbar^{n})\right)^{\overline{\Phi}^n \otimes 1} \right) \cong  H_{-1}\left(\left(\mathfrak{g}[\![\hbar]\!] /(\hbar^{n}) \right) ^{\overline{\Phi}^n}\right) \otimes_{R[\![\hbar]\!]} S[\![\hbar]\!] / (\hbar^{n})  \ ,  \] 
since $R[\![\hbar]\!]/(\hbar^{n}) \to S[\![\hbar]\!]/(\hbar^{n})$ is flat. By faithful flatness, the class $K^n_{\Phi \otimes 1}$ is zero on the left hand-side if and only if $K_{\Phi}^n \otimes 1$ is zero on the right hand-side. Thus, the Kaledin class \[K_{\Phi}^n \in H_{-1}\left(\left(\mathfrak{g}[\![\hbar]\!] /(\hbar^{n})\right)^{\overline{\Phi}^n}\right) \] is zero for all $n$~. Proposition \ref{toutes nulles} and Theorem \ref{truncated1} allow us to conclude.
\end{proof}

\noindent The following result is well-known, see e.g. \cite[Theorem~1.66]{DSV22}, we give it a new proof using the Kaledin classes. 

\begin{theorem}[Rigidity]\label{intrinsec0} Let $\mathfrak{g}$ be a dg Lie algebra and let $\varphi_0 \in \mathrm{MC}(\mathfrak{g})$~. If $H_{-1}(\mathfrak{g}^{\varphi_0})=0$~, then $\varphi_0$ is rigid, i.e. every formal deformation $\Phi \in \mathrm{Def}_{\varphi_0}(R[\![\hbar]\!])$ is gauge equivalent to $\varphi_0$~. 
\end{theorem}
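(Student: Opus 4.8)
The plan is to show that the hypothesis forces every truncated Kaledin class to vanish for the trivial reason that the homology group it inhabits is already zero, after which the Kaledin-class machinery concludes automatically. First I would establish, by induction on $n \geqslant 1$, that $H_{-1}\bigl((\mathfrak{g}[\![\hbar]\!]/(\hbar^n))^{\overline{\Phi}^n}\bigr) = 0$. The base case $n = 1$ is precisely the hypothesis, since $(\mathfrak{g}[\![\hbar]\!]/(\hbar))^{\overline{\Phi}^1} = \mathfrak{g}^{\varphi_0}$ and $H_{-1}(\mathfrak{g}^{\varphi_0}) = 0$ by assumption.

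For the inductive step I would exploit the short exact sequence of chain complexes
\[ 0 \longrightarrow \mathfrak{g}^{\varphi_0} \xrightarrow{\ \cdot\,\hbar^{n-1}\ } \bigl(\mathfrak{g}[\![\hbar]\!]/(\hbar^{n})\bigr)^{\overline{\Phi}^n} \longrightarrow \bigl(\mathfrak{g}[\![\hbar]\!]/(\hbar^{n-1})\bigr)^{\overline{\Phi}^{n-1}} \longrightarrow 0 \ , \]
where the first map sends $x$ to $x\hbar^{n-1}$ and the second is reduction modulo $\hbar^{n-1}$. The key verification is that the twisted differential restricts correctly to the subcomplex: for $x \in \mathfrak{g}$ one has $d^{\overline{\Phi}^n}(x\hbar^{n-1}) = (dx)\hbar^{n-1} + \sum_{i \geqslant 0} [\varphi_i, x]\hbar^{i+n-1}$, and since $\hbar^n = 0$ only the term $i = 0$ survives, yielding $(d^{\varphi_0}x)\hbar^{n-1}$. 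Thus the left-hand term is genuinely isomorphic to $\mathfrak{g}^{\varphi_0}$ as a complex, while the quotient carries the differential $d^{\overline{\Phi}^{n-1}}$. The associated long exact sequence then sandwiches $H_{-1}\bigl((\mathfrak{g}[\![\hbar]\!]/(\hbar^{n}))^{\overline{\Phi}^n}\bigr)$ between $H_{-1}(\mathfrak{g}^{\varphi_0}) = 0$ and $H_{-1}\bigl((\mathfrak{g}[\![\hbar]\!]/(\hbar^{n-1}))^{\overline{\Phi}^{n-1}}\bigr)$, the latter vanishing by induction; hence the middle term vanishes as well.

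With this vanishing established, the truncated Kaledin class $K_{\Phi}^n$, which by definition lies in $H_{-1}\bigl((\mathfrak{g}[\![\hbar]\!]/(\hbar^{n}))^{\overline{\Phi}^n}\bigr)$, is necessarily zero for every $n \geqslant 1$. Proposition \ref{toutes nulles} then gives $K_{\Phi} = 0$, and Theorem \ref{Kaledin1} produces a gauge $\lambda \in \mathfrak{g}_0 \otimes (\hbar)$ with $\lambda \cdot \Phi = \varphi_0$, so $\varphi_0$ is rigid.

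The only genuinely delicate point is the inductive identification of the differential on the subcomplex; everything else is a formal consequence of the apparatus already in place. I expect no serious obstacle, but I would double-check degree bookkeeping (the element $x\hbar^{n-1}$ has the same degree as $x$, so the maps stay in degree $-1$). As an alternative route avoiding homology-class language, one could instead run Theorem \ref{truncated1} directly: at each order the obstruction to extending a gauge from $\hbar^n$ to $\hbar^{n+1}$ is a class in $H_{-1}(\mathfrak{g}^{\varphi_0}) = 0$, so gauges can be constructed order by order and assembled via the Baker--Campbell--Hausdorff formula exactly as in the proof of Theorem \ref{Kaledin1}.
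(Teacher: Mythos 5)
Your proof is correct, and its inductive step is genuinely different from the paper's. The paper also inducts on $n$ to kill the truncated classes $K_\Phi^n$, but in its inductive step it first invokes Theorem \ref{truncated1} to produce a gauge $\nu$ with $\nu\cdot\Phi \equiv \varphi_0 \pmod{\hbar^n}$; for $\Psi := \nu\cdot\Phi$ the twisted differential on the truncation is then literally $d^{\varphi_0}$ extended $\hbar$-linearly, so $H_{-1}\bigl((\mathfrak{g}[\![\hbar]\!]/(\hbar^n))^{\overline{\Psi}^n}\bigr) \cong H_{-1}(\mathfrak{g}^{\varphi_0})[\![\hbar]\!]/(\hbar^n) = 0$, whence $K^n_\Psi = 0$, and Lemma \ref{technique12} transports this back to $K^n_\Phi = 0$. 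You instead work with the original $\Phi$ throughout and prove the stronger statement that the ambient group $H_{-1}\bigl((\mathfrak{g}[\![\hbar]\!]/(\hbar^n))^{\overline{\Phi}^n}\bigr)$ itself vanishes, via the short exact sequence of complexes $0 \to \mathfrak{g}^{\varphi_0} \to (\mathfrak{g}[\![\hbar]\!]/(\hbar^n))^{\overline{\Phi}^n} \to (\mathfrak{g}[\![\hbar]\!]/(\hbar^{n-1}))^{\overline{\Phi}^{n-1}} \to 0$ and its long exact sequence; your verification that the twisting term $[\overline{\Phi}^n,-]$ collapses to $[\varphi_0,-]$ on the sub\-complex $\hbar^{n-1}\mathfrak{g}$ is exactly the point that makes this work, and the projection is a chain map as already noted in Remark \ref{toutes nulles0}. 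What each approach buys: yours is more self-contained (no gauge transformations, no appeal to Theorem \ref{truncated1} or Lemma \ref{technique12} inside the induction) and yields vanishing of the whole homology group rather than just of the class; the paper's reuses machinery it has already built and keeps all computations in the untwisted complex. Both conclude identically via Proposition \ref{toutes nulles} and Theorem \ref{Kaledin1}. The alternative you sketch in your last paragraph (order-by-order gauge construction through Theorem \ref{truncated1}) is essentially the paper's argument.
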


\begin{proof}
We prove by induction on $n$ that all the truncated classes $K_{\Phi}^n$ are zero. The class\[K^1_{\Phi} \in H_{-1}\left(\mathfrak{g}^{\varphi_0}\right) \] is necessarily zero. Suppose that $K^{n-1}_{\Phi} = 0$ for $n > 1$. There exists $\nu \in \mathfrak{g}_0 \otimes (\hbar)$ such that \[\nu \cdot \Phi \equiv \varphi_0  \pmod{\hbar^{n}} \ ,\] by Theorem \ref{truncated1}. The element $\Psi \coloneqq \nu \cdot \Phi$ satisfies $d^{\overline{\Psi}^n} = d^{\varphi_0}$~. By assumption, the group \[H_{-1}\left(\left(\mathfrak{g}[\![\hbar]\!]/(\hbar^{n})\right)^{\overline{\Psi}^n}\right) \cong H_{-1}\left(\mathfrak{g}^{\varphi_0}\right)[\![\hbar]\!]/(\hbar^{n})  \] is trivial. Thus the class $K^n_{\Psi}$ is zero and by Lemma \ref{technique12} \[K^n_{\Phi} = e^{\mathrm{ad}_{-\nu}} (K^n_{\Psi}) = 0 \ ~.\] Using Proposition \ref{toutes nulles} and Theorem \ref{Kaledin1}, this implies that $\Phi$ is equivalent to $\varphi_0$~. 	\end{proof}  

\subsection{The case of weight-graded dg Lie algebras}\label{1.3}

This section detects whether a given Maurer--Cartan element is gauge equivalent to its first component in a weight-graded dg Lie algebra. In this setting, the Kaledin classes of Section \ref{1.2} and \cite[Section~7]{Lun07} do not directly apply, see Remark \ref{refine}. Hence, we refine the previous constructions to deal with any $\delta$-weight-graded dg Lie algebra, for $\delta \geqslant 1$ an integer. 

\begin{definition}[$\delta$-weight-graded dg Lie algebra]\label{delta wg}
	A complete dg Lie algebra $\left(\mathfrak{g}, [-,-] , d  \right)$ is a \emph{$\delta$-weight-graded dg Lie algebra} if it has an additional weight grading such that \[\mathfrak{g} \cong \prod_{k \geqslant 1} \mathfrak{g}^{(k)}, \quad d \left(\mathfrak{g}^{(k)} \right) \subset \mathfrak{g}^{(k + \delta)}, \quad \left[ \mathfrak{g}^{(k)},  \mathfrak{g}^{(l)} \right] \subset  \mathfrak{g}^{(k +l)}  \ .  \]   Every element $\varphi$ in $\mathfrak{g}$ decomposes as $\varphi = \varphi^{(1)} + \varphi^{(2)} +  \varphi^{(3)} + \cdots $~, where $\varphi^{(k)} \in \mathfrak{g}^{(k)}$~, for all $k \geqslant 1$~. It is complete for the descending filtration defined by \[\mathcal{F}^n \mathfrak{g} \coloneqq  \prod_{k \geqslant n} \mathfrak{g}^{(k)} \ .  \]
\end{definition}

\begin{definition}[Prismatic decomposition] Let $\mathfrak{g}$ be a $\delta$-weight-graded dg Lie algebra. The \emph{prismatic decomposition} is the morphism of graded vector spaces \[ \mathfrak{D} : \mathfrak{g}_{-1} \oplus \mathfrak{g}_{0}   \to  \mathfrak{g}[\![\hbar]\!] \ , \] defined for all $\varphi \in \mathfrak{g}_{-1}$ and all $\lambda \in \mathfrak{g}_{0}$~, by \[\begin{split}
	& \mathfrak{D}(\varphi) \coloneqq  \varphi^{(1)} + \cdots + \varphi^{(\delta)} + \varphi^{(\delta + 1 )} \hbar + \varphi^{(\delta + 2)} \hbar^2 + \cdots \\
&	\mathfrak{D} (\lambda) \coloneqq   \lambda^{(1)} \hbar + \lambda^{(2)} \hbar^2 + \lambda^{(3)} \hbar^3 + \cdots \ .
	\end{split}
 \]
\end{definition}

\begin{lemma}\label{decomposition prismatic}
Let $\mathfrak{g}$ be a $\delta$-weight-graded dg Lie algebra, let $\varphi \in \mathrm{MC}\left(\mathcal{F}^{\delta}\mathfrak{g}\right)$ and let $\lambda \in \mathfrak{g}_0$.
\begin{enumerate}
	\item The prismatic decomposition $\mathfrak{D}(\varphi)$ is a formal deformation of $\varphi^{(\delta)}$, i.e. \[\mathfrak{D}(\varphi) = \varphi^{(\delta)} + \varphi^{(\delta + 1)} \hbar + \varphi^{(\delta + 2)} \hbar^2 + \cdots \in \mathrm{Def}_{\varphi^{(\delta)}}(R[\![\hbar]\!]) \ .\]	
	\item The prismatic decomposition of $\lambda \cdot \varphi$ is given by the action of $\mathfrak{D}(\lambda)$ on $\mathfrak{D}(\varphi)$ in $\mathfrak{g}[\![\hbar]\!]$: \[\mathfrak{D}(\lambda \cdot \varphi ) = \mathfrak{D}(\lambda) \cdot \mathfrak{D}(\varphi) \ .\] 
\end{enumerate}	
\end{lemma}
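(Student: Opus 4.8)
The plan is to handle the two points independently: the first falls out of decomposing the Maurer--Cartan equation by weight, and the second from recognising $\mathfrak{D}$ as a degree-and-weight-dependent rescaling by powers of $\hbar$ that is compatible with the Lie structure. For Point~(1), since $\varphi \in \mathrm{MC}(\mathcal{F}^\delta\mathfrak{g})$ we have $\varphi^{(k)}=0$ for $k<\delta$, so that $\mathfrak{D}(\varphi)=\sum_{k\geqslant\delta}\varphi^{(k)}\hbar^{k-\delta}$. First I would expand the Maurer--Cartan equation of $\mathfrak{D}(\varphi)$ in $\mathfrak{g}[\![\hbar]\!]$; using $d(\mathfrak{g}^{(k)})\subset\mathfrak{g}^{(k+\delta)}$ and $[\mathfrak{g}^{(k)},\mathfrak{g}^{(l)}]\subset\mathfrak{g}^{(k+l)}$, the coefficient of $\hbar^j$ is
\[ d\varphi^{(j+\delta)}+\tfrac12\sum_{k+l=j+2\delta}[\varphi^{(k)},\varphi^{(l)}] \ , \]
which is precisely the weight $(j+2\delta)$ component of the equation $d\varphi+\tfrac12[\varphi,\varphi]=0$ satisfied by $\varphi$. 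Since the weight grading forces each homogeneous component of a Maurer--Cartan equation to vanish separately, every coefficient vanishes and $\mathfrak{D}(\varphi)\in\mathrm{MC}(\mathfrak{g}[\![\hbar]\!])$. The coefficient $j=0$ reads $d\varphi^{(\delta)}+\tfrac12[\varphi^{(\delta)},\varphi^{(\delta)}]=0$, so $\varphi^{(\delta)}\in\mathrm{MC}(\mathfrak{g})$, and as $\mathfrak{D}(\varphi)$ reduces to $\varphi^{(\delta)}$ modulo $(\hbar)$ it is a formal deformation of $\varphi^{(\delta)}$.

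For Point~(2), I would first record that on a homogeneous element $x$ of degree $n$ and weight $k$ the map $\mathfrak{D}$ is the rescaling $x\mapsto x\,\hbar^{k+n\delta}$; this reproduces both defining formulas, since degree $-1$ weight $k$ goes to $\hbar^{k-\delta}$ and degree $0$ weight $k$ to $\hbar^{k}$. The integer $k+n\delta$ is invariant under the differential $d$ (which decreases $n$ by one and increases $k$ by $\delta$) and additive under the Lie bracket (which adds degrees and adds weights). Consequently $\mathfrak{D}$ intertwines both operations on the elements at hand, namely $\mathfrak{D}(d\lambda)=d\,\mathfrak{D}(\lambda)$ and $\mathfrak{D}([\lambda,x])=[\mathfrak{D}(\lambda),\mathfrak{D}(x)]$, and by iteration $\mathfrak{D}(\mathrm{ad}_\lambda^n(x))=\mathrm{ad}_{\mathfrak{D}(\lambda)}^n(\mathfrak{D}(x))$ for $x=\varphi$ and $x=d\lambda$.

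I would then apply $\mathfrak{D}$ term by term to the gauge-action formula of Proposition~\ref{gauge}:
\[ \mathfrak{D}(\lambda\cdot\varphi)=\mathfrak{D}\!\left(e^{\mathrm{ad}_\lambda}(\varphi)-\frac{e^{\mathrm{ad}_\lambda}-\mathrm{id}}{\mathrm{ad}_\lambda}(d\lambda)\right)=e^{\mathrm{ad}_{\mathfrak{D}(\lambda)}}(\mathfrak{D}(\varphi))-\frac{e^{\mathrm{ad}_{\mathfrak{D}(\lambda)}}-\mathrm{id}}{\mathrm{ad}_{\mathfrak{D}(\lambda)}}(d\,\mathfrak{D}(\lambda))=\mathfrak{D}(\lambda)\cdot\mathfrak{D}(\varphi) \ , \]
where $\mathfrak{D}(\lambda)=\sum_{k\geqslant1}\lambda^{(k)}\hbar^k\in\mathfrak{g}_0\otimes(\hbar)$ is a legitimate gauge. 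To confirm both sides are genuine formal deformations I would check that $\lambda\cdot\varphi\in\mathcal{F}^\delta\mathfrak{g}$: the term $e^{\mathrm{ad}_\lambda}(\varphi)$ has weights $\geqslant\delta$ while the correction term has weights $\geqslant\delta+1$, so $(\lambda\cdot\varphi)^{(\delta)}=\varphi^{(\delta)}$, matching the fact that the right-hand side is a deformation of $\varphi^{(\delta)}$.

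The only real obstacle is bookkeeping: one must verify that every $\hbar$-exponent $k+n\delta$ appearing remains non-negative, so that the rescaling lands in $\mathfrak{g}[\![\hbar]\!]$ and not in a localisation where $\hbar$ is inverted. This is automatic for all the elements entering the gauge action --- $\varphi$ has degree $-1$ and weight $\geqslant\delta$ (exponent $\geqslant0$), $\lambda$ has degree $0$ and weight $\geqslant1$, $d\lambda$ has degree $-1$ and weight $\geqslant\delta+1$ (exponent $\geqslant1$), and iterated brackets only increase the exponent --- so no truncation phenomenon occurs once one has restricted to $\mathcal{F}^\delta\mathfrak{g}$ in degree $-1$.
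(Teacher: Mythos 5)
Your proof is correct and follows essentially the same route as the paper: Point (1) by identifying the coefficient of $\hbar^j$ in the Maurer--Cartan equation of $\mathfrak{D}(\varphi)$ with the weight $(j+2\delta)$ component of the Maurer--Cartan equation of $\varphi$, and Point (2) by a weight-versus-$\hbar$-degree comparison against the gauge-action formula of Proposition \ref{gauge}. Your packaging of the bookkeeping --- viewing $\mathfrak{D}$ as the rescaling $x \mapsto x\,\hbar^{k+n\delta}$ on elements of degree $n$ and weight $k$, noting that this exponent is invariant under $d$ and additive under brackets, and checking that all exponents stay non-negative so no negative powers of $\hbar$ arise --- is a clean and valid way of carrying out the verification that the paper compresses into ``one can prove that the coefficients coincide.''
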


\begin{proof} 
	Since $\varphi \in \mathrm{MC}(\mathfrak{g})$ is a Maurer--Cartan element, the component $\varphi^{(\delta)}$ is also a Maurer--Cartan element. Point (1) follows from the fact that the coefficient of $\hbar^n$ in the equation \[d \left(\mathfrak{D}(\varphi) \right) + \tfrac{1}{2} \left[\mathfrak{D}(\varphi), \mathfrak{D}(\varphi)\right]\] is equal to the component of weight $n+2\delta$ of $d(\varphi) + \tfrac{1}{2} \left[\varphi, \varphi \right] $. For Point (2), one can prove that the coefficient of $\hbar^n$ in \[\mathfrak{D}(\lambda) \cdot \mathfrak{D}(\varphi) \] coincides with the component of weight $n + \delta$ of $\lambda \cdot \varphi$. 
\end{proof}

\begin{definition}[Kaledin classes]
Let $\mathfrak{g}$ be a $\delta$-weight-graded dg Lie algebra and let \[\varphi \in \mathrm{MC}\left(\mathcal{F}^{\delta}\mathfrak{g}\right)\] be a Maurer--Cartan element. 
\begin{itemize}
	\item[$\centerdot$] The \emph{Kaledin class} $K_{\varphi}$ is the Kaledin class $K_{\mathfrak{D}(\varphi) }$ of its prismatic decomposition.
	\item[$\centerdot$]
	Its \emph{$n^{\text{th}}$-truncated Kaledin class} $K_{\varphi}^n$ is the $n^{\text{th}}$-truncated Kaledin class $K_{\mathfrak{D}(\varphi) }^n$ of its prismatic decomposition.
\end{itemize}

\end{definition}

\begin{theorem}\label{A}
Let $R$ be a $\mathbb{Q}$-algebra and let $\delta \geqslant 1$ be an integer. Let $\varphi \in \mathrm{MC}\left(\mathcal{F}^{\delta}\mathfrak{g}\right)$ be a Maurer--Cartan element in a $\delta$-weight-graded dg Lie algebra $\mathfrak{g}$ over $R$~. 
	\begin{enumerate}
		\item The Kaledin class $K_{\varphi}$ is zero if and only if there exists $\lambda \in \mathfrak{g}_0$ such that $\lambda \cdot \varphi = \varphi^{(\delta)}$~.
		\item Let $\psi \in \mathrm{MC}(\mathfrak{g})$ be a Maurer--Cartan element concentrated in weight $\delta$. If \[H_{-1}\left(\mathfrak{g}^{\psi}\right) = 0 \ ,\]  then $\psi$ is \emph{rigid}, i.e. every Maurer--Cartan element $\varphi \in \mathrm{MC}\left(\mathcal{F}^{\delta}\mathfrak{g}\right)$ such that $\varphi^{(\delta)} = \psi$~, is gauge equivalent to $\psi$~. \smallskip
		\item Let $S$ be a faithfully flat commutative $R$-algebra. The element $\varphi$ is gauge equivalent to $\varphi^{(\delta)}$ in $\mathfrak{g}$ if and only if $\varphi \otimes 1$ is gauge equivalent to $\varphi^{(\delta)} \otimes 1$ in $ \mathfrak{g} \otimes S$~. 
	\end{enumerate}
\end{theorem}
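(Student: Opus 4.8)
The plan is to reduce all three statements to the corresponding results for formal deformations established in Section~\ref{1.2}, using the prismatic decomposition $\mathfrak{D}$ as a dictionary. By Lemma~\ref{decomposition prismatic}, $\mathfrak{D}$ sends the Maurer--Cartan element $\varphi \in \mathrm{MC}(\mathcal{F}^{\delta}\mathfrak{g})$ to a formal deformation $\mathfrak{D}(\varphi) \in \mathrm{Def}_{\varphi^{(\delta)}}(R[\![\hbar]\!])$ and intertwines the two gauge actions, so that $\mathfrak{D}(\lambda \cdot \varphi) = \mathfrak{D}(\lambda) \cdot \mathfrak{D}(\varphi)$ for every $\lambda \in \mathfrak{g}_0$. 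Since $\varphi$ lies in $\mathcal{F}^{\delta}\mathfrak{g}$, its lowest weight component is $\varphi^{(\delta)}$ and $\mathfrak{D}(\varphi^{(\delta)}) = \varphi^{(\delta)}$ is the trivial deformation; moreover $\mathfrak{D}$ is injective on $(\mathcal{F}^{\delta}\mathfrak{g})_{-1}$, because the constant term of $\mathfrak{D}(\varphi)$ recovers $\varphi^{(\delta)}$ and the coefficient of $\hbar^{i}$ recovers $\varphi^{(\delta + i)}$. By definition $K_{\varphi} = K_{\mathfrak{D}(\varphi)}$, so Theorem~\ref{Kaledin1} already gives that $K_{\varphi} = 0$ if and only if $\mathfrak{D}(\varphi)$ is gauge equivalent to the trivial deformation $\varphi^{(\delta)}$ through \emph{some} gauge in $\mathfrak{g}_0 \otimes (\hbar)$.

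For Point~(1), the forward implication is immediate: if $\lambda \cdot \varphi = \varphi^{(\delta)}$, then applying $\mathfrak{D}$ and Lemma~\ref{decomposition prismatic} yields $\mathfrak{D}(\lambda) \cdot \mathfrak{D}(\varphi) = \varphi^{(\delta)}$ with $\mathfrak{D}(\lambda) \in \mathfrak{g}_0 \otimes (\hbar)$, whence $K_{\varphi} = 0$ by Theorem~\ref{Kaledin1}. The converse is the heart of the matter. Theorem~\ref{Kaledin1} only trivializes $\mathfrak{D}(\varphi)$ by an \emph{arbitrary} formal gauge, whereas a gauge coming from $\mathfrak{g}$ must be \emph{prismatic}, i.e. of the form $\mathfrak{D}(\lambda) = \sum_{k \geqslant 1} \lambda^{(k)} \hbar^{k}$ with the coefficient of $\hbar^{k}$ homogeneous of weight $k$. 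I would therefore revisit the inductive trivialization of Theorem~\ref{truncated1} and check that, starting from the prismatic deformation $\mathfrak{D}(\varphi)$ (whose $\hbar^{i}$-coefficient is homogeneous of weight $\delta + i$), each gauge produced can be chosen prismatic. Concretely, at the inductive step one solves $d^{\varphi^{(\delta)}}(\phi_{n-1}) = n\,\psi_{n}$, where $\psi_{n}$ is homogeneous of weight $\delta + n$; since $d^{\varphi^{(\delta)}} = d + \mathrm{ad}_{\varphi^{(\delta)}}$ raises the weight by exactly $\delta$, projecting this equation onto weight $\delta + n$ lets one replace $\phi_{n-1}$ by its weight-$n$ component, producing a prismatic gauge $\tfrac{1}{n}\phi_{n-1}\hbar^{n}$. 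The same projection handles the base case, and because the monomial gauges $x_{n_i}\hbar^{n_i}$ assembled in the proof of Theorem~\ref{Kaledin1} remain prismatic, their $\mathrm{BCH}$-composition is $\mathfrak{D}(\lambda)$ for a well-defined $\lambda \in \mathfrak{g}_0$. Then $\mathfrak{D}(\lambda)\cdot\mathfrak{D}(\varphi) = \varphi^{(\delta)} = \mathfrak{D}(\varphi^{(\delta)})$, and since $\lambda \cdot \varphi$ again lies in $\mathcal{F}^{\delta}\mathfrak{g}$, the injectivity of $\mathfrak{D}$ there gives $\lambda \cdot \varphi = \varphi^{(\delta)}$.

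Points~(2) and~(3) then follow formally from Point~(1) combined with the formal-deformation results. For the rigidity statement~(2), the element $\mathfrak{D}(\varphi)$ is a formal deformation of $\psi = \varphi^{(\delta)}$, and the hypothesis $H_{-1}(\mathfrak{g}^{\psi}) = 0$ is precisely what Theorem~\ref{intrinsec0} requires; that theorem makes $\mathfrak{D}(\varphi)$ gauge equivalent to $\psi$, so $K_{\varphi} = K_{\mathfrak{D}(\varphi)} = 0$ and Point~(1) yields $\varphi \sim \psi$. For the descent statement~(3), I would note that $\mathfrak{g} \otimes S$ is again $\delta$-weight-graded over the $\mathbb{Q}$-algebra $S$, that $\mathfrak{D}(\varphi \otimes 1) = \mathfrak{D}(\varphi) \otimes 1$ because $\mathfrak{D}$ is built weight-componentwise, and then chain the equivalences: $\varphi \sim \varphi^{(\delta)}$ in $\mathfrak{g}$ $\Leftrightarrow$ $\mathfrak{D}(\varphi)$ is gauge trivial (Point~(1)) $\Leftrightarrow$ $\mathfrak{D}(\varphi)\otimes 1$ is gauge trivial (Theorem~\ref{descent1.0}) $\Leftrightarrow$ $\varphi \otimes 1 \sim \varphi^{(\delta)} \otimes 1$ in $\mathfrak{g}\otimes S$ (Point~(1) applied over $S$).

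The main obstacle is the converse of Point~(1): transporting the trivializing gauge back from $\mathfrak{g}[\![\hbar]\!]$ to $\mathfrak{g}$ forces one to prove that the trivialization can be performed through prismatic (weight-homogeneous) gauges rather than arbitrary formal ones. Everything else is a direct translation through $\mathfrak{D}$, so the weight-bookkeeping in the inductive step is where the care is needed.
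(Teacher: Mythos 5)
Your proposal is correct and follows essentially the paper's own route: your ``prismatic gauge'' refinement of the inductive trivialization --- projecting the equation $d^{\varphi^{(\delta)}}(\phi_{n-1}) = n\,\psi_{n}$ onto weight $\delta+n$ (using that $d^{\varphi^{(\delta)}}$ raises weight by exactly $\delta$) and taking the weight-$n$ component $\tfrac{1}{n}\phi_{n-1}^{(n)}$ as gauge --- is precisely the paper's Proposition~\ref{passage}, and the BCH-assembly of weight-homogeneous gauges is precisely the paper's proof of Point~(1). The only cosmetic difference is in Points~(2) and~(3), where you invoke Theorems~\ref{intrinsec0} and~\ref{descent1.0} as black boxes applied to $\mathfrak{D}(\varphi)$ and then conclude via Point~(1), while the paper reruns those truncated-class arguments directly in the weight-graded setting; both amount to the same computation.
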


\begin{remark}\label{refine}
If the Kaledin class $K_{\varphi}$ is zero, then there exists $\lambda \in \mathfrak{g}_0 \otimes  (\hbar) \ ,$ such that $\lambda \cdot \mathcal{D}(\varphi) = \varphi^{(\delta)}$ by Theorem \ref{Kaledin1}. This is not sufficient \emph{a priori} to get a gauge $\nu \in \mathfrak{g}_0$ such that $\nu \cdot \varphi = \varphi^{(\delta)}$. For all $k \geqslant 1$~, each $\lambda_i$ may have a non-trivial component of weight $k$ and nothing asserts that the sum \[\lambda_1^{(k)} + \lambda_2^{(k)} + \lambda_3^{(k)} + \cdots\] is well-defined. Therefore, we need to refine the proof of the Theorem \ref{Kaledin1}. 
\end{remark}

\begin{proposition}\label{passage}
Let $\varphi \in \mathrm{MC}\left(\mathcal{F}^{\delta}\mathfrak{g}\right)$ be a Maurer--Cartan element in a $\delta$-weight-graded dg Lie algebra $\mathfrak{g}$ over $R$~. For all $n \geqslant 1$~, the following propositions are equivalent. 
\begin{enumerate}
	\item  The $n^{\text{th}}$-truncated Kaledin class $K_{\varphi}^n$ is zero.
	\item There exists $\lambda \in \mathfrak{g}_0 $ such that $(\lambda \cdot \varphi)^{(k)} = 0$~, for all $\ \delta + 1 \leqslant  k \leqslant  \delta + n $ .
\end{enumerate}
\end{proposition}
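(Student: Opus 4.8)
The plan is to reduce \cref{passage} to the already-established \cref{truncated1} via the prismatic decomposition and \cref{decomposition prismatic}. Recall that by definition $K_{\varphi}^n = K_{\mathfrak{D}(\varphi)}^n$, where $\mathfrak{D}(\varphi) \in \mathrm{Def}_{\varphi^{(\delta)}}(R[\![\hbar]\!])$ by Point (1) of \cref{decomposition prismatic}. Applying \cref{truncated1} to the formal deformation $\mathfrak{D}(\varphi)$ tells us that $K_{\varphi}^n = 0$ if and only if there exists a gauge $\mu \in \mathfrak{g}_0 \otimes (\hbar)$ such that $\mu \cdot \mathfrak{D}(\varphi) \equiv \varphi^{(\delta)} \pmod{\hbar^{n+1}}$. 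The task is therefore to translate the existence of such a $\mu$ in $\mathfrak{g}[\![\hbar]\!]$ into the existence of a genuine gauge $\lambda \in \mathfrak{g}_0$ killing the weight components $\delta+1, \dots, \delta+n$ of $\lambda \cdot \varphi$, and conversely.

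For the implication (2) $\Rightarrow$ (1), I would start from a $\lambda \in \mathfrak{g}_0$ with $(\lambda \cdot \varphi)^{(k)} = 0$ for $\delta+1 \leqslant k \leqslant \delta+n$. Set $\psi \coloneqq \lambda \cdot \varphi$; since $\psi \in \mathrm{MC}(\mathcal{F}^{\delta}\mathfrak{g})$ with $\psi^{(\delta)} = \varphi^{(\delta)}$ and vanishing components in weights $\delta+1, \dots, \delta+n$, Point (1) of \cref{decomposition prismatic} gives $\mathfrak{D}(\psi) \equiv \varphi^{(\delta)} \pmod{\hbar^{n+1}}$, so $K_{\psi}^n = 0$. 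By Point (2) of \cref{decomposition prismatic} we have $\mathfrak{D}(\psi) = \mathfrak{D}(\lambda) \cdot \mathfrak{D}(\varphi)$, where $\mathfrak{D}(\lambda) \in \mathfrak{g}_0 \otimes (\hbar)$, so \cref{technique12} yields $K_{\varphi}^n = e^{\mathrm{ad}_{-\mathfrak{D}(\lambda)}}(K_{\psi}^n) = 0$.

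The reverse implication (1) $\Rightarrow$ (2) is the main obstacle, precisely for the reason flagged in \cref{refine}: \cref{truncated1} produces a gauge $\mu \in \mathfrak{g}_0 \otimes (\hbar)$ living in $\mathfrak{g}[\![\hbar]\!]$, and there is no reason a priori that the coefficients of the various powers of $\hbar$ recombine into the prismatic image $\mathfrak{D}(\lambda)$ of a single $\lambda \in \mathfrak{g}_0$. I would therefore not invoke \cref{truncated1} as a black box but rather re-run its inductive construction while tracking weights. The key observation is that the prismatic decomposition identifies the coefficient of $\hbar^{m}$ with weight-$(\delta+m)$ data, so at each inductive step one is free to choose the gauge correction in a single fixed weight: assuming $(\lambda_{j} \cdot \varphi)^{(k)} = 0$ for $\delta+1 \leqslant k \leqslant \delta+j$, the first potentially nonzero higher component is $(\lambda_{j} \cdot \varphi)^{(\delta+j+1)}$, whose Kaledin-class vanishing (a consequence of $K_{\varphi}^n = 0$ transported by \cref{technique12}) exhibits it as a $d^{\varphi^{(\delta)}}$-boundary of some $\phi \in \mathfrak{g}_0$. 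Taking the weight-$(j+1)$ part $\phi^{(j+1)}$ of that primitive and gauging by it corrects weight $\delta+j+1$ without disturbing the lower weights, since $[\mathfrak{g}^{(a)}, \mathfrak{g}^{(b)}] \subset \mathfrak{g}^{(a+b)}$ and $d$ raises weight by $\delta$. Iterating from $j=0$ up to $j=n-1$ and composing the corrections via $\mathrm{BCH}$ produces the desired $\lambda \in \mathfrak{g}_0$, which is a finite (hence well-defined) composite because only $n$ steps are needed. This weight-by-weight bookkeeping is exactly the refinement promised in \cref{refine}, and it is where all the care must go.
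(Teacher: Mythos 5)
Your proposal is correct and follows essentially the same route as the paper: the easy direction via $\mathfrak{D}(\lambda\cdot\varphi)=\mathfrak{D}(\lambda)\cdot\mathfrak{D}(\varphi)$ together with Lemma \ref{technique12}/Theorem \ref{truncated1}, and the hard direction by re-running the truncated-class induction while extracting, at each stage, the single weight component (coefficient of the top power of $\hbar$, weight $\delta+j+1$) of a primitive of the transported truncated Kaledin class, gauging by it without disturbing lower weights, and composing the finitely many corrections by $\mathrm{BCH}$. This is exactly the paper's induction on $n$ (there organized as one induction-hypothesis gauge $\nu$ plus one correction $\upsilon=\tfrac{1}{n}\phi_{n-1}^{(n)}$, combined as $\mathrm{BCH}(\upsilon,\nu)$), so the two arguments coincide up to how the induction is packaged.
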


\begin{proof} If Point (2) holds, Lemma \ref{decomposition prismatic} implies that \[\mathfrak{D}(\lambda) \cdot \mathfrak{D}(\varphi)  \equiv \varphi^{(\delta)} \pmod{\hbar^{n+1}} \ , \]  and $K_{\varphi}^n = 0$ by Theorem \ref{truncated1}. Let us prove the converse result by induction on $n \geqslant 1$~. If  \[K^1_{\varphi} = \left[ \varphi^{(\delta +1 )} \right] \in H_{-1}\left(\mathfrak{g}^{\varphi^{(\delta)}}\right)  \] is zero, there exists $\phi \in \mathfrak{g}_0$ such that \[d^{\varphi^{(\delta)}} (\phi) = \varphi^{(\delta + 1)} \ .\] Then, the element $\lambda \coloneqq \phi^{(1)}  \in \mathfrak{g}_0$ is such that
	$(\lambda \cdot \varphi)^{(\delta +1)} = 0 $~. Suppose that $n > 1$ and that the result holds for $n-1$~. If the class $K_{\varphi}^n$ is zero, so does the class $K_{\varphi}^{n-1}$. By the induction hypothesis, there exists $\nu \in \mathfrak{g}_0 $ such that \[ (\nu \cdot \varphi)^{(k)} = 0  ~, \quad \mbox{for all } \  \delta +1  \leqslant  k \leqslant \delta +  n-1 ~. \] Let us denote $\psi \coloneqq \nu \cdot \varphi$. We have  \[\mathfrak{D}\left( \psi \right) \equiv \varphi^{(\delta)} + \psi^{(\delta + n )} \hbar^n \pmod{\hbar^{n+1}} \quad \mbox{and} \quad K_{\psi }^{n} = \left[n \psi^{(\delta + n)} \hbar^{n-1}\right] \ .\] Since $\mathfrak{D}\left( \psi \right) = \mathfrak{D}\left( \nu \right) \cdot \mathfrak{D}\left( \varphi \right) $ by Lemma \ref{decomposition prismatic}, we have \[K_{\psi}^n = e^{\mathrm{ad}_{\mathfrak{D}\left( \nu \right) }}(K_{\varphi }^n) = 0 \ ~, \] by Lemma \ref{technique12}. Thus, there exists $\phi \coloneqq \phi_0 + \phi_1 h + \dots + \phi_{n-1}\hbar^{n-1} \in \mathfrak{g}[\![\hbar]\!]/ (\hbar^{n})$ such that \[d^{\varphi^{(\delta)}}(\phi) \equiv n \psi^{(\delta + n)} \hbar^{n-1} \pmod{\hbar^{n}} \ .\] Looking at the coefficient of $\hbar^{n-1}$ and the component of weight $\delta + n$ on both sides, we have \[d^{\varphi^{(\delta)}} \left( \phi_{n-1}^{(n)}\right) = n \psi^{(\delta + n)}  \ . \] The element $\upsilon \coloneqq \frac{1}{n}\phi_{n-1}^{(n)}$ in $\mathfrak{g}_0^{(n)} $ thus satisfies \[ (\upsilon \cdot \psi)^{(k)} = 0  ~, \quad \mbox{for all } \  \delta +1  \leqslant  k \leqslant \delta +  n-1 ~. \]  Finally, the element $\lambda \coloneqq \mathrm{BCH}(\upsilon,\nu) \in \mathfrak{g}_0$ satisfies Point (2).
\end{proof}

\begin{proof}[Proof of Theorem \ref{A}] Let us prove Point (1). Suppose that there exists $\lambda \in \mathfrak{g}_0$ such that $\lambda \cdot \varphi = \varphi^{(\delta)}$~. By Lemma \ref{decomposition prismatic}, this implies that \[\mathfrak{D}(\lambda) \cdot \mathfrak{D}(\varphi)= \varphi^{(\delta)}  \ , \]  and $K_{\Phi} = 0$ by Theorem \ref{Kaledin1}. Conversely, we proceed as in the proof of Theorem \ref{Kaledin1}. If $K_{\Phi} = 0$~, we can use Proposition \ref{passage} to construct a sequence of integers $(n_i)$ greater that $1$ and a sequence of gauges $(\lambda^i)$ such that such that \[ (\lambda^{i} \cdot  \psi^i)^{(k)} = 0 ~, \quad \mbox{for all } \  \delta +1  \leqslant  k \leqslant \delta +  n_i ~, \] where $\psi^{i+1} \coloneqq \lambda^{{i}} \cdot \psi^{i}$ and $\psi^1 \coloneqq \varphi$~. The proof of Proposition \ref{passage} actually shows that one can find gauges such that $\lambda^{i} \in \mathfrak{g}^{(n_i)}_0$~, for all $i$~.  It follows from the construction that the gauge \[\lambda \coloneqq \mathrm{BCH}(\cdots \mathrm{BCH}(\lambda^{3}, \mathrm{BCH}(\lambda^{2}, \lambda^{1})) \cdots ) \ , \] satisfies $\lambda \cdot \varphi = \varphi^{(\delta)}$~. The proofs of points (2) and (3) make use of the Kaledin classes and Point (1) in a way similar to the proofs of Theorem \ref{intrinsec0} and Corollary \ref{descent1.0} respectively. 
\end{proof}

\noindent Theorem \ref{A} for $\delta = 1$ precisely corresponds to the formality setting addressed in Section \ref{section2}. 

\begin{remark}[Weight zero setting]
One can prove another version of Theorem \ref{A} with non-trivial weight zero component, a differential preserving the weight-grading, and the following definition of the prismatic decomposition of $\varphi \in \mathfrak{g}_{-1}$, \[\mathfrak{D} \left(\varphi\right) \coloneqq \varphi^{(0)} + \varphi^{(1)} \hbar + \varphi^{(2)} \hbar^2 + \cdots \in \mathrm{Def}_{\varphi^{(0)}}(R[\![\hbar]\!]) \ . \] More precisely, let $\mathfrak{g}$ be a differential graded Lie algebra with a compatible extra weight grading such that \[\mathfrak{g} \cong \prod_{k = 0}^{\infty} \mathfrak{g}^{(k)}, \quad d\left(\mathfrak{g}^{(k)} \right) \subset \mathfrak{g}^{(k)}, \quad \left[ \mathfrak{g}^{(k)},  \mathfrak{g}^{(l)} \right] \subset  \mathfrak{g}^{(k +l)} \ .  \] Then for every Maurer--Cartan element $\varphi \in \mathrm{MC}(\mathfrak{g})$~, one can prove that the Kaledin class $K_{\mathfrak{D} \left(\varphi\right)}$ is zero if and only if there exists $\lambda \in \mathcal{F}^1 \mathfrak{g}_0$ such that $\lambda \cdot \varphi = \varphi^{(0)}$~. 
\end{remark}

\begin{remark}[The complete setting]
The Kaledin classes cannot be defined without the weight-grading assumption. Nonetheless, in the spirit of Halperin and Stasheff's approach \cite{HJ79}, one forgo treating everything in a single class. In \cite[Section~1]{CE24b}, sequences of obstructions are constructed in order to detect gauge equivalences between a Maurer--Cartan element and its first component and more generally if two Maurer--Cartan elements are gauge equivalent in a complete dg Lie algebra. 
\end{remark}

\section{\textcolor{bordeau}{Operadic Kaledin classes over a commutative ground ring}}\label{section2}

In this section, we apply the Kaledin classes developed in Section \ref{section1} to study formality of algebras encoded by colored operads. In this particular context, we adapt the construction to deal with any commutative ground ring $R$. This generalizes the article \cite{MR19} of Melani and Rubi\'o to any commutative ring and to algebras over any reduced colored operad. \medskip

\noindent Let us fix $R$ a commutative ground ring and let $A$ be a differential graded $R$-module. Let $\C$ be a reduced weight-graded differential graded cooperad over $R$, i.e. $\C(0) = 0$ and \[\C =   \mathrm{I} \oplus \C^{(1)} \oplus \C^{(2)} \oplus  \cdots \oplus \C^{(n)} \oplus \cdots \quad \mbox{and} \quad d_{\C} \left(\C^{(k)}\right) \subset \C^{(k - 1)} \ .\]  Let us denote the coaugmentation coideal $\overline{\C} \coloneqq \bigoplus_{k \geqslant1}  \C^{(k)}$ so that $\C \cong \mathrm{I} \oplus \overline{\C}$~.

\begin{examples}\label{exemples}
Among possible choices for such cooperad $\C$, we have: 
\begin{enumerate}
	\item The bar construction $\C \coloneqq \Bar \P$ of a reduced operad $\P$, see \cite[Section~6.5]{LodayVallette12}~.
	
 	\item The Koszul dual cooperad $\C := \P^{\antishriek}$ of a homogeneous Koszul operad $\P$, see \cite[Section~7.4]{LodayVallette12}, or an inhomogeneous quadratic Koszul operad $\P$, see \cite[Section~7.6]{LodayVallette12}.
	
	\item The quasi-planar cooperad $\C = \Bar \left(\mathcal{E} \otimes \P\right) $ where $\mathcal{E}$ denotes the Barratt-Eccles operad and $\P$ is any reduced operad, see \cite{LRL23}. 
\end{enumerate}
\end{examples}

\subsection{Deformation complex of algebras over an operad}\label{2.1}

This section recalls the definition of a $\Cobar \C$-algebra structure on $A$~, as a Maurer--Cartan element of its associated convolution dg pre-Lie algebra $\mathfrak{g}_A$. We refer the reader to \cite[Section~5]{DSV16} for more details.

\begin{definition}[dg pre-Lie algebra]
	A \emph{dg pre-Lie algebra} is a chain complex $(\mathfrak{g},d)$ equipped with a linear map of degree zero $\star : \mathfrak{g}\otimes \mathfrak{g} \to \mathfrak{g}$~, the pre-Lie product, satisfying \[\forall x, y,z \in \mathfrak{a}, \, (x \star y) \star z  - x \star ( y \star z ) =  (-1)^{|y||z|} ((x \star z ) \star y  - x \star (z \star y)) \ . \]
Its set of \emph{Maurer--Cartan elements} is defined as \[\mathrm{MC(\mathfrak{g})} \coloneqq \lbrace x \in \mathfrak{g}_{-1} \mid dx + x\star x = 0 \rbrace \ . \]
\end{definition}

\begin{remark}
In a dg pre-Lie algebra, the skew-symmetrized bracket \[[x,y] := x \star y - (-1)^{|x||y|} y \star x \ ,\]  induces a dg Lie algebra structure. If $2$ is invertible in $R$~, we have $\varphi \star \varphi = \frac{1}{2}[\varphi, \varphi]$ and the Maurer--Cartan equation coincides with usual one.
\end{remark}

\begin{definition}[Convolution dg pre-Lie algebra]
The \emph{convolution dg pre-Lie algebra} associated to $\C$ and $A$ is the dg pre-Lie algebra \[\mathfrak{g}_{ A} := \left( \Hom_{\mathbb{S}} \left(\overline{\C}, \mathrm{End}_A\right), \star, d \right) \ , \]
where the underlying space is  \[\Hom_{\mathbb{S}} \left(\overline{\C}, \End_A\right) : = \prod_{p \geqslant0} \Hom_{\mathbb{S}} \left(\overline{\C}(p), \End_A(p)\right)\ ,\] equipped with the pre-Lie product \[\varphi \star \psi \coloneqq \overline{\C} \xrightarrow{\Delta_{(1)}} \overline{\C} \circ_{(1)} \overline{\C} \xrightarrow{\varphi \circ_{(1)} \psi} \End_A \circ_{(1)} \End_A \xrightarrow{\gamma_{(1)}} \End_A \ ,\] and the differential \[d (\varphi) = d_{\End_A} \circ \varphi - (- 1)^{|\varphi|} \varphi \circ d_{\overline{\C}} \ .\] It embeds inside the dg pre-Lie algebra made up of all the maps from $\C \cong \mathrm{I} \oplus \overline{\C}$, i.e. \[\mathfrak{g}_{A} \hookrightarrow \mathfrak{a}_{A}  \coloneqq \left( \Hom_{\mathbb{S}} \left(\C, \mathrm{End}_A\right), \star, d , 1 \right)  \] where the left unit is given by $1 : \mathrm{I} \mapsto \mathrm{id}_A$~. 
\end{definition}

\begin{proposition}
A \emph{$\Cobar \C$-algebra structure} $\varphi$ on $A$ is a Maurer--Cartan element in $\mathfrak{g}_A$~, i.e. \[\varphi \in \mathrm{MC}(\mathfrak{g}_{A}) = \{\varphi \in \mathfrak{g}_{A} \mid |\varphi|=-1, \; d(\varphi) + \varphi \star \varphi = 0\} \ .\] The \emph{deformation complex} of an $\Cobar \C$-algebra $\varphi \in \mathrm{MC}(\mathfrak{g}_{A})$ is the twisted dg pre-Lie algebra \[\mathfrak{g}_{A}^{\varphi} := \left( \Hom_{\mathbb{S}} \left(\overline{\C}, \mathrm{End}_A\right), \star \ , d^{\varphi} \coloneqq d + \mathrm{ad}_{\varphi} \right) \ .\] 
\end{proposition}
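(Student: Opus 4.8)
The plan is to read off both assertions from the universal property of the cobar construction, taking care to argue integrally. Recall that $\Cobar \C = \left(\T(s^{-1}\overline{\C}), d_{\Cobar}\right)$ is quasi-free as a graded operad on the desuspension $s^{-1}\overline{\C}$, and that its differential splits as $d_{\Cobar} = d_1 + d_2$, where $d_1$ is induced by the internal differential $d_{\C}$ and $d_2$ by the infinitesimal decomposition $\Delta_{(1)} : \overline{\C} \to \overline{\C} \circ_{(1)} \overline{\C}$ (this lands in $\overline{\C}\circ_{(1)}\overline{\C}$ precisely because $\C$ is reduced). An $\Cobar \C$-algebra structure on $A$ is by definition a morphism of dg operads $f : \Cobar \C \to \End_A$.

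First I would use freeness. Forgetting differentials, a morphism of graded operads $\T(s^{-1}\overline{\C}) \to \End_A$ is uniquely determined by its restriction to the generators $s^{-1}\overline{\C}$, that is by a degree $0$ map $s^{-1}\overline{\C} \to \End_A$, equivalently a degree $-1$ $\mathbb{S}$-module map $\varphi : \overline{\C} \to \End_A$, i.e. a degree $-1$ element $\varphi \in \mathfrak{g}_A$. It then remains to show that $f$ commutes with the differentials if and only if $\varphi$ solves the Maurer--Cartan equation. Evaluating $f \circ d_{\Cobar} = d_{\End_A} \circ f$ on generators, the $d_1$-part together with the internal differential of $\End_A$ contributes the linear term $d(\varphi) = d_{\End_A}\circ\varphi - (-1)^{|\varphi|}\varphi\circ d_{\overline{\C}}$, while the $d_2$-part, built from $\Delta_{(1)}$ and then $\gamma_{(1)}$, contributes exactly the convolution pre-Lie square $\varphi\star\varphi$. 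Hence compatibility reads $d(\varphi) + \varphi\star\varphi = 0$. The decisive point for working over arbitrary $R$ is that $d_2$ produces $\varphi\star\varphi$ \emph{directly}, with no factor $\tfrac12$: the relevant Maurer--Cartan equation is the pre-Lie one of the statement, which over a characteristic zero field coincides with $d\varphi + \tfrac12[\varphi,\varphi]=0$ but over general $R$ is strictly the correct form.

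For the second assertion I would turn to the twisted object $\mathfrak{g}_A^\varphi = \left(\mathfrak{g}_A, \star, d^\varphi\right)$ with $d^\varphi = d + \mathrm{ad}_\varphi$. Over a $\mathbb{Q}$-algebra one may simply invoke the Proposition on twisted dg Lie algebras, since there the pre-Lie and Lie Maurer--Cartan equations agree. Over an arbitrary $R$ I would instead verify $(d^\varphi)^2 = 0$ directly: expanding, using that $d$ is a $\star$-derivation and the Maurer--Cartan equation in the form $d\varphi = -\varphi\star\varphi$, one collects three associators and, via the pre-Lie relation $A(a,b,c) = (-1)^{|b||c|}A(a,c,b)$ with $A(a,b,c) \coloneqq (a\star b)\star c - a\star(b\star c)$, reduces everything (up to sign) to the single term $A(x,\varphi,\varphi) = (x\star\varphi)\star\varphi - x\star(\varphi\star\varphi)$. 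This is the ``disjoint'' double insertion of $\varphi$ into $x$; since $\varphi$ has odd degree, interchanging the two identical odd insertions yields a Koszul sign $-1$, so the contributions cancel in pairs and $A(x,\varphi,\varphi) = 0$ integrally. Thus $d^\varphi$ is a square-zero differential, yielding the twisted dg pre-Lie algebra of the statement (its skew-symmetrization being the twisted dg Lie algebra that will govern the gauge action).

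The hard part is exactly this passage to arbitrary $R$: the factor of $2$ that is invisible over $\mathbb{Q}$ must be tracked throughout, which is why the pre-Lie product, rather than its skew-symmetrization, controls both the Maurer--Cartan equation and the nilpotency of $d^\varphi$. The remaining points are formal: well-definedness of the product $\prod_{p\geqslant 0}\Hom_{\mathbb{S}}\!\left(\overline{\C}(p), \End_A(p)\right)$, and the embedding $\mathfrak{g}_A \hookrightarrow \mathfrak{a}_A$ with left unit $1 : \mathrm{I}\mapsto \id_A$, both relying only on $\C$ being reduced so that the convolution product is defined.
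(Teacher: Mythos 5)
Your proposal is correct, but it is worth noting that the paper does not actually argue this statement at all: its proof is the single line ``See \cite[Proposition~10.1.1]{LodayVallette12}'', a reference to a text that works over a field of characteristic zero. What you have written is a self-contained reconstruction of the argument behind that citation, and it is the more honest one given that the surrounding section claims the result over an arbitrary commutative ring $R$. Your two key points are exactly the ones the bare citation leaves implicit: (i) the differential compatibility of a dg operad morphism $\Cobar \C \to \End_A$ with the quasi-free structure on $\mathcal{T}(s^{-1}\overline{\C})$ yields the \emph{pre-Lie} Maurer--Cartan equation $d(\varphi)+\varphi\star\varphi=0$ with no factor $\tfrac12$, which is the form that survives integrally; and (ii) the nilpotency of $d^{\varphi}=d+\mathrm{ad}_{\varphi}$ cannot be deduced over general $R$ from the abstract pre-Lie identity alone (which only gives $2\,A(x,\varphi,\varphi)=0$), but does follow from the operadic origin of $\star$: the associator $A(x,\varphi,\varphi)$ is a sum over ordered pairs of disjoint insertions of $\varphi$ into $x$, and swapping the two odd insertions produces a Koszul sign $-1$, so the terms cancel in pairs and $A(x,\varphi,\varphi)=0$ on the nose. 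Your bookkeeping of the three associators, with two cancelling via $A(a,b,c)=(-1)^{|b||c|}A(a,c,b)$ and the third vanishing by the disjoint-insertion argument, is accurate. The only caveat is terminological: $\mathrm{ad}_{\varphi}$ is not a derivation of $\star$ (only of the skew-symmetrized bracket), so ``twisted dg pre-Lie algebra'' should be read, as you implicitly do, as the pre-Lie product together with a square-zero differential compatible with the induced Lie structure; this looseness is already present in the paper's own statement.
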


\begin{proof}
	See \cite[Proposition~10.1.1]{LodayVallette12}. 
\end{proof}

\begin{examples} The homology groups of the complex $\mathfrak{g}_{A}^{\varphi}$ is often called the \emph{tangent homology.}
	\begin{enumerate}
		\item In the case of the cooperad $\C = A^{\antishriek}$ and an associative algebra structure $\varphi$, the tangent homology is isomorphic to the Hochschild cohomology with coefficients into itself (up to a suspension) with Gerstenhaber bracket, see \cite[Section~9.1.6]{LodayVallette12}.
		
		\item  In the case of the cooperad $Com^{\antishriek}$ and a commutative algebra structure $\varphi$, one recovers the Harrison cohomology of commutative algebras, see \cite[Section~13.1.7]{LodayVallette12}.
		
		\item  For the operad $Lie^{\antishriek}$ and a Lie algebra structure $\varphi$, one gets the Chevalley--Eilenberg homology of Lie algebras, see \cite[Section~12.3]{LodayVallette12}. 
	\end{enumerate}
\end{examples}

\begin{proposition}\label{associative}
The algebra $\mathfrak{a}_A$ is an associative algebra with the
product defined by \[\varphi \circledcirc \psi \coloneqq \C \xrightarrow{\Delta} \C \circ \C \xrightarrow{\varphi \  \circ \ \psi} \End_A \circ \End_A \xrightarrow{\gamma} \End_A \ .\] 
\end{proposition}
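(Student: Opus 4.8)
The plan is to verify that the product $\circledcirc$ defined on $\mathfrak{a}_A = \Hom_{\mathbb{S}}(\C, \End_A)$ is associative. The key structural input is the coassociativity of the decomposition coproduct $\Delta : \C \to \C \circ \C$ of the cooperad $\C$, together with the associativity of the composition product $\gamma : \End_A \circ \End_A \to \End_A$ of the endomorphism operad. These are precisely the defining axioms of a cooperad and an operad respectively. Concretely, I would compute both $(\varphi \circledcirc \psi) \circledcirc \chi$ and $\varphi \circledcirc (\psi \circledcirc \chi)$ and exhibit them as the same composite map from $\C$ to $\End_A$.

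First I would expand $(\varphi \circledcirc \psi) \circledcirc \chi$ as the composite
\[
\C \xrightarrow{\Delta} \C \circ \C \xrightarrow{\Delta \circ \id} (\C \circ \C) \circ \C \xrightarrow{(\varphi \circ \psi) \circ \chi} (\End_A \circ \End_A) \circ \End_A \xrightarrow{\gamma \circ \id} \End_A \circ \End_A \xrightarrow{\gamma} \End_A \ ,
\]
and similarly write $\varphi \circledcirc (\psi \circledcirc \chi)$ using $\id \circ \Delta$ on the cooperad side and $\id \circ \gamma$ on the endomorphism side. The coassociativity axiom for the cooperad gives the equality $(\Delta \circ \id)\,\Delta = (\id \circ \Delta)\,\Delta$ as maps $\C \to \C \circ \C \circ \C$, and the operad associativity axiom gives $\gamma\,(\gamma \circ \id) = \gamma\,(\id \circ \gamma)$ as maps $\End_A \circ \End_A \circ \End_A \to \End_A$. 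Matching these two diagrams through the naturality of the composition of morphisms (i.e. that $(\varphi \circ \psi)\circ \chi$ and $\varphi \circ (\psi \circ \chi)$ agree under the associativity isomorphism of the composite product $\circ$ on $\mathbb{S}$-modules) yields the desired identity.

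The main obstacle will be bookkeeping the Koszul signs produced by permuting the homogeneous maps $\varphi, \psi, \chi$ past one another in the composite product $\circ$, since the symmetric monoidal structure on chain complexes carries the Koszul sign rule. I would organize the verification so that the associativity isomorphism for $\circ$ is applied uniformly and the signs cancel on both sides simultaneously, rather than tracking them term by term. One must also check compatibility with the $\mathbb{S}$-equivariance so that the composite indeed lands in $\Hom_{\mathbb{S}}$, but this is automatic from the equivariance of $\Delta$ and $\gamma$. Once the two iterated composites are identified via coassociativity and associativity, associativity of $\circledcirc$ follows, and the unit $1 : \mathrm{I} \mapsto \id_A$ is easily seen to be a two-sided unit using the counit axiom of $\C$, making $(\mathfrak{a}_A, \circledcirc, 1)$ a unital associative algebra.
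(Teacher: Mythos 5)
Your proposal is correct: the paper itself does not spell out an argument but simply cites \cite[Section~10.2.3]{LodayVallette12}, and the proof given there is precisely the one you outline, namely deducing associativity of $\circledcirc$ from the coassociativity of $\Delta$, the associativity of $\gamma$, and the functoriality of the composite product $\circ$, with unitality coming from the counit of $\C$ and the unit of $\End_A$. Your handling of the Koszul signs and $\mathbb{S}$-equivariance matches the standard treatment, so this is essentially the same approach, just written out in full.
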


\begin{proof}
See \cite[Section~10.2.3]{LodayVallette12}. 
\end{proof}

\begin{definition}[$\infty$-morphism]
Let $\varphi$ and $\psi$ be two $\Cobar \C$-algebra structures on $A$~. An \emph{$\infty$-morphism} $\varphi \rightsquigarrow \psi$ is a map $f \in \mathfrak{a}_A$ of degree $0$ satisfying \[f \star \varphi - \psi \circledcirc f = d(f) \ . \] The composite of two $\infty$-morphisms $f : \varphi \rightsquigarrow \psi$ and $g : \psi \rightsquigarrow \phi$ is given by $f \circledcirc  g$. 
\end{definition}

\begin{remark}
Let $\varphi \in  \mathrm{MC}(\mathfrak{g}_{A})$ and $\psi \in  \mathrm{MC}(\mathfrak{g}_{B})$ be two $\Cobar \C$-algebra structures. In this context, an $\infty$-morphism $\varphi \rightsquigarrow \psi$ can be defined as a morphism \[f \in \Hom_{\mathbb{S}} \left(\C, \End^A_B\right) \] of degree $0$ satisfying a similar identity, see \cite[Theorem~10.2.3]{LodayVallette12} for more details. 
\end{remark}

\noindent Since the cooperad $\C$ is weight-graded, the pre-Lie algebra $\mathfrak{a}_A$ admits a weight-grading, \[\Hom_{\mathbb{S}}\left(\C, \mathrm{End}_A\right) \cong \prod_{n \geqslant 0} \Hom_{\mathbb{S}} \left(\C^{(n)}, \mathrm{End}_A\right) \ . \] Every element $f \in \Hom_{\mathbb{S}} \left(\C, \mathrm{End}_A \right)$ decomposes as \[f = f^{(0)} + f^{(1)} + f^{(2)} + \cdots \] where $f^{(i)}$ is the restriction of $f$ to $\C^{(i)}$. The first component \[f^{(0)} : \mathrm{I} \to \End_A \] of $\infty$-morphism is equivalent to
a chain map in $\End_A$ that we still denote by $f^{(0)}$. The convolution dg pre-Lie algebra $\mathfrak{g}_A$ has a similar underlying weight grading and every element $\varphi$ decomposes as \[\varphi = \varphi^{(1)} + \varphi^{(2)} + \varphi^{(3)} + \cdots \ .\]

\begin{definition}
An $\infty$-morphism $f : \varphi \rightsquigarrow \psi$ between two $\Cobar \C$-algebra structures on $A$ is  
	\begin{itemize}
		 \item[$\centerdot$] an \emph{$\infty$-quasi-isomorphism} if $f^{(0)} \in \End_A$ is a quasi-isomorphism; 
		 \item[$\centerdot$] an \emph{$\infty$-isomorphism} if $f^{(0)} \in \End_A$ is an isomorphism; 
		 \item[$\centerdot$] an \emph{$\infty$-isotopy} if $f^{(0)} = \mathrm{id}_A$~. 
	\end{itemize}

\end{definition}

\begin{lemma}\label{invertibility} The $\infty$-isomorphisms are the isomorphisms in the category of $\Cobar \C$-algebra structures on $A$. We will denote by $f^{-1}$ the unique inverse of an $\infty$-isomorphism $f$ with respect to the product $\circledcirc$.
\end{lemma}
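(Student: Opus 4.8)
The plan is to exploit that, by Proposition \ref{associative}, $(\mathfrak{a}_A, \circledcirc, 1)$ is a unital associative algebra which is weight-graded with weight-zero component $\C^{(0)} = \mathrm{I}$, and that the identity $\infty$-morphism of an object $\varphi$ is the unit $1$ (one checks $1 \star \varphi = \varphi = \varphi \circledcirc 1$ and $d(1)=0$, so $1$ is an $\infty$-isotopy $\varphi \rightsquigarrow \varphi$). A categorical isomorphism $f : \varphi \rightsquigarrow \psi$ is then exactly an $\infty$-morphism admitting an $\infty$-morphism $g : \psi \rightsquigarrow \varphi$ with $f \circledcirc g = 1$ and $g \circledcirc f = 1$, so I must match this with the condition that $f^{(0)} \in \End_A$ be invertible.

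First, the easy direction. Projection onto weight zero yields $\pi_0 : \mathfrak{a}_A \to \End_A$, $f \mapsto f^{(0)}$, and since $\Delta$ restricted to $\mathrm{I}$ is the coidentity one has $\pi_0(f \circledcirc g) = f^{(0)} \circ g^{(0)}$ and $\pi_0(1) = \id_A$; that is, $\pi_0$ is a morphism of unital monoids. Hence if $f$ is a categorical isomorphism with inverse $g$, then $f^{(0)} \circ g^{(0)} = \id_A = g^{(0)} \circ f^{(0)}$, so $f^{(0)}$ is invertible and $f$ is an $\infty$-isomorphism.

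Second, the converse existence of the inverse. Given $f$ with $f^{(0)}$ invertible, I construct $g$ with $f \circledcirc g = 1$ by induction on the weight. Because $\Delta$ preserves total weight, in weight $n \geqslant 1$ the unknown $g^{(n)}$ enters $f \circledcirc g$ only through the left-counit term $f^{(0)} \circ g^{(n)}$, while all remaining contributions involve the $f^{(i)}$ and the already-built $g^{(j)}$ with $j < n$. Setting $g^{(0)} := (f^{(0)})^{-1}$ and using invertibility of $f^{(0)}$ to solve $f^{(0)} \circ g^{(n)} = -(\text{lower-weight terms})$ determines $g \in \mathfrak{a}_A$ with $f \circledcirc g = 1$. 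The symmetric argument gives $h$ with $h \circledcirc f = 1$, and associativity forces $h = h \circledcirc (f \circledcirc g) = (h \circledcirc f) \circledcirc g = g$, so $g$ is a two-sided $\circledcirc$-inverse, unique as in any monoid; moreover $g^{(0)} = (f^{(0)})^{-1}$ is invertible.

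The main obstacle is to show that this $\circledcirc$-inverse $g$ is genuinely an $\infty$-morphism $\psi \rightsquigarrow \varphi$, i.e. that it satisfies the defining equation $d(g) = g \star \psi - \varphi \circledcirc g$. The clean route is to differentiate $f \circledcirc g = 1$: as $d$ is a derivation for $\circledcirc$ and $d(1)=0$, one obtains $f \circledcirc d(g) = -\,d(f) \circledcirc g$; substituting the $\infty$-morphism equation of $f$ and composing on the left with $g = f^{-1}$ then rewrites $d(g)$ in terms of $\psi$, $\varphi$ and $g$. Reconciling the resulting expression with the sought equation is precisely where the interplay between the infinitesimal product $\star$ and the full composition product $\circledcirc$ must be controlled; conceptually this is the statement that $\infty$-morphisms are exactly the morphisms of cofree dg $\C$-coalgebras (LV, Theorem~10.2.3), for which the inverse of a chain isomorphism is automatically a chain map. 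This bookkeeping is the only non-formal point; with it in hand, $g$ is an $\infty$-isomorphism inverse to $f$, and we set $f^{-1} := g$.
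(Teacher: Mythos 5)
Your easy direction and the weight-by-weight construction of the $\circledcirc$-inverse $g$ are correct (this is essentially \cite[Theorem~10.4.2]{LodayVallette12}; the paper itself disposes of the lemma by citing \cite[Theorem~10.4.1]{LodayVallette12}). But the heart of the lemma --- that this $\circledcirc$-inverse is again an \emph{$\infty$-morphism}, i.e. a morphism in the category of $\Cobar \C$-algebra structures --- is exactly the step you do not prove, and the one computation you offer toward it starts from a false identity. The differential $d$ is \emph{not} a derivation for $\circledcirc$: since $\circledcirc$ plugs $g$ into every input of $f$, the correct Leibniz-type rule is the one recorded in Lemma \ref{differentielles}, formula (j), namely $d(f \circledcirc g) = d(f) \circledcirc g + (-1)^{|f|}\, f \circledcirc (g; d(g))$, whose second term is the ``one slot differentiated'' expression $f \circledcirc (g; d(g))$ and not $f \circledcirc d(g)$. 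Consequently, differentiating $f \circledcirc g = 1$ yields $f \circledcirc (g; d(g)) = -\,d(f) \circledcirc g$, and extracting $d(g)$ from this equation requires precisely the mixed $\star$/$\circledcirc$ identities of Lemmas \ref{compatibility} and \ref{differentielles} (compare formula (l), $d(g^{-1}) = -\,g^{-1} \star (d(g) \circledcirc g^{-1})$). This is not a formality, and declaring it ``the only non-formal point'' and then skipping it leaves the lemma unproved: what you have actually established is that $f$ admits a two-sided inverse in the associative algebra $(\mathfrak{a}_A, \circledcirc, 1)$, not that this inverse is a morphism $\psi \rightsquigarrow \varphi$.

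Note also that your appeal to \cite[Theorem~10.2.3]{LodayVallette12}, if promoted from a gesture to the actual argument, makes the whole $d(g)$ manipulation unnecessary and closes the gap cleanly: under that correspondence, $\infty$-morphisms are morphisms of quasi-cofree dg $\C$-coalgebras, $\circledcirc$ is their composition, and your weight induction shows that the coalgebra morphism $F$ attached to $f$ is bijective. The set-theoretic inverse of a bijective dg coalgebra morphism is automatically a dg coalgebra morphism (conjugating $\Delta F = (F \otimes F)\Delta$ and $dF = Fd$ by $F^{-1}$ gives $\Delta F^{-1} = (F^{-1} \otimes F^{-1})\Delta$ and $d F^{-1} = F^{-1} d$), hence corresponds to an $\infty$-morphism $g : \psi \rightsquigarrow \varphi$ with $f \circledcirc g = g \circledcirc f = 1$. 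Either carry out that coalgebraic argument in full, or do the pre-Lie-calculus verification with the corrected Leibniz rule; as written, the proposal does neither.
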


\begin{proof}
	See \cite[Theorem~10.4.1]{LodayVallette12}. 
\end{proof}

\begin{theorem}[{\cite[Theorem~3]{DSV16}}]\label{parallele}
Let $R$ be a $\mathbb{Q}$-algebra and let $A$ be a chain complex over $R$. The pre-Lie exponential map induces an isomorphism \[\mathrm{exp} : ((\mathfrak{g}_A)_0, \mathrm{BCH}, 0) \overset{\cong}{\longrightarrow} (\infty-\mathrm{iso}, \circledcirc, 1) \] between the gauge group and the group of $\infty$-isotopies. 
\end{theorem}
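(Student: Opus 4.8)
The plan is to identify both groups with groups of (co)algebra data on the cofree conilpotent $\C$-coalgebra $\C(A)$ cogenerated by $A$, and to reduce the statement to the classical Baker--Campbell--Hausdorff identity in an associative algebra. Recall the standard dictionary of operadic calculus (see \cite[Chapter~10]{LodayVallette12}): since $R$ is a $\mathbb{Q}$-algebra and $\C$ is reduced, conilpotent and weight-graded, the assignment $f \mapsto \hat{f}$ identifies the associative algebra $(\mathfrak{a}_A, \circledcirc, 1)$ with the monoid of endomorphisms of $\C(A)$ that are morphisms of $\C$-coalgebras, the product $\circledcirc$ corresponding to composition (in the order fixed by loc. cit.) and the unit $1$ to the identity. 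Under this identification, a degree-zero map $f$ is an $\infty$-isotopy precisely when $f^{(0)} = \id_A$, equivalently when $\hat{f}$ is a coalgebra automorphism reducing to the identity in weight zero; these form the target group. On the other hand, the same dictionary identifies $(\mathfrak{g}_A)_0 = \Hom_{\mathbb{S}}(\overline{\C}, \End_A)_0$ with the degree-zero, filtration-increasing coderivations of $\C(A)$, in such a way that the sub-adjacent Lie bracket obtained from the pre-Lie product $\star$ corresponds to the commutator bracket of coderivations; this is a Lie algebra isomorphism.

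With these identifications in hand, I would define the pre-Lie exponential of $\lambda \in (\mathfrak{g}_A)_0$ to be the element $\exp(\lambda) \in \mathfrak{a}_A$ corresponding to the honest exponential $e^{\hat{\lambda}} = \sum_{n \geq 0} \tfrac{1}{n!}\, \hat{\lambda}^{\circ n}$ of the coderivation $\hat{\lambda}$. Because $\lambda$ is concentrated in positive weights, $\hat{\lambda}$ strictly increases the arity filtration of $\C(A)$, so the series is finite in each fixed arity and converges, the factorials being invertible as $\mathbb{Q} \subseteq R$. The exponential of a coderivation is a coalgebra morphism, and its weight-zero component is $\exp(\lambda)^{(0)} = \id_A$; thus $\exp(\lambda)$ is an $\infty$-isotopy, and the map is well defined. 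It is bijective, with inverse the pre-Lie logarithm $f \mapsto \log(\hat{f})$, defined by the usual series on automorphisms with identity weight-zero part (hence unipotent principal part): this converges for the same filtration reason, sends such automorphisms to coderivations, and satisfies $\log \circ \exp = \id = \exp \circ \log$.

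It remains to show that $\exp$ is a group homomorphism, namely $\exp(\BCH(\lambda,\mu)) = \exp(\lambda) \circledcirc \exp(\mu)$. Transporting along the dictionary, the left-hand side corresponds to $e^{\widehat{\BCH(\lambda,\mu)}} = e^{\BCH(\hat{\lambda}, \hat{\mu})}$, where the second equality uses that $f \mapsto \hat{f}$ is a Lie algebra isomorphism and that $\BCH$ is expressed solely through iterated brackets. Now $\hat{\lambda}$ and $\hat{\mu}$ are filtration-increasing endomorphisms of $\C(A)$, so they lie in a pro-nilpotent associative algebra in which the classical identity $e^{\BCH(X,Y)} = e^{X} e^{Y}$ holds. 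Hence $e^{\BCH(\hat{\lambda},\hat{\mu})} = e^{\hat{\lambda}} \circ e^{\hat{\mu}}$, which corresponds back to $\exp(\lambda) \circledcirc \exp(\mu)$ since composition of coalgebra morphisms is $\circledcirc$. This gives the desired identity and, together with the previous paragraph, shows that $\exp$ is an isomorphism of groups.

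The main obstacle is the careful setup and simultaneous matching of the two halves of the dictionary: one must verify that $f \mapsto \hat{f}$ is at once a monoid isomorphism for $(\circledcirc,\ \text{composition})$, with the correct order convention, and, restricted to $\mathfrak{g}_A$, a Lie algebra isomorphism for $([-,-],\ \text{commutator})$. It is precisely the compatibility of these two structures—so that a single exponential series computes both the pre-Lie exponential and $e^{\hat{\lambda}}$—that lets the classical $\BCH$ identity transfer; and all the series converge thanks to the conilpotency and weight grading of $\C$ together with $\mathbb{Q} \subseteq R$. A purely pre-Lie-algebraic alternative, staying inside $\mathfrak{g}_A$ and $\mathfrak{a}_A$ as in \cite[Theorem~3]{DSV16}, replaces this dictionary by an explicit combinatorial comparison of iterated pre-Lie products with the associative product $\circledcirc$; there the analogous hard point is establishing that comparison identity.
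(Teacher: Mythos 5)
You should first note that the paper gives no proof of this statement of its own: it is imported wholesale from \cite[Theorem~3]{DSV16}, so the relevant comparison is with that proof. The argument of Dotsenko--Shadrin--Vallette is intrinsic pre-Lie integration theory: it takes place inside an arbitrary complete left-unital dg pre-Lie algebra, with the product $\circledcirc$ and the pre-Lie exponential expressed through symmetric braces, and the group law established by brace combinatorics. Your route is genuinely different: you transport both groups onto the cofree conilpotent $\C$-coalgebra $\C(A)$, where $(\mathfrak{a}_A,\circledcirc,1)$ becomes the monoid of $\C$-coalgebra endomorphisms under composition, $(\mathfrak{g}_A)_0$ becomes the Lie algebra of weight-decreasing coderivations under the commutator, and the statement reduces to the classical identity $e^{\mathrm{BCH}(X,Y)}=e^{X}e^{Y}$ in the pro-nilpotent associative algebra of weight-decreasing endomorphisms of $\C(A)$, combined with two standard facts: the exponential of a coderivation is a coalgebra morphism, and the logarithm of a coalgebra endomorphism with identity weight-zero component is a coderivation. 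These ingredients are all correct -- the dictionary does send $\circledcirc$ to composition in the order you fixed and the skew-symmetrized $\star$ to the commutator, and local nilpotence of weight-decreasing maps gives convergence of all series over a $\mathbb{Q}$-algebra -- so the strategy is sound. The trade-off is clear: the cited proof is intrinsic and therefore valid for complete left-unital pre-Lie algebras that are not of convolution type, whereas yours is tied to the existence of $\C(A)$ (exactly the situation at hand) but eliminates all brace combinatorics by outsourcing the group law to the associative $\mathrm{BCH}$ identity.

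The one point you should not gloss over is that you \emph{define} the pre-Lie exponential as the transport of $e^{\hat\lambda}$, while the theorem refers to the intrinsically defined pre-Lie exponential of \cite{DSV16}. As written, you have produced \emph{an} isomorphism of groups, not yet shown that it is induced by \emph{the} pre-Lie exponential map; the missing identification is that the corestriction of $e^{\hat\lambda}$ equals the pre-Lie exponential series, which follows by induction from the identity $\mu\circ\hat\lambda=\mu\star\lambda$ (so that $\mathrm{proj}_A\bigl(\hat\lambda^{\circ n}\bigr)$ is the $n$-fold left-iterated $\star$-product of $\lambda$ with itself). This is precisely the ``comparison identity'' you defer to your final paragraph; it is short in this direction, but it is needed both for the literal statement and for the later uses in the paper, where it is this specific map that must intertwine the gauge action on Maurer--Cartan elements with the action of $\infty$-isotopies.
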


\subsection{Gauge formality}\label{2.2}

This section introduces the gauge formality notion studied in this article. It coincides with the standard formality definition in most cases, see Proposition \ref{formality-gauge}. Let us consider the operad 
\begin{equation}\label{operad P}
\P \coloneqq \mathcal{T}\left(s^{-1}\C^{(1)}\right) / \left(d_{\Cobar \C} \left(s^{-1}\C^{(2)}\right)\right) \ , 
\end{equation} which comes with a twisting morphism $\C \to \P$.

\begin{proposition}\label{induite}
Any $\Cobar \C$-algebra structure $(A, \varphi)$ induces a canonical $\P$-algebra structure \[(H(A),\varphi_*) \ . \]
\end{proposition}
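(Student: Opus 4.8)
The plan is to produce the $\P$-algebra structure as an operad morphism $\bar F\colon\P\to\End_{H(A)}$, built only from the weight-one part $\varphi^{(1)}$ of $\varphi$, and to verify that it respects the defining relations of $\P$ by reading off the weight-two Maurer--Cartan equation.

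First I would unwind the two pieces of structure. Decomposing the Maurer--Cartan equation $d(\varphi)+\varphi\star\varphi=0$ according to the weight grading (using that $\Delta_{(1)}$ preserves weight while $d_{\overline{\C}}$ lowers it by one), the weight-one component reads $d_{\End_A}\circ\varphi^{(1)}=0$, since $d_{\overline{\C}}$ vanishes on $\C^{(1)}$ (as $d_{\C}(\C^{(1)})\subset\mathrm{I}$) and there is no quadratic term in weight one. Hence $\varphi^{(1)}\colon\C^{(1)}\to\End_A$ is a chain map, where $\C^{(1)}$ carries the zero differential; desuspending gives a degree-zero chain map $\bar\varphi^{(1)}\colon s^{-1}\C^{(1)}\to\End_A$ landing in cycles. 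Composing its homology with the canonical operad morphism $\mathrm{can}\colon H(\End_A)\to\End_{H(A)}$ (the lax-monoidal comparison induced by the Künneth cross-product map, which exists over any $R$) yields a map of $\S$-modules $f\colon s^{-1}\C^{(1)}\to\End_{H(A)}$. By freeness $f$ extends uniquely to an operad morphism $F\colon\mathcal{T}(s^{-1}\C^{(1)})\to\End_{H(A)}$.

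Next I would identify the relations of $\P$ concretely. For $c\in\C^{(2)}$ one checks that $d_{\Cobar\C}(s^{-1}c)$ lands inside $\mathcal{T}(s^{-1}\C^{(1)})$ and equals, up to signs, a linear term $s^{-1}(d_{\C}c)$ plus a quadratic term $\sum s^{-1}c_{(1)}\circ_{(1)}s^{-1}c_{(2)}$ coming from $\Delta_{(1)}(c)\in\C^{(1)}\circ_{(1)}\C^{(1)}$. The heart of the proof is then the weight-two component of the Maurer--Cartan equation evaluated on $c$:
\[
d_{\End_A}\bigl(\varphi^{(2)}(c)\bigr)+\varphi^{(1)}(d_{\C}c)+\sum\varphi^{(1)}(c_{(1)})\circ_{(1)}\varphi^{(1)}(c_{(2)})=0 .
\]
This exhibits the cycle $\varphi^{(1)}(d_{\C}c)+\sum\varphi^{(1)}(c_{(1)})\circ_{(1)}\varphi^{(1)}(c_{(2)})$ as a boundary in $\End_A$, so its class vanishes in $H(\End_A)$. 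Applying $\mathrm{can}$ and using that both $\mathrm{can}$ and the operad structure on $H(\End_A)$ are compatible with operadic composition, I obtain exactly $F$ of the relation $d_{\Cobar\C}(s^{-1}c)$; hence $F$ kills every relation and descends to an operad morphism $\bar F\colon\P\to\End_{H(A)}$, which defines $\varphi_*$.

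Finally, canonicity is automatic: $f$, and therefore $\bar F$, depend only on $\varphi^{(1)}$ (the element $\varphi^{(2)}$ serves only to witness that a class is a boundary, and does not enter the definition), hence only on $\varphi$. The main obstacle is bookkeeping rather than conceptual: one must track the suspension signs in identifying $d_{\Cobar\C}(s^{-1}c)$, and must invoke the standard fact that $\mathrm{can}\colon H(\End_A)\to\End_{H(A)}$ is a morphism of operads. Over a general commutative ring this is the delicate point, since without a field one cannot appeal to Künneth isomorphisms and has only the lax comparison map available --- which, crucially, is all that the argument requires.
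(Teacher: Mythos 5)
Your proof is correct, and it rests on the same two facts that drive the paper's argument: the weight-one part of the Maurer--Cartan equation makes $\varphi^{(1)}$ land in cycles, and the weight-two part is exactly the set of relations $d_{\Cobar \C}(s^{-1}\C^{(2)})$ defining $\P$. The difference is one of packaging. The paper's proof is two lines: it cites \cite[Proposition~6.3.5]{LodayVallette12} for a natural $\Cobar \C$-algebra structure $\psi \colon \C \to \End_{H(A)}$ induced on homology, sets $\varphi_* \coloneqq \psi^{(1)}$, and observes that, since $H(A)$ carries no differential, the term $d_{\End_{H(A)}} \circ \psi^{(2)}$ is absent from the weight-two Maurer--Cartan equation, so $\psi^{(1)}$ alone is a Maurer--Cartan element concentrated in weight one, i.e. a $\P$-algebra structure. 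You never invoke (or need) a full induced structure on $H(A)$: you push only $\varphi^{(1)}$ through the lax K\"unneth comparison $H(\End_A) \to \End_{H(A)}$ and kill the relations upstairs in $\End_A$, where the weight-two Maurer--Cartan equation exhibits their images as the boundaries $-d_{\End_A}\bigl(\varphi^{(2)}(c)\bigr)$; so the same verification happens in two different places, namely exact vanishing in $\End_{H(A)}$ for the paper versus vanishing of a boundary class in $H(\End_A)$ for you. What your route buys is self-containedness and an explicit justification of the point the paper's citation leaves implicit --- that only the lax, never inverted, K\"unneth map is required, so the construction is valid over an arbitrary commutative ring --- and you also check, which the paper tacitly assumes when defining $\P$, that $d_{\Cobar \C}(s^{-1}\C^{(2)})$ indeed lands in $\mathcal{T}(s^{-1}\C^{(1)})$. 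The sign bookkeeping you defer is routine and hides no gap.
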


\begin{proof}
	The homology of $(A, \varphi)$ carries a natural $\Cobar \C$-algebra structure \[ \psi : \C \longrightarrow \End_{H(A)} \ ,\] see \cite[Proposition~6.3.5]{LodayVallette12}. Since the homology has no differential, the component \[\varphi_* \coloneqq \psi^{(1)} \] is a $\Cobar \C$-algebra structure which appears as a $\P$-algebra over the operad $\P$ defined by (\ref{operad P}). 
\end{proof}

\begin{examples}
	Examples \ref{exemples} all comes equipped with a Koszul morphism $\kappa : \C \to \P$. 	
	\begin{enumerate}
		\item The bar construction $\C \coloneqq \Bar \P$ of a reduced operad $\P$ in $\mathrm{gr Mod}_R$ has an associated universal twisting morphism $\pi : \Bar \P \to \P$~, see \cite[Section~6.5]{LodayVallette12}~.
		
		\item The Koszul dual cooperad $\C := \P^{\antishriek}$ has the canonical twisting morphism $\kappa : \P^{\antishriek} \to \P$ .
		
		\item The quasi-planar cooperad $\C = \Bar \left(\mathcal{E} \otimes \P\right) $ where $\P$ is any reduced operad has an associated canonical twisting morphism $ \C \to \P$, see \cite{LRL23}. 
	\end{enumerate}
	
\end{examples}

\begin{definition}[Formality and gauge formality]
	A $\Cobar \C$-algebra structure $(A, \varphi)$ is
	
	\begin{enumerate}
		\item[$\centerdot$] \emph{formal} if there exists a zig-zag of quasi-isomorphisms of $\Cobar \C$-algebras \[(A, \varphi) \; \overset{\sim}{\longleftarrow} \;  \cdot \;  \overset{\sim}{\longrightarrow} \;  \cdots \;  \overset{\sim}{\longleftarrow} \; \cdot \;  \overset{\sim}{\longrightarrow} \; (H(A), \varphi_*) \ ;\] \smallskip
		
		\item[$\centerdot$] \emph{gauge formal} if there exists an $\infty$-quasi-isomorphism of $\Cobar \C$-algebras \begin{center}
			\begin{tikzcd}[column sep=normal]
				(A,\varphi) \ar[r,squiggly,"\sim"] 
				& (H(A), \varphi_*) \ .
			\end{tikzcd} 
		\end{center}
	\end{enumerate} 
\end{definition}

\noindent The formality and the gauge formality coincide in most examples, see Proposition \ref{formality-gauge}.

\begin{definition}[Transferred structure]
 A $\Cobar \C$-algebra $(A, \varphi)$ admits a \emph{transferred structure} if there exist a $\Cobar \C$-algebra structure $(H(A), \varphi_t)$, extending the $\P$-algebra structure $\varphi_*$ \[\varphi_t \coloneqq \varphi_* + \varphi_t^{(2)} + \varphi_t^{(3)} + \cdots \in \mathrm{MC}\left(\mathfrak{g}_{H(A)}\right) \ , \] and two $\infty$-quasi-isomorphisms 
 \[\hbox{
 	\begin{tikzpicture}
 		
 		\def\upshift{0.075}
 		\def\downshift{0.075}
 		\pgfmathsetmacro{\midshift}{0.005}
 		
 		\node[left] (x) at (0, 0) {$(A,\varphi)$};
 		\node[right=1.5 cm of x] (y) {$(H(A),\varphi_t) \ .$};
 		
 		\draw[->] ($(x.east) + (0.1, \upshift)$) -- node[above]{\mbox{\tiny{$p_{\infty}$}}} ($(y.west) + (-0.1, \upshift)$);
 		\draw[->] ($(y.west) + (-0.1, -\downshift)$) -- node[below]{\mbox{\tiny{$i_{\infty}$}}} ($(x.east) + (0.1, -\downshift)$);
 		
 \end{tikzpicture}}
 \] 
\end{definition}

\begin{proposition}\label{gauge formality} Let $(A, \varphi)$ be $\Cobar \C$-algebra structure. If there exists a transferred structure $(H(A), \varphi_t)$, the algebra $(A, \varphi)$ is gauge formal if and only if there exists an $\infty$-isotopy  \begin{center}
		\begin{tikzcd}[column sep=normal]
			f : (H(A), \varphi_t) \ar[r,squiggly,"="] 
			& (H(A), \varphi_*) \ .
		\end{tikzcd} 
	\end{center}
\end{proposition}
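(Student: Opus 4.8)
The plan is to deduce both implications by composing the $\infty$-quasi-isomorphisms $p_{\infty}$ and $i_{\infty}$ furnished by the transferred structure with the hypothesised $\infty$-morphism, and then to control the linear (weight-zero) part of the resulting composite. Two elementary facts drive the argument: first, the linear part of a composite $\infty$-morphism is the composite of the linear parts, so that composing with an $\infty$-morphism whose linear part is the identity leaves the linear part unchanged; second, since $H(A)$ carries the zero differential, an $\infty$-quasi-isomorphism with source and target $H(A)$ has linear part a quasi-isomorphism between zero-differential complexes, hence an honest isomorphism.

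For the reverse implication, I assume an $\infty$-isotopy $f : (H(A), \varphi_t) \rightsquigarrow (H(A), \varphi_*)$. Since $f^{(0)} = \mathrm{id}_{H(A)}$ is a quasi-isomorphism, $f$ is itself an $\infty$-quasi-isomorphism, and the composite $p_{\infty} \circledcirc f : (A, \varphi) \rightsquigarrow (H(A), \varphi_*)$ has linear part equal to $p_{\infty}^{(0)}$ (because $f^{(0)} = \mathrm{id}$), which is a quasi-isomorphism. Hence $(A, \varphi)$ is gauge formal.

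For the direct implication, let $g : (A, \varphi) \rightsquigarrow (H(A), \varphi_*)$ be an $\infty$-quasi-isomorphism witnessing gauge formality. Composing with $i_{\infty}$ yields $h \coloneqq i_{\infty} \circledcirc g : (H(A), \varphi_t) \rightsquigarrow (H(A), \varphi_*)$, whose linear part $\theta$ is the composite of the two quasi-isomorphisms $i_{\infty}^{(0)}$ and $g^{(0)}$ through $A$, hence a quasi-isomorphism $H(A) \to H(A)$ and therefore an isomorphism. So $h$ is an $\infty$-isomorphism (invertible by Lemma \ref{invertibility}), but a priori not an $\infty$-isotopy. To rectify this I use that the linear part of an $\infty$-morphism is a strict morphism of the underlying $\P$-algebras, see \cite[Section~10.2]{LodayVallette12}: extracting the weight-one component of the defining relation of $h$ shows that $\theta$ intertwines the two weight-one structures, which both equal $\varphi_*$ since $\varphi_t = \varphi_* + \varphi_t^{(2)} + \cdots$. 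Thus $\theta$ is a strict automorphism of the $\P$-algebra $(H(A), \varphi_*)$, and as $\varphi_*$ is concentrated in weight one, its inverse $\theta^{-1}$ defines a strict $\infty$-isomorphism $\theta^{-1} : (H(A), \varphi_*) \rightsquigarrow (H(A), \varphi_*)$. The composite $f \coloneqq h \circledcirc \theta^{-1} : (H(A), \varphi_t) \rightsquigarrow (H(A), \varphi_*)$ then has linear part equal to $\mathrm{id}_{H(A)}$, i.e. it is the sought $\infty$-isotopy.

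The crux of the proof, and the only place where something genuine happens beyond formal composition, is this last rectification step. Composing the two $\infty$-quasi-isomorphisms only produces an $\infty$-\emph{isomorphism} of the transferred and induced structures, and over an arbitrary ground ring $R$ one cannot appeal to the gauge-group description of Theorem \ref{parallele} (valid only over a $\mathbb{Q}$-algebra) to straighten it. The substantive points to verify are therefore that the linear part is an actual isomorphism (using that $H(A)$ has zero differential) and that it is a strict automorphism of $(H(A), \varphi_*)$, so that multiplying by its inverse is a legitimate operation on $\Cobar \C$-algebra structures that trivialises the linear part while preserving the source $\varphi_t$ and the target $\varphi_*$.
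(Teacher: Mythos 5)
Your proof is correct and follows essentially the same route as the paper's: reduce via $p_{\infty}$ and $i_{\infty}$ to an $\infty$-quasi-isomorphism $(H(A),\varphi_t) \rightsquigarrow (H(A),\varphi_*)$, observe it is an $\infty$-isomorphism because $H(A)$ has zero differential, read off from the weight-one part of the $\infty$-morphism equation that its linear part is an automorphism of $(H(A),\varphi_*)$, and compose with the inverse of that linear part to obtain the $\infty$-isotopy. The rectification step you single out as the crux is exactly the step the paper performs with $\left(f^{(0)}\right)^{-1} \circledcirc f$.
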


\begin{proof}
	The transferred structure comes equipped with two $\infty$-quasi-isomorphisms
	\[\hbox{
		\begin{tikzpicture}
			
			\def\upshift{0.075}
			\def\downshift{0.075}
			\pgfmathsetmacro{\midshift}{0.005}
			
			\node[left] (x) at (0, 0) {$(A, \varphi)$};
			\node[right=1.5 cm of x] (y) {$(H(A),\varphi_t) \ .$};
			
			\draw[->] ($(x.east) + (0.1, \upshift)$) -- node[above]{\mbox{\tiny{$p_{\infty}$}}} ($(y.west) + (-0.1, \upshift)$);
			\draw[->] ($(y.west) + (-0.1, -\downshift)$) -- node[below]{\mbox{\tiny{$i_{\infty}$}}} ($(x.east) + (0.1, -\downshift)$);
			
	\end{tikzpicture}}
	\] The algebra $(A,\varphi)$ is gauge formal if and only if there exists an $\infty$-quasi-isomorphism  \begin{center}
		\begin{tikzcd}[column sep=normal]
			f : (H(A), \varphi_t) \ar[r,squiggly,"\sim"] 
			& (H(A), \varphi_*) \ .
		\end{tikzcd} 
	\end{center} It is an $\infty$-isomorphism since the homology $H(A)$ has no differential. By definition of an $\infty$-morphism, the equation $f \star \varphi_t - \varphi_*  \circledcirc  f = d(f)$ holds and gives in weight 1, \[ f^{(0)} \star \left(\varphi_t\right)^{(1)} -  \varphi_*\circledcirc  f^{(0)} = - f^{(0)} \circ d_{\C} \ . \] Since $\varphi_* = \left(\varphi_t\right)^{(1)}$~, the component $f^{(0)}$ is an $\infty$-automorphism of $(H(A), \varphi_*)$ and the element \[ \left(f^{(0)}\right)^{-1}  \circledcirc f\] is an $\infty$-isotopy between $(H(A), \varphi_t)$ and $(H(A), \varphi_*)$. 
\end{proof}

\begin{definition}
	A chain complex $(A,d)$ is related to $H(A)$ via a \emph{contraction} if there exists \[
	\hbox{
		\begin{tikzpicture}
			
			\def\upshift{0.075}
			\def\downshift{0.075}
			\pgfmathsetmacro{\midshift}{0.005}
			
			\node[left] (x) at (0, 0) {$(A,d)$};
			\node[right=1.5 cm of x] (y) {$(H(A),0)$};
			
			\draw[->] ($(x.east) + (0.1, \upshift)$) -- node[above]{\mbox{\tiny{$p$}}} ($(y.west) + (-0.1, \upshift)$);
			\draw[->] ($(y.west) + (-0.1, -\downshift)$) -- node[below]{\mbox{\tiny{$i$}}} ($(x.east) + (0.1, -\downshift)$);
			
			\draw[->] ($(x.south west) + (0, 0.1)$) to [out=-160,in=160,looseness=5] node[left]{\mbox{\tiny{$h$}}} ($(x.north west) - (0, 0.1)$);
	\end{tikzpicture}} \] such that $ip - \mathrm{id}_A = d_A h + h d_A\ ,$  $pi = \mathrm{id}_{H(A)} \ ,$ $h^2 =0\ , $ $  ph = 0\ ,$ $hi =0 \ .   $
\end{definition}

\begin{example}
	If $R$ is a field, any chain complex $A$ is related to its homology $H(A)$ via a contraction. This is also the case when $A$ and $H(A)$ are degreewise projective and $R$ is an hereditary ring.
\end{example}

\begin{assumption}\label{assumptions 1}
	Let $R$ be a commutative ground ring and let $A$ be a chain complex related to $H(A)$ via a contraction. Let $\C$ be a reduced weight-graded dg cooperad over $R$. 
	\begin{itemize}
		\item[$\centerdot$] If $\C$ is a symmetric cooperad, we assume that $R$ is a $\mathbb{Q}$-algebra.
		\item[$\centerdot$]  If $\C$ is a symmetric quasi-planar cooperad, we assume that $R$ is a field. 
	\end{itemize}
\end{assumption}

\begin{theorem}[Homotopy transfer theorem] \label{HTT}  
\noindent Under Assumptions \ref{assumptions 1}, for any $\Cobar \C$-algebra structure, there exists a transferred structure $(H(A), \varphi_t)$  such that the embedding $i : H(A) \to A$ and the projection $p : A \to H(A)$ extend to $\infty$-quasi-isomorphisms 	\[\hbox{
	\begin{tikzpicture}
		
		\def\upshift{0.075}
		\def\downshift{0.075}
		\pgfmathsetmacro{\midshift}{0.005}
		
		\node[left] (x) at (0, 0) {$(A, \varphi)$};
		\node[right=1.5 cm of x] (y) {$(H(A),\varphi_t) \ .$};
		
		\draw[->] ($(x.east) + (0.1, \upshift)$) -- node[above]{\mbox{\tiny{$p_{\infty}$}}} ($(y.west) + (-0.1, \upshift)$);
		\draw[->] ($(y.west) + (-0.1, -\downshift)$) -- node[below]{\mbox{\tiny{$i_{\infty}$}}} ($(x.east) + (0.1, -\downshift)$);
		
\end{tikzpicture}}
\]  The transferred structure is independent of the choice of contraction in the following sense: any two such transferred structures are related by an $\infty$-isotopy. 
\end{theorem}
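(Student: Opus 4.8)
The homotopy transfer theorem has two parts: existence of the transferred structure together with the extended $\infty$-quasi-isomorphisms $i_\infty, p_\infty$, and uniqueness up to $\infty$-isotopy. The existence part is classical over a field or a $\mathbb{Q}$-algebra (see \cite[Section~10.3]{LodayVallette12}), so the real content here is to verify that the explicit transfer formulas make sense under the weaker Assumptions~\ref{assumptions 1}, and then to prove the uniqueness statement, which is where I expect the main work to lie.

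First I would recall the explicit transfer formulas. Given a contraction $(i,p,h)$ and a $\Cobar\C$-algebra structure $\varphi \in \mathrm{MC}(\mathfrak{g}_A)$, the transferred structure $\varphi_t$ on $H(A)$ is defined by a sum over trees: for each corolla one plugs in $\varphi$, assigns the homotopy $h$ to internal edges, and caps with $i$ at the leaves and $p$ at the root. Similarly, $i_\infty$ is the tree sum decorated by $h$ everywhere except $i$ at the leaves, and $p_\infty$ is dual. The only obstruction to writing these down is that in the symmetric case each tree comes weighted by a rational coefficient coming from the $\mathbb{S}$-module structure of $\overline{\C}$, which forces the $\mathbb{Q}$-algebra (or field) hypotheses recorded in Assumptions~\ref{assumptions 1}; in the quasi-planar case, the Barratt–Eccles resolution removes the need to divide, leaving only the field hypothesis needed for the contraction to exist degreewise. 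I would check that $\varphi_t$ satisfies the Maurer--Cartan equation and that $p_\infty, i_\infty$ satisfy the $\infty$-morphism relation $f \star \varphi - \psi \circledcirc f = d(f)$ by the standard side-conditions $h^2 = 0$, $ph = 0$, $hi = 0$, so that the tree sums telescope.

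For uniqueness, suppose $(H(A),\varphi_t)$ and $(H(A),\varphi_t')$ are two transferred structures, extending the \emph{same} induced structure $\varphi_* = \varphi_t^{(1)} = (\varphi_t')^{(1)}$. Both are connected to $(A,\varphi)$ by $\infty$-quasi-isomorphisms, so composing $p_\infty'$ with $i_\infty$ (in the sense of the $\circledcirc$ product, using that $\infty$-quasi-isomorphisms on a complex with zero differential are $\infty$-isomorphisms by Lemma~\ref{invertibility}) yields an $\infty$-isomorphism $f : (H(A),\varphi_t) \rightsquigarrow (H(A),\varphi_t')$. Its zeroth component $f^{(0)} : H(A) \to H(A)$ is the quasi-isomorphism $p'i$; since $p i = \mathrm{id}_{H(A)}$ and both $i,i'$ induce the identity on homology, $f^{(0)} = \mathrm{id}_{H(A)}$ (or can be corrected to it, exactly as in the argument of Proposition~\ref{gauge formality}, by precomposing with $(f^{(0)})^{-1}$). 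Thus $f$ is an $\infty$-isotopy, as required.

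The hard part will be the bookkeeping that makes the composite $f$ an honest $\infty$-isotopy rather than merely an $\infty$-isomorphism: one must confirm that the weight-one components genuinely agree, so that $f^{(0)} = \mathrm{id}$ after the correction of Lemma~\ref{invertibility} and Proposition~\ref{gauge formality}. Concretely, I would argue that any two contractions of $A$ onto $H(A)$ can be interpolated — the space of contractions is connected — and that the induced transferred structures are therefore joined by an $\infty$-isotopy obtained by integrating this interpolation; alternatively, one invokes the standard fact that the homotopy category of $\Cobar\C$-algebras on a fixed complex identifies $\infty$-quasi-isomorphic structures uniquely up to $\infty$-isotopy, which under Assumptions~\ref{assumptions 1} is precisely the content one needs. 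Either route reduces to verifying that the correction term $(f^{(0)})^{-1}$ is an $\infty$-automorphism of $(H(A),\varphi_*)$, which is immediate since $f^{(0)}$ intertwines $\varphi_*$ with itself in weight one.
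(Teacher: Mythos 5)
The first thing to say is that the paper does not actually prove Theorem \ref{HTT}: its ``proof'' is a pointer to the literature (\cite{berglund}, \cite[Section~10.3]{LodayVallette12}, and \cite[Section~5.5]{LRL23} for the symmetric quasi-planar case). So your proposal is not competing with an argument written in the paper; it is a reconstruction of the proof contained in those references, and its outline is the standard one: existence via the tree-sum transfer formulas (with the rational tree weights explaining why the symmetric case requires a $\mathbb{Q}$-algebra or a field in Assumptions \ref{assumptions 1}, and the Barratt--Eccles/quasi-planar mechanism removing the divisions), and uniqueness by composing $p'_\infty \circledcirc i_\infty$ and invoking the invertibility of $\infty$-isomorphisms (Lemma \ref{invertibility}). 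This matches what the cited sources do, and your accounting of where each hypothesis enters is accurate.

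One step of your uniqueness argument is flawed, although it is also unnecessary. The ``correction'' maneuver --- replacing $f$ by $(f^{(0)})^{-1} \circledcirc f$ ``exactly as in Proposition \ref{gauge formality}'' --- does not transport to this setting. In that proposition the target structure is $\varphi_*$, concentrated in weight one, so the strict automorphism $(f^{(0)})^{-1}$ of the $\P$-algebra $(H(A),\varphi_*)$ satisfies the full $\infty$-morphism equation with target $\varphi_*$ and may be composed with $f$. In the uniqueness setting the target is $\varphi'_t$, which has higher components; a strict automorphism of $\varphi_*$ is an $\infty$-morphism $\varphi'_t \rightsquigarrow \varphi'_t$ only if it strictly commutes with all the higher transferred operations, which fails in general --- composing with it changes the target structure to $(f^{(0)})^{-1}\cdot\varphi'_t$ in the sense of Theorem \ref{action}, not $\varphi'_t$. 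Your fallback, invoking ``the standard fact'' that $\infty$-quasi-isomorphic structures on a fixed complex with zero differential are $\infty$-isotopic, is circular: that is essentially the statement being proven. Fortunately your primary argument already suffices and needs no correction: both contractions' maps induce the canonical identification on homology, and every map in sight is a chain map between complexes with zero differential, hence equal to its induced map on homology; therefore $f^{(0)} = p'\circ i = H(p')\circ H(i) = \mathrm{id}_{H(A)}$ and $f = p'_\infty \circledcirc i_\infty$ is an honest $\infty$-isotopy on the nose. Note that this is precisely where the normalization that both transferred structures extend the \emph{same} canonical induced structure $\varphi_*$ (Proposition \ref{induite}) is used; for ``contractions'' whose maps do not induce the canonical identification (e.g.\ rescaling $i$ and $p$ by inverse units), the resulting structures need not extend $\varphi_*$ and need not be $\infty$-isotopic, so the statement really does depend on this normalization.
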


\begin{proof} We refer the reader to \cite{berglund},  \cite[Section~10.3]{LodayVallette12} and references therein. In the case of a symmetric quasi-planar cooperad, we refer the reader to \cite[Section~5.5]{LRL23}, 
\end{proof}

\begin{proposition} \label{formality-gauge} 
	Let $\C$ be a reduced weight-graded dg cooperad over $R$. 
	\begin{enumerate}
		\item If $R$ is a characteristic zero field, an $\Cobar \C$-algebra structure is formal if and only if it is gauge formal. 
						
		\item Suppose that $\C$ is non-symmetric. 
		\begin{enumerate}
			\item[i.] If $R$ is a field, then a formal $\Cobar \C$-algebra structure on $A$ is also gauge formal.

			\item[ii.] If $\C$ and $\Cobar \C$ are aritywise and degreewise flat, then a gauge formal $\Cobar \C$-algebra is formal. 
		\end{enumerate}
		
		\item   \cite[Proposition~43]{LRL23} Suppose that $\C$ is a symmetric quasi-planar cooperad. If $R$ is a field, then an $\Cobar \C$-algebra is formal if and only if it is gauge formal.   

	\end{enumerate}
\end{proposition}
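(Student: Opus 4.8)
The plan is to prove both implications leg by leg, reducing everything to two standard properties of $\infty$-morphisms together with the trivial remark that a strict quasi-isomorphism of $\Cobar\C$-algebras \emph{is} an $\infty$-quasi-isomorphism, namely the $\infty$-morphism concentrated in weight zero whose only nonzero component $f^{(0)}$ is the given chain map. The two properties I would invoke are: \emph{(Invertibility)} under each hypothesis in the statement, any $\infty$-quasi-isomorphism admits an inverse $\infty$-quasi-isomorphism; and \emph{(Rectification)} any $\infty$-quasi-isomorphism fits into a zig-zag of strict quasi-isomorphisms produced by the bar--cobar adjunction relative to the universal twisting morphism $\iota \colon \C \to \Cobar\C$. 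Invertibility will drive \emph{formal $\Rightarrow$ gauge formal}, and rectification will drive \emph{gauge formal $\Rightarrow$ formal}.

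For \emph{formal $\Rightarrow$ gauge formal}, I would start from a zig-zag of strict quasi-isomorphisms relating $(A,\varphi)$ to $(H(A),\varphi_*)$, regard each leg as an $\infty$-quasi-isomorphism, and use invertibility to reverse every backward-pointing leg. Composing the resulting chain under the product $\circledcirc$ yields a single $\infty$-quasi-isomorphism $(A,\varphi) \rightsquigarrow (H(A),\varphi_*)$; its weight-zero component is the composite of the individual weight-zero components, hence still a quasi-isomorphism, which is precisely gauge formality. Invertibility is available because the hypotheses are exactly those guaranteeing the homotopy transfer theorem \ref{HTT} together with a denominator-free inductive construction of a homotopy inverse: a characteristic zero field for a symmetric $\C$ (Point (1)), an arbitrary field for a non-symmetric $\C$ (Point (2)(i)), and a field for a quasi-planar $\C$ (Point (3)).

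For \emph{gauge formal $\Rightarrow$ formal}, I would start from an $\infty$-quasi-isomorphism $f \colon (A,\varphi) \rightsquigarrow (H(A),\varphi_*)$, which is the same datum as a strict morphism of dg $\C$-coalgebras $F \colon \Bar_\iota A \to \Bar_\iota(H(A))$. Applying the cobar functor $\Cobar_\iota$ and adjoining the counits of the bar--cobar adjunction produces the zig-zag
\[
(A,\varphi) \xleftarrow{\ \sim\ } \Cobar_\iota\Bar_\iota A \xrightarrow{\ \Cobar_\iota F\ } \Cobar_\iota\Bar_\iota (H(A)) \xrightarrow{\ \sim\ } (H(A),\varphi_*) \ ,
\]
consisting of strict $\Cobar\C$-algebra morphisms. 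The two counits are quasi-isomorphisms -- automatically when $R$ is a field, and in general because $\C$ and $\Cobar\C$ are aritywise and degreewise flat (Point (2)(ii)) -- while a filtration argument on the cofree coalgebras identifies $\Cobar_\iota F$ as a quasi-isomorphism exactly because $f^{(0)}$ is one. Hence $(A,\varphi)$ is formal.

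Assembling the cases: Point (1) uses both implications over a characteristic zero field, where Assumptions \ref{assumptions 1} hold for every reduced weight-graded cooperad and flatness is automatic; Point (2)(i) is the first implication and Point (2)(ii) the second, the non-symmetric setting removing the need for characteristic zero in both invertibility and rectification; and Point (3) is \cite[Proposition~43]{LRL23}, the quasi-planar hypothesis being what restores both properties over an arbitrary field in the symmetric case. I expect the main difficulty to be bookkeeping of hypotheses rather than new computation: pinning down exactly where characteristic zero is forced (invertibility of symmetric $\infty$-quasi-isomorphisms) and exactly where flatness of $\C$ and $\Cobar\C$ is indispensable (quasi-isomorphy of the bar--cobar counit and of $\Cobar_\iota F$) once one leaves the comfort of a base field.
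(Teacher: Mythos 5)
Your proposal is correct and follows essentially the same route as the paper: the implication formal $\Rightarrow$ gauge formal is obtained by reversing each backward leg through the transferred structures furnished by Theorem \ref{HTT} and inverting the resulting $\infty$-isomorphism between homologies, and gauge formal $\Rightarrow$ formal is exactly the paper's bar--cobar rectification zig-zag $\Cobar_{\iota}\Bar_{\iota}$ with the K\"unneth/flatness hypotheses placed where you place them. The only cosmetic difference is that you posit invertibility of $\infty$-quasi-isomorphisms as a primitive, whereas the paper needs only the hypothesis-free invertibility of $\infty$-isomorphisms (Lemma \ref{invertibility}), since the transferred structures live on homology with zero differential --- which is in fact how your black box is proved.
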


\begin{proof}
Let $(A, \varphi)$ be a formal $\Omega \C$-algebra structure. If $R$ is a field and the hypotheses of Theorem \ref{HTT} are satisfied, we claim that it is also gauge formal. It suffices to prove that given a quasi-isomorphism of $\Omega \C$-algebras \[(A, \varphi) \longleftarrow (B, \psi) \ ,  \] there exists an $\infty$-quasi-isomorphism in the other direction. By Theorem \ref{HTT}, there exists $\infty$-quasi-isomorphisms
\begin{center}
	\begin{tikzcd}[column sep=normal]
	f : (H(B),\psi_t) \ar[r,squiggly,"i_{\infty} "]  & 	(B,\psi) \ar[r,squiggly,"\sim"] 
		& (A, \varphi)   \ar[r,squiggly," p_{\infty}"] 		 
		& (H(A), \varphi_t) \ .
	\end{tikzcd} 
\end{center} The $\infty$-isomorphism $f$ admits an inverse $f^{-1}$ by Lemma \ref{invertibility}. The composite 
\begin{center}
	\begin{tikzcd}[column sep=normal]
		 (A,\phi) \ar[r,squiggly,"p_{\infty}\ "]  &  (H(A), \varphi_t) \ar[r,squiggly,"f^{-1}"]  &
		(H(B), \psi_t) \ar[r,squiggly," i_{\infty}"] 		 
		& 	(B,\psi)  \ .
	\end{tikzcd} 
\end{center}
produces the required $\infty$-quasi-isomorphism. This proves (2).i. and the direct implications of (1) and (3). Conversely, let $(A, \varphi)$ be a gauge formal $\Omega \C$-algebra structure and let  \begin{center}
	\begin{tikzcd}[column sep=normal]
		f : (A,\varphi) \ar[r,squiggly,"\sim"] 
		& (H(A), \varphi_*) \ .
	\end{tikzcd} 
\end{center} be an $\infty$-quasi-isomorphism. By \cite[Proposition~11.3.1]{LodayVallette12}, the universal Koszul morphism  $\iota : \C \to \Cobar \C$ induces an adjunction \[
\hbox{
	\begin{tikzpicture}
		\def\upshift{0.185}
		\def\downshift{0.15}
		\pgfmathsetmacro{\midshift}{0.005}
		
		\node[left] (x) at (0, 0) {$\Cobar_{\iota} : \lbrace \mbox{dg } \C \mbox{-coalgebras}\rbrace$};
		\node[right] (y) at (2, 0) {$\lbrace \mbox{dg } \Cobar \C \mbox{-algebras}\rbrace : \Bar_{\iota} \ .$};
		
		\draw[-{To[left]}] ($(x.east) + (0.1, \upshift)$) -- ($(y.west) + (-0.1, \upshift)$);
		\draw[-{To[left]}] ($(y.west) + (-0.1, -\downshift)$) -- ($(x.east) + (0.1, -\downshift)$);
		
		\node at ($(x.east)!0.5!(y.west) + (0, \midshift)$) {\scalebox{0.8}{$\perp$}};
\end{tikzpicture}} 
\]  If $R$ is a characteristic zero field, this leads to a zig-zag of $\Cobar \C$-algebras quasi-isomorphisms \begin{equation}\label{recti}
(A,\varphi)  \underset{\sim}{\xleftarrow{\epsilon_{\iota}(A,\varphi)}  }  \Cobar_{\iota} \Bar_{\iota} (A,\varphi)  \underset{\sim}{\xrightarrow{\Cobar_{\iota} \Bar_{\iota} (f)}  }          \Cobar_{\iota} \Bar_{\iota} (H(A), \varphi_*) \underset{\sim}{\xrightarrow{\epsilon_{\iota}(H(A), \varphi_*)}  }  (H(A), \varphi_*) \ , 
\end{equation}
where the first and the third quasi-isomorphisms are given by the counit $\epsilon_{\iota}$ of the adjunction, see 
\cite[Theorem~11.3.3]{LodayVallette12} and $\Cobar_{\iota} \Bar_{\iota} (f) $ is a quasi-isomorphism by \cite[Theorem~11.4.7]{LodayVallette12}. This proves the converse implication of (1).

\noindent The ground ring $R$ is assumed to be a characteristic zero field in \cite[Section~11]{LodayVallette12} so that the 
operadic K\"unneth formula \cite[Proposition~6.2.3]{LodayVallette12} holds true. If $\C$ and $\Cobar \C$ are non-symmetric and aritywise and degreewise flat, there is a version of this proposition over any ring $R$, as mentioned in the proof of \cite[Proposition~1.18]{DCH21}. Thus, under the hypotheses of (2).ii., the zig-zag (\ref{recti}) also holds and $(A, \varphi)$ is formal.
\end{proof}

\begin{definition}[Gauge $n$-formality]\label{definition formality}
Let $(A, \varphi)$ be $\Cobar \C$-algebra such that there exists a transferred structure $(H(A), \varphi_t)$. It is called \emph{gauge $n$-formal} if there exist an $\Cobar \C$-algebra $(H(A),\psi)$ such that $\psi^{(k)} = 0$~, for all $2 \leqslant k \leqslant n + 1$ and an $\infty$-isotopy 	\begin{center}
		\begin{tikzcd}[column sep=normal]
			(H(A), \varphi_t) \ar[r,squiggly,"="] 
			& (H(A), \psi) \ .
		\end{tikzcd} 
	\end{center}
\end{definition}

\begin{remark}
The notion of gauge $n$-formality depends on the choice of weight-grading on $\C$. If the weight-grading is the homological degree of $\C$, it coincides with \cite[Definition~1.22]{DCH21}. If the weight-grading is given by the arity, one recovers the approach taken for instance in \cite{Lun07}, \cite{Sal17}, \cite{MR19}, etc. 
\end{remark}

\subsection{Pre-Lie calculus in operadic convolution dg algebras}\label{2.3}
As explained in Section \ref{2.1}, the pre-Lie algebra $\mathfrak{a}_A$ is equipped with a pre-Lie product $\star$ and an associative product $\circledcirc$. These two operations satisfy compatibility relations, some of them are proved over a characteristic zero field in \cite[Proposition~5]{DSV16}. In this section, we establish these relations over any commutative ground ring $R$~.

\begin{definition} For all $f,g, h\in \mathfrak{a}_A$~, we define $f\circledcirc \left(g ; h    \right) \in \mathfrak{a}_A$ as the following composition
\[f\circledcirc \left(g ; h    \right) \coloneqq \C \xrightarrow{\Delta} \C \circ \C \xrightarrow{\mathrm{id}_{\C} \circ'  \mathrm{id}_{\C}}  \C \circ \left(\C; \C \right)  \xrightarrow{f  \circ  (g;h)} \End_A \circ \left( \End_A ; \End_A \right) \xrightarrow{\pi}  \End_A \ ,\] where the map $\circ'$ is the infinitesimal composition of morphisms and where the map $\pi$ is \[ \End_A \circ \left( \End_A ; \End_A \right) \rightarrowtail \End_A \circ \left( \End_A \oplus \End_A \right) \twoheadrightarrow \End_A \circ  \End_A  \xrightarrow{\gamma} \End_A \ ,\]  see \cite[Section~6.1.1]{LodayVallette12} for more details.
\end{definition}

\begin{lemma}\label{compatibility}
For all $f,g, h, k\in \mathfrak{a}_A$~, the following identities hold
\begin{enumerate}
\item[(a)] $  \left(f \circledcirc g \right) \star h = f \circledcirc \left(g; g \star h \right) \ ,$
\item[(b)] $\left(f\circledcirc \left(g ; h    \right) \right) \circledcirc k = f\circledcirc \left(g \circledcirc  k  ; h  \circledcirc  k   \right) \ , $
	\item[(c)] $f \star h =  f \circledcirc \left(1;  h \right) \ .$
\end{enumerate}
If furthermore, the element $g$ is invertible with respect to $\circledcirc$\ , then  
\begin{enumerate}
	\item[(d)]  $ f \circledcirc \left(g;h\right) = \left( f \star \left(h \circledcirc g^{-1}\right)\right) \circledcirc g \ ,$
	\item[(e)] $\left(	\left( f \circledcirc g \right)  \star h \right) \circledcirc g^{-1} = f \star 	\left(	\left(    g \star h \right) \circledcirc g^{-1}  \right) \ . $
\end{enumerate}
\end{lemma}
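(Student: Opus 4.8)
The plan is to prove the five identities (a)--(e) by unwinding the definitions of $\star$, $\circledcirc$, and $f\circledcirc(g;h)$ in terms of the cooperad cocomposition $\Delta$ (and its infinitesimal version $\Delta_{(1)}$) composed with the operad composition $\gamma$ (and $\gamma_{(1)}$) of $\End_A$. The key structural input is coassociativity of the cooperad $\C$, associativity of the operad $\End_A$, and the standard compatibility between the full composition $\circ$ and the infinitesimal composition $\circ_{(1)}$ (equivalently $\circ'$). Since these are the defining axioms of (co)operads that hold over any commutative ring $R$, none of the arguments below requires $R$ to be a $\mathbb{Q}$-algebra; this is precisely the point of re-proving the relations of \cite[Proposition~5]{DSV16} integrally. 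First I would fix, once and for all, the diagrammatic bookkeeping: each side of every identity is a composite starting from $\C$, threading through iterated cocompositions, and landing in $\End_A$ after a single composition map, so each identity reduces to the commutativity of a diagram built entirely from the (co)operadic structure maps.

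First I would establish (c) and (a), which are the foundational cases. For (c), unfolding $f\circledcirc(1;h)$ with the left unit $1:\mathrm{I}\to\End_A$ forces the ``$g$-slot'' to be the identity, collapsing the infinitesimal composition $\circ'$ so that $\C\xrightarrow{\Delta}\C\circ\C\xrightarrow{\mathrm{id}\circ'\mathrm{id}}\C\circ(\C;\C)$ reduces to the infinitesimal cocomposition $\Delta_{(1)}:\C\to\C\circ_{(1)}\C$; matching the composition map $\pi$ with $\gamma_{(1)}$ then reproduces exactly the definition of $f\star h$. For (a), I would use the relation between $\Delta$ and $\Delta_{(1)}$ together with coassociativity: the left side $(f\circledcirc g)\star h$ applies $\Delta$ then an infinitesimal cocomposition on the inner factor, and by the compatibility of $\Delta$ with $\Delta_{(1)}$ this rearranges into the iterated comultiplication appearing on the right side $f\circledcirc(g;g\star h)$, where the ``$g\star h$'' branch is itself an instance of $\star$ realized through $\circ'$. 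The bookkeeping of which cooperadic leg receives the infinitesimal decomposition is where sign and bracketing care is needed, but the underlying identity is coassociativity.

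Next I would treat (b), the associativity-type statement for the $\circledcirc(-;-)$ operation: both sides are obtained by cocomposing three times and composing in $\End_A$, and the identity is the assertion that the two ways of bracketing these cocompositions agree. This follows from coassociativity of $\Delta$ on $\C$ together with associativity of $\gamma$ on $\End_A$, exactly the sequential/horizontal composition axioms of operads, applied degreewise under the product $\prod_p$. With (a), (b), (c) in hand, the remaining two identities follow formally from the invertibility of $g$ with respect to $\circledcirc$, which is guaranteed by Lemma \ref{invertibility}. For (d), I would start from the right-hand side $(f\star(h\circledcirc g^{-1}))\circledcirc g$, rewrite $f\star(-)$ via (c) as $f\circledcirc(1;-)$, then apply (b) to push the outer $\circledcirc g$ inside, obtaining $f\circledcirc(1\circledcirc g;(h\circledcirc g^{-1})\circledcirc g)=f\circledcirc(g;h)$ after using $1\circledcirc g=g$ and $g^{-1}\circledcirc g=1$ (i.e. associativity and the inverse property for $\circledcirc$). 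For (e), I would combine (a) and (d): applying (a) gives $(f\circledcirc g)\star h=f\circledcirc(g;g\star h)$, and then substituting into (d) with this choice of data, followed by a final $\circledcirc g^{-1}$, yields the stated pre-Lie-type identity.

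The main obstacle I anticipate is not any single deep idea but the careful diagram-chasing and sign management in identities (a) and (b): one must verify that the specific composites of $\Delta$, $\Delta_{(1)}$, $\circ'$, $\gamma$ and $\gamma_{(1)}$ coincide, keeping track of the Koszul signs introduced whenever graded maps are permuted past one another and ensuring the arity-wise infinite products behave well. Because the whole argument is purely (co)operadic and uses only structure maps that exist over any commutative ring, the characteristic-zero hypotheses play no role here; I would emphasize this point explicitly, since it is exactly what allows the subsequent Kaledin-class machinery of Theorem \ref{B} to be run integrally. Once (a)--(c) are checked diagrammatically, (d) and (e) are short formal consequences via Lemma \ref{invertibility}, so I would present those two in full and relegate the routine verification of the first three to a sequence of commuting diagrams.
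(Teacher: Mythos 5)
Your proposal is correct and follows essentially the same route as the paper: identities (a) and (b) are established by diagram chases resting on coassociativity of $\Delta$ and associativity of $\gamma$ (valid over any commutative ring), and (d) and (e) are then deduced formally from (a)--(c) together with the invertibility of $g$ from Lemma \ref{invertibility}, exactly as in the paper's proof. The only cosmetic difference is that you prove (c) directly from the definition of the unit $1$ (collapsing the composite through $\C \circ (\C;\C)$ to $\Delta_{(1)}$), whereas the paper obtains (c) by specializing (a) at $g=1$; both are immediate.
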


\begin{proof} By coassociativity of the decomposition map $\Delta$~, the following diagram is commutative 
\[\begin{tikzcd}[description]
&\C \arrow{d}{ \Delta }   &\\
&\C \circ \C \arrow{d}{ \mathrm{id}_{\C} \circ'  \mathrm{id}_{\C}}  & \\
& \C \circ \left(\C; \C \right) \arrow{ddr}[above right]{\mathrm{id}_{\C} \circ \left(\mathrm{id}_{\C}, \Delta_{(1)} \right)} \arrow{ddl}[above left]{\Delta \circ \left(\eta , \mathrm{id}_{\C} \right)}& \\
&& \\
(\C \circ \C) \circ (\mathrm{I};\C) \arrow{rr}[above]{\cong} \arrow{dd}[left]{(f \circ g) \circ (\mathrm{id}_\mathrm{I}; h) }  & &   \C \circ \left(\C; \C \circ \left(\mathrm{I}; \C \right) \right)\arrow{dd}{f \circ \left(g; g \circ (\mathrm{id}; h) \right)} \\
&& \\
(\End_A \circ \End_A) \circ (\mathrm{I};\End_A) \arrow{rr}[above]{\cong} \arrow{dd}[left]{ \gamma \circ (\mathrm{id}_\mathrm{I} ; \mathrm{id}_{\End_A})  }  & &   \End_A\circ \left(\End_A; \End_A \circ \left(\mathrm{I}; \End_A \right) \right) \arrow{dd}[right]{\mathrm{id}_{\End_A} \circ \left(\mathrm{id}_{\End_A}; \gamma_{(1)}\right)} \\
&& \\
\End_A  \circ (\mathrm{I};\End_A) \arrow[dr, twoheadrightarrow]   & &   \End_A\circ \left(\End_A; \End_A \right) \arrow[dl, twoheadrightarrow]    \\
&\End_A \circ \End_A \arrow[d, "\gamma"]& \\
&\End_A & 
\end{tikzcd} 
\] \normalsize  The left-hand side is equal to $\left(f \circledcirc g\right) \star h$ and the right-hand side is equal to $f \circledcirc \left(g; g \star h \right)$, which gives Formula (a). The proof of Formula (b) is similar to the one of (a) and relies on the isomorphism \[ \left( \C \circ \left(\C; \C \right) \right) \circ \C \cong \C \circ \left( \C \circ \left( \C \circ \C; \C \circ \C \right)\right) \ . \] Formula (c) follows from (a) with $g= 1$~. Considering $k \coloneqq g^{-1}$ in Formula (b) and using Formula (c), we obtain Formula (d) :
\begin{equation*}
	f \circledcirc \left(g;h\right) \overset{(b)}{=}  \left(f\circledcirc \left(1  ; h  \circledcirc  g^{-1}   \right) \right) \circledcirc  g \overset{(c)}{=} \left( f \star \left(h \circledcirc g^{-1}\right)\right) \circledcirc g  \ .
\end{equation*} Formulas (a) and (d) give Formula (e) :
\begin{equation*}
	\left(\left(f \circledcirc g \right) \star h \right) \circledcirc g^{-1} \overset{(a)}{= } \left(f \circledcirc \left(g; g \star h \right) \right) \circledcirc g^{-1} \overset{(d)}{= }  f \star \left( \left(g \star h\right) \circledcirc g^{-1}\right) \ . \qedhere
\end{equation*}
\end{proof}

\begin{lemma}\label{differentielles} For all $f,g \in \mathfrak{a}_A$~, the following identities hold	
	\begin{enumerate}
		\item[(i)] 	$d \left(f \star g \right)  =  d(f)  \star g +   (-1)^{|f|} f \star  d(g)  \ ,   $
		\item[(j)] If $g$ is of degree zero, then $d \left(f \circledcirc g \right) =  d\left(f  \right) \circledcirc g + (-1)^{|f|} f \circledcirc  \left(g; d(g )  \right) \ , $ 
		\item[(k)]  $ d (1) = 0 \ .$
\end{enumerate} 
If furthermore, the element $g$ is invertible with respect to $\circledcirc$ , then  
	\begin{enumerate}
		\item[(l)] $d \left(g^{-1} \right) =  - g^{-1} \star \left( d(g ) \circledcirc g^{-1}  \right) \ .   $ 
	\end{enumerate}
\end{lemma}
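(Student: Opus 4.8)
The plan is to establish the four identities (i)--(l) by direct computation from the definitions of the two products $\star$ and $\circledcirc$ and the differential $d$ on $\mathfrak{a}_A$, using that $d$ is a derivation built from $d_{\End_A}$ and $d_{\overline{\C}}$ (resp.\ $d_{\C}$). Identity (i) is the Leibniz rule for $d$ with respect to the pre-Lie product $\star$; I would prove it by unfolding $\varphi \star \psi$ as the composite $\C \xrightarrow{\Delta_{(1)}} \C \circ_{(1)} \C \xrightarrow{f \circ_{(1)} g} \End_A \circ_{(1)} \End_A \xrightarrow{\gamma_{(1)}} \End_A$ and applying the differential, checking that the terms where $d$ hits $\Delta_{(1)}$ cancel because $\Delta_{(1)}$ is a chain map, leaving precisely the two Leibniz terms with the correct Koszul sign $(-1)^{|f|}$. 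This is the standard fact that the convolution product is compatible with the differential, now carried out over an arbitrary ground ring; since everything is $R$-linear and the sign bookkeeping is the Koszul rule, the characteristic-zero hypothesis of \cite{DSV16} plays no role and the same computation goes through verbatim.

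Next I would prove (j). Here the asymmetry in the statement---the hypothesis that $g$ is of degree zero and the appearance of $f \circledcirc (g; d(g))$ rather than $f \circledcirc d(g)$---is the conceptual point. Writing out $f \circledcirc g$ via $\C \xrightarrow{\Delta} \C \circ \C \xrightarrow{f \circ g} \End_A \circ \End_A \xrightarrow{\gamma} \End_A$ and applying $d$, the differential distributes over the two tensor slots of $f \circ g$. When $d$ hits the $g$-slot, one is differentiating $g$ applied to \emph{several} inputs simultaneously; because $g$ has degree zero, the Koszul signs coming from permuting $d$ past the already-placed copies of $g$ all vanish, and the resulting term reorganizes exactly as $f \circledcirc (g; d(g))$, where the auxiliary operation $f \circledcirc (-;-)$ from the preceding definition records ``apply $d(g)$ in one branch and $g$ in the others.'' I would make this precise by invoking the infinitesimal-composition formalism already set up for $f \circledcirc (g;h)$, so that the Leibniz expansion of $\gamma \circ (f \circ g) \circ \Delta$ is literally the sum of the two named terms. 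Identity (k) is immediate: $1 : \mathrm{I} \to \End_A$ sends the generator to $\mathrm{id}_A$, which is a cycle of degree zero, and $\mathrm{I}$ carries no differential, so $d(1) = 0$.

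Finally, (l) should follow formally from (j) and (k) together with Lemma \ref{compatibility}, without any further diagram chase. Starting from $g^{-1} \circledcirc g = 1$ and applying $d$ using (j) (legitimate since $g$, hence $g^{-1}$, is of degree zero as an $\infty$-isomorphism), one gets
\begin{equation*}
0 = d(1) = d(g^{-1}) \circledcirc g + g^{-1} \circledcirc \bigl(g; d(g)\bigr) \ .
\end{equation*}
I would then right-compose with $g^{-1}$, use identity (b) to simplify $\bigl(g^{-1} \circledcirc (g; d(g))\bigr) \circledcirc g^{-1}$, and apply the consequence (d) of Lemma \ref{compatibility} to rewrite $g^{-1} \circledcirc (g; d(g))$ as a $\star$-product; solving for $d(g^{-1})$ yields $d(g^{-1}) = -\,g^{-1} \star \bigl(d(g) \circledcirc g^{-1}\bigr)$. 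The main obstacle I anticipate is purely combinatorial rather than conceptual: getting the Koszul signs and the bookkeeping of the multi-slot operation $f \circledcirc (g; d(g))$ exactly right in (j), since this is the one identity where the degree-zero hypothesis is genuinely used and where a naive Leibniz expansion could misplace a sign. Once (j) is secured, the degree-zero constraint ensures no stray signs appear and (l) drops out algebraically from the lemmas already established.
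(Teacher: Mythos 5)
Your proposal is correct and follows essentially the same route as the paper: the paper cites \cite[Section~6.4]{LodayVallette12} for (i) and (j) (which you instead verify directly -- and you are right that nothing in that computation requires characteristic zero), obtains (k) from (i) applied to $f = g = 1$ rather than your direct check that $\mathrm{id}_A$ is a cycle, and proves (l) exactly as you do, by applying the Leibniz rule (j) to $g^{-1} \circledcirc g = 1$ and then invoking Formula (d) of Lemma \ref{compatibility} (the paper phrases this through the precomposition part $- \circ d_{\C}$ of the differential, but the algebraic manipulation is the same). There are no gaps.
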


\begin{proof} For points (i) and (j), we refer the reader to \cite[Section~6.4]{LodayVallette12}\ . Formula $(k)$ follows from  Formula (i) applied to $f = g = 1$. By Formula (j) applied to $\left(g^{-1} \circledcirc g \right) \circ d_{\C} = 1 \circ d_{\C} = 0$~, the following equality holds \[g^{-1} \circ d_{\C} = - \left( g^{-1} \circledcirc  \left(g; g \circ d_{\C}  \right)  \right) \circledcirc g^{-1} \ . \] Formula (d) of Lemma \ref{compatibility} implies formula (l). 
\end{proof}

\begin{proposition}\label{Adf}
 Let $\varphi$ and $\psi$ be two $\Cobar \C$-algebra structures on $A$. Every $\infty$-isomorphism $f : \varphi \rightsquigarrow \psi$ induces an isomorphism of dg Lie algebras  \[\mathrm{Ad}_f : \mathfrak{g}_A^{\varphi} \to \mathfrak{g}_A^{\psi}, \quad x \mapsto (f \star x) \circledcirc f^{-1} \ ,\] whose inverse is given by $\mathrm{Ad}_{f^{-1}}$~. 
\end{proposition}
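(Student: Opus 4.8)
The plan is to verify three properties of the map $\mathrm{Ad}_f \colon x \mapsto (f \star x) \circledcirc f^{-1}$: that it lands in the deformation complex $\mathfrak{g}_A$ and is bijective with inverse $\mathrm{Ad}_{f^{-1}}$, that it preserves the Lie bracket, and that it intertwines the twisted differentials $d^\varphi$ and $d^\psi$. Every step is carried out purely with the pre-Lie calculus identities of Lemma \ref{compatibility} and Lemma \ref{differentielles}, so the argument holds over any commutative ring $R$, with no characteristic or invertibility hypothesis. For well-definedness I would argue by weights: since $x$ has positive weight and $\star$ is additive in weights, $f \star x$ has positive weight, and hence so does $(f \star x) \circledcirc f^{-1}$; thus $\mathrm{Ad}_f(x)$ has vanishing weight-zero component and lies in $\mathfrak{g}_A = \Hom_{\mathbb{S}}(\overline{\C}, \End_A)$.

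For bijectivity I would compute $\mathrm{Ad}_{f^{-1}} \circ \mathrm{Ad}_f$ directly. Using Formula (e) of Lemma \ref{compatibility} together with $f^{-1} \circledcirc f = 1$ and the unit relation $1 \star x = x$ (a special case of Formula (c)), one obtains $f^{-1} \star \bigl((f \star x) \circledcirc f^{-1}\bigr) = x \circledcirc f^{-1}$, whence $\mathrm{Ad}_{f^{-1}}(\mathrm{Ad}_f(x)) = (x \circledcirc f^{-1}) \circledcirc f = x$ by associativity of $\circledcirc$; the reverse composite is symmetric. For the bracket, the key identity is
\[
\mathrm{Ad}_f(x) \star \mathrm{Ad}_f(y) = \bigl((f \star x) \star y\bigr) \circledcirc f^{-1},
\]
obtained by applying Formula (a), then the computation of $f^{-1} \star \mathrm{Ad}_f(y)$ just described, and finally Formula (d) with $y \circledcirc 1 = y$. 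Since the pre-Lie associator $(f \star x) \star y - f \star (x \star y)$ is graded symmetric in $x$ and $y$, it cancels in the skew-symmetrization, yielding $[\mathrm{Ad}_f(x), \mathrm{Ad}_f(y)] = \mathrm{Ad}_f([x,y])$.

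The main obstacle is compatibility with the twisted differentials. First I would feed the $\infty$-morphism equation $f \star \varphi - \psi \circledcirc f = d(f)$ into $\mathrm{Ad}_f(\varphi)$, using associativity and $f \circledcirc f^{-1} = 1$, to obtain $\mathrm{Ad}_f(\varphi) = \psi + a$ with $a \coloneqq d(f) \circledcirc f^{-1}$. Since $\mathrm{Ad}_f$ is already a Lie morphism, the desired identity $\mathrm{Ad}_f(d^\varphi x) = d^\psi(\mathrm{Ad}_f x)$ reduces, after expanding both twisted differentials and cancelling the common term $[\psi, \mathrm{Ad}_f x]$, to
\[
[a, \mathrm{Ad}_f(x)] = d\bigl(\mathrm{Ad}_f(x)\bigr) - \mathrm{Ad}_f\bigl(d(x)\bigr).
\]
The right-hand side I would expand with the Leibniz rules (i) and (j) of Lemma \ref{differentielles}, producing exactly the two terms $(d(f) \star x) \circledcirc f^{-1}$ and $(-1)^{|x|}(f \star x) \circledcirc (f^{-1}; d(f^{-1}))$.

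For the left-hand side I would compute the two pre-Lie products separately: $a \star \mathrm{Ad}_f(x) = (d(f) \star x) \circledcirc f^{-1}$ via Formulas (a), (e) and (d), and $\mathrm{Ad}_f(x) \star a = -(f \star x) \circledcirc (f^{-1}; d(f^{-1}))$, where the sign and the appearance of $d(f^{-1})$ come precisely from Formula (l), namely $d(f^{-1}) = -f^{-1} \star (d(f) \circledcirc f^{-1})$. Combining these in the bracket with $|a| = -1$ and $|\mathrm{Ad}_f(x)| = |x|$ matches the right-hand side, which is the heart of the argument. This shows $\mathrm{Ad}_f$ is a morphism of dg Lie algebras; being bijective, it is an isomorphism, and its inverse is the map $\mathrm{Ad}_{f^{-1}}$, which is of the same form because $f^{-1} \colon \psi \rightsquigarrow \varphi$ is itself an $\infty$-isomorphism by Lemma \ref{invertibility}.
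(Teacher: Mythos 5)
Your proof is correct and follows essentially the same route as the paper's: both rest on the same pre-Lie calculus identities (Formulas (a), (d), (e) of Lemma \ref{compatibility} and (i), (j), (l) of Lemma \ref{differentielles}), the same conjugation computations for the bracket and for $a \coloneqq d(f)\circledcirc f^{-1}$, and the same use of the $\infty$-morphism equation to pass from the untwisted identity to $\mathrm{Ad}_f(d^{\varphi}x)=d^{\psi}(\mathrm{Ad}_f x)$. The only differences are presentational — you reduce to the key identity $[a,\mathrm{Ad}_f(x)] = d(\mathrm{Ad}_f(x)) - \mathrm{Ad}_f(d(x))$ up front and add an explicit well-definedness check by weights, while the paper derives the same identity directly and checks invertibility last.
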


\begin{proof}
The bracket is defined by the skew-symmetrization of the pre-Lie product, i.e. for all $x,y \in  \mathfrak{g}_H$~, we have \[\mathrm{Ad}_f \left([x, y]\right) =    \left(f \star \left(x \star y\right) \right)  \circledcirc f^{-1}  - (-1)^{|x||y|}  \left(f \star \left(y \star x \right) \right)  \circledcirc f^{-1}  \ . \] The definition of a pre-Lie product gives \[ f \star \left(x \star y\right)   = \left( f \star x \right) \star y - (-1)^{|x||y|} \left(  \left( f \star y \right) \star x  \right)    + (-1)^{|x||y|} \left( f \star \left(y \star  x \right)   \right) \ . \] This shows that  \[ \mathrm{Ad}_f \left([x, y]\right)  =  \left( \left( \mathrm{Ad}_f (x) \circledcirc f \right)  \star y \right)  \circledcirc f^{-1}  - (-1)^{|x||y|} \left(  \left( \mathrm{Ad}_f (y) \circledcirc f \right) \star x  \right) \circledcirc f^{-1} \ .  \] Formula (e) of Lemma \ref{compatibility} implies that \[ \mathrm{Ad}_f \left([x, y]\right) = [\mathrm{Ad}_f (x) , \mathrm{Ad}_f (y)] \ . \] Formula (i) of Lemma \ref{differentielles} implies that \[ \mathrm{Ad}_f \left( dx \right) \overset{(i)}{=}   d \left(f \star x \right)   \circledcirc f^{-1}  -   \left(  d \left(f  \right) \star x  \right) \circledcirc f^{-1} \ .   \] By Formula (j) of Lemma \ref{compatibility}, we have \[d \left(f \star x \right)   \circledcirc f^{-1}  \overset{(j)}{=} d \left(\mathrm{Ad}_f \left(x \right) \right) - (-1)^{|x|}\left(f \star x \right) \circledcirc  \left(f^{-1}; d \left(f^{-1} \right)  \right) \ . \] By Formula (l) and (a), we get \[ \left(f \star x \right) \circledcirc  \left(f^{-1}; d \left(f^{-1} \right)  \right) = - \mathrm{Ad}_f \left(x \right) \star  \left( d \left( f  \right) \circledcirc f^{-1}   \right) \] Formula (e) of Lemma \ref{compatibility} implies \[ \left(   d(f) \star x  \right) \circledcirc f^{-1} =  \left( d \left(f \right) \circledcirc f^{-1} \right)  \star  \mathrm{Ad}_f \left(x \right)   \ .  \] This leads to \[\mathrm{Ad}_f \left(d(x) \right) = d\left(\mathrm{Ad}_f \left(x \right) \right) -  \left[ d\left(f \right) \circledcirc f^{-1} , \mathrm{Ad}_f \left(x \right) \right]  \ .  \] Thus, for all $x \in \mathfrak{g}_H$~, we have \[\begin{split}
\mathrm{Ad}_{f}\left(d^{\varphi} (x)\right) & =  \mathrm{Ad}_{f}\left(  d(x) \right) +  \mathrm{Ad}_{f}\left( \left[\varphi,x\right]\right) \\
& =  d \left( \mathrm{Ad}_f \left(x \right) \right) -  \left[ d \left(f \right) \circledcirc f^{-1} , \mathrm{Ad}_f \left(x \right) \right] +  \left[  \mathrm{Ad}_{f}\left(\varphi \right), \mathrm{Ad}_{f}\left(x \right)\right]  \ .
\end{split}  \] By definition of an $\infty$-morphism, we have $f \star \varphi - \psi \circledcirc f = d(f)$ and thus
\[\psi = \mathrm{Ad}_{f}\left(\varphi\right) - d(f ) \circledcirc f^{-1} \ . \] We get $\mathrm{Ad}_{f}\left(d^{\varphi} (x)\right) = d^{\psi} \left(\mathrm{Ad}_{f}(x) \right) \ . $ Finally, Formula (e) of Lemma \ref{compatibility} implies that \[ \mathrm{Ad}_{f^{-1}} \left(\mathrm{Ad}_{f} \left(x\right) \right) =  \left(f^{-1} \star \left( (f \star x) \circledcirc f^{-1}\right) \right) \circledcirc f  \overset{(e)}{=}    \left( \left(	\left( f^{-1} \circledcirc f \right)  \star x \right) \circledcirc f^{-1}  \right) \circledcirc f = x \ .  \] Thus $\mathrm{Ad}_{f}$ is an isomorphism of dg Lie algebras whose inverse is given by $\mathrm{Ad}_{f^{-1}}$~.
\end{proof}

\begin{theorem}\label{action}$\leavevmode$
\begin{enumerate}
\item The set of degree zero elements $f \in \mathfrak{a}_A$ such that $f^{(0)} \in \End_A$ is an isomorphism forms a group with respect to the product $\circledcirc$. 
\item It acts on the set of Maurer--Cartan elements $\mathrm{MC}(\mathfrak{g}_A)$  under \[  f \cdot \varphi = \mathrm{Ad}_{f}\left(\varphi\right) - d(f) \circledcirc f^{-1} \ . \]
\end{enumerate}
\end{theorem}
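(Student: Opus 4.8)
The plan is to establish the two assertions separately, relying on the pre-Lie calculus of Lemmas~\ref{compatibility} and~\ref{differentielles} together with Proposition~\ref{Adf}. For the first one, associativity of $\circledcirc$ and the unit $1$ are furnished by Proposition~\ref{associative}, so I only have to check that the set $G$ of degree zero elements $f$ with $f^{(0)}$ invertible is stable under $\circledcirc$ and under inversion. Since the coproduct $\Delta$ is weight-additive and $\mathrm{I}$ is the weight zero part of $\C$, the weight zero component of a product is the composite $(f\circledcirc g)^{(0)} = f^{(0)}\circ g^{(0)}$ in $\End_A$; hence $G$ is closed under $\circledcirc$ and contains $1$, as $1^{(0)}=\mathrm{id}_A$. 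For inverses I would use that $(\mathfrak{a}_A,\circledcirc)$ is complete for the filtration $\mathcal{F}^n = \prod_{k\geq n}\Hom_{\mathbb{S}}(\C^{(k)},\End_A)$ and that $\circledcirc$ is weight-additive: setting $g^{(0)} := (f^{(0)})^{-1}$ and solving $(f\circledcirc g)^{(n)} = 0$ recursively in $n\geq 1$ determines $g^{(n)}$ uniquely, because the unknown enters only through post-composition by the invertible $f^{(0)}$ while every other term involves the already constructed $g^{(<n)}$. This yields a two-sided inverse exactly as in Lemma~\ref{invertibility}, cf.~\cite[Theorem~10.4.1]{LodayVallette12}.

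For the second assertion, set $\psi := f\cdot\varphi$. Using $\mathrm{Ad}_f(\varphi) = (f\star\varphi)\circledcirc f^{-1}$, the defining formula rearranges to $\psi\circledcirc f = f\star\varphi - d(f)$, that is $f\star\varphi - \psi\circledcirc f = d(f)$, which is exactly the equation asserting that $f$ is an $\infty$-morphism $\varphi\rightsquigarrow\psi$. As $\psi$ is visibly of degree $-1$, the one substantive point is that $\psi$ is a Maurer--Cartan element. I would prove this through the curvature transformation law
\[
\bigl(d(\psi)+\psi\star\psi\bigr)\circledcirc f \;=\; f\star\bigl(d(\varphi)+\varphi\star\varphi\bigr),
\]
obtained by a direct computation: Formulas~(i) and~(j) express $d(\psi)\circledcirc f$ in terms of $d(\psi\circledcirc f) = d(f\star\varphi - d(f))$, Formulas~(a), (b) and~(c) rewrite $(\psi\star\psi)\circledcirc f$ after substituting $\psi = (\psi\circledcirc f)\circledcirc f^{-1}$, and Formula~(l) disposes of the resulting terms in $d(f^{-1})$. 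Granting this identity, the right-hand side vanishes when $\varphi\in\mathrm{MC}(\mathfrak{g}_A)$, and the $\circledcirc$-invertibility of $f$ then forces $d(\psi)+\psi\star\psi = 0$; thus $\psi\in\mathrm{MC}(\mathfrak{g}_A)$ and $f$ is a genuine $\infty$-isomorphism to which Proposition~\ref{Adf} applies.

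The two action axioms are then formal consequences of the pre-Lie calculus. For the unit, Formula~(k) gives $d(1)=0$ and one has $1^{-1}=1$ and $1\star\varphi=\varphi$, whence $1\cdot\varphi = \mathrm{Ad}_1(\varphi) = \varphi$. For compatibility I would first read off from Formula~(e) that $\mathrm{Ad}$ is multiplicative, $\mathrm{Ad}_f\circ\mathrm{Ad}_g = \mathrm{Ad}_{f\circledcirc g}$, and from Formulas~(j) and~(d) that
\[
d(f\circledcirc g)\circledcirc (f\circledcirc g)^{-1} \;=\; d(f)\circledcirc f^{-1} + \mathrm{Ad}_f\bigl(d(g)\circledcirc g^{-1}\bigr);
\]
substituting both into the definition of the action gives $(f\circledcirc g)\cdot\varphi = f\cdot(g\cdot\varphi)$.

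The crux is the curvature transformation law of the second step. Over a $\mathbb{Q}$-algebra one could instead pass to the skew-symmetrized bracket and deduce Maurer--Cartan preservation from the identity $\mathrm{Ad}_f\circ d^{\varphi} = d^{\psi}\circ\mathrm{Ad}_f$ of Proposition~\ref{Adf} (squaring it gives $(d^{\psi})^2 = \mathrm{Ad}_f\,(d^{\varphi})^2\,\mathrm{Ad}_{f}^{-1}$). Over a general ring, however, the pre-Lie equation $d(\varphi)+\varphi\star\varphi=0$ is strictly stronger than its Lie avatar, so the whole argument must stay at the level of the products $\star$ and $\circledcirc$, and the computation demands scrupulous Koszul-sign bookkeeping through Lemmas~\ref{compatibility} and~\ref{differentielles}. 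Conceptually, the transformation law is the convolution-algebra shadow of the relation $D_{\psi}^2\circ F = F\circ D_{\varphi}^2$ for the square of the codifferential on the cofree coalgebra, which is what makes $D_{\varphi}^2=0$ propagate to $D_{\psi}^2=0$.
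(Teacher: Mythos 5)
Your proposal is correct, and for everything the paper's proof actually writes down it takes the same route: point (1) is exactly Lemma \ref{invertibility} (your weight-by-weight construction of the inverse is the proof of \cite[Theorem~10.4.1]{LodayVallette12} that the paper cites), and your verification of the two action axioms coincides with the paper's computation --- multiplicativity $\mathrm{Ad}_f\circ\mathrm{Ad}_g=\mathrm{Ad}_{f\circledcirc g}$ via Formula (e) of Lemma \ref{compatibility}, and the cocycle identity for $d(f\circledcirc g)\circledcirc (f\circledcirc g)^{-1}$ via Formula (j) of Lemma \ref{differentielles} together with (b)--(c) (the paper) or (d) (you), an immaterial variant. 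The genuine difference is that you also prove closure: that $\psi=f\cdot\varphi$ is again a Maurer--Cartan element. The paper's proof only checks $1\cdot\varphi=\varphi$ and $(f\circledcirc g)\cdot\varphi=f\cdot(g\cdot\varphi)$ and never verifies that the formula lands in $\mathrm{MC}(\mathfrak{g}_A)$, even though this is part of the statement and is invoked later (e.g.\ the assertion ``by construction, $\psi\in\mathrm{MC}(\mathfrak{g}_H)$'' in the proof of Proposition \ref{Kaledin}). Your curvature transformation law is the right way to supply this, and it is a true identity.

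Two caveats on that extra step, neither fatal. First, running the computation with the listed formulas yields $\bigl(d(\psi)+\psi\star\psi\bigr)\circledcirc f = f\star d(\varphi)+(f\star\varphi)\star\varphi$, so the law reduces to the associator identity $(f\star\varphi)\star\varphi=f\star(\varphi\star\varphi)$ for the odd element $\varphi$. This is not among the formulas of Lemmas \ref{compatibility} and \ref{differentielles}, and the pre-Lie relation alone only gives it up to $2$-torsion; since Section \ref{2.3} claims everything over an arbitrary commutative ring, you must prove it from the definition of $\star$: the two cross terms inserting $\varphi$ into two distinct inputs of $f$ in either order are the same element with opposite Koszul signs, hence cancel exactly. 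Second, your parenthetical $\mathbb{Q}$-algebra shortcut is not sound as stated: $(d^{\psi})^2=\mathrm{Ad}_f\,(d^{\varphi})^2\,\mathrm{Ad}_f^{-1}=0$ only says that $\mathrm{ad}$ applied to $d(\psi)+\tfrac{1}{2}[\psi,\psi]$ vanishes, i.e.\ the curvature of $\psi$ is central for the bracket, not that it is zero. With the associator identity supplied, your main argument is complete, and indeed more complete than the paper's own proof.
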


\begin{proof}
Point (1) follows from Lemma \ref{invertibility}. We have $ 1 \cdot \varphi = \varphi$. Let $f$ and $g$ in $\mathfrak{a}_A$ whose weight-zero components are isomorphisms. Then, we have \[f \cdot (g \cdot \varphi) = \mathrm{Ad}_{f} \circ \mathrm{Ad}_{g}\left(\varphi\right) - \mathrm{Ad}_{f} \left( d(g) \circledcirc g^{-1} \right) - d(f) \circledcirc f^{-1} \ . \] Formula (e) of Lemma \ref{compatibility} implies that \[\mathrm{Ad}_{f} \circ \mathrm{Ad}_{g}\left(\varphi\right) = \mathrm{Ad}_{f \circledcirc g} \left(\varphi\right) \ . \] Formula (j) of Lemma \ref{differentielles} gives \[d \left(f \circledcirc g \right) \circledcirc g^{-1} \circledcirc f^{-1} =   f \circledcirc  \left(g; d(g )  \right)  \circledcirc g^{-1} \circledcirc f^{-1} + d\left(f  \right) \circledcirc f^{-1} \ .  \] By Formula (b) and (c) of Lemma \ref{compatibility}, we have \[f \circledcirc  \left(g; d(g )  \right)  \circledcirc g^{-1} \circledcirc f^{-1}  \overset{(b)}{=}  f \circledcirc  \left( 1; d(g )  \circledcirc g^{-1} \right)   \circledcirc f^{-1}  \overset{(c)}{=}  \mathrm{Ad}_{f} \left( d(g) \circledcirc g^{-1} \right) \ .   \] This proves that $f \cdot (g \cdot \varphi) = (f \circledcirc g) \cdot \varphi  \ .$
\end{proof}

\begin{lemma}\label{key lemma}
Let $H$ be a graded $R$-module and let $\varphi$ and $\psi$ be two $\Omega \C$-algebra structures. Let $n \geqslant1$ be an integer and suppose that there exists an $\infty$-isomorphism $f : \varphi \rightsquigarrow \psi $  such that
	
	\begin{itemize}
		\item[$\centerdot$] $\varphi^{(k)} = 0$~, for $2 \leqslant k \leqslant n$ and
		\item[$\centerdot$]  $f^{(k)} = 0$~, for $1 \leqslant k \leqslant n - 1$ . 
	\end{itemize}
	The weight components of $\psi$ are then given by 
	\begin{enumerate}
		\item[$\centerdot$]  $\psi^{(1)} =  \mathrm{Ad}_{f^{(0)}} \left( \varphi^{(1)} \right) + \left(f^{(0)} \circ d_{\C}\right) \circledcirc \left(f^{(0)}\right)^{-1}  $ ,
		\item[$\centerdot$] $\psi^{(k)} = 0$~,  for $2 \leqslant k \leqslant n$~, and
		\item[$\centerdot$]  $\psi^{(n+1)} = \mathrm{Ad}_{f^{(0)}} \left(\varphi^{(n+1)} \right) - d^{\psi^{(1)}} \left(f^{(n)} \circledcirc \left(f^{(0)}\right)^{-1}\right)$ .	
	\end{enumerate}
	In particular, if $f$ is an $\infty$-isotopy, then $\psi$ decomposes as
	\[ \psi =   \varphi^{(1)} + \varphi^{(n+1)} - d^{ \varphi^{(1)}} \left( f^{(n)} \right) + \psi^{(n+2)} + \cdots  \ .\]
\end{lemma}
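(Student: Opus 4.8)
The plan is to read off the three weight components directly from the gauge-action formula of Theorem \ref{action}. Since $f\colon \varphi \rightsquigarrow \psi$ is an $\infty$-isomorphism, Lemma \ref{invertibility} lets us solve the defining relation $f \star \varphi - \psi \circledcirc f = d(f)$ for $\psi$, giving
\[
\psi \;=\; f \cdot \varphi \;=\; (f \star \varphi) \circledcirc f^{-1} \;-\; d(f) \circledcirc f^{-1}\,.
\]
Everything reduces to extracting the weight-homogeneous pieces of the right-hand side. Two features drive the bookkeeping: because $H$ carries no differential, the differential on $\mathfrak{a}_H$ is $d(h) = -(-1)^{|h|} h \circ d_{\C}$, which \emph{raises} the weight by one; and, peeling off $f \circledcirc f^{-1} = 1$ weight by weight, one gets $(f^{-1})^{(0)} = (f^{(0)})^{-1}$ together with $(f^{-1})^{(k)} = 0$ for $1 \leqslant k \leqslant n-1$, so $f^{-1}$ has the same vanishing pattern as $f$.

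\noindent I would first reduce to the isotopy case. Writing $c \coloneqq f^{(0)}$, factor $f = g \circledcirc c$ with $g \coloneqq f \circledcirc c^{-1}$. As $c$ is an arity-one, weight-zero $\infty$-isomorphism, the product $- \circledcirc c^{-1}$ preserves the weight, so $g$ is an $\infty$-isotopy with $g^{(k)} = 0$ for $1 \leqslant k \leqslant n-1$ and $g^{(n)} = f^{(n)} \circledcirc c^{-1}$. By the group law of Theorem \ref{action}, $\psi = g \cdot (c \cdot \varphi)$. The inner action is pure weight-zero conjugation: $\mathrm{Ad}_c$ preserves the weight grading and $-\,d(c)\circledcirc c^{-1} = (c \circ d_{\C})\circledcirc c^{-1}$ is concentrated in weight one, so $\bar\varphi \coloneqq c \cdot \varphi$ satisfies $\bar\varphi^{(1)} = \mathrm{Ad}_c(\varphi^{(1)}) + (c \circ d_{\C})\circledcirc c^{-1}$, $\bar\varphi^{(k)} = 0$ for $2 \leqslant k \leqslant n$, and $\bar\varphi^{(n+1)} = \mathrm{Ad}_c(\varphi^{(n+1)})$. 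It therefore suffices to prove the lemma when the $\infty$-isomorphism is an isotopy and then transport the answer through $\bar\varphi^{(1)} = \psi^{(1)}$ and $g^{(n)} = f^{(n)}\circledcirc c^{-1}$.

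\noindent For the isotopy case ($c = \mathrm{id}$, isotopy $g$), I would extract weights from the displayed formula. In weight one only identity slots survive, giving $\psi^{(1)} = \varphi^{(1)}$ since $1 \circ d_{\C} = -\,d(1) = 0$ by Lemma \ref{differentielles}. For $2 \leqslant k \leqslant n$, every monomial of that total weight is forced to use a vanishing $\varphi^{(j)}$ ($2 \leqslant j \leqslant n$) or a vanishing $g^{(i)}$, $(g^{-1})^{(i)}$ ($1 \leqslant i \leqslant n-1$), whence $\psi^{(k)} = 0$. The crux is weight $n+1$, and the main obstacle is the multiplicity bookkeeping for $\circledcirc$: when extracting total weight $n+1$ with a weight-one outer part, the right factor $g^{-1}$ decorating all inner corollas must reduce to the identity on all but one slot, so the contribution reorganizes into the \emph{infinitesimal} product $\star$ rather than $\circledcirc$. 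The surviving terms $\varphi^{(n+1)}$, $g^{(n)} \star \varphi^{(1)}$, $-\,\varphi^{(1)} \star g^{(n)}$ and $g^{(n)} \circ d_{\C}$ then assemble, via Lemmas \ref{compatibility} and \ref{differentielles} exactly as in the proof of Proposition \ref{Adf}, into
\[
\psi^{(n+1)} \;=\; \varphi^{(n+1)} \;-\; d^{\varphi^{(1)}}\!\big(g^{(n)}\big)\,, \qquad d^{\varphi^{(1)}}(x) = d(x) + [\varphi^{(1)}, x]\,.
\]
Applying this to $g\colon \bar\varphi \rightsquigarrow \psi$ and feeding back $\bar\varphi^{(1)} = \psi^{(1)}$, $\bar\varphi^{(n+1)} = \mathrm{Ad}_{f^{(0)}}(\varphi^{(n+1)})$ and $g^{(n)} = f^{(n)}\circledcirc (f^{(0)})^{-1}$ yields the three stated components, the last being $\psi^{(n+1)} = \mathrm{Ad}_{f^{(0)}}(\varphi^{(n+1)}) - d^{\psi^{(1)}}\!\big(f^{(n)}\circledcirc (f^{(0)})^{-1}\big)$.

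\noindent Finally, the ``in particular'' clause is the specialization $f^{(0)} = \mathrm{id}_H$: then $\mathrm{Ad}_{f^{(0)}}$ and $(f^{(0)})^{-1}$ are the identity and $(f^{(0)} \circ d_{\C})\circledcirc (f^{(0)})^{-1} = 1 \circ d_{\C} = 0$, so $\psi^{(1)} = \varphi^{(1)}$, $\psi^{(k)} = 0$ for $2 \leqslant k \leqslant n$, and $\psi^{(n+1)} = \varphi^{(n+1)} - d^{\varphi^{(1)}}(f^{(n)})$. Collecting these with the undetermined tail $\psi^{(n+2)} + \cdots$ gives the displayed decomposition $\psi = \varphi^{(1)} + \varphi^{(n+1)} - d^{\varphi^{(1)}}(f^{(n)}) + \psi^{(n+2)} + \cdots$.
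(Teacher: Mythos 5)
Your proof is correct, and it reaches all three formulas (and the isotopy specialization) by a route that is organized differently from the paper's. The paper works in one shot with the general $\infty$-isomorphism $f$: it expands both sides of the defining relation $\psi \circledcirc f = f \star \varphi + f \circ d_{\C}$ modulo $\mathcal{F}^{n+2}\mathfrak{g}_H$, solves for $\psi$ using only associativity of $\circledcirc$ and the inverse of the weight-zero part $f^{(0)}$, and then spends the bulk of the proof reorganizing the weight-$(n+1)$ terms via formulas (a), (b), (c), (e) of Lemma \ref{compatibility} and (j), (l) of Lemma \ref{differentielles} into $\mathrm{Ad}_{f^{(0)}}(\varphi^{(n+1)}) - d^{\psi^{(1)}}\bigl(f^{(n)} \circledcirc (f^{(0)})^{-1}\bigr)$. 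You instead factor $f = g \circledcirc f^{(0)}$ through the group structure of Theorem \ref{action}, compute the purely weight-preserving conjugation $f^{(0)} \cdot \varphi$ (plus its weight-one correction $(f^{(0)} \circ d_{\C}) \circledcirc (f^{(0)})^{-1}$) separately, and carry out the weight extraction only for the isotopy $g$, where the single identity $f \star h = f \circledcirc (1;h)$ of Lemma \ref{compatibility}(c) linearizes the inner $\circledcirc$-slots and the surviving terms $g^{(n)} \star \varphi^{(1)} - \varphi^{(1)} \star g^{(n)} + g^{(n)} \circ d_{\C}$ are $-d^{\varphi^{(1)}}(g^{(n)})$ by definition of the twisted differential (your appeal to the full machinery of Proposition \ref{Adf} here is more than is needed). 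What your factorization buys is a much lighter crux computation; what it costs is a handful of auxiliary verifications the paper avoids: the group law $\psi = g \cdot (f^{(0)} \cdot \varphi)$, the fact that $- \circledcirc (f^{(0)})^{-1}$ preserves weights so that $g$ inherits the vanishing pattern of $f$, and the weight-by-weight vanishing of the full inverse $g^{-1}$ (the paper never needs $f^{-1}$ at all, only $(f^{(0)})^{-1}$). All of these you address adequately, so the argument stands.
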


\begin{proof}
	By definition of an $\infty$-morphism, we have \begin{equation}\label{infini}
 \psi \circledcirc f = 	f \star \varphi + f \circ d_{\C} \ .
	\end{equation}
	The assumptions on $\varphi$ and on $f$ imply that 	\[f \star \varphi \equiv f^{(0)} \star \varphi^{(1)} + f^{(0)} \star \varphi^{(n+1)} + f^{(n)} \star \varphi^{(1)} \pmod{\mathcal{F}^{n+2} \mathfrak{g}_H} \ ,\]\[ \psi \circledcirc f  \equiv \psi \circledcirc f^{(0)}  + \psi^{(1)} \circledcirc \left( f^{(0)} ; f^{(n)} \right)   \pmod{\mathcal{F}^{n+2} \mathfrak{g}_H} \ ,\] \[ \mbox{and} \quad \quad f \circ d_{\C} \equiv f^{(0)} \circ d_{\C} + f^{(n)} \circ d_{\C} \pmod{\mathcal{F}^{n+2} \mathfrak{g}_H} \ .\]	Equation (\ref{infini}) implies that \[\begin{split}
	\psi \  \equiv \  & \mathrm{Ad}_{f^{(0)}} \left( \varphi^{(1)} \right) + \mathrm{Ad}_{f^{(0)}} \left( \varphi^{(n+1)} \right) + \left(f^{(n)} \star \varphi^{(1)} \right) \circledcirc \left(f^{(0)} \right)^{-1}+ \left(f^{(0)} \circ d_{\C}\right) \circledcirc \left(f^{(0)} \right)^{-1} \\
	&+ \left(f^{(n)} \circ d_{\C}\right) \circledcirc \left(f^{(0)} \right)^{-1}  - \left(\psi^{(1)} \circledcirc \left( f^{(0)} ; f^{(n)} \right)    \right) \circledcirc \left(f^{(0)} \right)^{-1}
	 \pmod{\mathcal{F}^{n+2} \mathfrak{g}_H} \ .
	\end{split} \]  This gives the desired formulas for the components $\psi^{(k)}$ for $1 \leqslant k \leqslant n$. Furthermore, by Formula (e) of Lemma \ref{compatibility}, we have \[\left( f^{(n)} \star \varphi^{(1)} \right) \circledcirc \left(f^{(0)}\right)^{-1} \overset{(e)}{=} \left(f^{(n)} \circledcirc \left(f^{(0)}\right)^{-1} \right) \star \mathrm{Ad}_{f^{(0)}} \left(\varphi^{(1)} \right)    \] and it follows from Formulas (b) and (c) of Lemma \ref{compatibility}, that \[\left(\psi^{(1)} \circledcirc \left( f^{(0)} ; f^{(n)} \right)    \right) \circledcirc \left(f^{(0)} \right)^{-1} = \psi^{(1)} \star \left(f^{(n)} \circledcirc \left(f^{(0)}\right)^{-1} \right) \ .  \] Formula (j) of Lemma \ref{differentielles} implies that \[\left(f^{(n)} \circ d_{\C} \right)  \circledcirc \left(f^{(0)}\right)^{-1} \overset{(j)}{=} \left(f^{(n)} \circledcirc \left(f^{(0)}\right)^{-1} \right) \circ d_{\C} - f^{(n)}  \circledcirc \left(\left(f^{(0)}\right)^{-1} ; \left(f^{(0)}\right)^{-1} \circ d_{\C} \right)  \ .  \] By Formula (l) of Lemma \ref{differentielles} and Formula (a) of Lemma \ref{compatibility}, we have \[ - f^{(n)}  \circledcirc \left(\left(f^{(0)}\right)^{-1} ; \left(f^{(0)}\right)^{-1} \circ d_{\C} \right) \overset{(a)}{=} \left(f^{(n)} \circledcirc \left(f^{(0)}\right)^{-1} \right)  \star \left( \left( f^{(0)} \circ d_{\C}  \right) \circledcirc \left(f^{(0)}\right)^{-1}  \right) \ . \] We obtain that \[ \psi^{(n+1)} = \mathrm{Ad}_{f^{(0)}} \left( \varphi^{(n+1)} \right) - \left[ \psi^{(1)} ,   f^{(n)}  \circledcirc \left(f^{(0)}\right)^{-1}   \right] +  \left(f^{(n)} \circledcirc \left(f^{(0)}\right)^{-1} \right) \circ d_{\C} \ . \qedhere \]
\end{proof}

\subsection{Operadic Kaledin classes}\label{2.4}

Let $(H, \varphi)$ be an $\Omega \C$-algebra structure on a graded $R$-module $H$. In the spirit of gauge formality and Proposition \ref{gauge formality}, this section addresses the following problem: does there exist an $\infty$-isotopy between $\varphi$ and $\varphi^{(1)}$ ? If $R$ is a $\mathbb{Q}$-algebra, this is equivalent to the existence of a gauge $\lambda \in \mathfrak{g}_{H}$ such that \[\lambda \cdot \varphi = \varphi^{(1)} \ ,\] through the isomorphism given by Theorem \ref{parallele}. This last problem is fully characterized by the Kaledin classes, applying Theorem \ref{A} to the $1$-weight-graded dg Lie algebra $\mathfrak{g}_{H}$~. Nonetheless, we aim at studying this question for any base rings $R$. If $R$ that is not a $\mathbb{Q}$-algebra, the gauge group do not even exist, neither the isomorphism of Theorem \ref{parallele}. However, we can still ask for the existence of an $\infty$-isotopy $\varphi \rightsquigarrow \varphi^{(1)}$. The goal of this section is to adapt the methods of Section \ref{1.3} to deal with any commutative ground ring.

\begin{remark}
For the sake of clarity, we deal with the case $\delta = 1$. But if $d_{\C} = 0$ then $\mathfrak{g}_H$ is also $\delta$-weight-graded dg Lie algebra for all $\delta \geqslant 1$. All what follows adapts directly to detect if an $\Omega \C$-algebra structure $\varphi \in  \mathcal{F}^{\delta}(\mathfrak{g}_H)$ is related to $\varphi^{\delta}$ by an $\infty$-isotopy. 
\end{remark}

\begin{definition}[Operadic Kaledin classes]
	Let $\varphi \in \mathrm{MC}(\mathfrak{g}_H)$ be an $\Cobar \C$-algebra structure on a graded $R$-module $H$ and let $\mathfrak{D}(\varphi)$ be its prismatic decomposition. The \emph{Kaledin class} $K_{\varphi}$ is the Kaledin class of $\mathfrak{D}(\varphi)$, i.e. \[K_{\varphi} =  \left[\partial_{\hbar}   \mathfrak{D}(\varphi) \right] \in H_{-1}\left(\mathfrak{g}_H[\![\hbar]\!]^{\Phi}\right) \ . \] Its \emph{$n^{\text{th}}$-truncated Kaledin class} $K_{\varphi}^n$ is the $n^{\text{th}}$-truncated Kaledin class of $\mathfrak{D}(\varphi)$, i.e. \[ K^n_{\varphi} = \left[\varphi^{(2)} + 2 \varphi^{(3)} \hbar + \dots + n \varphi^{(n+1)} \hbar^{n-1} \right]  \in H_{-1}\left(\left(\mathfrak{g}_H[\![\hbar]\!]/(\hbar^{n })\right)^{\overline{\Phi}^n}\right) \ .  \] 
\end{definition}

\begin{proposition}\label{Kaledin} 	Let $R$ be a commutative ground ring and let $\C$ be a reduced weight-graded dg cooperad over $R$. Let $\varphi \in \mathrm{MC}(\mathfrak{g}_H)$ be an $\Cobar \C$-algebra structure on a graded $R$-module $H$~. 
\begin{enumerate}
	\item  Let $n \geqslant 1$ be an integer such that $n !$ is a unit in $R$. The $n^{\text{th}}$-truncated Kaledin class $K^n_{\varphi}$  is zero if and only if there exists an $\infty$-isotopy $f : \varphi \rightsquigarrow  \psi $ where $\psi \in \mathrm{MC}(\mathfrak{g}_H)$ is such that $\psi^{(k)} = 0$~, for $2 \leqslant k \leqslant n + 1 $. 
	
	\item If $R$ is a $\QQ$-algebra, the Kaledin class  $K_{\varphi}$ vanishes if and only if there exists an $\infty$-isotopy $f : \varphi \rightsquigarrow \varphi^{(1)} $~.
\end{enumerate}
\end{proposition}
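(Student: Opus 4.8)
The plan is to reduce Proposition~\ref{Kaledin} to the results already obtained for formal deformations in Section~\ref{1.2}, transporting everything through the prismatic decomposition $\mathfrak{D}$ and the pre-Lie calculus of Section~\ref{2.3}. The convolution dg pre-Lie algebra $\mathfrak{g}_H$ is $1$-weight-graded, so by definition $K_\varphi = K_{\mathfrak{D}(\varphi)}$ and $K_\varphi^n = K_{\mathfrak{D}(\varphi)}^n$ live in the twisted complexes associated with the formal deformation $\mathfrak{D}(\varphi)$ of $\varphi^{(1)}$, exactly the setting of Theorem~\ref{truncated1} and Theorem~\ref{Kaledin1}. The essential difference from Section~\ref{1.3} is that here the gauge action must be replaced by the $\circledcirc$-action of $\infty$-isotopies (Theorem~\ref{action}), since over a general $R$ the gauge group of Theorem~\ref{parallele} need not exist.

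For Point~(1), I would prove both implications by translating a statement about the $\circledcirc$-action into one about the formal-deformation gauge action via Lemma~\ref{decomposition prismatic}. First, the $(\Leftarrow)$ direction: given an $\infty$-isotopy $f\colon \varphi \rightsquigarrow \psi$ with $\psi^{(k)}=0$ for $2\leqslant k\leqslant n+1$, I would show $\mathfrak{D}(f\cdot\varphi)=\mathfrak{D}(f)\cdot\mathfrak{D}(\varphi)\equiv \varphi^{(1)} \pmod{\hbar^{n+1}}$ — the analogue of Lemma~\ref{decomposition prismatic}(2) for the $\circledcirc$-action — and then invoke Theorem~\ref{truncated1} to conclude $K_\varphi^n=0$. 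For the harder $(\Rightarrow)$ direction, I would adapt the inductive argument of Proposition~\ref{passage}: assuming $K_\varphi^n=0$, I would build, weight by weight, a sequence of $\infty$-isotopies killing the components $\psi^{(k)}$ for $2\leqslant k\leqslant n+1$. The key computational input is Lemma~\ref{key lemma}, which shows that an $\infty$-isotopy with leading component $f^{(m)}$ (and $f^{(j)}=0$ for $j<m$) modifies $\varphi^{(m+1)}$ precisely by the coboundary $d^{\varphi^{(1)}}(f^{(m)})$. Thus at stage $m$, vanishing of the relevant truncated Kaledin class produces a cycle that is a boundary, hence an $f^{(m)}$ trivializing the next offending component. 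The constraint ``$n!$ is a unit'' is what allows the division by $m$ appearing in the truncated class $[m\,\varphi^{(m+1)}\hbar^{m-1}]$ (compare the factor $\tfrac1n$ in the proof of Proposition~\ref{passage}), so each step stays internal to $R$ without assuming $\mathbb{Q}\subset R$.

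For Point~(2), when $R$ is a $\mathbb{Q}$-algebra all factorials are invertible, so Point~(1) holds for every $n\geqslant 1$; moreover Theorem~\ref{parallele} identifies $\infty$-isotopies with gauges. I would then run the iterated-composition argument of the proof of Theorem~\ref{Kaledin1} (and Theorem~\ref{A}): from $K_\varphi=0$ one gets $K_\varphi^n=0$ for all $n$ by Remark~\ref{toutes nulles0}, and successively composing $\infty$-isotopies $f^i$ whose leading components sit in strictly increasing weights $n_1<n_2<\cdots$ yields a well-defined infinite composite $f$ with $f\cdot\varphi=\varphi^{(1)}$, i.e. an $\infty$-isotopy $\varphi\rightsquigarrow\varphi^{(1)}$. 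Well-definedness of the infinite $\circledcirc$-composite follows from completeness, exactly as the infinite $\mathrm{BCH}$-product was well-defined in Theorem~\ref{Kaledin1}.

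The main obstacle I anticipate is the $(\Rightarrow)$ direction of Point~(1): one must carry out the induction purely in terms of the $\circledcirc$-action rather than the gauge action, replacing each use of the $\mathrm{BCH}$ formula and $e^{\mathrm{ad}_\lambda}$ by honest compositions of $\infty$-isotopies and the conjugation $\mathrm{Ad}_f$ of Proposition~\ref{Adf}. The delicate point is verifying that Lemma~\ref{key lemma} gives exactly the coboundary needed and that the invariance of the truncated Kaledin class under the $\circledcirc$-action (the analogue of Lemma~\ref{technique12}, obtained by transporting through $\mathfrak{D}$) holds over $R$ — this is where the pre-Lie identities (a)--(e) of Lemma~\ref{compatibility} do the real work, and where the unit hypothesis on $n!$ must be threaded carefully through each weight.
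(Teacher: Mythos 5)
Your overall strategy coincides with the paper's: transport everything through the prismatic decomposition, run the induction of Proposition \ref{passage} with $\infty$-isotopies of the form $1+\lambda$ in place of exponentials, use Lemma \ref{key lemma} to identify the correction of the leading component with the coboundary $d^{\varphi^{(1)}}\left(f^{(m)}\right)$ (with the hypothesis that $n!$ is a unit entering exactly as you say, through the division by $m$ at stage $m$), and obtain Point (2) by composing isotopies with strictly increasing leading weights into a well-defined infinite $\circledcirc$-composite. The $(\Rightarrow)$ direction of Point (1) and Point (2) are essentially the paper's argument.

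There is, however, one genuine gap, in your $(\Leftarrow)$ direction of Point (1). Having shown $\mathfrak{D}(f)\cdot\mathfrak{D}(\varphi)\equiv\varphi^{(1)}\pmod{\hbar^{n+1}}$, you propose to ``invoke Theorem \ref{truncated1}'' to conclude $K^n_\varphi=0$. But Theorem \ref{truncated1} is a Section \ref{section1} statement, proved under the standing assumption that $R$ is a $\mathbb{Q}$-algebra, and its equivalence is phrased in terms of a gauge $\lambda\in\mathfrak{g}_0\otimes(\hbar)$ acting through $e^{\mathrm{ad}_\lambda}$ and $\mathrm{BCH}$ --- whereas $\mathfrak{D}(f)$ is an element of the form $1+O(\hbar)$ acting through $\circledcirc$. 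Over a general $R$ in which only $n!$ is invertible, there is no logarithm converting one into the other unless you re-prove truncated versions of the gauge action, of $\mathrm{BCH}$, and of Theorem \ref{parallele} with controlled denominators, which you do not do. The correct replacement, and what the paper actually uses, is the invariance of truncated Kaledin classes under the $\circledcirc$-action of $\infty$-isomorphisms, namely $K^n_{\mathfrak{D}(\psi)}=\mathrm{Ad}^n_{\mathfrak{D}(f)}\left(K^n_{\mathfrak{D}(\varphi)}\right)$, proved directly from the pre-Lie identities (this is Lemma \ref{piquant}): since $K^n_{\mathfrak{D}(\psi)}=\left[\psi^{(2)}+2\psi^{(3)}\hbar+\cdots+n\psi^{(n+1)}\hbar^{n-1}\right]=0$ by hypothesis on $\psi$, invertibility of $\mathrm{Ad}^n_{\mathfrak{D}(f)}$ gives $K^n_\varphi=0$ at once, with no characteristic-zero input. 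You do name this invariance statement, but only as an ingredient of the $(\Rightarrow)$ induction; it is equally indispensable for $(\Leftarrow)$, and without it that half of your argument silently falls back on results that are unavailable in the stated generality.
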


\begin{remark}
If $R$ is a $\QQ$-algebra, Proposition \ref{Kaledin} is a direct corollary of Proposition \ref{passage} and Theorem \ref{A} through the isomorphism of Theorem \ref{parallele}. In order to deal with any commutative ground ring, we are going to adapt the proofs, mainly by using $\infty$-isotopies of the form $1 + \lambda$~, instead of exponential maps $e^{\lambda}$~.  
\end{remark}

\begin{definition}
The \emph{prismatic decomposition} of $f  \in \mathfrak{a}_H$ in degree zero is defined by \[\mathfrak{D} \left(f\right) \coloneqq  f^{(0)} + f^{(1)}\hbar +  f^{(2)}\hbar^2 + \cdots \in \mathfrak{a}_H[\![\hbar]\!] \ .\] 
\end{definition}

\begin{remark}
The pre-Lie product $\star$ and the circle product $\circledcirc$ extend by $\hbar$-linearity to $\mathfrak{a}_H[\![\hbar]\!]$. For all $f,g \in \mathfrak{a}_H$ of degree zero, we have \[\mathfrak{D} \left(f \star g \right) = \mathfrak{D} \left(f\right) \star \mathfrak{D} \left(g\right) \quad \mbox{and} \quad \mathfrak{D} \left(f \circledcirc g \right) = \mathfrak{D} \left(f\right) \circledcirc \mathfrak{D} \left(g\right)  \ . \] Furthermore, if $f : \varphi \rightsquigarrow \psi$ is an $\infty$-isomorphism then $\mathfrak{D} \left(f\right)$ satisfies \[\mathfrak{D} \left( \psi \right) = \left( \mathfrak{D} \left(f\right) \star \mathfrak{D} \left( \varphi \right)  \right)\circledcirc \mathfrak{D} \left(f\right)^{-1} + \left(\mathfrak{D} \left(f\right) \circ d_{\C}\right) \circledcirc \mathfrak{D} \left(f\right)^{-1} \ .\]
\end{remark}

\begin{lemma}\label{compatibility2} For all $F, G \in \mathfrak{a}_H[\![\hbar]\!]$~, the following identities hold:
	\begin{enumerate}
		\item $\partial_{\hbar} (F \star G) = \partial_{\hbar} F \star G + F \star  \partial_{\hbar} G \ ,$
		\item $\partial_{\hbar} (F \circledcirc G) = \partial_{\hbar} F \circledcirc  G + F \circledcirc (G; \partial_{\hbar} G)  \ ,$ 
\end{enumerate}
If furthermore, the element $F$ is invertible with respect to $\circledcirc$~, then  
\begin{enumerate}
		\item[(3)]  $\partial_{\hbar} (F^{-1}) = - F^{-1} \star \left( \partial_{\hbar} F \circledcirc  F^{-1}  \right)\ .$ 
	\end{enumerate}
\end{lemma}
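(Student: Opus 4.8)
The plan is to establish the three identities as the $\partial_{\hbar}$-analogues of Formulas (i), (j) and (l) of Lemma \ref{differentielles}, exploiting that $\partial_{\hbar}$ is a degree-zero derivation on $\mathfrak{a}_H[\![\hbar]\!]$ in exactly the same way that $d$ is a degree $-1$ derivation there.

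First, for Identity (1), I would note that for any $R$-bilinear operation extended $\hbar$-linearly to power series, the operator $\partial_{\hbar}$ automatically satisfies the Leibniz rule: writing $F = \sum_i F_i \hbar^i$ and $G = \sum_j G_j \hbar^j$, the coefficient of $\hbar^{n-1}$ on each side of (1) collects the terms $(i+j)(F_i \star G_j)$ with $i + j = n$, so the identity is immediate. Equivalently, the proof of Formula (i) of Lemma \ref{differentielles} transports verbatim to $\partial_{\hbar}$; since $\partial_{\hbar}$ has degree zero, no Koszul sign appears, which matches the sign-free statement of (1).

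Next, for Identity (2), I would run the same argument that proves Formula (j) of Lemma \ref{differentielles}. The product $F \circledcirc G$ is obtained from the coproduct $\Delta : \C \to \C \circ \C$ by plugging $F$ into the top vertex and $G$ into every lower slot simultaneously, followed by the operad composition $\gamma$ of $\End_A$; neither $\Delta$ nor $\gamma$ involves $\hbar$. Applying the derivation $\partial_{\hbar}$ therefore hits either the single top copy of $F$, producing $\partial_{\hbar}F \circledcirc G$, or exactly one of the lower copies of $G$, and summing the latter over all slots is by definition the operation $F \circledcirc (G; \partial_{\hbar}G)$. This is precisely the combinatorial content of Formula (j), and because $\partial_{\hbar}$ has degree zero the sign $(-1)^{|f|}$ of (j) disappears and no degree restriction on $G$ is needed. \textbf{This step is the main point of care}: one must check that differentiating the simultaneous substitution of $G$ into all lower inputs reassembles, via the coassociativity of $\Delta$, into the single operation $F \circledcirc (G; \partial_{\hbar}G)$ encoded by the infinitesimal composition $\circ'$, rather than a raw unstructured sum over inputs.

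Finally, Identity (3) I would deduce formally from (2) exactly as Formula (l) is deduced from (j). Applying $\partial_{\hbar}$ to $F^{-1} \circledcirc F = 1$, and using $\partial_{\hbar}(1) = 0$ together with Identity (2), gives
\[
\partial_{\hbar}(F^{-1}) \circledcirc F + F^{-1} \circledcirc (F; \partial_{\hbar}F) = 0 \ .
\]
Composing on the right with $F^{-1}$ and using the associativity of $\circledcirc$ from Proposition \ref{associative} (so that $\,\cdot \circledcirc F \circledcirc F^{-1}$ cancels), the first term becomes $\partial_{\hbar}(F^{-1})$. For the second term I would invoke Formula (d) of Lemma \ref{compatibility} to rewrite $F^{-1} \circledcirc (F; \partial_{\hbar}F) = \left(F^{-1} \star (\partial_{\hbar}F \circledcirc F^{-1})\right) \circledcirc F$, whence composing with $F^{-1}$ on the right and cancelling $F \circledcirc F^{-1} = 1$ leaves $F^{-1} \star (\partial_{\hbar}F \circledcirc F^{-1})$. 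Solving the resulting equation yields $\partial_{\hbar}(F^{-1}) = - F^{-1} \star (\partial_{\hbar}F \circledcirc F^{-1})$, which is (3). The only care here is bookkeeping the associativity cancellations correctly, but these are the same manipulations already used in the proof of Lemma \ref{differentielles}(l).
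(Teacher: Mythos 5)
Your proposal is correct and follows essentially the same route as the paper: identities (1) and (2) via the $\hbar$-linear extension of the products (with the slot-wise Leibniz argument for $\circledcirc$ that the paper leaves implicit), and identity (3) by applying (2) to $\partial_{\hbar}(F^{-1}\circledcirc F)=\partial_{\hbar}(1)=0$, composing on the right with $F^{-1}$, and invoking Formula (d) of Lemma \ref{compatibility}. Your extra care on (2) — checking that differentiating the simultaneous substitution reassembles into the infinitesimal operation $F \circledcirc (G;\partial_{\hbar}G)$ — is exactly the content behind the paper's one-line justification.
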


\begin{proof} Points (1) and (2) follow from the $\hbar$-linearity of the pre-Lie product. 
Furthermore, by Point (2), applied to $\partial_{\hbar} (F^{-1} \circledcirc F) = \partial_{\hbar} \left(1 \right) = 0  \ ,$ the following equality holds \[ \partial_{\hbar} \left(F^{-1} \right)  = - \left( F^{-1} \circledcirc (F; \partial_{\hbar} F)  \right) \circledcirc  F^{-1} \ . \] Formula (d) of Lemma \ref{compatibility} then implies \[ \partial_{\hbar} \left(F^{-1} \right)  \overset{(d)}{=} -  F^{-1} \star \left( \partial_{\hbar} F \circledcirc F^{-1}\right)  \ . \qedhere \]
\end{proof}

\begin{lemma} Let $n \geqslant 1$ and let $f : \varphi \rightsquigarrow \psi$ be an $\infty$-isomorphism. Let us denote $\Phi \coloneqq \mathfrak{D}(\varphi)$~, $\Psi \coloneqq \mathfrak{D}(\psi)$ and $F \coloneqq \mathfrak{D}(f)$. There are isomorphisms of dg Lie algebras \[\mathrm{Ad}_F : \mathfrak{g}_H[\![\hbar]\!]^{\Phi} \to \mathfrak{g}_H[\![\hbar]\!]^{\Psi} \quad \mbox{and} \quad \mathrm{Ad}_{F}^n :  \left(\mathfrak{g}_H[\![\hbar]\!] /(\hbar^{n}) \right)^{\overline{\Phi}^n} \to  \left(\mathfrak{g}_H[\![\hbar]\!] /(\hbar^{n}) \right)^{\overline{\Psi}^n}   \] induced by the mapping $x \mapsto (F \star x) \circledcirc F^{-1}$.
\end{lemma}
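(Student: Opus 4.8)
The plan is to recognise the power series $F = \mathfrak{D}(f)$ as a genuine $\infty$-isomorphism $\Phi \rightsquigarrow \Psi$ inside the $\hbar$-complete convolution pre-Lie algebra $\mathfrak{a}_H[\![\hbar]\!]$, and then to invoke the $\hbar$-linear extension of Proposition \ref{Adf}. First I would record that $\Phi = \mathfrak{D}(\varphi)$ and $\Psi = \mathfrak{D}(\psi)$ are Maurer--Cartan elements of $\mathfrak{g}_H[\![\hbar]\!]$: by Lemma \ref{decomposition prismatic} (with $\delta = 1$) each prismatic decomposition is a formal deformation of the corresponding weight-one component, hence lies in $\mathrm{MC}(\mathfrak{g}_H[\![\hbar]\!])$. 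Thus the twisted dg Lie algebras $\mathfrak{g}_H[\![\hbar]\!]^{\Phi}$ and $\mathfrak{g}_H[\![\hbar]\!]^{\Psi}$, together with their truncations, are well defined and the statement is meaningful.

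Next I would exhibit $F$ as an $\infty$-isomorphism in $\mathfrak{a}_H[\![\hbar]\!]$. The identity recorded in the remark preceding the statement, which after multiplying on the right by $F$ reads
\[
\Psi \circledcirc F = F \star \Phi + F \circ d_{\C} \ ,
\]
is exactly the $\mathfrak{D}$-transform of the defining $\infty$-morphism equation (\ref{infini}) for $f : \varphi \rightsquigarrow \psi$; this uses that $\mathfrak{D}$ intertwines both products, $\mathfrak{D}(f \star \varphi) = \mathfrak{D}(f) \star \mathfrak{D}(\varphi)$ and $\mathfrak{D}(\psi \circledcirc f) = \mathfrak{D}(\psi) \circledcirc \mathfrak{D}(f)$. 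Hence $F$ is an $\infty$-morphism from $\Phi$ to $\Psi$. It is moreover an $\infty$-isomorphism: its weight-zero component is $f^{(0)}$, which is an isomorphism since $f$ is, so $F$ is invertible with respect to $\circledcirc$ by (the $\hbar$-linear version of) Lemma \ref{invertibility}, with inverse $F^{-1} = \mathfrak{D}(f^{-1})$.

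The core of the argument is then that the whole pre-Lie calculus extends $R[\![\hbar]\!]$-linearly: the products $\star$ and $\circledcirc$, the operation $\circledcirc(-;-)$, and the differential $d$ all extend by $\hbar$-linearity to $\mathfrak{a}_H[\![\hbar]\!]$, so that every identity of Lemma \ref{compatibility} and Lemma \ref{differentielles} persists there. Therefore the proof of Proposition \ref{Adf} applies verbatim to $F$, showing that $\mathrm{Ad}_F : x \mapsto (F \star x) \circledcirc F^{-1}$ is an isomorphism of dg Lie algebras $\mathfrak{g}_H[\![\hbar]\!]^{\Phi} \to \mathfrak{g}_H[\![\hbar]\!]^{\Psi}$ with inverse $\mathrm{Ad}_{F^{-1}}$. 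For the truncated statement I would simply note that $\mathrm{Ad}_F$ is $\hbar$-linear, hence preserves the ideal $(\hbar^n)$, so it descends to an isomorphism $\mathrm{Ad}_F^n$ between $\left(\mathfrak{g}_H[\![\hbar]\!]/(\hbar^n)\right)^{\overline{\Phi}^n}$ and $\left(\mathfrak{g}_H[\![\hbar]\!]/(\hbar^n)\right)^{\overline{\Psi}^n}$, its compatibility with the twisted differentials $d^{\overline{\Phi}^n}$ and $d^{\overline{\Psi}^n}$ being inherited from the untruncated case. I do not expect a genuine obstacle: the only points requiring care are the bookkeeping of the $\hbar$-linear extension of the calculus and the verification that the remark's formula really is the transported equation (\ref{infini}), after which the result is a formal consequence of Proposition \ref{Adf}.
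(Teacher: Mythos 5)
Your proposal is correct and takes essentially the same route as the paper, whose entire proof is the one-line remark that the proof of Proposition \ref{Adf} holds \emph{mutatis mutandis} by $\hbar$-linearity. The details you supply — that $\Phi$ and $\Psi$ are Maurer--Cartan elements by Lemma \ref{decomposition prismatic}, that $F$ is an $\infty$-isomorphism with $F^{-1} = \mathfrak{D}(f^{-1})$ by the preceding remark, and that the $\hbar$-linear map $\mathrm{Ad}_F$ descends to the truncations — are exactly the bookkeeping the paper leaves implicit.
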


\begin{proof}
The proof of Proposition \ref{Adf} holds \emph{mutatis mutandis} by $\hbar$-linearity. 
\end{proof}

\begin{lemma}[Invariance of Kaledin classes under $\infty$-isomorphisms]\label{piquant} Let $n \geqslant 1$ and let $f : \varphi \rightsquigarrow \psi$ be an $\infty$-isomorphism. Let us denote $\Phi \coloneqq \mathfrak{D}(\varphi)$~, $\Psi \coloneqq \mathfrak{D}(\psi)$ and $F \coloneqq \mathfrak{D}(f)$.
	\begin{enumerate}
  	\item The classes $K_{\Psi}$ and $\mathrm{Ad}_{F}\left(K_{\Phi}\right)$ are equal in \[H_{-1}\left(\mathfrak{g}_H[\![\hbar]\!]^{\Psi}\right) \ ,\] where we still denote by $\mathrm{Ad}_{F}$ the induced isomorphism on homology. 
		\item The classes $K_{\Psi}^n$ and $\mathrm{Ad}_{F}^n\left(K_{\Phi}^n\right)$ are equal in \[H_{-1}\left(\left(\mathfrak{g}_H[\![\hbar]\!] /(\hbar^{n}) \right)^{\overline{\Psi}^n}\right) \ ,\] where we still denote by $\mathrm{Ad}_{F}^n$ the induced isomorphism on homology. 
	\end{enumerate}
\end{lemma}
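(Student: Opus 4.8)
The plan is to reduce both points to a single homological statement in the twisted dg Lie algebra $\mathfrak{g}_H[\![\hbar]\!]^{\Psi}$ and then to exhibit an explicit primitive. By the previous lemma, $\mathrm{Ad}_F$ is an isomorphism of dg Lie algebras from $\mathfrak{g}_H[\![\hbar]\!]^{\Phi}$ to $\mathfrak{g}_H[\![\hbar]\!]^{\Psi}$, so it induces an isomorphism on homology. Since $\Phi = \mathfrak{D}(\varphi)$ and $\Psi = \mathfrak{D}(\psi)$ are formal deformations of $\varphi^{(1)}$ and $\psi^{(1)}$ by Lemma \ref{decomposition prismatic}, Proposition \ref{c'est un cycle} shows that $\partial_{\hbar}\Phi$ and $\partial_{\hbar}\Psi$ are cycles representing $K_{\Phi}$ and $K_{\Psi}$. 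Hence $\mathrm{Ad}_F(\partial_{\hbar}\Phi)$ is a cycle representing $\mathrm{Ad}_F(K_{\Phi})$, and Point (1) amounts to producing an element $\mu \in \left(\mathfrak{g}_H[\![\hbar]\!]\right)_0$ with
\[\partial_{\hbar}\Psi - \mathrm{Ad}_F(\partial_{\hbar}\Phi) = d^{\Psi}(\mu).\]

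For this primitive I would take the operadic logarithmic derivative of $F$,
\[\mu := -(\partial_{\hbar} F)\circledcirc F^{-1},\]
which is the exact counterpart of the term $-\tfrac{e^{\mathrm{ad}_{\lambda}} - \mathrm{id}}{\mathrm{ad}_{\lambda}}(\partial_{\hbar}\lambda)$ used in Lemma \ref{technique1}. Note that $\mu$ is a genuine degree-zero element of $\mathfrak{g}_H[\![\hbar]\!]$, and not merely of $\mathfrak{a}_H[\![\hbar]\!]$: the only $\mathrm{I}$-supported component of $F$ is $f^{(0)}$, which sits in $\hbar$-degree zero and is therefore annihilated by $\partial_{\hbar}$, so $\partial_{\hbar}F$ — and hence $\mu$ — vanishes on $\mathrm{I}$ in each $\hbar$-degree.

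The heart of the argument is the verification of the displayed identity. Starting from $\Psi = (F\star\Phi)\circledcirc F^{-1} + (F\circ d_{\C})\circledcirc F^{-1}$, recorded in the remark preceding this lemma, I would apply $\partial_{\hbar}$ and expand using the Leibniz rules of Lemma \ref{compatibility2}; the summand $(F\star\partial_{\hbar}\Phi)\circledcirc F^{-1} = \mathrm{Ad}_F(\partial_{\hbar}\Phi)$ splits off at once, and the remaining terms have to be reassembled into $d^{\Psi}(\mu) = d(\mu) + [\Psi,\mu]$. This reassembly is the operadic analogue of the computation in Lemma \ref{technique1}, with the pre-Lie calculus playing the role of the exponential identities: it uses the compatibility relations (a)--(e) of Lemma \ref{compatibility}, the derivation rules (i)--(l) of Lemma \ref{differentielles}, and the fact that $\partial_{\hbar}$ commutes with $d$ (the differential being precomposition with the $\hbar$-independent map $d_{\C}$). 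I expect the only real difficulty to be this bookkeeping, tracking the interplay of $\star$ and $\circledcirc$ together with the Koszul signs, just as in the proofs of Proposition \ref{Adf} and Theorem \ref{action}.

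Finally, Point (2) follows by running the identical computation modulo $\hbar^n$. The operations $\partial_{\hbar}$, $\star$, $\circledcirc$, inversion and $\mathrm{Ad}_F$ are all compatible with the canonical projection $\mathfrak{g}_H[\![\hbar]\!] \twoheadrightarrow \mathfrak{g}_H[\![\hbar]\!]/(\hbar^n)$, which is a morphism of dg Lie algebras; the same identity therefore holds for $\overline{\Phi}^n$ and $\overline{\Psi}^n$ with primitive the image $\overline{\mu}^n$, which gives $K_{\Psi}^n = \mathrm{Ad}_F^n(K_{\Phi}^n)$.
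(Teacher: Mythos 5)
Your proposal is correct and follows essentially the same route as the paper: the paper's proof starts from the same identity $\Psi = \mathrm{Ad}_F(\Phi) + (F\circ d_{\C})\circledcirc F^{-1}$, expands $\partial_{\hbar}$ with Lemma \ref{compatibility2} and the relations of Lemmas \ref{compatibility} and \ref{differentielles}, and arrives at exactly your identity, $\mathrm{Ad}_{F}(\partial_{\hbar}\Phi) - \partial_{\hbar}\Psi = d^{\Psi}\left(\partial_{\hbar}F \circledcirc F^{-1}\right)$, i.e. your primitive $\mu$ up to the overall sign, with Point (2) likewise handled by truncation. Your observation that $\partial_{\hbar}F$ (hence $\mu$) vanishes on $\mathrm{I}$ and so lies in $\mathfrak{g}_H[\![\hbar]\!]$ is a worthwhile detail the paper leaves implicit.
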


\begin{proof} Let us prove point $(1)$~. By definition of an $\infty$-isomorphism, we have \[\psi = \left( f \star \varphi \right)\circledcirc f^{-1} + \left(f \circ d_{\C}\right) \circledcirc f^{-1} \] and thus $ \Psi = \mathrm{Ad}_F(\Phi) + \left(F \circ d_{\C}\right) \circledcirc F^{-1} $~. Let us prove that the cycles $\mathrm{Ad}_{F} (\partial_{\hbar} \Phi)$ and $\partial_{\hbar} \Psi$ are homologous. The relations of Lemma \ref{compatibility2} imply that \[
\partial_{\hbar} \left( \mathrm{Ad}_{F} (\Phi) \right)  = \left(\partial_{\hbar} F  \star \Phi \right) \circledcirc F^{-1} + \left(  F \star \partial_{\hbar}  \Phi   \right) \circledcirc F^{-1} +  (F \star \Phi) \circledcirc \left(F^{-1}; \partial_{\hbar} (F^{-1}) \right) \ .   \] By Formula (e) of Lemma \ref{compatibility}, the following equality holds \[\left(\partial_{\hbar} F  \star \Phi \right) \circledcirc F^{-1} =   \left(\partial_{\hbar} F \circledcirc F^{-1} \right)\star  \mathrm{Ad}_{F} (\Phi) \ . \] Using the formula of $\partial_{\hbar} (F^{-1})$ given by Lemma \ref{compatibility2} and Formula (a) of Lemma \ref{compatibility}, we have  \[ \begin{split}
	(F \star \Phi) \circledcirc \left(F^{-1}; \partial_{\hbar} (F^{-1}) \right) &  = -  (F \star \Phi) \circledcirc \left(F^{-1}; F^{-1} \star \left( \partial_{\hbar} F \circledcirc  F^{-1}  \right) \right) \\
	& \overset{(a)}{=} - \mathrm{Ad}_{F} (\Phi)  \star \left(\partial_{\hbar} F \circledcirc F^{-1} \right) \ .
\end{split} \]
This shows that $ \partial_{\hbar} \left( \mathrm{Ad}_{F} (\Phi) \right) = \mathrm{Ad}_{F} (\partial_{\hbar} \Phi) -  \left[\mathrm{Ad}_{F} (\Phi), \partial_{\hbar} F \circledcirc F^{-1} \right] \ .$ Formula (2) of Lemma \ref{compatibility2} implies that \[\partial_{\hbar} \left( \left(F \circ d_{\C}\right) \circledcirc F^{-1}\right) =  \left( \partial_{\hbar} F \circ d_{\C}\right) \circledcirc F^{-1} +  \left(  F \circ d_{\C}\right) \circledcirc  \left(F^{-1}; \partial_{\hbar} (F^{-1}) \right) \ . \]  Using the expression of $\partial_{\hbar} (F^{-1})$ and Formula (a) of Lemma \ref{compatibility}, we have  \[
\left(  F \circ d_{\C}\right) \circledcirc  \left(F^{-1}; \partial_{\hbar} (F^{-1}) \right) \overset{(a)}{=} - \left(\left(  F \circ d_{\C}\right)  \circledcirc  F^{-1} \right) \star \left(\partial_{\hbar} F \circledcirc F^{-1} \right) \ .\] Formula (j) of Lemma \ref{differentielles} implies that \[\left( \partial_{\hbar} F \circ d_{\C}\right) \circledcirc F^{-1} = \left(\partial_{\hbar} F \circledcirc F^{-1} \right) \circ d_{\C} - \partial_{\hbar} F \circledcirc \left( F^{-1}; F^{-1} \circ d_{\C} \right) \ . \]  Using the expression of $F^{-1} \circ d_{\C}$ given by Lemma \ref{differentielles} and Formula (a) of Lemma \ref{compatibility}, we have \[\partial_{\hbar} F \circledcirc \left( F^{-1}; F^{-1} \circ d_{\C} \right) \overset{(a)}{=} -  \left(\partial_{\hbar} F \circledcirc F^{-1} \right) \star \left(\left(  F \circ d_{\C}\right)  \circledcirc  F^{-1} \right) \ . \] This leads to \[\partial_{\hbar} \left( \left(F \circ d_{\C}\right) \circledcirc F^{-1}\right) = \left(\partial_{\hbar} F \circledcirc F^{-1} \right) \circ d_{\C} - \left[ \left(  F \circ d_{\C}\right)  \circledcirc  F^{-1}  , \partial_{\hbar} F \circledcirc F^{-1} \right]~, \quad \mbox{and} \] \begin{equation}\label{homologues}
\mathrm{Ad}_{F} (\partial_{\hbar} \Phi) - \partial_{\hbar} \Psi  = d^{\Psi} \left(\partial_{\hbar} F \circledcirc F^{-1}  \right) \ .
\end{equation}
The proof of Point (2) is similar.
\end{proof}

\begin{proof}[Proof of Proposition \ref{Kaledin}] 
	
Let us prove Point (1). Let $\psi \in \mathrm{MC}(\mathfrak{g}_H)$ be such that $\psi^{(k)} = 0$~, for $2 \leqslant k \leqslant n+1$~, and let $\varphi \rightsquigarrow \psi$ be an $\infty$-isotopy. Point (2) of Lemma \ref{piquant} implies that \[K_{\Phi}^n = \left(\mathrm{Ad}_{F}^n \right)^{-1} \left(K_{\Psi}^n\right) = \left(\mathrm{Ad}_{F}^n \right)^{-1} \left(K_{\varphi^{(1)}}^n\right) = 0 \ . \] Let us prove the converse result by induction on $n \geqslant 1$. Suppose that \[ K^1_{\Phi} = 0 \in H_{-1}\left(\mathfrak{g}^{\varphi^{(1)}}_H\right) \ . \] There exists $\upsilon \in \mathfrak{g}_H$ of degree zero such that $ d^{\varphi^{(1)}} (\upsilon) =\varphi^{(2)} \ .$ Let us set $f \coloneqq 1 + \upsilon^{(1)}$ and  \[\psi \coloneqq (f \star \varphi) \circledcirc f^{-1} + \left(f \circ d_{\C}\right) \circledcirc f^{-1} \ . \] By construction, $\psi \in \mathrm{MC}(\mathfrak{g}_H)$ and $f$ determines an $\infty$-isotopy $ \varphi \rightsquigarrow  \psi \ .$ Lemma \ref{key lemma} gives \[
		\psi^{(2)}  = \varphi^{(2)} - d^{\varphi^{(1)}} \left(\upsilon^{(1)}\right) = 0 \ .  \] Suppose that $n > 1$ and that the result holds for $n-1$~. If the class $K_{\Phi}^n$ is zero, so does the class $K_{\Phi}^{n-1}$. By the induction hypothesis, there exist $\psi \in \mathrm{MC}(\mathfrak{g}_H)$ such that $\psi^{(k)} = 0$ for $2 \leqslant k \leqslant n$ and an $\infty$-isotopy $g : \varphi \rightsquigarrow \psi$~. Point (2) of Lemma \ref{piquant} implies that \[\mathrm{Ad}_{\mathfrak{D}(g)}^n(K^{n}_{\Phi})= K^{n}_{\mathfrak{D}(\psi)} = [n \psi^{(n+1)} \hbar^{n-1}] = 0 \ .\] There exists $\upsilon \coloneqq \upsilon_0 + \upsilon_1 h + \dots + \upsilon_{n-1}\hbar^{n-1} \in \mathfrak{g}_H[\![\hbar]\!]$ of degree zero such that \[d^{\varphi^{(1)}}(\upsilon) \equiv n \psi^{(n+1)} \hbar^{n-1} \pmod{\hbar^{n}} \ .\] Looking at the coefficient of $\hbar^{n-1}$ and in weight $n +1$ on both sides, we have\[ d^{\varphi^{(1)}} \left( \upsilon_{n-1}^{(n)} \right) = n \psi^{(n+1)}  \ . \]  Let us consider $\lambda \coloneqq \frac{1}{n} \upsilon_{n-1}^{(n)}$ and $f \coloneqq 1 + \lambda$~. We set \[\psi' \coloneqq (f \star \psi) \circledcirc f^{-1} + \left(f \circ d_{\C}\right) \circledcirc f^{-1} \in \mathrm{MC}(\mathfrak{g}_H) \ .\] By construction, $f \circledcirc g$ determines an $\infty$-isotopy $ \varphi \rightsquigarrow  \psi' \ .$ Lemma \ref{key lemma} implies that \[\left(\psi'\right)^{(k)} = 0~, \quad \mbox{for} \; 2 \leqslant k \leqslant n~, \quad \mbox{and} \quad
		(\psi')^{(n+1)}  = \psi^{(n+1)} - d^{\varphi^{(1)}} \left( \lambda\right) = 0 \ , \] and we get the desired result. Let us prove Point (2). If there exists an $\infty$-isotopy $f: \varphi \rightsquigarrow \varphi^{(1)}$~, Point (1) of Lemma \ref{piquant} implies that \[K_{\Phi} =  \mathrm{Ad}_{F}^{-1} \left(K_{\varphi^{(1)}}\right) = 0 \ . \]  Conversely, suppose that $K_{\Phi} = 0$~. If $\Phi = \varphi^{(1)}$~, the result is immediate. Otherwise, let $n_1 > 1$ be the smallest integer such that $\varphi^{(n_1)}$ is not zero. Since $K_{\Phi} = 0$~, the class $K_{\Phi}^{n_1}$ is also zero. By the proof of Point (1), there exist $\psi \in \mathrm{MC}(\mathfrak{g}_H)$ such that $\psi^{(k)}= 0$ for $2  \leqslant k \leqslant n_1 +1 $ and an $\infty$-isotopy between $\varphi$ and $\psi$ of the form \[f_{n_1} \coloneqq  1 + \lambda_1 \ , \quad \mbox{where } \lambda_1 \in \mathfrak{g}_H^{(n_1)} \ . \] We can repeat the procedure with $\psi_{n_1} \coloneqq \psi$~. If $R$ is a $\mathbb{Q}$-algebra, we obtain a sequence  \[n_1 < n_2 < n_3 < \cdots\] of integers and a sequence of $\Cobar \C$-algebra structures $\psi_{n_i}$ such that $\psi_{n_i}$ is zero in weight $k$ for $2 \leqslant k \leqslant n_i +1$ associated to a sequence of $\infty$-isotopies 
\begin{center}
	\begin{tikzcd}[column sep=normal]
		\varphi \ar[r,squiggly,"f_{n_1}"] 
		& \psi_{n_1} \ar[r,squiggly,"f_{n_2}"] 
		& \psi_{n_2}  \ar[r,squiggly,"f_{n_3}"] 
		& \cdots \ .
	\end{tikzcd} 
\end{center}  
where $f_{n_i}$ is of the form $1 + \lambda_i$ with $\lambda_i$ concentrated in weight $n_i$. The composite \[f \coloneqq  \cdots f_{n_3} \circledcirc f_{n_2} \circledcirc f_{n_1} \] is well-defined by construction and induces an $\infty$-isotopy $f : \varphi \rightsquigarrow \varphi^{(1)}$~. \qedhere
\end{proof}

\begin{proposition}\label{toutes nulles 2}
Let $H$ be a graded $R$-module and let $(H,\varphi)$ be an $\Cobar \C$-algebra structure. If $R$ is a $\mathbb{Q}$-algebra, the Kaledin class $K_{\varphi}$ is zero if and only if, for all $n \geqslant 1$, the $n^{\text{th}}$-truncated Kaledin class $K^n_{\varphi}$ is zero. 
\end{proposition}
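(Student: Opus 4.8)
The plan is to reduce the statement to its dg-Lie-algebraic counterpart, Proposition \ref{toutes nulles}, exploiting the fact that the operadic Kaledin classes are \emph{by definition} those of the prismatic decomposition. First I would recall that, by the very definition of the operadic Kaledin classes, one has $K_\varphi = K_{\mathfrak{D}(\varphi)}$ and $K^n_\varphi = K^n_{\mathfrak{D}(\varphi)}$, where $\mathfrak{D}(\varphi) = \varphi^{(1)} + \varphi^{(2)}\hbar + \varphi^{(3)}\hbar^2 + \cdots$ is the prismatic decomposition of $\varphi$ inside $\mathfrak{g}_H[\![\hbar]\!]$. Thus proving the proposition amounts to proving the corresponding equivalence for the formal deformation $\mathfrak{D}(\varphi)$.

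The key step is to check that $\mathfrak{g}_H$ fits the hypotheses of Section \ref{1.3} as a $1$-weight-graded dg Lie algebra. Its weight grading is inherited from that of $\overline{\C}$; the bracket, obtained by skew-symmetrizing the pre-Lie product $\star$, is additive in weight; and since $H$ carries no differential, the internal differential $\varphi \mapsto -(-1)^{|\varphi|}\varphi \circ d_{\overline{\C}}$ raises the weight by one, because $d_{\C}$ lowers the coweight by one. Hence the $\delta = 1$ case of Lemma \ref{decomposition prismatic}(1) applies and shows that $\mathfrak{D}(\varphi)$ is an honest formal deformation of the Maurer--Cartan element $\varphi^{(1)}$, that is, $\mathfrak{D}(\varphi) \in \mathrm{Def}_{\varphi^{(1)}}(R[\![\hbar]\!])$.

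With these identifications in place, the statement follows at once: since $R$ is a $\mathbb{Q}$-algebra, Proposition \ref{toutes nulles} applies verbatim to the formal deformation $\Phi = \mathfrak{D}(\varphi)$ of $\varphi_0 = \varphi^{(1)}$, yielding that $K_{\mathfrak{D}(\varphi)}$ vanishes if and only if all its truncations $K^n_{\mathfrak{D}(\varphi)}$ vanish. Translating back through the definitions gives the claimed equivalence for $K_\varphi$ and the $K^n_\varphi$. I do not expect a genuine obstacle here; the only points requiring care are bookkeeping, namely confirming that the weight grading makes $\mathfrak{g}_H$ satisfy Definition \ref{delta wg} for $\delta = 1$, and that the $\mathbb{Q}$-algebra assumption is precisely what licenses Proposition \ref{toutes nulles} (whose converse implication rests on inverting all integers, via the BCH/exponential arguments of Section \ref{1.3}). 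The forward implication, by contrast, needs no such assumption and already follows from Remark \ref{toutes nulles0}, since the projections $\mathfrak{g}_H[\![\hbar]\!]^{\mathfrak{D}(\varphi)} \twoheadrightarrow (\mathfrak{g}_H[\![\hbar]\!]/(\hbar^n))^{\overline{\mathfrak{D}(\varphi)}^n}$ are morphisms of dg Lie algebras.
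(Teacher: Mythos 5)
Your proof is correct, and it takes a genuinely different route from the paper's. The paper's proof of Proposition \ref{toutes nulles 2} stays inside the operadic formalism: following the pattern of the proof of Proposition \ref{toutes nulles}, it uses Proposition \ref{Kaledin} to produce, from the vanishing of all the truncated classes, a sequence of $\infty$-isotopies of the form $1+\lambda_i$ with $\lambda_i$ concentrated in increasing weights, whose infinite composite is an $\infty$-isotopy $\varphi \rightsquigarrow \varphi^{(1)}$, and then concludes again by Proposition \ref{Kaledin}. You instead reduce the statement to the dg Lie algebra result: since by definition $K_\varphi = K_{\mathfrak{D}(\varphi)}$ and $K^n_\varphi = K^n_{\mathfrak{D}(\varphi)}$, and since $\mathfrak{g}_H$ is a $1$-weight-graded dg Lie algebra (so that Lemma \ref{decomposition prismatic} exhibits $\mathfrak{D}(\varphi)$ as a formal deformation of $\varphi^{(1)}$), Proposition \ref{toutes nulles} applies verbatim over the $\mathbb{Q}$-algebra $R$. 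Your reduction is shorter and entirely legitimate under the stated hypothesis. What the paper's route buys is coherence with the design of Section \ref{2.4}: the point of that section is to replace gauge/exponential arguments by $\infty$-isotopies $1+\lambda$, which survive outside characteristic zero, and its argument also produces explicitly the $\infty$-isotopy $\varphi \rightsquigarrow \varphi^{(1)}$ that feeds directly into gauge formality (Theorem \ref{B}); with your argument one only obtains the vanishing of classes and must afterwards invoke Proposition \ref{Kaledin} or Theorem \ref{parallele} to recover the isotopy. Two small corrections to your write-up: the BCH/exponential arguments behind the converse of Proposition \ref{toutes nulles} live in Section \ref{1.2} (Theorems \ref{truncated1} and \ref{Kaledin1}), not Section \ref{1.3}; and when you feed $\varphi$ into the dg Lie machinery you should note that the pre-Lie Maurer--Cartan equation $d\varphi + \varphi\star\varphi = 0$ agrees with the Lie one $d\varphi + \tfrac{1}{2}[\varphi,\varphi] = 0$ precisely because $2$ is invertible in $R$.
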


\begin{proof}
The proof makes use of the Kaledin classes and Proposition \ref{Kaledin} in a way similar to the proof of Proposition \ref{toutes nulles}. 
\end{proof}

\subsection{Kaledin classes in the colored setting}\label{2.5}

All the previous results apply \emph{mutatis mutandis} to operads colored in groupoids. The notion of a colored operad is a generalization of the notion of operads in which each input or output comes with a color chosen in a given set. A composition is possible whenever the colors of the corresponding input and output involved match. While an operad acts on a single chain complex, colored operads encode operations acting on various chain complexes. In order to encode symmetries, one can also consider groupoid colored operads where colors are chosen in a given groupoid $\mathbb{V}$. The associated Koszul duality theory was developed by Ward in \cite{War19}. We refer the reader to \cite[Chapter 2, Section 6.4]{RL22} for more details.

\begin{example}[Endomorphism operad] Let $A$ be a $\mathbb{V}$-module, i.e. a functor $A : \mathbb{V} \to \mathrm{dgMod_R}$. The associated endomorphism $\mathbb{V}$-colored operad is given by \[\End_A \left(v_0; v_1, \dots , v_r\right) \coloneqq \Hom\left( A(v_0); A(v_1) \otimes \cdots \otimes A(v_r)\right)\] with the $\mathbb{S}_r$ action given by permuting the inputs. 
\end{example}

\noindent We fix $\mathbb{V}$ a groupoid and $\C$ a reduced $\mathbb{V}$-colored weight-graded dg cooperad over $R$. For instance, let $\C = \P^{\antishriek}$ be the dual $\mathbb{V}$-colored cooperad of a Koszul $\mathbb{V}$-colored operad $\P$, see \cite[Definition~2.45]{War19}.

\begin{example}
	We denote by $\mathbb{S}$ the groupoid defined by $\mathbb{S} \coloneqq \bigsqcup_{n \geqslant0} \mathrm{B} \mathbb{S}_n$ where $\mathrm{B} \mathbb{S}_n$ is the category with one object $\{*\}$ and $\mathrm{Aut}(*) = \mathbb{S}_n$~. As proved by Ward in \cite{War19}, there exists an $\mathbb{S}$-colored Koszul operad encoding symmetric operads. 
\end{example}

\noindent The previous sections generalize directly to this setting by working in the following colored version of the convolution dg pre-Lie algebra.

\begin{definition}
[Colored convolution dg pre-Lie algebra]
The \emph{convolution dg pre-Lie algebra} associated to $\C$ and to a $\mathbb{V}$-module $A$ is the dg pre-Lie algebra \[\mathfrak{g}_{A} := \left( \Hom_{\mathbb{S}} \left(\overline{\C}, \mathrm{End}_A\right), \star, d \right) \ , \]
where the underlying space is  \[\Hom_{ \mathcal{S}_{\mathbb{V}}} \left(\overline{\C}, \End_A\right) : = \prod_{ \mathcal{S}_{\mathbb{V}}} \Hom_{\mathbb{S}} \left(\overline{\C}\left(v_0;v_1, \dots , v_r\right), \End_A\left(v_0; v_1, \dots , v_r\right)\right)\ ,\] where $\mathcal{S}_{\mathbb{V}} \coloneqq \lbrace \left(v_1, \dots , v_r; v_0\right) \in \mathrm{ob} \left(\mathbb{V}\right)^{r+1} \rbrace$. It is equipped with the pre-Lie product \[\varphi \star \psi \coloneqq \overline{\C} \xrightarrow{\Delta_{(1)}} \overline{\C} \circ_{(1)} \overline{\C} \xrightarrow{\varphi \circ_{(1)} \psi} \End_A \circ_{(1)} \End_A \xrightarrow{\gamma_{(1)}} \End_A \ ,\] and the differential \[d (\varphi) = d_{\End_A} \circ \varphi - (- 1)^{|\varphi|} \varphi \circ d_{\overline{\C}} \ .\] 
\end{definition}

\begin{assumption}\label{assumptions 2}
	Let $R$ be a commutative ground ring and let $A$ be a chain complex related to $H(A)$ via a contraction. Let $\mathbb{V}$ be a groupoid and let $\C$ be a reduced weight-graded $\mathbb{V}$-colored dg cooperad where $R$ is either 
	\begin{itemize}
		\item[$\centerdot$] any commutative ground ring if $\mathbb{V}$ is a set and $\C$ is a non-symmetric; 
		\item[$\centerdot$] a characteristic zero field if  $\mathbb{V}$ is a groupoid.  
	\end{itemize} 
\end{assumption}

\begin{theorem}[Homotopy transfer theorem]  
Under Assumptions \ref{assumptions 2}, for any $\Cobar \C$-algebra structure, there exists a transferred structure $(H(A), \varphi_t)$. Ihis is independent of the choice of contraction, i.e. any two such
	transferred structures are related by an $\infty$-isotopy. 
\end{theorem}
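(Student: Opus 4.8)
The plan is to repeat the proof of Theorem \ref{HTT} essentially verbatim, but now inside the colored convolution dg pre-Lie algebra $\mathfrak{g}_A$ just defined, the whole point being that none of the preceding constructions ever used that $\mathbb{V}$ is trivial. First I would check that the operadic calculus of Section \ref{2.3} survives the passage to colors. The pre-Lie product $\star$, the associative product $\circledcirc$, and the auxiliary operation $f \circledcirc (g;h)$ are defined solely through the (co)operadic structure maps $\Delta$, $\Delta_{(1)}$, $\gamma$, $\gamma_{(1)}$ of $\C$ and $\End_A$, all of which exist for $\mathbb{V}$-colored (co)operads, the colors matching automatically along each partial composition. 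Consequently the compatibility relations of Lemmas \ref{compatibility} and \ref{differentielles}, and hence Proposition \ref{Adf}, Theorem \ref{action} and Lemma \ref{key lemma}, hold \emph{mutatis mutandis} in the colored setting.

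For the existence of a transferred structure I would use the explicit homotopy transfer formula, which expresses $\varphi_t$, $i_\infty$ and $p_\infty$ as sums indexed by the iterated decompositions of $\C$ under $\Delta$, with the leaves of each decomposition decorated by $i$, the root by $p$, and the internal edges by the contracting homotopy $h$, the whole composed using the partial products of $\End_A$. In the first case of Assumptions \ref{assumptions 2}, where $\mathbb{V}$ is a set and $\C$ is non-symmetric, these sums are finite in each weight and involve no denominators, so the formula is defined over an arbitrary commutative ring exactly as in \cite[Section~10.3]{LodayVallette12} and \cite{berglund}. In the second case, where $\mathbb{V}$ is a genuine groupoid, one must track the actions of the automorphism groups of $\mathbb{V}$ together with the symmetric group actions; this introduces the usual symmetry factors in the transfer formula, and it is precisely here that the characteristic zero hypothesis enters, just as in the symmetric uncolored case of Theorem \ref{HTT}. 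The colored bar--cobar and Koszul duality framework of Ward \cite{War19} guarantees that the requisite structure maps and the equivariance needed to run these formulas are available.

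For the independence statement I would argue as in the uniqueness part of Theorem \ref{HTT}. Two contractions produce two transferred structures $(H(A), \varphi_t)$ and $(H(A), \varphi_t')$, equipped respectively with $\infty$-quasi-isomorphisms $i_\infty \colon (H(A),\varphi_t) \rightsquigarrow (A,\varphi)$ and $p_\infty' \colon (A,\varphi) \rightsquigarrow (H(A),\varphi_t')$. Their composite is an $\infty$-quasi-isomorphism $(H(A),\varphi_t) \rightsquigarrow (H(A),\varphi_t')$, which is in fact an $\infty$-isomorphism since $H(A)$ carries no differential; as in the proof of Proposition \ref{gauge formality}, precomposing with the inverse of its weight-zero component, available by Lemma \ref{invertibility}, turns it into an $\infty$-isotopy. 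The one genuine obstacle, and the reason the two cases of Assumptions \ref{assumptions 2} are separated, is the well-definedness and equivariance of the transfer formula in the groupoid-colored situation: one must verify that the symmetry factors appearing in the sum over decorated decompositions assemble into a bona fide Maurer--Cartan element of $\mathfrak{g}_A$, which forces the denominators and hence the characteristic zero field.
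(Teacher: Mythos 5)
Your overall strategy is the one the paper itself uses: the paper proves this theorem by citation, invoking \cite{War19} for the groupoid-colored case over a characteristic zero field and the set-colored non-symmetric transfer theorem (\cite[Theorem~5]{DV15}) over an arbitrary commutative ring. Your existence argument is an expanded account of exactly those results, and you correctly locate where the characteristic zero hypothesis of Assumptions \ref{assumptions 2} enters (the symmetry factors in the groupoid-colored, symmetric situation).

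There is, however, a genuine gap in your uniqueness argument. You form the $\infty$-isomorphism $f \coloneqq p'_{\infty} \circledcirc i_{\infty} \colon (H(A), \varphi_t) \rightsquigarrow (H(A), \varphi'_t)$ and assert that composing with $\left(f^{(0)}\right)^{-1}$ turns it into an $\infty$-isotopy ``as in Proposition \ref{gauge formality}''. That trick is specific to a target structure concentrated in weight one: there, the weight-one part of the $\infty$-morphism equation shows that $f^{(0)}$ is a strict automorphism of $(H(A), \varphi_*)$, so $\left(f^{(0)}\right)^{-1}$ is again an $\infty$-morphism from the target structure to itself, and composing with it does not move the target. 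Here the target $\varphi'_t$ has higher-weight components, and the weight-one equation only shows that $f^{(0)}$ commutes with $\varphi_* = (\varphi'_t)^{(1)}$; nothing forces it to commute with $(\varphi'_t)^{(k)}$ for $k \geqslant 2$. By Theorem \ref{action}, the element $\left(f^{(0)}\right)^{-1} \circledcirc f$ is therefore an $\infty$-isotopy from $\varphi_t$ to the structure $\left(f^{(0)}\right)^{-1} \cdot \varphi'_t$, which in general differs from $\varphi'_t$, so it does not prove the claim. Moreover no correction of this kind can work in general: if $f^{(0)}$ were a nontrivial automorphism of $(H(A),\varphi_*)$, conjugating a transferred structure by it can change its $\infty$-isotopy class (by Lemma \ref{key lemma}, an $\infty$-isotopy can only alter the first nontrivial higher-weight component by a $d^{\varphi_*}$-boundary). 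The correct observation is simpler: both $i$ and $p'$ induce the identity on homology --- this is implicit in the notion of a contraction of $A$ onto $H(A)$, and is needed in any case for the two transferred structures to extend the same canonical induced structure $\varphi_*$ of Proposition \ref{induite}. Since $p' \circ i$ is a chain map between complexes with zero differential, it coincides with the map it induces on homology, whence $f^{(0)} = p' \circ i = \mathrm{id}_{H(A)}$ and $p'_{\infty} \circledcirc i_{\infty}$ is already the desired $\infty$-isotopy, with no correction step at all.
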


\begin{proof} The groupoid colored version of the homotopy transfer theorem is given in \cite{War19} for a characteristic zero field. In the case of set colored non-symmetric operads, one can deal with any coefficient ring see e.g. \cite[Theorem~5]{DV15}. 
\end{proof}

\begin{theorem}\label{B}
Let $R$ be a commutative ring. Let $\mathbb{V}$ be a groupoid and let $\C$ be a $\mathbb{V}$-colored reduced weight-graded dg cooperad over $R$. Let $(A, \varphi)$ be a $\Cobar \C$-algebra structure such that there exists a transferred structure $(H(A),\varphi_t)$.
	\begin{enumerate}
		\item Let $n \geqslant 1$ be an integer such that $n !$ is a unit in $R$. The algebra $(A, \varphi)$ is gauge $n $-formal if and only if the truncated class $K^n_{\varphi_t}$ is zero. 
		
		\item If $R$ is a $\mathbb{Q}$-algebra, the algebra $(A, \varphi)$ is gauge formal if and only if the class $K_{\varphi_t}$ is zero.
	\end{enumerate}
\end{theorem}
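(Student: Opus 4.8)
The plan is to recognize that Theorem \ref{B} is, after unwinding definitions, a direct translation of Proposition \ref{Kaledin} applied to the transferred structure, once that proposition is transported to the groupoid-colored setting and its output ($\infty$-isotopies) is matched with the formality notions. So the strategy is to work entirely at the level of $H = H(A)$, reduce everything to statements about the single Maurer--Cartan element $\varphi_t$, and then invoke Proposition \ref{Kaledin}, Definition \ref{definition formality}, and Proposition \ref{gauge formality}.

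First I would fix the transferred structure $(H(A), \varphi_t)$ provided by hypothesis and regard $\varphi_t$ as a Maurer--Cartan element of the colored convolution dg pre-Lie algebra $\mathfrak{g}_{H(A)}$ of Section \ref{2.5}, recording that its weight-one component is $(\varphi_t)^{(1)} = \varphi_*$. I would then note that all the pre-Lie calculus of Section \ref{2.3} (Lemmas \ref{compatibility} and \ref{differentielles}, Proposition \ref{Adf}, Theorem \ref{action}, and Lemma \ref{key lemma}) and the Kaledin-class constructions of Section \ref{2.4} depend only on the pre-Lie product $\star$, the associative product $\circledcirc$, the differential, and the weight grading inherited from $\C$. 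Since all of these structures exist verbatim in the convolution algebra built from a $\mathbb{V}$-colored cooperad, Proposition \ref{Kaledin} holds \emph{mutatis mutandis} for $\varphi_t \in \mathrm{MC}(\mathfrak{g}_{H})$ in the colored setting.

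To prove Point (1), I would apply Proposition \ref{Kaledin}(1) to $\varphi_t$: as $n!$ is a unit in $R$, the truncated class $K^n_{\varphi_t}$ vanishes if and only if there is an $\infty$-isotopy $f\colon \varphi_t \rightsquigarrow \psi$ with $\psi \in \mathrm{MC}(\mathfrak{g}_H)$ satisfying $\psi^{(k)} = 0$ for $2 \leqslant k \leqslant n+1$. This is exactly the defining condition for $(A, \varphi)$ to be gauge $n$-formal in Definition \ref{definition formality}, so the two statements coincide and Point (1) is immediate. For Point (2), assuming $R$ is a $\mathbb{Q}$-algebra, I would apply Proposition \ref{Kaledin}(2) to $\varphi_t$: the class $K_{\varphi_t}$ vanishes if and only if there is an $\infty$-isotopy $\varphi_t \rightsquigarrow (\varphi_t)^{(1)} = \varphi_*$. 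By Proposition \ref{gauge formality}, the existence of such an $\infty$-isotopy between $(H(A), \varphi_t)$ and $(H(A), \varphi_*)$ is precisely equivalent to the gauge formality of $(A, \varphi)$, which concludes.

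I do not expect a deep obstacle, since the entire substance is already contained in Proposition \ref{Kaledin} and the remainder is bookkeeping against the definitions. The one point deserving care is the passage of Proposition \ref{Kaledin} from the plain operadic to the $\mathbb{V}$-colored setting: one must check that the compatibility identities between $\star$, $\circledcirc$, and $d_{\C}$, together with the behavior of the prismatic decomposition and of $\mathrm{Ad}_F$ under $\partial_{\hbar}$, persist for the colored endomorphism operad. Because those identities are established purely diagrammatically from the (co)operad structure maps, they survive the colored generalization unchanged, so no new argument is required.
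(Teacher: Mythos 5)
Your proposal is correct and follows essentially the same route as the paper, whose proof is precisely the one-line observation that the statement "follows from the straightforward colored counterpart of Proposition \ref{Kaledin}, by using Definition \ref{definition formality} and Proposition \ref{gauge formality}." Your additional attention to why the pre-Lie calculus and Kaledin-class constructions transfer to the $\mathbb{V}$-colored convolution algebra is exactly the \emph{mutatis mutandis} claim the paper makes in Section \ref{2.5}, so nothing is missing.
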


\begin{proof}
	This follows from the straightforward colored counterpart of Proposition \ref{Kaledin}, by using Definition \ref{definition formality} and Proposition \ref{gauge formality}.
\end{proof}

\begin{remark}[Naturality in the algebra] Let us consider $\infty$-quasi-isomorphic $\Cobar \C$-algebras $(A, \varphi)$ and $(B, \psi)$, admitting transferred structures. This leads to an $\infty$-isomorphism \begin{center}
		\begin{tikzcd}[column sep=normal]
			f : (H(A), \varphi_t)  \ar[r,squiggly]  & 	(A, \varphi)  \ar[r,squiggly] 
			&   (B,\psi)  \ar[r,squiggly] 		 
			& (H(B),\psi_t) \ .
		\end{tikzcd} 
	\end{center} By setting $\psi_t' =  \left(f^{(0)}\right)^{-1} \cdot  \psi_t $, we have an $\infty$-isotopy \begin{center}
	\begin{tikzcd}[column sep=normal]
g : (H(A), \varphi_t)  \ar[r,squiggly, "f"]  		 
		& (H(B),\psi_t) \ar[r,squiggly, "\left(f^{(0)}\right)^{-1}"]  		 
		& (H(A),\psi_t') \ ,
	\end{tikzcd} 
	\end{center}  see Theorem \ref{action}. By Lemma \ref{piquant}, their respective Kaledin classes are related by \[K_{\psi_t'} = \mathrm{Ad}_{\mathfrak{D}(g)} (K_{\varphi_t}) \quad \mbox{and} \quad K_{\psi_t'}^n = \mathrm{Ad}_{\mathfrak{D}(g)}^n (K_{\varphi_t}^n) \ . \]
\end{remark}

\section{\textcolor{bordeau}{Kaledin classes in the properadic setting }} \label{3}

The formality question also arises for algebraic structures involving operations with several inputs but also several outputs such as dg Frobenius bialgebras, dg involutive Lie bialgebras etc. Such structures are encoded by a generalization of operads introduced in \cite{Vallette_2007} called properads. The purpose of this section is to recall the associated deformation theory and apply the methods of Section \ref{section1} in order to study the formality of algebras over a properad. \medskip

\noindent In all this section, $R$ denotes a characteristic zero field. Let $A$ be a chain complex and let $\C$ be a reduced weight-graded dg coproperad over $R$, i.e. \[\C =   \mathcal{I} \oplus \C^{(1)} \oplus \C^{(2)} \oplus  \cdots \oplus \C^{(n)} \oplus \cdots \quad \ , \quad d_{\C} \left(\C^{(k)}\right) \subset \C^{(k - 1)} \] and $\C(0,0) = 0$. Let us denote the coaugmentation coideal $\overline{\C} \coloneqq \bigoplus_{k \geqslant1}  \C^{(k)}$ so that $\C \cong \mathrm{I} \oplus \overline{\C}$~.

\begin{example}
	Among possible choices for such a coproperad $\C$, we have: 
	
	\begin{enumerate}
		\item The bar construction $\C \coloneqq \Bar \P$ of a reduced properad $\P$ in $\mathrm{gr Mod}_R$, see \cite{Vallette_2007}~.

		\item The Koszul dual coproperad $\C := \P^{\antishriek}$, if $\P$ is a homogeneous Koszul properad, see \cite{Vallette_2007}, or an inhomogeneous quadratic Koszul properad, see \cite[Appendix~A]{GCTV12}.
	\end{enumerate}
\end{example}

\subsection{Recollections on properadic homological algebra}

The Koszul duality theory of properads, the associated deformation theory and the homotopy theory of algebras over a properad were respectively developed in \cite{Vallette_2007},  \cite{Merkulov_2009, Merkulov_2009bis} and \cite{PHC}. In this section, we recall the definition of $\Cobar \C$-algebra structures on $A$, as Maurer--Cartan elements of the associated convolution algebra $\mathfrak{g}_{A}$. We refer the reader to \cite{PHC} for more details.

\begin{definition}[dg Lie-admissible algebra]
	A \emph{dg Lie-admissible algebra} is a chain complex $\mathfrak{g}$ equipped with a linear map of degree zero $\star : \mathfrak{g} \otimes \mathfrak{g} \to \mathfrak{g}$ such that $d$ is a derivation and such that the skew-symmetrized bracket induces a dg Lie algebra structure. 
\end{definition}

\begin{proposition}[{\cite[Proposition 11]{Merkulov_2009}}]
	The \emph{convolution dg Lie-admissible algebra} associated to the coproperad $\C$ and the chain complex $A$ is the Lie-admissible algebra \[\mathfrak{g}_A \coloneqq \left(\Hom_{\mathbb{S}}\left(\overline{\C}, \End_A\right), \partial, \star\right) \ , \] where the underlying space is \[\Hom_{\mathbb{S}}\left(\overline{\C}, \End_A \right) \coloneqq \prod_{m,n \geqslant0} \Hom_{\mathbb{S}_m^{op} \times \mathbb{S}_n}\left(\overline{\C}(m,n),\End_A(m,n) \right)\ ,\] equipped with the Lie-admissible product \[\varphi \star \psi \coloneqq \overline{\C} \xrightarrow{\Delta_{(1,1)}} \overline{\C} \underset{(1,1)}{\boxtimes} \overline{\C} \xrightarrow{\varphi \underset{(1,1)}{\boxtimes}  \psi} \End_A \underset{(1,1)}{\boxtimes} \End_A \xrightarrow{\gamma_{(1,1)}} \End_A \ , \] and the differential \[d (\varphi) \coloneqq d_{\End_A} \circ \varphi - (-1)^{|\varphi|} \varphi \circ d_{\overline{\C}} \ .\] 
\end{proposition}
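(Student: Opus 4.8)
The plan is to unwind the defining properties of a dg Lie-admissible algebra and verify them one at a time, reducing the statement to a combinatorial property of the infinitesimal (de)composition maps. Recall that $(\mathfrak{g}_A, d, \star)$ is a dg Lie-admissible algebra precisely when $d$ is a degree $-1$ square-zero derivation of $\star$ and the skew-symmetrized bracket $[\varphi,\psi] \coloneqq \varphi\star\psi - (-1)^{|\varphi||\psi|}\psi\star\varphi$ satisfies the graded Jacobi identity (antisymmetry being automatic). The Jacobi condition is equivalent to the \emph{Lie-admissibility} of $\star$: writing the associator $\mathrm{as}(\varphi,\psi,\chi) \coloneqq (\varphi\star\psi)\star\chi - \varphi\star(\psi\star\chi)$, one must show that its total antisymmetrization over the three arguments, with the appropriate Koszul signs, vanishes. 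This is the standard identity expressing the Jacobiator of $[-,-]$ as the antisymmetrized associator.

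First I would dispatch the differential. That $d$ has degree $-1$ and squares to zero is immediate from $d_{\End_A}^2 = 0$ and $d_{\overline{\C}}^2 = 0$ together with the sign convention in its definition. That $d$ is a derivation of $\star$ follows from the compatibility of the infinitesimal decomposition $\Delta_{(1,1)}$ with $d_{\overline{\C}}$ and of the infinitesimal composition $\gamma_{(1,1)}$ with $d_{\End_A}$, using that $d_{\End_A}$ is a derivation for the properadic composition of $\End_A$. This is exactly the same bookkeeping as in the operadic case and introduces no new difficulty.

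The heart of the proof is the Lie-admissibility of $\star$. I would expand both iterated products $(\varphi\star\psi)\star\chi$ and $\varphi\star(\psi\star\chi)$ by inserting the coassociativity relation for $\Delta_{(1,1)}$ and the associativity relation for $\gamma_{(1,1)}$, so that each becomes a sum of composites governed by the two-level infinitesimal decompositions of $\overline{\C}$. The resulting contributions are naturally indexed by connected directed graphs with three vertices carrying $\varphi,\psi,\chi$, and the associator is the signed difference of the two groupings of these graphs. I would then sort the graphs by combinatorial type: the \emph{nested} configurations, where one operation is grafted into a second which is itself grafted into a third, appear in both parenthesizations and cancel in the associator; the \emph{parallel} configurations, where two of the operations are grafted into the third independently, give a contribution that is symmetric in the corresponding pair of arguments and hence dies under antisymmetrization; and the genuinely properadic configurations, where the two graftings share a vertex along several connecting edges, pair off under the $\mathbb{S}_3$-antisymmetrization with opposite Koszul signs.

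The main obstacle is precisely this last combinatorial cancellation. In the operadic case the associator enjoys the stronger pre-Lie symmetry $\mathrm{as}(\varphi,\psi,\chi) = (-1)^{|\psi||\chi|}\mathrm{as}(\varphi,\chi,\psi)$, which already forces the antisymmetrization to vanish argument-wise; in the properadic setting this symmetry genuinely fails, and only the full signed sum over $\mathbb{S}_3$ collapses. Organizing the three-vertex graphs, matching their Koszul signs, and tracking the $\mathbb{S}_m^{\mathrm{op}}\times\mathbb{S}_n$-equivariance throughout is the delicate part, and is exactly the content established in the properadic deformation theory of \cite{Merkulov_2009} and \cite{PHC}, to which I would appeal for the detailed verification.
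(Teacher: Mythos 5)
Your proposal is correct, and it matches the paper's treatment: the paper gives no proof of this proposition at all, simply importing it as \cite[Proposition~11]{Merkulov_2009}, which is exactly the reference you defer to for the combinatorial verification. Your sketch accurately captures the content of that cited proof — in particular the key point that the operadic pre-Lie (right-symmetric) identity for the associator genuinely fails for properads because of multiply-connected two-vertex graftings, so that only the full $\mathbb{S}_3$-antisymmetrized associator vanishes, yielding Lie-admissibility rather than a pre-Lie structure.
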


\begin{remark}
 It embeds into the dg Lie-admissible algebra made up of maps from $\C$~, i.e. \[\mathfrak{g}_A \hookrightarrow \mathfrak{a}_A  \coloneqq \left( \Hom_{\mathbb{S}} \left(\C, \mathrm{End}_A\right), \star, d  \right) \ . \] 
\end{remark}

\begin{proposition}[{\cite[Proposition~3.19]{PHC}}]
	The space $\Hom_{\mathbb{S}} \left(\C, \End_A\right)$ is equipped with the following associative and unital product \[\varphi \circledcirc \psi \coloneqq \C \xrightarrow{\Delta} \C \boxtimes \C \xrightarrow{\varphi \boxtimes  \psi} \End_A \boxtimes \End_A \xrightarrow{\gamma} \End_A \ , \] where the unit is given by $1 : \mathcal{I} \mapsto \mathrm{id}_A$ and $\Delta$ and $\gamma$ stand respectively for the decomposition map of the coproperad $\C$ and the 
	composition map of the endomorphism properad $\End_A$~. 
\end{proposition}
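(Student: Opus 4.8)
The plan is to recognize $\circledcirc$ as the convolution product attached to the comonoid $(\C, \Delta)$ and the monoid $(\End_A, \gamma)$ inside the monoidal category of dg $\mathbb{S}$-bimodules equipped with the connected composition product $\boxtimes$ (whose unit is $\mathcal{I}$), and to deduce associativity and unitality from the coassociativity of $\Delta$ and the associativity of $\gamma$ together with the bifunctoriality of $\boxtimes$. First I would record the ambient structure: since $\C$ is a coproperad, its decomposition $\Delta : \C \to \C \boxtimes \C$ is coassociative, i.e. $(\Delta \boxtimes \id_{\C}) \Delta$ and $(\id_{\C} \boxtimes \Delta) \Delta$ agree up to the associativity constraint of $\boxtimes$; dually, since $\End_A$ is a properad, its composition $\gamma : \End_A \boxtimes \End_A \to \End_A$ is associative. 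The unit $1 : \mathcal{I} \to \End_A$ singles out $\id_A$.

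For associativity, I would expand both $(\varphi \circledcirc \psi) \circledcirc \chi$ and $\varphi \circledcirc (\psi \circledcirc \chi)$ as a single composite through the triple product, namely
\[ \C \xrightarrow{\ \Delta^{(2)}\ } \C \boxtimes \C \boxtimes \C \xrightarrow{\ \varphi \boxtimes \psi \boxtimes \chi\ } \End_A \boxtimes \End_A \boxtimes \End_A \xrightarrow{\ \gamma^{(2)}\ } \End_A \ , \]
where $\Delta^{(2)}$ and $\gamma^{(2)}$ denote the unambiguous iterated decomposition and composition. The identification of the left-hand factors uses coassociativity of $\Delta$, the identification of the right-hand factors uses associativity of $\gamma$, and the middle identification uses the bifunctoriality of $\boxtimes$, which makes $(\varphi \boxtimes \psi) \boxtimes \chi$ and $\varphi \boxtimes (\psi \boxtimes \chi)$ agree after applying the associativity isomorphism. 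This is a commutative-diagram argument whose only content is keeping track of the associativity constraints; no new computation is required beyond the defining axioms.

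For unitality, I would use the counit of the coproperad, i.e. the canonical projection $\C \twoheadrightarrow \mathcal{I}$, together with the left and right counit axioms of $\Delta$, and the unit axioms of $\End_A$. Precomposing $\Delta$ with the projection onto $\mathcal{I} \boxtimes \C$ (respectively $\C \boxtimes \mathcal{I}$) recovers the unit isomorphism; applying $\varphi \boxtimes 1$ (respectively $1 \boxtimes \varphi$) and then $\gamma$ restricted to $\id_A \boxtimes (-)$ (respectively $(-) \boxtimes \id_A$), which is again the unit isomorphism of $\End_A$, reproduces $\varphi$. Hence $1 \circledcirc \varphi = \varphi = \varphi \circledcirc 1$.

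The main obstacle is purely combinatorial: the product $\boxtimes$ sums over connected two-level directed graphs, so its associativity holds only up to the coherence isomorphism reorganizing three-level graphs, and one must check that $\Delta$, $\varphi \boxtimes \psi$, and $\gamma$ are all compatible with these reassociations. Since $\C$ is assumed to be a coproperad and $\End_A$ a properad, these coherences are exactly the hypotheses at hand, so the argument reduces to the abstract fact that the convolution algebra of a comonoid mapping into a monoid in any monoidal category is associative and unital; everything else is the already-established monoidal structure on $\mathbb{S}$-bimodules from \cite{Vallette_2007} and \cite{PHC}.
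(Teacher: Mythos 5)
First, a point of comparison: the paper does not prove this proposition at all --- it is imported wholesale from \cite[Proposition~3.19]{PHC}, with no proof environment following it --- so your argument can only be measured against the standard proof of the cited result, which is indeed the convolution argument you outline. Your skeleton (comonoid $\C$, monoid $\End_A$, convolution monoid on the hom-space) is the right one, and your unitality argument via the counit is fine.

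There is, however, a genuine gap in the associativity step, located exactly in your sentence asserting that the compatibility of $(\varphi\boxtimes\psi)\boxtimes\chi$ with $\varphi\boxtimes(\psi\boxtimes\chi)$ under the associator is ``exactly the hypotheses at hand.'' It is not: coassociativity of $\Delta$ and associativity of $\gamma$ are hypotheses, but the naturality of the associator against the maps $\varphi,\psi,\chi$ is a property of $\boxtimes$ evaluated on maps of \emph{arbitrary homological degree}, and in that generality it fails. The product $\boxtimes$ applies each map once per vertex, so $\varphi\boxtimes\psi$ is neither bilinear nor homogeneous (a component with $k$ bottom and $l$ top vertices is hit in degree $k|\varphi|+l|\psi|$), and for $\psi$ of odd degree the map $\psi^{\otimes l}$ commutes with the $\mathbb{S}_l$-action only up to the sign of the permutation, so $\varphi\boxtimes\psi$ need not even descend to the (co)invariants defining $\boxtimes$. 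Concretely, already in the operadic special case, on the tree consisting of a binary vertex over two unary vertices over two further unary vertices, a direct Koszul-sign computation shows that $(\varphi\circledcirc\psi)\circledcirc\chi$ and $\varphi\circledcirc(\psi\circledcirc\chi)$ differ by $(-1)^{|\psi||\chi|}$ on that component: the two iterated products apply the maps in the orders $(\varphi\,\psi\,\psi)(\chi\,\chi)$ versus $(\varphi)(\psi\,\chi)(\psi\,\chi)$, and each crossing of a $\psi$-application past a $\chi$-application costs a Koszul sign. So the ``abstract fact about convolution in any monoidal category'' applies verbatim only to degree-zero elements, where all such signs vanish and equivariance is automatic --- which is, not coincidentally, the only regime in which the paper ever uses $\circledcirc$ ($\infty$-morphisms, $\infty$-isotopies, and their inverses). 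The paper itself signals the phenomenon: Lemma \ref{differentielles}(j) is stated only for $g$ of degree zero, Lemma \ref{compatibility prop} only for $g$ concentrated in weight zero, and the properadic $\infty$-morphism equation is written with the one-application products $\lhd$ and $\rhd$ rather than with $\circledcirc$, precisely because $\boxtimes$ is not a dg bifunctor. A complete proof must therefore either restrict the statement to degree-zero elements or replace the appeal to bifunctoriality by vertex-by-vertex sign bookkeeping.
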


\begin{definition}
		A \emph{$\Cobar \C$-algebra structure} on $A$ is a Maurer--Cartan element $\varphi \in \mathrm{MC}(\mathfrak{g}_{A})$.
An \emph{$\infty$-morphism} $\varphi \rightsquigarrow \psi$ between two $\Cobar \C$-algebra structures $\varphi$ and $\psi$ on $A$ is a map $f \in \mathfrak{a}_A$ of degree $0$ satisfying \[f 	\rhd \varphi -  \psi	\lhd f = d(f) \ \ ,\] where the left and right actions are respectively defined by
	\[\begin{array}{ccccccccc}
		\psi 	\lhd f & \coloneqq & \overline{\C} & \xrightarrow{\Delta_{(1,*)}} & \overline{\C} \underset{(1,*)}{\boxtimes} \C &  \xrightarrow{\psi \underset{(1,*)}{\boxtimes} f} &  \End_A \underset{(1,*)}{\boxtimes} \End_A & \longrightarrow & \End_A \ ,  \\
		& & \mathcal{I} & \overset{\cong}{ \longrightarrow} & \mathcal{I} \boxtimes \mathcal{I} &  \xrightarrow{\varphi \; \boxtimes \; f} & \End_A \boxtimes \End_A & \longrightarrow & \End_A \ , \\ 
		& & & & & & & & \\ f 	\rhd \varphi & \coloneqq & \overline{\C} & \xrightarrow{\Delta_{(*,1)}} & \C \underset{(*,1)}{\boxtimes} \overline{\C}  &  \xrightarrow{f \underset{(*,1)}{\boxtimes} \varphi} &  \End_A \underset{(*,1)}{\boxtimes} \End_A & \longrightarrow & \End_A \ ,  \\
		& & \mathcal{I} & \overset{\cong}{ \longrightarrow} & \mathcal{I} \boxtimes \mathcal{I} &  \xrightarrow{f \; \boxtimes \; \varphi} & \End_A \boxtimes \End_A & \longrightarrow & \End_A \ .
	\end{array}\]
\end{definition}

\begin{remark}
	Let $\varphi \in  \mathrm{MC}(\mathfrak{g}_{A})$ and $\psi \in  \mathrm{MC}(\mathfrak{g}_{B})$ be two $\Cobar \C$-algebra structures. In this context, an $\infty$-morphism $\varphi \rightsquigarrow \psi$ can be defined as a morphism  \[f \in \Hom_{\mathbb{S}} \left(\C, \End^A_B\right)\] satisfying a similar identity, see \cite[Proposition~3.16]{PHC} for more details. 
\end{remark}

\noindent Since the coproperad $\C$ is weight-graded, the Lie-admissible algebra $\mathfrak{a}_A$ admits a weight-grading. Every element $f \in \mathfrak{a}_A$ decomposes as \[f = f^{(0)} + f^{(1)} + f^{(2)} + \cdots \] where $f^{(i)}$ is the restriction of $f$ to $\C^{(i)}$~. The first component \[f^{(0)} : \mathcal{I} \to \Hom(A,A)\] is equivalent to the data of an endomorphism $f^{(0)}(1) \in \End_A$ that we simply denote by $f^{(0)}$~. The $f$ is an $\infty$-morphism, this element $f^{(0)}$ is a chain map. The convolution dg Lie-admissible algebra $\mathfrak{g}_A$ has the same underlying weight grading and every $\varphi \in  \mathfrak{g}_A$ decomposes as $\varphi = \varphi^{(1)} + \varphi^{(2)} + \cdots \ .$

\begin{definition} An $\infty$-morphism $f : \varphi \rightsquigarrow \psi$ between two $\Cobar \C$-algebra structures on $A$ is
	\begin{itemize}
		\item[$\centerdot$] an \emph{$\infty$-quasi-isomorphism} if $f^{(0)} : A \to A$ is a quasi-isomorphism; 
		\item[$\centerdot$] an \emph{$\infty$-isomorphism} if $f^{(0)} : A \to A$ is an isomorphism; 
		\item[$\centerdot$] an \emph{$\infty$-isotopy} if $f^{(0)} = \mathrm{id}_A$~. 
	\end{itemize}
	The set of all the $\infty$-isotopies is denoted by $\infty-\mathrm{iso}$~.
\end{definition}

\begin{theorem}[{\cite[Theorem~3.21]{PHC}}]\label{invertibility prop}
	The $\infty$-isomorphisms are the isomorphisms in the category of $\Cobar \C$-algebra structures on $A$~, i.e. every $\infty$-isomorphism $f : \varphi \rightsquigarrow \psi$ admits a unique inverse, denoted $f^{-1}$, with respect to the product $\circledcirc$. 
\end{theorem}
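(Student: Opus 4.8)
The plan is to work entirely inside the complete weight-graded associative algebra $\left(\mathfrak{a}_A, \circledcirc, 1\right)$ and to invert $f$ by a triangularity argument on the weight, parallel to the operadic Lemma~\ref{invertibility}. First I record that $\circledcirc$ is built from the weight-additive decomposition map $\Delta$, so that $\mathfrak{a}_A^{(i)} \circledcirc \mathfrak{a}_A^{(j)} \subseteq \mathfrak{a}_A^{(i+j)}$ and hence $\mathcal{F}^i \mathfrak{a}_A \circledcirc \mathcal{F}^j \mathfrak{a}_A \subseteq \mathcal{F}^{i+j}\mathfrak{a}_A$, where $\mathcal{F}^k \mathfrak{a}_A \coloneqq \prod_{n \geqslant k} \mathfrak{a}_A^{(n)}$. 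On the weight-zero part, $\Delta$ restricted to $\mathcal{I}$ and the composition $\gamma$ of $\End_A$ reduce $\circledcirc$ to the composition of endomorphisms of $A$; thus the component $f^{(0)}$, being by assumption an isomorphism of $A$, defines an invertible element of $\mathfrak{a}_A$ whose inverse $(f^{(0)})^{-1}$ is concentrated in weight zero.

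Next I factor $f = f^{(0)} \circledcirc (1 + \nu)$ with $\nu \coloneqq (f^{(0)})^{-1} \circledcirc (f - f^{(0)}) \in \mathcal{F}^1 \mathfrak{a}_A$. Since $\nu^{\circledcirc k} \in \mathcal{F}^k \mathfrak{a}_A$, the series $\sum_{k \geqslant 0}(-1)^k \nu^{\circledcirc k}$ converges in the complete topology and is a two-sided $\circledcirc$-inverse of $1 + \nu$. Hence $f^{-1} \coloneqq \left(\sum_{k\geqslant0}(-1)^k \nu^{\circledcirc k}\right)\circledcirc (f^{(0)})^{-1}$ is a two-sided inverse of $f$ in $\mathfrak{a}_A$, and uniqueness is automatic since a two-sided inverse in a unital associative algebra is unique when it exists. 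Equivalently, one may build $g \coloneqq f^{-1}$ by induction on the weight by solving $(f \circledcirc g)^{(n)} = 0$: the only term involving $g^{(n)}$ is $f^{(0)} \circledcirc g^{(n)}$, so invertibility of $f^{(0)}$ determines $g^{(n)}$ uniquely from the lower-weight components already constructed.

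It remains to check that $g$ is again an $\infty$-morphism, now from $\psi$ to $\varphi$, which is the only genuinely delicate point. Starting from the defining relation $d(f) = f \rhd \varphi - \psi \lhd f$ and the identity $g \circledcirc f = 1$, I would differentiate and use the properadic analogues of the compatibility relations of Lemma~\ref{compatibility} together with the Leibniz-type rules of Lemma~\ref{differentielles} — relating $\circledcirc$, $\star$, the two actions $\rhd, \lhd$, and the differential $d$ — to transform it into $d(g) = g \rhd \psi - \varphi \lhd g$. Establishing these properadic compatibility relations is where the bulk of the computation lies and is the main obstacle; an alternative that sidesteps it is to interpret $\infty$-morphisms as morphisms of the associated quasi-cofree $\C$-coalgebras, where invertibility of $f^{(0)}$ makes the coalgebra morphism invertible by the same triangular argument, its inverse being automatically a coalgebra morphism and hence an $\infty$-morphism. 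Finally, since composition of $\infty$-morphisms is $\circledcirc$ with unit the identity $\infty$-morphism $1$, the element $g$ is a two-sided inverse of $f$ in the category; conversely any categorical isomorphism has invertible weight-zero component, so the isomorphisms are exactly the $\infty$-isomorphisms.
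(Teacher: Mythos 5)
Your proposal has two genuine problems, one in the step you carry out and one in the step you defer. First, the structural framing is wrong: in the properadic setting $\left(\mathfrak{a}_A, \circledcirc, 1\right)$ is \emph{not} an associative algebra, because $\circledcirc$ is not bilinear. The product is built from the full decomposition map $\Delta \colon \C \to \C \boxtimes \C$, and a connected two-level graph can have several vertices on \emph{each} level; the map $\varphi \boxtimes \psi$ places a copy of $\varphi$ at every vertex of one level and a copy of $\psi$ at every vertex of the other. Hence $(\varphi_1 + \varphi_2)\circledcirc \psi$ produces cross terms and differs from $\varphi_1 \circledcirc \psi + \varphi_2 \circledcirc \psi$; likewise your graded claim $\mathfrak{a}_A^{(i)} \circledcirc \mathfrak{a}_A^{(j)} \subseteq \mathfrak{a}_A^{(i+j)}$ fails (a level with $k$ vertices of weight $i$ contributes weight $ki$), although the filtration statement $\mathcal{F}^i \circledcirc \mathcal{F}^j \subseteq \mathcal{F}^{i+j}$ does survive. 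Consequently the geometric series $\sum_{k\geqslant 0}(-1)^k\nu^{\circledcirc k}$ is \emph{not} an inverse of $1+\nu$: checking it would require exactly the distributivity that fails. (This already breaks for operads, where $\circledcirc$ is only left-linear; this is why the explicit inverse of an $\infty$-isotopy in \cite[Theorem~10.4.2]{LodayVallette12} is a sum over trees rather than a sum of powers.) Your fallback induction on the weight, by contrast, is the correct argument: since $\C^{(0)}=\mathcal{I}$ and a connected two-level graph whose $f$-level consists only of identity vertices has, by connectedness, a single vertex on the other level, the only term of $(f \circledcirc g)^{(n)}$ involving $g^{(n)}$ is indeed $f^{(0)} \circledcirc g^{(n)}$, and left $\circledcirc$-multiplication by the weight-zero element $f^{(0)}$ is linear and invertible; together with associativity and unitality of $\circledcirc$ (a monoid argument, needing no linearity) this yields a unique two-sided inverse.

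The second and more serious gap is the point you yourself flag as ``the main obstacle'' and then do not close: to conclude that $\infty$-isomorphisms are isomorphisms in the category, one must show that $f^{-1}$ satisfies the $\infty$-morphism equation $d(f^{-1}) = f^{-1} \rhd \psi - \varphi \lhd f^{-1}$, and neither of your two proposed routes is carried out. Worse, the route you offer as a way to ``sidestep'' the computation is not available here: unlike the operadic case, $\infty$-morphisms of algebras over a properad can \emph{not} be encoded as morphisms of quasi-cofree $\C$-coalgebras, because there is no bar construction (no cofree conilpotent coalgebra functor) for coproperads at the level of algebras. This absence is precisely the reason the direct calculus of \cite{PHC} was developed, and it is the same obstruction the paper alludes to when it notes that the rectification argument does not generalize (proof of Theorem \ref{formal - gauge}). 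A self-contained proof would instead have to run the verification using the properadic compatibility relations, i.e.\ the analogues recorded in the paper as Lemmas \ref{compatibility prop} and \ref{diff prop}. Note finally that the paper does not prove this statement at all: it imports it as \cite[Theorem~3.21]{PHC}, whose proof is exactly your weight-by-weight induction \emph{plus} the direct check that the inverse is again an $\infty$-morphism, so the part you omit is the part that cannot be omitted.
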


\begin{theorem}[{\cite[Theorem~2.16]{graphexp}}]\label{graphexp2}
	The gauge group associated to the convolution dg Lie admissible algebra $\mathfrak{g}_A$ and the group of $\infty$-isotopies are isomorphic though the \emph{graph exponential/logarithm maps}, \[\mathrm{exp} : ((\mathfrak{g}_A)_0, \mathrm{BCH}, 0) \cong (\infty-\mathrm{iso}, \circledcirc, 1) : \mathrm{log}\ . \] 
\end{theorem}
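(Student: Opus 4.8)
The plan is to realize the stated isomorphism as a \emph{graph exponential}, generalizing the pre-Lie exponential of the operadic case (Theorem~\ref{parallele}) to the Lie-admissible convolution algebra of a coproperad. The source of the difficulty is that $\mathfrak{g}_A$ is only Lie-admissible: the associative product $\circledcirc$ is the full two-level composition along the coproperad decomposition $\Delta \colon \C \to \C \boxtimes \C$, whereas the skew-symmetrized bracket comes from the infinitesimal product $\star$ built from $\Delta_{(1,1)}$, the composition along a single edge. Consequently the $\circledcirc$-commutator does not agree with the $\star$-bracket, and the naive associative exponential $\sum_n \frac{1}{n!}\,\lambda^{\circledcirc n}$ will not intertwine $\mathrm{BCH}$ with $\circledcirc$; one must sum over graphs instead.

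First I would define the two maps. For $\lambda \in (\mathfrak{g}_A)_0 = \Hom_{\mathbb{S}}(\overline{\C}, \End_A)_0$, set
\[\exp(\lambda) \coloneqq 1 + \sum_{\Gamma} \frac{1}{|\mathrm{Aut}(\Gamma)|}\, \lambda_{\Gamma} \ ,\]
the sum ranging over connected graphs $\Gamma$ whose vertices are decorated, through the $\Gamma$-component of the iterated decomposition of $\C$, by copies of $\lambda$ and composed in $\End_A$ according to $\Gamma$; here the empty graph contributes the unit $1 \colon \mathcal{I} \mapsto \mathrm{id}_A$. The map $\mathrm{log}$ is defined as the combinatorial inverse, a signed graph sum. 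Both specialize, when $\C$ is a cooperad and the graphs are rooted trees, to the Guin--Oudom pre-Lie exponential underlying Theorem~\ref{parallele}.

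Next I would verify well-definedness and that $\exp$ lands in $\infty$-iso. Since $\lambda$ is supported on the coaugmentation coideal $\overline{\C}$, decorating a graph with $n$ vertices raises the weight by at least $n$; as $\mathfrak{a}_A = \prod_{n \geqslant 0} \Hom_{\mathbb{S}}(\C^{(n)}, \End_A)$ is complete for the weight filtration, the infinite sums converge. The only weight-zero term is the empty graph, so $\exp(\lambda)^{(0)} = \mathrm{id}_A$ and $\exp(\lambda)$ is an $\infty$-isotopy. I would then prove $\mathrm{log} \circ \exp = \mathrm{id}$ and $\exp \circ \mathrm{log} = \mathrm{id}$ by induction on the weight: this is a purely combinatorial inversion of graph sums governed by the symmetry factors $|\mathrm{Aut}(\Gamma)|$ and the coassociativity of $\Delta$, independent of the chain complex $A$.

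The main obstacle is the homomorphism property $\exp(\mathrm{BCH}(\lambda,\mu)) = \exp(\lambda) \circledcirc \exp(\mu)$. The plan is to expand the right-hand side using $\circledcirc$ as a two-level composition along $\Delta$: stacking a $\lambda$-decorated connected graph over a $\mu$-decorated one and applying the coassociativity of $\Delta$ together with the compatibility relations between $\star$ and $\circledcirc$ (the properadic analogues of Lemma~\ref{compatibility}) rewrites the product as a graph sum decorated by iterated products and brackets of $\lambda$ and $\mu$. Equivalently, transporting $\circledcirc$ through the bijection $\exp$ produces a product $\bullet$ on $(\mathfrak{g}_A)_0$ which is associative and unital with first-order term the skew-symmetrized bracket; the crux is to show that the graph combinatorics of $\circledcirc$ collapses $\bullet$ onto an expression in that bracket alone, whence a universality argument (uniqueness of the logarithm / integration of a complete Lie algebra) identifies $\bullet$ with $\mathrm{BCH}$. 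This collapse is exactly the place where the Lie-admissible — rather than pre-Lie — nature intervenes: for pre-Lie algebras it is the Guin--Oudom/Agrachev--Gamkrelidze theory over rooted trees, and here it requires its graph-level generalization, which is the substance of \cite[Theorem~2.16]{graphexp}.
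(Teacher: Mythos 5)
First, a point of comparison that matters here: the paper does not prove this statement at all. The bracketed attribution in the theorem header, \cite[Theorem~2.16]{graphexp}, means the result is imported wholesale from that reference; within the present paper it plays the role that Theorem \ref{parallele} (the pre-Lie exponential of \cite{DSV16}) plays in the operadic sections, namely a black box identifying the gauge group with the group of $\infty$-isotopies so that the dg Lie algebra results of Section \ref{section1} can be transported to properadic Kaledin classes. So there is no in-paper argument to measure your proposal against, only the citation.

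As a blind reconstruction, your outline is aimed in the right direction — the definition of $\exp$ as a sum over connected graphs weighted by $1/|\mathrm{Aut}(\Gamma)|$, the convergence via the weight filtration on $\Hom_{\mathbb{S}}(\C,\End_A)$, the observation that the only weight-zero contribution is $1 \colon \mathcal{I} \mapsto \mathrm{id}_A$ (so the image consists of $\infty$-isotopies), and the correct diagnosis that the pre-Lie (Guin--Oudom) mechanism fails because $\star$ is only Lie-admissible. But the proposal has a genuine gap exactly where all the substance lies: the homomorphism property $\exp(\mathrm{BCH}(\lambda,\mu)) = \exp(\lambda) \circledcirc \exp(\mu)$. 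Your treatment of it is circular — you describe the desired collapse of the transported product onto a universal Lie series and then state that this "is the substance of \cite[Theorem~2.16]{graphexp}," i.e., you defer the decisive step to the very theorem being proved. The "universality argument" you invoke also does not come for free: to identify the transported product $\bullet$ with $\mathrm{BCH}$ one must first show that $\bullet$ is given by a universal expression in iterated brackets of $\lambda$ and $\mu$ (a Lie series with coefficients independent of $A$ and $\C$), and establishing that functorial/universal form is precisely the graph-combinatorial work that distinguishes the properadic case from the operadic one; the compatibility relations of the type in Lemma \ref{compatibility} control only low-weight interactions of $\star$ and $\circledcirc$ and do not by themselves yield it. So the proposal is an honest and well-oriented sketch, but not a proof: if the citation is allowed, the whole theorem is already available and nothing remains to show; if it is not, the key step is missing.
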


\subsection{Gauge formality of algebras and properadic Kaledin classes} In this section, we define gauge formality for algebras over a properad and we introduce Kaledin classes characterizing it. Let us consider the properad 
\[\P \coloneqq \mathcal{T}\left(s^{-1}\C^{(1)}\right) / \left(d_{\Cobar \C} \left(s^{-1}\C^{(2)}\right)\right) \ , \] which comes with a twisting morphism $\C \to \P$.

\begin{proposition}
Any $\Cobar \C$-algebra structure $(A, \varphi)$ induces a canonical $\P$-algebra structure \[(H(A),\varphi_*) \ . \]
\end{proposition}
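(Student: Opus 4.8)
The plan is to follow the operadic argument of Proposition \ref{induite} verbatim, replacing operadic bar--cobar calculus by its properadic counterpart from \cite{PHC}. Since $R$ is a characteristic zero field, the chain complex $A$ is related to its homology $H(A)$ by a contraction, so the homotopy transfer theorem for properads \cite{PHC} endows $H(A)$ with a transferred $\Cobar\C$-algebra structure, that is, a Maurer--Cartan element
\[
\psi = \varphi_* + \psi^{(2)} + \psi^{(3)} + \cdots \in \MC(\mathfrak{g}_{H(A)})\ , \qquad \varphi_* \coloneqq \psi^{(1)}\ .
\]
I would then isolate the weight $1$ component $\varphi_*$ and show that, on its own, it assembles into a morphism of properads $\P \to \End_{H(A)}$.

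The key computation is to read the Maurer--Cartan equation $d(\psi) + \psi \star \psi = 0$ weight by weight. Because $H(A)$ carries the zero differential, the only contribution to the differential on $\mathfrak{g}_{H(A)}$ comes from the internal differential of the coproperad, namely $d(\psi) = \psi \circ d_{\overline{\C}}$, which raises the weight by one, whereas the product $\psi \star \psi$ built from $\Delta_{(1,1)}$ is additive in weights. In weight $2$ this yields
\[
\varphi_* \star \varphi_* \ +\ \varphi_* \circ d_{\C}\big|_{\C^{(2)}} \ =\ 0\ .
\]
Next I would unwind the cobar differential $d_{\Cobar\C}$ on $s^{-1}\C^{(2)}$: it splits as the linear part induced by $d_{\C}\colon \C^{(2)} \to \C^{(1)}$ together with the quadratic part induced by the infinitesimal decomposition $\Delta_{(1,1)}\colon \C^{(2)} \to \C^{(1)} \underset{(1,1)}{\boxtimes} \C^{(1)}$. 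Thus the defining relations $d_{\Cobar\C}(s^{-1}\C^{(2)})$ of
\[
\P = \mathcal{T}\!\left(s^{-1}\C^{(1)}\right)\big/\!\left(d_{\Cobar\C}(s^{-1}\C^{(2)})\right)
\]
are precisely the combinations ``$s^{-1}d_{\C}$ plus composition along $\Delta_{(1,1)}$'' that appear in the displayed weight $2$ equation.

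Consequently, the map $s^{-1}\C^{(1)} \to \End_{H(A)}$ determined by $\varphi_*$ sends every relator $d_{\Cobar\C}(s^{-1}c)$, for $c \in \C^{(2)}$, to zero, and therefore extends uniquely to a morphism of properads $\P \to \End_{H(A)}$ by the universal properties of the free properad and of the quotient. This is exactly a $\P$-algebra structure $(H(A), \varphi_*)$; its canonicity follows from the essential uniqueness of the transferred structure up to $\infty$-isotopy \cite{PHC}, since an $\infty$-isotopy is the identity in weight $0$ and hence leaves the induced weight $1$ datum unchanged. The only genuine subtlety will be the bookkeeping of signs, suspensions, and the properadic $\boxtimes$-composition needed to match the convolution product $\star$ with the quadratic part of $d_{\Cobar\C}$; this is where the properadic conventions of \cite{PHC} must be invoked carefully, but no new idea beyond the operadic case is required.
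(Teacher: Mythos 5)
Your proof is correct, and its core computation — reading the Maurer--Cartan equation for $\psi$ on the differential-free complex $H(A)$ in weight two, and matching it against the linear-plus-quadratic description of $d_{\Cobar \C}$ on $s^{-1}\C^{(2)}$ — is exactly what the paper compresses into the phrase ``appears as a $\P$-algebra''. The genuine divergence is upstream. The paper (transposing Proposition \ref{induite}) starts from the \emph{natural} $\Cobar \C$-algebra structure carried by the homology (the properadic analogue of \cite[Proposition~6.3.5]{LodayVallette12}) and merely observes that, the differential being zero, its weight-one component is itself a Maurer--Cartan element concentrated in weight one; canonicity is then automatic, with no contraction or transfer needed. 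You instead route through the properadic homotopy transfer theorem, which also works over a characteristic-zero field but costs extra steps that the paper's route avoids. First, canonicity: your claim that an $\infty$-isotopy fixes the weight-one component is true, but it does not follow from ``$f^{(0)}=\mathrm{id}$'' alone; it is the weight-one component of the equation $f \rhd \varphi - \psi \lhd f = d(f)$, i.e.\ Lemma \ref{key lemma 2} with $n=1$, and should be invoked or verified explicitly. Second, an ordering issue: in this paper the transfer theorem (Theorem \ref{HTTprop}) is stated \emph{after} the present proposition, and its statement already refers to ``the induced structure $\varphi_*$''; this is only an expository circularity, since the transfer of \cite{PHC} is built from a contraction without reference to $\varphi_*$, but you should then also check that the weight-one part of the transferred structure computes the homology of $\varphi^{(1)}$ (this is the weight-one part of the $\infty$-quasi-isomorphism equation for $i_\infty$), so that your $\varphi_*$ is genuinely ``induced by $\varphi$'' and not merely some canonical structure. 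What your route buys in exchange is explicitness: the identification of the weight-two Maurer--Cartan equation with the defining relators of $\P$ is spelled out rather than asserted.
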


\begin{proof} 
Similar to the one of Proposition \ref{induite}. 
\end{proof}

\begin{definition}
	A $\Cobar \C$-algebra structure $(A, \varphi)$ is
	
	\begin{enumerate}
		\item[$\centerdot$] \emph{formal} if there exists a zig-zag of quasi-isomorphisms of $\Cobar \C$-algebras \[(A, \varphi) \; \overset{\sim}{\longleftarrow} \;  \cdot \;  \overset{\sim}{\longrightarrow} \;  \cdots \;  \overset{\sim}{\longleftarrow} \; \cdot \;  \overset{\sim}{\longrightarrow} \; (H(A), \varphi_*) \ ;\]
		\item[$\centerdot$] \emph{gauge formal} if there exists an $\infty$-quasi-isomorphism \begin{center}
			\begin{tikzcd}[column sep=normal]
				(A,\varphi) \ar[r,squiggly,"\sim"] 
				& (H(A), \varphi_*) \ .
			\end{tikzcd} 
		\end{center}
	\end{enumerate} 
\end{definition}

\begin{theorem}[{\cite[Theorem 1.11]{SPH}}] \label{formal - gauge}
	An $\Cobar \C$-algebra structure $(A, \varphi)$ is formal if and only if it is gauge formal.  
\end{theorem}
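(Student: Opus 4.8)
The plan is to prove the two implications separately, following the blueprint of the operadic statement, Proposition~\ref{formality-gauge}(1), now over a characteristic zero field where the full homotopy transfer machinery for properads of \cite{PHC} is available. The two structural inputs are: the homotopy transfer theorem, which equips $H(A)$ with a transferred structure $(H(A),\varphi_t)$ together with $\infty$-quasi-isomorphisms $(A,\varphi)\rightsquigarrow(H(A),\varphi_t)$ in both directions; and the invertibility of $\infty$-isomorphisms, Theorem~\ref{invertibility prop}. Recall that $(H(A),\varphi_t)$ and $(H(A),\varphi_*)$ agree in weight one, differing only in the higher-weight components of $\varphi_t$.

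For the implication \emph{formal} $\Rightarrow$ \emph{gauge formal}, the first step is to show that every strict quasi-isomorphism of $\Cobar\C$-algebras admits an $\infty$-quasi-isomorphism in the opposite direction. Given a quasi-isomorphism $g\colon(B,\psi)\to(A,\varphi)$, I would apply the homotopy transfer theorem to both ends and form the composite
\[(H(B),\psi_t)\ \rightsquigarrow\ (B,\psi)\ \xrightarrow{\ g\ }\ (A,\varphi)\ \rightsquigarrow\ (H(A),\varphi_t)\ ,\]
whose weight-zero component is the isomorphism $H(g)$; it is therefore an $\infty$-isomorphism $h$, which admits an inverse $h^{-1}$ by Theorem~\ref{invertibility prop}. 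Composing $(A,\varphi)\rightsquigarrow(H(A),\varphi_t)\xrightarrow{h^{-1}}(H(B),\psi_t)\rightsquigarrow(B,\psi)$ yields the sought reverse $\infty$-quasi-isomorphism. Replacing each backward arrow of the formality zig-zag by such a reverse $\infty$-quasi-isomorphism and using that $\infty$-quasi-isomorphisms compose, the whole zig-zag collapses to a single $\infty$-quasi-isomorphism $(A,\varphi)\rightsquigarrow(H(A),\varphi_*)$, which is exactly gauge formality.

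For the converse, \emph{gauge formal} $\Rightarrow$ \emph{formal}, the plan is to rectify a given $\infty$-quasi-isomorphism $f\colon(A,\varphi)\rightsquigarrow(H(A),\varphi_*)$ into a genuine zig-zag of strict quasi-isomorphisms. I would use the properadic bar--cobar adjunction associated to the universal Koszul morphism $\iota\colon\C\to\Cobar\C$, namely $\Cobar_\iota\dashv\Bar_\iota$ between dg $\C$-coalgebras and dg $\Cobar\C$-algebras from \cite{PHC}. The counit $\epsilon_\iota\colon\Cobar_\iota\Bar_\iota\Rightarrow\mathrm{id}$ is a natural quasi-isomorphism, and $\Cobar_\iota\Bar_\iota$ carries $\infty$-quasi-isomorphisms to strict quasi-isomorphisms; consequently the diagram
\[(A,\varphi)\ \underset{\sim}{\xleftarrow{\epsilon_\iota}}\ \Cobar_\iota\Bar_\iota(A,\varphi)\ \underset{\sim}{\xrightarrow{\Cobar_\iota\Bar_\iota(f)}}\ \Cobar_\iota\Bar_\iota(H(A),\varphi_*)\ \underset{\sim}{\xrightarrow{\epsilon_\iota}}\ (H(A),\varphi_*)\]
is a zig-zag of strict quasi-isomorphisms, establishing formality.

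The main obstacle is to set up the properadic homotopical algebra underlying the converse: one must verify that the bar--cobar adjunction for (co)properads enjoys the same properties as in the operadic case of \cite[Theorems~11.3.3 and 11.4.7]{LodayVallette12}, i.e. that the counit is a quasi-isomorphism and that $\Cobar_\iota\Bar_\iota$ preserves quasi-isomorphisms of $\infty$-morphisms. Unlike operads, properads require care with the connected bar--cobar construction and the relevant filtrations and spectral sequences, so the genuine work lies in transporting these comparison results to the properadic framework; the characteristic zero hypothesis on $R$ is precisely what makes the symmetric homotopy transfer theorem and the operadic/properadic K\"unneth isomorphisms available throughout.
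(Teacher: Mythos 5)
Your first implication (formal $\Rightarrow$ gauge formal) is correct and is exactly the argument the paper indicates: transfer structures on both ends of each backward arrow, invert the resulting $\infty$-isomorphism using Theorem \ref{invertibility prop} (\cite[Corollary 4.19]{PHC}), and compose. No issue there.

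The converse is where your proposal breaks down, and the paper itself flags the obstruction: \emph{the rectification method does not generalize to algebras over a properad}. The bar--cobar adjunction $\Cobar_\iota \dashv \Bar_\iota$ between dg $\C$-coalgebras and dg $\Cobar\C$-algebras that you invoke simply does not exist in the properadic setting. The operadic construction rests on the fact that an operad $\P$ induces a monad $A \mapsto \P(A) = \bigoplus_n \P(n)\otimes_{\mathbb{S}_n} A^{\otimes n}$ on chain complexes, giving free algebras, cofree conilpotent coalgebras, and the adjunction of \cite[Proposition~11.3.1]{LodayVallette12}. A properad, having operations with several outputs, does not induce a monad (nor a comonad): there is no free $\Cobar\C$-algebra on a chain complex, no category of dg $\C$-coalgebras playing the corresponding role, and hence no counit $\epsilon_\iota$ to build your zig-zag from. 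This is not a matter of ``transporting the comparison results'' and ``checking the filtrations,'' as your last paragraph suggests; the objects in your proposed diagram are undefined. This is precisely why the paper defers this implication to \cite[Theorem 1.11]{SPH}, which proves that an $\infty$-quasi-isomorphism between $\Cobar\C$-algebras over a properad can be replaced by a zig-zag of strict quasi-isomorphisms by genuinely different (homotopy-theoretic) methods, rather than by rectification. So your proposal proves the easy half and, for the hard half, reduces the theorem to machinery that does not exist in this generality.
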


\begin{proof}
	As for operads, the existence of a zig-zag of quasi-isomorphisms implies an $\infty$-quasi-isomorphism relies on the invertibility of $\infty$-quasi-isomorphisms, see \cite[Corollary 4.19]{PHC}. Since the rectification method do not admit direct generalization to the context of $\Cobar \C $-algebras over a properad, the converse is nonetheless more subtle.  
\end{proof}

\begin{theorem}[{\cite[Theorem~4.14]{PHC}}]\label{HTTprop} Let $(A, \varphi)$ be an $\Cobar \C$-algebra structure. There exists a transferred structure on the homology, i.e. a $\Cobar \C$-algebra structure $(H(A), \varphi_t)$ extending the induced structure $\varphi_*$ such that the embedding $i : H(A) \to A$ and the projection $p : A \to H(A)$ extend to $\infty$-quasi-isomorphisms. The transferred structure is independent of the choice of contraction in the following sense: any two such transferred structures are related by an $\infty$-isotopy. 
\end{theorem}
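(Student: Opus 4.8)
The plan is to split the statement into two independent parts: an \emph{existence} part, which produces a Maurer--Cartan element $\varphi_t \in \mathrm{MC}(\mathfrak{g}_{H(A)})$ extending $\varphi_*$ together with $\infty$-quasi-isomorphisms $p_\infty \colon \varphi \rightsquigarrow \varphi_t$ and $i_\infty \colon \varphi_t \rightsquigarrow \varphi$; and an \emph{independence} part. Since $R$ is a field, the chain complex $A$ is related to $H(A)$ by a contraction $(i,p,h)$, and all the data needed to transport the structure is encoded in this contraction together with the weight-grading of $\C$.

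For existence, I would construct $\varphi_t$, $i_\infty$ and $p_\infty$ explicitly by the transfer formulas, i.e. sums over connected directed graphs whose vertices are decorated by the weight components of $\varphi$, whose internal edges carry the homotopy $h$, whose leaves carry $i$ and whose roots carry $p$. Two points are essential. First, the sum is well defined weight by weight: because $\C$ is reduced and weight-graded, only finitely many graphs contribute to each $\varphi_t^{(k)}$, so the formulas land in the complete convolution algebra $\mathfrak{g}_{H(A)}$. Second, the contraction relations $ip - \id_A = d_A h + h d_A$, $pi = \id_{H(A)}$, $h^2=0$, $ph=0$, $hi=0$ produce the telescoping cancellations that turn the Maurer--Cartan equation for $\varphi$ into the one for $\varphi_t$, and likewise force $p_\infty$, $i_\infty$ to satisfy the $\infty$-morphism equation $f \rhd \varphi - \psi \lhd f = d(f)$. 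By construction $p_\infty^{(0)} = p$ and $i_\infty^{(0)} = i$ are quasi-isomorphisms, so both are $\infty$-quasi-isomorphisms. Equivalently, one may run the homological perturbation lemma on the convolution complex, completeness with respect to the weight filtration guaranteeing convergence.

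For the independence statement, suppose $(\varphi_t, i_\infty, p_\infty)$ and $(\varphi_t', i_\infty', p_\infty')$ arise from two contractions onto $H(A)$. Composing the comparison $\infty$-quasi-isomorphisms yields an $\infty$-isomorphism
\[
g \;\coloneqq\; i_\infty \circledcirc p_\infty' \;\colon\; \varphi_t \rightsquigarrow \varphi_t',
\]
whose weight-zero component is the underlying linear map $g^{(0)} = p' \circ i \colon H(A) \to H(A)$. Now both $i$ and $p'$ induce the identity on $H(A)$ (forced by $pi = \id$, $p'i' = \id$ together with $[p]=[p']=\id_{H(A)}$), and $p'\circ i$, being a chain map between complexes with zero differential, equals its own induced map $[\,p'\circ i\,] = \id_{H(A)}$. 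Hence $g^{(0)} = \id_{H(A)}$, so $g$ is an $\infty$-isotopy relating the two transferred structures, as claimed.

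The main obstacle is the combinatorial verification in the second point of the existence part, which is genuinely heavier than in the operadic case. There the transfer is governed by a sum over rooted trees, whereas here one must sum over connected graphs with several inputs and several outputs, and the verification is entangled with the two-sided infinitesimal decompositions $\Delta_{(1,*)}$, $\Delta_{(*,1)}$ of the coproperad and with the Koszul sign rule. Showing that the contraction identities yield \emph{exactly} the required cancellations—producing strict equations rather than equalities up to homotopy—and keeping track of the signs is the delicate step; it is precisely the content of \cite[Theorem~4.14]{PHC}, on which we rely.
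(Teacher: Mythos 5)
The first thing to note is that the paper contains no proof of Theorem \ref{HTTprop}: the statement is recalled verbatim from \cite[Theorem~4.14]{PHC}. Your existence argument is, in substance, the same move — you sketch the standard graph-sum transfer formulas and then explicitly defer the delicate combinatorial verification to that very citation — so that part is consistent with what the paper does, although as a ``proof'' it is necessarily circular, since the statement being proved \emph{is} the cited theorem.

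The independence part, the only place where you argue rather than cite, contains a genuine gap: the identity $g^{(0)} = p' \circ i = \id_{H(A)}$. Your parenthetical justification assumes $[p]=[p']=\id_{H(A)}$, which is exactly what needs proving, and it is not forced by the axioms of a contraction as the paper defines them: if $(i,p,h)$ is a contraction and $\sigma$ is any degree-zero automorphism of the graded module $H(A)$, then $(i\circ\sigma,\ \sigma^{-1}\circ p,\ h)$ is again a contraction, and it induces $\sigma$, not the identity, on homology. The axioms only yield that $H(i)$ and $H(p)$ are mutually inverse automorphisms of $H(A)$; for two \emph{different} contractions, $p'\circ i = H(p')\circ H(i)$ is an automorphism of $H(A)$ with no reason to be the identity, so your $g$ is in general only an $\infty$-isomorphism, not an $\infty$-isotopy. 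This is not cosmetic: twisting a contraction by a nontrivial $\sigma \in \Aut\left(H(A),\varphi_*\right)$ transfers the structure $\mathrm{Ad}_{\sigma^{-1}}(\varphi_t) = \left(\sigma^{-1}\rhd \varphi_t\right)\lhd \sigma$, which still extends $\varphi_*$; but by Lemma \ref{key lemma 2} (applied with $n=1$) an $\infty$-isotopy changes $\varphi_t^{(2)}$ only by a $d^{\varphi_*}$-boundary, so the class $\bigl[\varphi_t^{(2)}\bigr]\in H_{-1}\bigl(\mathfrak{g}_{H(A)}^{\varphi_*}\bigr)$ is an isotopy invariant, whereas $\mathrm{Ad}_{\sigma^{-1}}$ can move it (a grading automorphism, for instance, rescales each homogeneous piece by a nontrivial power of $\alpha$). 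Hence for a non-formal algebra the two transferred structures need not be isotopic at all. To close the argument you must either add the standard but unstated normalization that the contraction maps induce the canonical identification, $H(i)=H(p)=\id_{H(A)}$ — under which your computation $p'\circ i = H(p')\circ H(i) = \id$ becomes correct — or settle for uniqueness up to $\infty$-isomorphism after correcting by the strict automorphism $\bigl(g^{(0)}\bigr)^{-1}$, which is precisely the phenomenon the paper handles in its remark on naturality at the end of Section \ref{2.5}, where the target structure has to be twisted by $\bigl(f^{(0)}\bigr)^{-1}$ rather than being reached by an isotopy.
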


\begin{proposition} \label{gauge formality 2} Let $(A, \varphi)$ be $\Cobar \C$-algebra structure and let $(H(A), \varphi_t)$ be a transferred structure. The algebra $(A, \varphi)$ is formal if and only if there exists an $\infty$-isotopy \begin{center}
		\begin{tikzcd}[column sep=normal]
			(H(A), \varphi_t) \ar[r,squiggly,"="] 
			& (H(A), \varphi_*) \ .
		\end{tikzcd} 
	\end{center}
\end{proposition}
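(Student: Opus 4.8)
The plan is to mirror the operadic argument of Proposition \ref{gauge formality}, replacing the infinitesimal composition products by their properadic counterparts $\rhd$ and $\lhd$, and to route the passage between formality and gauge formality through Theorem \ref{formal - gauge} rather than re-establishing it by hand. First I would invoke Theorem \ref{formal - gauge} to replace formality of $(A,\varphi)$ by gauge formality, that is, by the existence of an $\infty$-quasi-isomorphism $(A,\varphi) \rightsquigarrow (H(A),\varphi_*)$. The transferred structure supplied by Theorem \ref{HTTprop} comes equipped with $\infty$-quasi-isomorphisms $p_\infty : (A,\varphi) \rightsquigarrow (H(A),\varphi_t)$ and $i_\infty : (H(A),\varphi_t) \rightsquigarrow (A,\varphi)$. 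Composing a given $\infty$-quasi-isomorphism $(A,\varphi) \rightsquigarrow (H(A),\varphi_*)$ with $i_\infty$, and conversely composing with $p_\infty$, shows that gauge formality of $(A,\varphi)$ is equivalent to the existence of an $\infty$-quasi-isomorphism $g : (H(A),\varphi_t) \rightsquigarrow (H(A),\varphi_*)$.

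Next I would upgrade such a $g$ to the desired $\infty$-isotopy. Since $H(A)$ carries the zero differential, the weight-zero component $g^{(0)} \in \End_{H(A)}$ is a quasi-isomorphism between complexes with trivial differential, hence an isomorphism, so $g$ is in fact an $\infty$-isomorphism. Inspecting the defining equation $g \rhd \varphi_t - \varphi_* \lhd g = d(g)$ in weight $1$, and using $\varphi_* = \varphi_t^{(1)}$ together with $d_{H(A)} = 0$, I expect to obtain that $g^{(0)}$ is itself an $\infty$-automorphism of $(H(A),\varphi_*)$. Its inverse exists by Theorem \ref{invertibility prop}, and the composite $\left(g^{(0)}\right)^{-1} \circledcirc g$ is then an $\infty$-morphism $(H(A),\varphi_t) \rightsquigarrow (H(A),\varphi_*)$ whose weight-zero part is $\id_{H(A)}$, i.e. an $\infty$-isotopy. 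For the converse, an $\infty$-isotopy $(H(A),\varphi_t) \rightsquigarrow (H(A),\varphi_*)$ precomposed with $p_\infty$ produces an $\infty$-quasi-isomorphism $(A,\varphi) \rightsquigarrow (H(A),\varphi_*)$, so $(A,\varphi)$ is gauge formal and hence formal by Theorem \ref{formal - gauge}.

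The only genuinely subtle point, passing from an $\infty$-quasi-isomorphism back to an honest zig-zag of strict quasi-isomorphisms, is exactly the content of Theorem \ref{formal - gauge}, where the absence of a rectification functor for algebras over a properad is dealt with separately; by quoting it I sidestep that difficulty entirely. The remaining work is therefore formal: verifying the weight-$1$ identity that makes $g^{(0)}$ an $\infty$-automorphism. This is the properadic analogue of the computation in Proposition \ref{gauge formality}, and the main thing to check is that the left and right actions $\lhd$ and $\rhd$ restrict in weight $1$ to the expected expressions, so that the argument carries over \emph{mutatis mutandis}.
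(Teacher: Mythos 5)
Your proposal is correct and follows essentially the same route as the paper: the paper's proof likewise invokes Theorem \ref{formal - gauge} to identify formality with gauge formality and then declares the rest ``similar to the one of Proposition \ref{gauge formality}'', which is exactly the composition with $p_\infty$, $i_\infty$ and the weight-$1$ upgrade of $g$ to $\left(g^{(0)}\right)^{-1} \circledcirc g$ that you spell out. Your version merely makes explicit the properadic substitutions ($\rhd$, $\lhd$ for $\star$, $\circledcirc$, and Theorem \ref{invertibility prop} for Lemma \ref{invertibility}) that the paper leaves implicit.
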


\begin{proof}
	By Theorem \ref{formal - gauge}, the algebra is formal if and only if it is gauge formal. The proof is then similar to the one of Proposition \ref{gauge formality}.  
\end{proof}

\begin{definition}
Let $(A, \varphi)$ be $\Cobar \C$-algebra such that there exists a transferred structure $(H(A), \varphi_t)$. It is called \emph{gauge $n$-formal} if there exist an $\Cobar \C$-algebra $(H(A),\psi)$ such that $\psi^{(k)} = 0$~, for all $2 \leqslant k \leqslant n + 1$ and an $\infty$-isotopy 	\begin{center}
	\begin{tikzcd}[column sep=normal]
		(H(A), \varphi_t) \ar[r,squiggly,"="] 
		& (H(A), \psi) \ .
	\end{tikzcd} 
\end{center}
\end{definition}

\begin{definition}[Properadic Kaledin classes]
Let $\varphi \in \mathrm{MC}(\mathfrak{g}_H)$ be an $\Cobar \C$-algebra structure on a graded $R$-module $H$ and let $\mathfrak{D}(\varphi)$ be its prismatic decomposition. The \emph{Kaledin class} $K_{\varphi}$ is the Kaledin class of $\mathfrak{D}(\varphi)$, i.e. \[K_{\varphi} =  \left[\partial_{\hbar}   \mathfrak{D}(\varphi) \right] \in H_{-1}\left(\mathfrak{g}_H[\![\hbar]\!]^{\Phi}\right) \ . \] Its \emph{$n^{\text{th}}$-truncated Kaledin class} $K_{\varphi}^n$ is the $n^{\text{th}}$-truncated Kaledin class of $\mathfrak{D}(\varphi)$, i.e. \[ K^n_{\varphi} = \left[\varphi^{(2)} + 2 \varphi^{(3)} \hbar + \dots + n \varphi^{(n+1)} \hbar^{n-1} \right]  \in H_{-1}\left(\left(\mathfrak{g}_H[\![\hbar]\!]/(\hbar^{n })\right)^{\overline{\Phi}^n}\right) \ .  \] 
\end{definition}

\begin{theorem}\label{Kaledin properadic}
Let $R$ be a characteristic zero field. Let $\C$ be a reduced weight-graded differential graded coproperad over $R$. Let $(A, \varphi)$ be a $\Cobar \C$-algebra structure and let $(H(A), \varphi_t)$ be a transferred structure.
	\begin{enumerate}
		\item  The algebra $(A, \varphi)$ is gauge $n $-formal if and only if the truncated class $K^n_{\varphi_t}$ is zero. 
		
		\item  The algebra $(A, \varphi)$ is formal if and only if the Kaledin class $K_{\varphi_t}$ is zero.
	\end{enumerate}
\end{theorem}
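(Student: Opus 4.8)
The plan is to deduce both statements from the Lie-theoretic results of Section~\ref{section1}, which become available here precisely because $R$ is a characteristic zero field. First I would record that the convolution dg Lie-admissible algebra $\mathfrak{g}_{H(A)}$ yields, after skew-symmetrizing $\star$, a dg Lie algebra, and that its weight grading $\mathfrak{g}_{H(A)} \cong \prod_{k \geqslant 1} \mathfrak{g}_{H(A)}^{(k)}$ exhibits it as a $1$-weight-graded dg Lie algebra in the sense of Definition~\ref{delta wg}: the Lie bracket is additive in the weight, and the differential $\varphi \mapsto -(-1)^{|\varphi|}\varphi \circ d_{\overline{\C}}$ raises the weight by one, since $d_{\overline{\C}}$ lowers the weight by one on $\overline{\C}$ and $H(A)$ carries no differential. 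The transferred structure $\varphi_t$ is then a Maurer--Cartan element of $\mathcal{F}^1 \mathfrak{g}_{H(A)}$ with $\varphi_t^{(1)} = \varphi_*$, and the classes $K_{\varphi_t}$, $K_{\varphi_t}^n$ of the statement are exactly the Kaledin classes attached to $\varphi_t$ by Theorem~\ref{A} in the case $\delta = 1$.

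The bridge to the properadic picture is Theorem~\ref{graphexp2}: the graph exponential identifies the gauge group $\bigl((\mathfrak{g}_{H(A)})_0, \mathrm{BCH}, 0\bigr)$ with the group $(\infty-\mathrm{iso}, \circledcirc, 1)$. The crucial point to verify is that this isomorphism intertwines the two actions on $\mathrm{MC}(\mathfrak{g}_{H(A)})$, namely that for $f = \exp(\lambda)$ the gauge action $\lambda \cdot \varphi$ of Proposition~\ref{gauge} agrees with the $\infty$-isotopy action $f \cdot \varphi = (f \star \varphi) \circledcirc f^{-1} + (f \circ d_{\C}) \circledcirc f^{-1}$; this is the Lie-admissible counterpart of the operadic identity recorded in Theorem~\ref{action}. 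Granting this, a gauge $\lambda$ with $\lambda \cdot \varphi_t = \psi$ exists if and only if there is an $\infty$-isotopy $\varphi_t \rightsquigarrow \psi$.

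With the dictionary in hand, Point~(2) follows from Theorem~\ref{A}(1): the class $K_{\varphi_t}$ vanishes if and only if $\varphi_t$ is gauge equivalent to $\varphi_t^{(1)} = \varphi_*$, hence if and only if there is an $\infty$-isotopy $\varphi_t \rightsquigarrow \varphi_*$, which by Proposition~\ref{gauge formality 2} is exactly the formality of $(A, \varphi)$. Point~(1) follows in the same way from Proposition~\ref{passage}, whose case $\delta = 1$ asserts that $K_{\varphi_t}^n = 0$ if and only if some gauge $\lambda$ satisfies $(\lambda \cdot \varphi_t)^{(k)} = 0$ for all $2 \leqslant k \leqslant n+1$; transporting this across the graph exponential produces an $\infty$-isotopy $\varphi_t \rightsquigarrow \psi$ with $\psi^{(k)} = 0$ in that range, which is precisely gauge $n$-formality.

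I expect the main obstacle to be the compatibility of the graph exponential with the two actions, together with the fact that the properadic convolution algebra carries only a Lie-admissible, rather than pre-Lie, product. Establishing the action-compatibility requires the Lie-admissible analogues of the compatibility relations of Lemmas~\ref{compatibility} and~\ref{differentielles}; once these are secured, the remainder is a routine transport of Theorem~\ref{A}(1) and Proposition~\ref{passage} through the isomorphism of Theorem~\ref{graphexp2}.
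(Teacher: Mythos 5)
Your proposal is correct and follows essentially the same route as the paper: the paper also deduces Point~(1) from Proposition~\ref{passage} combined with Proposition~\ref{gauge formality 2} and the graph exponential isomorphism of Theorem~\ref{graphexp2}, and Point~(2) from Theorem~\ref{A}(1) via the same dictionary. The action-compatibility you flag as the main obstacle is indeed the substance of the cited result behind Theorem~\ref{graphexp2}, so the paper treats it as already available rather than re-proving it.
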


\begin{proof} Point (1) is an immediate corollary of Proposition \ref{passage}, by Proposition \ref{gauge formality 2} and the isomorphism of Theorem \ref{graphexp2}. Similarly, Point (2) follows from Point (1) of Theorem \ref{A}. 
\end{proof}

\begin{remark}
	If $R$ is not a characteristic zero field, the properadic Kaledin classes also adapts using the same arguments as in Section \ref{section2}. One can also prefer to work
	with the obstruction sequences developed in \cite[Section~2]{CE24b}, without a bound on the characteristic.
\end{remark}

\subsection{Properadic calculus}
This section generalizes some lemmas established in Section \ref{2.3} to the properadic case. These results will be used in Section \ref{critere} to prove the vanishing of properadic Kaledin classes. 

\begin{lemma}\label{compatibility prop}
	Let $g \in \mathfrak{a}_H$ be an element concentrated in weight $0$ and invertible with respect to $\circledcirc$. Let $f$ and $h$ in $\mathfrak{a}_A$ and let $(f;h)$ be the standard notation of \cite{PHC} for $f$ applied everywhere except at one place where $h$ is applied. The following identities holds 
	\begin{enumerate}
		\item[(a)] $\left(f \lhd g \right) \star h = f \lhd \left( g ; g \star h \right) $;
		\item[(b)] $f \star h = f \lhd (1 ; h) $; 
		\item[(c)] $\left(g ; f \lhd  g \right) \rhd h = f \lhd \left(g ; g  \rhd h\right)  $;
		\end{enumerate}
		If furthermore, the element $f^{(0)}$ is invertible with respect to $\circledcirc$ , then
	\begin{enumerate}
		
		\item[(d)] $ \left( f \lhd \left( g ;  h \right) \right) \lhd g^{-1} = f \star \left(h \lhd g^{-1}\right)$;
		\item[(e)] $ \left(f \lhd g  \right) \lhd  g^{-1} = f $;
		\item[(f)] $\left( \left(g ; f\right) \rhd h \right) \lhd g^{-1} = \left(f \lhd g^{-1}\right) \star \left( \left(g \rhd h \right) \lhd g^{-1}\right) $ \ .
	\end{enumerate}
\end{lemma}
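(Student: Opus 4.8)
The plan is to transpose the proof of Lemma \ref{compatibility} to the properadic setting, replacing each operadic coassociativity diagram by its coproperadic analogue for the decomposition maps $\Delta$, $\Delta_{(1,1)}$, $\Delta_{(1,*)}$ and $\Delta_{(*,1)}$ of $\C$. Identities (a), (b) and (c) are \emph{structural}: each asserts that two iterated decompositions of $\overline{\C}$, followed by the same composite in $\End_A$, coincide, and hence reduces to a coherence axiom of the coproperad combined with the functoriality of the connected composition product $\boxtimes$ and the associativity of $\gamma$. I would first treat (a) exactly as Formula~(a) of Lemma \ref{compatibility}, via the coassociativity that decomposes $\overline{\C}$ into a distinguished top vertex (fed to $g$) sitting above the infinitesimal bottom decomposition (fed to $g \star h$). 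Identity (b) then follows by specializing (a) to $g = 1$ and using the unit relations $f \lhd 1 = f$ and $1 \star h = h$, the latter coming from the counit term of the decomposition, so that $f \lhd (1; h)$ collapses to the infinitesimal product $f \star h$.

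The delicate structural identity is (c), which has no operadic counterpart since it interlaces the left and right actions $\rhd$ and $\lhd$. Here I would invoke the two-dimensional coherence of the coproperad: the iterated decomposition that isolates \emph{simultaneously} a top vertex and a bottom vertex of a connected graph can be performed in two orders, and the two resulting maps $\overline{\C} \to \C \boxtimes (\C \boxtimes \C)$ agree; feeding $g$, $f$ and $h$ into the three resulting slots and composing with $\gamma$ produces the two sides of (c). This is the technical heart of the lemma and the main obstacle. Unlike the operadic case, where every composite lives in a single vertical tower of decompositions, properadic composites branch both upward and downward, so the relevant coassociativity square is genuinely $2$-dimensional: one must track carefully which vertex is singled out at the top and which at the bottom at each stage, and verify that the partial composition maps of $\End_A$ glue as required. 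Once this diagram is in hand, (c) is immediate.

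Finally I would derive the identities (d), (e) and (f), which require the invertibility hypothesis, as purely formal consequences of (a), (b) and (c) together with the unit and inverse relations, following the operadic template of Lemma \ref{compatibility} verbatim. For (e) I would first record that $\lhd$ restricts to a right action of the group of $\circledcirc$-invertible weight-zero elements, i.e. $(f \lhd g) \lhd g' = f \lhd (g \circledcirc g')$ — itself a consequence of the coassociativity relating $\Delta_{(1,*)}$ and $\Delta$ — so that $(f \lhd g) \lhd g^{-1} = f \lhd (g \circledcirc g^{-1}) = f \lhd 1 = f$. Identity (d) then follows by combining this action-associativity with (a) and (b) and inserting $g^{-1}$, exactly as Formula~(d) of Lemma \ref{compatibility} is obtained there; and (f), the properadic analogue of Formula~(e) of Lemma \ref{compatibility} needed to build the adjoint action, results from chaining (a), (c), (d) and (e). These last manipulations are routine once the structural identities (a)--(c) are secured.
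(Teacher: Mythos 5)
Your overall strategy is the paper's: establish (a)--(c) as coassociativity statements for the iterated decomposition maps of $\C$, made tractable by the hypothesis that $g$ is concentrated in weight zero (so every auxiliary factor in the relevant isomorphisms is a copy of $\mathcal{I}$), and then deduce (d)--(f) by formal manipulations following the operadic Lemma \ref{compatibility}. Your identification of (c) as the genuinely new, ``two-dimensional'' coherence statement interlacing $\rhd$ and $\lhd$ is exactly how the paper treats it, via the isomorphism $\left( \mathcal{I};  \overline{\C} \underset{(1,*)}{\boxtimes} \mathcal{I} \right)  \underset{(*,1)}{\boxtimes} \overline{\C} \cong \overline{\C} \underset{(1,*)}{\boxtimes} \left( \mathcal{I} ; \mathcal{I} \underset{(*,1)}{\boxtimes} \overline{\C} \right)$. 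Your route to (e), through the right-action identity $(f \lhd g)\lhd g' = f \lhd (g \circledcirc g')$, differs mildly from the paper, which instead obtains (e) as the case $h=g$ of (d); both are correct.

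The one step that does not close as you state it is (d). You propose to get it from the slot-free action identity together with (a) and (b), ``exactly as in the operadic case''; but in the operadic case formula (d) is deduced from operadic formula (b), i.e. the distributivity \emph{with a distinguished slot}, $\left(f\circledcirc (g;h)\right)\circledcirc k = f \circledcirc\left(g \circledcirc k; h \circledcirc k\right)$, and this identity has no counterpart among the properadic items (a)--(c), nor is it implied by your action identity (which is its slot-free special case): one cannot in general rewrite an arbitrary slot entry $h$ in the form $g \star h'$ so as to reach it through (a). You therefore need to add one more structural identity to your list, namely $\left( f \lhd \left( g ;  h \right) \right) \lhd g^{-1} =  f \lhd \left( g \lhd g^{-1} ;  h \lhd g^{-1} \right)$, which the paper proves by a third coassociativity isomorphism, $\left( \overline{\C} \underset{(1,*)}{\boxtimes} \left(\mathcal{I};  \overline{\C} \right)  \right)   \underset{(1,*)}{\boxtimes} \mathcal{I} \cong  \overline{\C} \underset{(1,*)}{\boxtimes}\left(\mathcal{I}  \underset{(1,*)}{\boxtimes}  \mathcal{I}  ;  \overline{\C} \underset{(1,*)}{\boxtimes}  \mathcal{I}   \right)$, again using that $g^{-1}$ is concentrated in weight zero. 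Once this is recorded, (d) follows from $g \lhd g^{-1} = g \circledcirc g^{-1} = 1$ together with (b), and your derivations of (e) and (f) go through as written (the paper's (f) is the same chain: (e), then (c), then (d)).
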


\begin{proof}
	The proof is similar to the one of Lemma \ref{compatibility} and relies on the fact that $g$ is zero outside of $\C^{(0)} = \mathcal{I}$. Formula (a) follows from the isomorphism \[\left( \overline{\C} \underset{(1,*)}{\boxtimes} \mathcal{I} \right)  \underset{(1,1)}{\boxtimes} \overline{\C} \cong \overline{\C} \underset{(1,*)}{\boxtimes} \left( \mathcal{I} ; \mathcal{I} \underset{(1,1)}{\boxtimes} \overline{\C} \right)  \ . \]  Formula (b) follows from (a) with $g = 1$. Formula (c) boils down to the isomorphism \[\left( \mathcal{I};  \overline{\C} \underset{(1,*)}{\boxtimes} \mathcal{I} \right)  \underset{(*,1)}{\boxtimes} \overline{\C} \cong \overline{\C} \underset{(1,*)}{\boxtimes} \left( \mathcal{I} ; \mathcal{I} \underset{(*,1)}{\boxtimes} \overline{\C} \right)  \ . \] We have that $ \left( f \lhd \left( g ;  h \right) \right) \lhd g^{-1} =  f \lhd \left( g \lhd g^{-1} ;  h \lhd g^{-1} \right)   $ by the isomorphism \[ \left( \overline{\C} \underset{(1,*)}{\boxtimes} \left(\mathcal{I};  \overline{\C} \right)  \right)   \underset{(1,*)}{\boxtimes} \mathcal{I} \cong  \overline{\C} \underset{(1,*)}{\boxtimes}
	 \left(\mathcal{I}  \underset{(1,*)}{\boxtimes}  \mathcal{I}  ;  \overline{\C} \underset{(1,*)}{\boxtimes}  \mathcal{I}   \right)  	 \ . \] Using the fact that $g \lhd g^{-1} =g \circledcirc g^{-1} = 1$, Formula (d) then follows from (b). Formula (e) is a particular case of (d) with $h = g$. Using Formulas (e) and (c), we have \[\left(g ; f\right) \rhd h = \left(g ; \left(f \lhd g^{-1}  \right) \lhd  g \right) \rhd h = \left(f \lhd g^{-1}  \right) \lhd \left(g ; g  \rhd h\right) \ . \] Formula (f) then follows from Formula (d) since	 
	 \[\left( \left(g ; f\right) \rhd h \right) \lhd g^{-1} = \left( \left(f \lhd g^{-1}  \right) \lhd \left(g ; g  \rhd h\right) \right) \lhd g^{-1} \overset{(d)}{=} \left(f \lhd g^{-1}\right) \star \left( \left(g \rhd h \right) \lhd g^{-1}\right)  \ . \qedhere \]
\end{proof}

\begin{lemma}\label{diff prop}
For all $f$ and $g$ in $\mathfrak{a}_A$ of degree $0$, the following identities hold   
	\begin{enumerate}
		\item[(i)] $(f \rhd g) \circ d_{\C} = f \rhd (g \circ d_{\C}) + (f; f \circ d_{\C}) \rhd g$;
		\item[(j)] $\left(f \lhd g \right) \circ d_{\C} =  \left( f \circ d_{\C} \right) \lhd g +  f  \lhd \left(g; g \circ d_{\C}   \right)  \ . $
	\end{enumerate}
If furthermore, the element $f^{(0)}$ is invertible with respect to $\circledcirc$ , then for all $x \in \mathfrak{a}_H$
	\begin{enumerate}
		\item[(k)] $  \left(f^{(0)} \right)^{-1} \circ d_{\C} = -  \left(f^{(0)} \right)^{-1} \star  \left( \left( f^{(0)} \circ d_{\C} \right)  \lhd  \left(f^{(0)}\right) ^{-1} \right) \ .  $ 
	\end{enumerate}
\end{lemma}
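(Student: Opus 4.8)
The plan is to deduce identities (i) and (j) from the single structural fact that $\C$ is a differential graded coproperad, so that its partial decomposition maps $\Delta_{(*,1)} : \overline{\C} \to \C \boxtimes_{(*,1)} \overline{\C}$ and $\Delta_{(1,*)} : \overline{\C} \to \overline{\C} \boxtimes_{(1,*)} \C$ are morphisms of chain complexes; equivalently, $d_{\C}$ is a coderivation. First I would rewrite $f \rhd g$ and $f \lhd g$ as the defining composites through these partial coproducts (together with their components on $\mathcal{I}$), precompose with $d_{\C}$, and then commute $d_{\C}$ across the coproduct using this compatibility.

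For (i), commuting $d_{\C}$ through $\Delta_{(*,1)}$ replaces it by the induced differential on the connected composite $\C \boxtimes_{(*,1)} \overline{\C}$, which by the Leibniz rule is the sum of a term differentiating the single distinguished bottom vertex and a term differentiating the top vertices one at a time. After applying $f \boxtimes_{(*,1)} g$ and $\gamma_{(*,1)}$, the first contribution is exactly $f \rhd (g \circ d_{\C})$ and the second is $(f; f \circ d_{\C}) \rhd g$, where the standard notation $(f; f \circ d_{\C})$ precisely records the summation over the top vertices. Identity (j) is obtained in the same way from $\Delta_{(1,*)}$, the roles of the two legs being exchanged so that the distinguished single vertex now carries $f$ and the summation is over the bottom vertices, giving $(f \circ d_{\C}) \lhd g + f \lhd (g; g \circ d_{\C})$. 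Since $f$ and $g$ have degree $0$, all Koszul signs incurred when moving $d_{\C}$ past them are trivial, which accounts for the sign-free form of both identities.

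For (k) I would mimic the computation of the operadic formula (l) in Lemma \ref{differentielles}. Writing $g \coloneqq f^{(0)}$, which is concentrated in weight $0$, the $\circledcirc$-inverse relation reads $g^{-1} \lhd g = 1$, since grafting a weight-zero (unary) element agrees with the associative product along its single leg. Applying (j) to this relation and using that $1 \circ d_{\C} = 0$ — because $d_{\C}$ preserves the coaugmentation coideal $\overline{\C}$, so its $\mathcal{I}$-component vanishes — yields
\[
\bigl(g^{-1} \circ d_{\C}\bigr) \lhd g = - \, g^{-1} \lhd \bigl(g ; g \circ d_{\C}\bigr) \ .
\]
Applying $(-) \lhd g^{-1}$ to both sides, simplifying the left-hand side with Formula (e) of Lemma \ref{compatibility prop} and the right-hand side with Formula (d) of Lemma \ref{compatibility prop}, produces exactly
\[
g^{-1} \circ d_{\C} = - \, g^{-1} \star \bigl( (g \circ d_{\C}) \lhd g^{-1} \bigr) \ ,
\]
which is (k).

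The main obstacle is not conceptual but bookkeeping: one must check that commuting $d_{\C}$ through the partial coproducts really distributes over the several vertices of the connected composite according to the $(f; f \circ d_{\C})$ and $(g; g \circ d_{\C})$ conventions, and keep careful track of the $\mathcal{I}$-summands hidden in the definitions of $\rhd$ and $\lhd$. Once the coderivation compatibility of $\Delta_{(*,1)}$ and $\Delta_{(1,*)}$ is in place, the degree-$0$ hypothesis removes every sign and the identities follow formally.
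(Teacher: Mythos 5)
Your proposal matches the paper's proof: points (i) and (j) are justified by the coderivation property of $d_{\C}$ as a dg coproperad (which the paper states in one line and you expand), and your derivation of (k) — applying (j) to $\left(f^{(0)}\right)^{-1} \lhd f^{(0)} = 1$ with $1 \circ d_{\C} = 0$, then simplifying via Formulas (e) and (d) of Lemma \ref{compatibility prop} — is exactly the paper's argument, including the same intermediate identity. No gaps; this is essentially the same proof.
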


\begin{proof}
	Points (i) and (j) follows from the fact that $\C$ is a dg coproperad. By Formula (j) and Formula (e) of Lemma \ref{compatibility prop},  we have \[ \left(f^{(0)} \right)^{-1} \circ d_{\C}  = - \left( \left(f^{(0)} \right)^{-1}  \lhd \left(f^{(0)}; f^{(0)} \circ d_{\C}   \right)\right) \lhd \left(f^{(0)} \right)^{-1} \ . \] Point (k) then follows from Point (d) of Lemma \ref{compatibility prop}. 
\end{proof}

\begin{lemma}\label{key lemma 2}
	Let $n \geqslant1$ be an integer and let $f : \varphi \rightsquigarrow \psi $ be an $\infty$-isomorphism such that	
	\begin{itemize}
		\item[$\centerdot$] $\varphi^{(k)} = 0$~, for $2 \leqslant k \leqslant n$ and
		\item[$\centerdot$]  $f^{(k)} = 0$~, for $1 \leqslant k \leqslant n - 1$ . 
	\end{itemize}
	Then, the weight components of $\psi$ are given by 
	\begin{itemize}
		\item[$\centerdot$]  $\psi^{(1)} =  \mathrm{Ad}_{f^{(0)}} \left( \varphi^{(1)} \right) + \left(f^{(0)} \circ d_{\C}\right) \lhd \left(f^{(0)}\right)^{-1}  $ ,
		\item[$\centerdot$] $\psi^{(k)} = 0$~,  for $2 \leqslant k \leqslant n$~, and
		\item[$\centerdot$]  $\psi^{(n+1)} = \mathrm{Ad}_{f^{(0)}} \left(\varphi^{(n+1)} \right) - d^{\psi^{(1)}} \left(f^{(n)} \lhd \left(f^{(0)}\right)^{-1}\right)$ ,	
	\end{itemize}
	where $\mathrm{Ad}_{f^{(0)}}(x) \coloneqq \left(f^{(0)} \rhd x \right) \lhd \left(f^{(0)} \right)^{-1}$. 
	In particular, if $f$ is an $\infty$-isotopy, we have
	\[ \psi =   \varphi^{(1)} + \varphi^{(n+1)} - d^{ \varphi^{(1)}} \left( f^{(n)} \right) + \psi^{(n+2)} + \cdots  \ .\]
\end{lemma}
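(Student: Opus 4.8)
The plan is to mimic the proof of the operadic Lemma \ref{key lemma}, now keeping track of the two distinct actions $\lhd$ and $\rhd$ and invoking the properadic calculus of Lemmas \ref{compatibility prop} and \ref{diff prop} in place of the operadic identities. The starting point is the defining equation of a properadic $\infty$-morphism $f : \varphi \rightsquigarrow \psi$, namely $f \rhd \varphi - \psi \lhd f = d(f)$. Since $H$ carries the zero differential and $f$ has degree zero, we have $d(f) = - f \circ d_{\C}$, so the equation reads $\psi \lhd f = f \rhd \varphi + f \circ d_{\C}$.

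First I would expand both sides of this identity modulo $\mathcal{F}^{n+2}\mathfrak{g}_H$. The hypotheses $\varphi^{(k)} = 0$ for $2 \leqslant k \leqslant n$ and $f^{(k)} = 0$ for $1 \leqslant k \leqslant n-1$ leave only the weight components
\[ f \rhd \varphi \equiv f^{(0)} \rhd \varphi^{(1)} + f^{(0)} \rhd \varphi^{(n+1)} + \left(f^{(0)}; f^{(n)}\right) \rhd \varphi^{(1)} \ , \]
\[ \psi \lhd f \equiv \psi \lhd f^{(0)} + \psi^{(1)} \lhd \left(f^{(0)}; f^{(n)}\right) \ , \qquad f \circ d_{\C} \equiv f^{(0)} \circ d_{\C} + f^{(n)} \circ d_{\C} \ , \]
all taken modulo $\mathcal{F}^{n+2}\mathfrak{g}_H$, where $\left(f^{(0)}; f^{(n)}\right)$ denotes $f^{(0)}$ applied to every block except one, where $f^{(n)}$ is applied. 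Acting on the right with $(f^{(0)})^{-1}$ via $\lhd$, and using Formula (e) of Lemma \ref{compatibility prop} to recover $\psi = (\psi \lhd f^{(0)}) \lhd (f^{(0)})^{-1}$, isolates $\psi$ modulo $\mathcal{F}^{n+2}\mathfrak{g}_H$. Reading off weights one and $k$ with $2 \leqslant k \leqslant n$ gives at once $\psi^{(1)} = \mathrm{Ad}_{f^{(0)}}\!\left(\varphi^{(1)}\right) + (f^{(0)} \circ d_{\C}) \lhd (f^{(0)})^{-1}$ and $\psi^{(k)} = 0$.

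The remaining task is to simplify the weight-$(n+1)$ component. Writing $Y \coloneqq f^{(n)} \lhd (f^{(0)})^{-1}$, I would rewrite the three surviving cross terms as follows: $\left(\left(f^{(0)}; f^{(n)}\right) \rhd \varphi^{(1)}\right) \lhd (f^{(0)})^{-1} = Y \star \mathrm{Ad}_{f^{(0)}}\!\left(\varphi^{(1)}\right)$ by Formula (f) of Lemma \ref{compatibility prop}; next $\left(\psi^{(1)} \lhd \left(f^{(0)}; f^{(n)}\right)\right) \lhd (f^{(0)})^{-1} = \psi^{(1)} \star Y$ by Formula (d); and $(f^{(n)} \circ d_{\C}) \lhd (f^{(0)})^{-1} = Y \circ d_{\C} + Y \star \left((f^{(0)} \circ d_{\C}) \lhd (f^{(0)})^{-1}\right)$ by Formula (j) of Lemma \ref{diff prop} combined with Formula (k) of that lemma and Formula (a) of Lemma \ref{compatibility prop}. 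Summing these and inserting the expression already found for $\psi^{(1)}$, the two $Y \star (-)$ contributions collapse into $Y \star \psi^{(1)}$, so that $\psi^{(n+1)} = \mathrm{Ad}_{f^{(0)}}\!\left(\varphi^{(n+1)}\right) + Y \star \psi^{(1)} - \psi^{(1)} \star Y + Y \circ d_{\C}$. Recognizing $Y \star \psi^{(1)} - \psi^{(1)} \star Y + Y \circ d_{\C} = - d^{\psi^{(1)}}(Y)$ yields the claimed formula; the ``in particular'' statement then follows by setting $f^{(0)} = \mathrm{id}_H$, for which $\mathrm{Ad}_{f^{(0)}} = \mathrm{id}$, $\psi^{(1)} = \varphi^{(1)}$ and $Y = f^{(n)}$.

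The step I expect to be most delicate is the expansion modulo $\mathcal{F}^{n+2}\mathfrak{g}_H$: correctly determining which graftings survive in the properadic products $f \rhd \varphi$ and $\psi \lhd f$, where the ``full'' side of each action contributes several blocks, and hence recognizing the term $\left(f^{(0)}; f^{(n)}\right)$ rather than a bare $f^{(n)}$. Keeping the left and right actions in the correct slots so that Formulas (d), (e) and (f) of Lemma \ref{compatibility prop} apply — in contrast to the single pre-Lie product available in the operadic proof — and checking that the signs recombine into the twisted differential $d^{\psi^{(1)}}$ is where the bookkeeping must be carried out with care.
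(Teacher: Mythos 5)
Your proposal is correct and follows essentially the same route as the paper's proof: the same expansion of the $\infty$-morphism equation $\psi \lhd f = f \rhd \varphi + f \circ d_{\C}$ modulo $\mathcal{F}^{n+2}\mathfrak{g}_H$, followed by the same applications of Formulas (d) and (f) of Lemma \ref{compatibility prop} and (j), (k) of Lemma \ref{diff prop} together with (a), to rewrite the three cross terms and recombine them into $-d^{\psi^{(1)}}\bigl(f^{(n)} \lhd (f^{(0)})^{-1}\bigr)$. The only cosmetic difference is that you collapse the two $Y \star (-)$ terms into $Y \star \psi^{(1)}$ by bilinearity before recognizing the twisted differential, whereas the paper records the result directly as $-\bigl[\psi^{(1)}, Y\bigr] + Y \circ d_{\C}$; these are identical computations.
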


\begin{proof}
 By definition of an $\infty$-morphism, we have \begin{equation}\label{infini2}
 \psi \lhd f = f \rhd \varphi + f \circ d_{\C} \ .
 \end{equation} The assumptions on $\varphi$ and on $f$ imply that \[\begin{split}
f \rhd \varphi + f \circ d_{\C}\  \equiv \ &  f^{(0)} \rhd \varphi^{(1)} + f^{(0)} \rhd \varphi^{(n+1)} + \left(f^{(0)}; f^{(n)} \right) \rhd \varphi^{(1)}  \\ & +  f^{(0)} \circ d_{\C} + f^{(n)} \circ d_{\C} \pmod{\mathcal{F}^{n+2} \mathfrak{g}_H} \ .
 \end{split}\]   Similarly, we have \[\psi \lhd f \equiv \psi \lhd f^{(0)} + \psi^{(1)} \lhd \left(f^{(0)} ; f^{(n)} \right) \pmod{\mathcal{F}^{n+2} \mathfrak{g}_H} \ . \]  Equation (\ref{infini2}) implies that \[\begin{split}
 	\psi \  \equiv \  & \mathrm{Ad}_{f^{(0)}} \left( \varphi^{(1)} \right) +  \left(\left(f^{(0)}; f^{(n)} \right) \rhd \varphi^{(1)}\right) \lhd  \left(f^{(0)}\right) ^{-1}+ \left(f^{(0)} \circ d_{\C} + f^{(n)} \circ d_{\C}  \right) \lhd  \left(f^{(0)}\right) ^{-1}  \\ &  +  \mathrm{Ad}_{f^{(0)}} \left( \varphi^{(n+1)} \right)   - \left(\psi^{(1)} \lhd \left(f^{(0)} ; f^{(n)} \right) \right) \lhd  \left(f^{(0)}\right) ^{-1}    
 	\pmod{\mathcal{F}^{n+2} \mathfrak{g}_H} \ .
 \end{split} \] This gives the desired formulas for the components $\psi^{(k)}$ for $1 \leqslant k \leqslant n$. Furthermore, by Point (d) of Lemma \ref{compatibility prop}, we have \[ \left(\psi^{(1)} \lhd \left(f^{(0)} ; f^{(n)} \right) \right) \lhd  \left(f^{(0)}\right) ^{-1}   =  \psi^{(1)} \star  \left(f^{(n)} \lhd \left(f^{(0)}\right) ^{-1} \right) \ . \] By Point (f) of Lemma \ref{compatibility prop}, we have\[\left(\left(f^{(0)}; f^{(n)} \right) \rhd \varphi^{(1)}\right) \lhd  \left(f^{(0)}\right) ^{-1} = \left(f^{(n)} \lhd \left(f^{(0)}\right) ^{-1} \right) \star \mathrm{Ad}_{f^{(0)}} \left( \varphi^{(1)} \right) \ . \] Point (j) of Lemma \ref{diff prop} leads to \[\left(f^{(n)} \circ d_{\C} \right) \lhd  \left(f^{(0)}\right) ^{-1}  = \left(f^{(n)} \lhd \left(f^{(0)}\right) ^{-1} \right) \circ d_{\C} -  f^{(n)} \lhd \left(\left(f^{(0)}\right) ^{-1}; \left(f^{(0)}\right) ^{-1} \circ d_{\C}   \right)  \] while Point (k) of Lemma \ref{diff prop} and Point (a) of Lemma \ref{compatibility prop} imply that \[ - f^{(n)} \lhd \left(\left(f^{(0)}\right) ^{-1}; \left(f^{(0)}\right) ^{-1} \circ d_{\C}   \right)  =  \left(f^{(n)} \lhd \left(f^{(0)}\right) ^{-1} \right) \star \left( \left( f^{(0)} \circ d_{\C} \right)  \lhd  \left(f^{(0)}\right) ^{-1}  \right) \ . \] We obtain that \[ \psi^{(n+1)} = \mathrm{Ad}_{f^{(0)}} \left( \varphi^{(n+1)} \right) - \left[ \psi^{(1)} ,   f^{(n)}  \lhd \left(f^{(0)}\right)^{-1}   \right] +  \left(f^{(n)} \lhd \left(f^{(0)}\right)^{-1} \right) \circ d_{\C} \ .  \]
\end{proof}

\section{\textcolor{bordeau}{Formality criteria}} \label{4}

This section exploits the obstruction theory set out above in order to produce explicit formality criteria. 
We establish formality descent results in Section \ref{4:1.4} and study formality of families in Section \ref{4:3.4}. In Section \ref{4:2.4}, we establish a criterion of intrinsic formality. Section \ref{critere} concludes with criteria in terms of chain level lifts of certain homology automorphisms. This generalizes the previously known results to any coefficient ring and to algebras over a colored operad or a properad. 

\begin{assumption}\label{assump}
	\noindent Let $A$ be a chain complex over a commutative ring $R$. Let $n \geqslant 1$ be an integer such that $n !$ is a unit in $R$. Let $(A, \varphi)$ be a $\Cobar \C$-algebra structure where $\C$ is either 
	\begin{itemize}
		\item[$\centerdot$] a reduced weight-graded dg cooperad over $R$;

		\item[$\centerdot$]  a $\mathbb{V}$-colored reduced weight-graded dg cooperad over $R$, where $\mathbb{V}$ is a given groupoid;
		
		\item[$\centerdot$]  a reduced weight-graded dg coproperad over a characteristic zero field.	
	\end{itemize}
	Suppose that there exists a transferred structure $(H(A), \varphi_t)$, e.g. Assumptions \ref{assumptions 1} are satisfied in the case of a cooperad or Assumptions \ref{assumptions 2} are satisfied in the colored case.
\end{assumption}

\noindent We denote by $\mathfrak{g} \coloneqq \mathfrak{g}_{H(A)}$ the convolution dg Lie algebra associated to $\C$ and $H(A)$ and by $\tilde{\mathfrak{g}} $ is the subcomplex of $\mathfrak{g}$ whose underlying space is given by \[\sum_{k \geqslant 1} \mathfrak{g}^{(k)} \ .\]

\noindent The constraint on the integer $n$ in positive characteristic and the requirement of $R$ being a characteristic zero field for properads come from the use of Kaledin classes to establish the following criteria. In the forthcoming paper \cite{CE24b}, we refine the Kaledin classes construction in order to remove these assumptions.

\subsection{Formality descent}\label{4:1.4}

Let $R \to S$ be any commutative ring morphism. If the $\Cobar \C$-algebra $(A, \varphi)$ is formal, then the algebra $(A \otimes_{R} S,\varphi \otimes 1)$ stays formal. This section gives sufficient conditions under which the converse holds true.

\begin{theorem}[Formality descent]\label{formality descent}
Under Assumptions \ref{assump}, let $S$ be a faithfully flat commutative $R$-algebra. Suppose that $H(A)$ is an $R$-module of finite presentation. Let us denote \[A_{S} \coloneqq A \otimes_{R} S \ .\] 
	\begin{enumerate}
		\item The $\Cobar \C$-algebra $(A, \varphi)$ is gauge $n$-formal if and only if $(A_{S},\varphi \otimes 1)$ is gauge $n$-formal.
		\item If $R$ is a $\mathbb{Q}$-algebra, the $\Cobar \C$-algebra $(A, \varphi)$ is gauge formal if and only if $(A_{S},\varphi \otimes 1)$ is gauge formal.
	\end{enumerate}
\end{theorem}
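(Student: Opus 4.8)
The plan is to reduce the statement, via the Kaledin-class characterisation of gauge $n$-formality, to the faithfully flat descent of truncated Kaledin classes already established in Section~\ref{section1}. Throughout write $\mathfrak{g}\coloneqq\mathfrak{g}_{H(A)}$ and $\Phi\coloneqq\mathfrak{D}(\varphi_t)$. First I would record that $\varphi_t\otimes1$ is a legitimate transferred structure for $(A_S,\varphi\otimes1)$: since $S$ is flat over $R$, homology commutes with $-\otimes_RS$, so $H(A_S)\cong H(A)\otimes_RS$, and base-changing a contraction $(i,p,h)$ of $A$ onto $H(A)$ yields a contraction of $A_S$ onto $H(A_S)$. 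As the homotopy-transfer formulas are $R$-linear, the resulting structure is exactly $\varphi_t\otimes1$, extending $\varphi_*\otimes1=(\varphi\otimes1)_*$. By Theorem~\ref{HTT} (resp.\ its colored and properadic analogues) the transferred structure is unique up to $\infty$-isotopy, so by Lemma~\ref{piquant} the vanishing of a (truncated) Kaledin class does not depend on this choice. Note also that $n!$, being a unit in $R$, remains a unit in $S$, so Theorem~\ref{B} applies on both sides.

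The crucial point is the base-change isomorphism of convolution algebras
\[
\mathfrak{g}_{H(A_S)}\;\cong\;\mathfrak{g}\otimes_RS
\]
as dg Lie algebras over $S$, compatibly with the weight grading, the bracket and the differential. This is precisely where the finite presentation of $H(A)$ enters: for each arity the tensor power $H(A)^{\otimes p}$ is again finitely presented, so flatness of $S$ supplies a natural isomorphism $\Hom_R(H(A)^{\otimes p},H(A))\otimes_RS\cong\Hom_S(H(A_S)^{\otimes p},H(A_S))$, and these assemble into the displayed identification of convolution (pre-)Lie algebras. Under it the prismatic decomposition $\Phi\otimes1$ on the right corresponds to that of $\varphi_t\otimes1$ on $A_S$. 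I expect the careful assembly of these arity-wise isomorphisms—controlling the interaction between the product over arities and the base change, which is exactly what the finite-presentation hypothesis is designed to handle—to be the main technical obstacle; once it is in place the remainder is formal.

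Granting this, Part~(1) follows the proof of Theorem~\ref{descent1.0} verbatim. By Theorem~\ref{B}(1) (resp.\ Theorem~\ref{Kaledin properadic}(1) in the properadic case) the algebra $(A,\varphi)$ is gauge $n$-formal if and only if $K^n_{\varphi_t}=0$, and likewise $(A_S,\varphi\otimes1)$ is gauge $n$-formal if and only if $K^n_{\varphi_t\otimes1}=0$. The base-change isomorphism yields
\[
\bigl(\mathfrak{g}_{H(A_S)}[\![\hbar]\!]/(\hbar^{n})\bigr)^{\overline{\Phi\otimes1}^n}\;\cong\;\bigl(\mathfrak{g}[\![\hbar]\!]/(\hbar^{n})\bigr)^{\overline{\Phi}^n}\otimes_{R[\![\hbar]\!]/(\hbar^{n})}S[\![\hbar]\!]/(\hbar^{n}),
\]
and since $R[\![\hbar]\!]/(\hbar^{n})\to S[\![\hbar]\!]/(\hbar^{n})$ is flat, homology commutes with this base change, so
\[
H_{-1}\Bigl(\bigl(\mathfrak{g}_{H(A_S)}[\![\hbar]\!]/(\hbar^{n})\bigr)^{\overline{\Phi\otimes1}^n}\Bigr)\;\cong\;H_{-1}\Bigl(\bigl(\mathfrak{g}[\![\hbar]\!]/(\hbar^{n})\bigr)^{\overline{\Phi}^n}\Bigr)\otimes_{R[\![\hbar]\!]}S[\![\hbar]\!]/(\hbar^{n}).
\]
As the Kaledin class is represented by the $R$-linear cycle $\partial_{\hbar}\Phi$, the class $K^n_{\varphi_t\otimes1}$ is carried to $K^n_{\varphi_t}\otimes1$ under this isomorphism. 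Faithful flatness then gives $K^n_{\varphi_t}\otimes1=0$ if and only if $K^n_{\varphi_t}=0$, which proves Part~(1).

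For Part~(2), assume $R$ is a $\mathbb{Q}$-algebra, so that $n!$ is a unit for every $n$. By Proposition~\ref{toutes nulles 2} the Kaledin class $K_{\varphi_t}$ vanishes if and only if all its truncations $K^n_{\varphi_t}$ vanish, and the same holds over $S$. Applying Part~(1) for every $n\geqslant1$ shows that all $K^n_{\varphi_t}$ vanish if and only if all $K^n_{\varphi_t\otimes1}$ vanish; combined with Theorem~\ref{B}(2) this gives the equivalence of gauge formality over $R$ and over $S$. In particular the implication ``formal $\Rightarrow$ formal after base change'' is the elementary direction and requires neither faithful flatness nor finite presentation, while the converse is exactly the descent supplied by the argument above.
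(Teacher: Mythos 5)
Your overall strategy (reduce gauge $n$-formality to vanishing of truncated Kaledin classes via Theorem \ref{B}, then descend the classes by faithful flatness) is the same as the paper's, but the step you yourself flag as ``the main technical obstacle'' is not a routine assembly: it is false as stated, and the paper's proof is structured precisely to avoid it. The full convolution algebra $\mathfrak{g}_{H(A)}=\prod_{p}\Hom_{\mathbb{S}}\bigl(\overline{\C}(p),\End_{H(A)}(p)\bigr)$, with its weight decomposition $\prod_{k\geqslant 1}\mathfrak{g}^{(k)}$, is an \emph{infinite product}, and $-\otimes_R S$ does not commute with infinite products, even of finitely presented modules: for $R=\mathbb{Z}$, $S=\mathbb{Q}$ the natural map $\bigl(\prod_{\mathbb{N}}\mathbb{Z}\bigr)\otimes_{\mathbb{Z}}\mathbb{Q}\to\prod_{\mathbb{N}}\mathbb{Q}$ is not surjective (its image consists of sequences with bounded denominators). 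Finite presentation of $H(A)$ gives you the factor-wise isomorphisms $\Hom_R(H(A)^{\otimes p},H(A))\otimes_R S\cong\Hom_S(H(A_S)^{\otimes p},H(A_S))$, but it cannot make them ``assemble'' across the product over arities and weights. Hence your claimed isomorphism $\mathfrak{g}_{H(A_S)}\cong\mathfrak{g}\otimes_R S$, and with it the displayed base-change isomorphism of truncated twisted complexes and of their $H_{-1}$, fail in general; there is only a natural map $\mathfrak{g}\otimes_R S\to\mathfrak{g}_{H(A_S)}$, and without an isomorphism at this level faithful flatness gives you nothing, because the comparison map $H_{-1}(\text{$R$-complex})\otimes S\to H_{-1}(\text{$S$-complex})$ need not be injective. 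This is also why you cannot quote the proof of Theorem \ref{descent1.0} ``verbatim'': there the $S$-side object is by definition $\mathfrak{g}[\![\hbar]\!]\otimes S$, whereas here gauge formality of $(A_S,\varphi\otimes 1)$ is a statement about $\mathfrak{g}_{H(A_S)}$, which is a genuinely different (bigger) algebra.

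The paper circumvents this by never base-changing the full convolution algebra. It introduces the subcomplex $\tilde{\mathfrak{g}}\subset\mathfrak{g}$ spanned by \emph{finite} sums of weight-homogeneous elements, for which the base-change isomorphism $\tilde{\mathfrak{g}}_{H(A_S)}\cong\tilde{\mathfrak{g}}\otimes S$ and hence $H_{-1}\bigl(\tilde{\mathfrak{g}}_{H(A_S)}^{\varphi_*\otimes 1}\bigr)\cong H_{-1}\bigl(\tilde{\mathfrak{g}}^{\varphi_*}\bigr)\otimes S$ do hold, and it uses that $H_{-1}(\tilde{\mathfrak{g}}^{\varphi_*})\hookrightarrow H_{-1}(\mathfrak{g}^{\varphi_*})$ is injective (this uses that $d^{\varphi_*}$ is weight-homogeneous of weight $+1$, so a bounded-weight boundary admits a bounded-weight primitive). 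The price is an induction on $k\leqslant n$: assuming $K^{k-1}_{\varphi_t}=0$, one uses Theorem \ref{B} and Lemma \ref{piquant} to replace $\varphi_t$ by an $\infty$-isotopic structure $\psi$ with $\psi^{(i)}=0$ for $2\leqslant i\leqslant k$, so that $K^{k}_{\psi}=[k\psi^{(k+1)}\hbar^{k-1}]$ is represented by a single weight-homogeneous cycle in the complex twisted by $\varphi_*$ alone; only then do the $\tilde{\mathfrak{g}}$-descent and faithful flatness apply to conclude $K^k_\psi=0$, hence $K^k_{\varphi_t}=0$. This gauging step is not cosmetic: for the complex twisted by the full $\overline{\Phi}^k$ (which contains the higher components $\varphi_t^{(j)}\hbar^{j-1}$), the injectivity of the $\tilde{\mathfrak{g}}$-homology into the $\mathfrak{g}$-homology is no longer clear, so your plan of comparing $K^n_{\varphi_t}$ and $K^n_{\varphi_t\otimes 1}$ directly in the $\overline{\Phi}^n$-twisted complexes does not go through even after restricting to $\tilde{\mathfrak{g}}$. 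Your remaining points (that $\varphi_t\otimes 1$ is a valid transferred structure, independence of choices via Lemma \ref{piquant}, and the deduction of Part (2) from Part (1) via Proposition \ref{toutes nulles 2}) are correct and agree with the paper.
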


\begin{proof}
In the properadic setting, this is a direct corollary of Point (3) of Theorem \ref{A}, using in Theorem \ref{Kaledin properadic}. Let us prove the result in the operadic case. For both points, the direct implications are immediate. Conversely, suppose that $(A_{S},\varphi \otimes 1)$ is gauge $n$-formal. By Theorem \ref{B}, the $k^{\text{th}}$-truncated Kaledin class of $\varphi_t \otimes 1$ are zero for all $k \leqslant n$. Let us prove by induction that $K^n_{\varphi_t}$ is also zero. The canonical map \[
H_{-1}\left(\tilde{\mathfrak{g}}  ^{\varphi_*}\right) \hookrightarrow H_{-1}\left(\mathfrak{g} ^{\varphi_*}\right) \]
is injective and its image contains the class $K^1_{\varphi_t }$. The same applies for $\tilde{\mathfrak{g}}_{H(A_S)}$ and $K^1_{\varphi_t \otimes 1 }$. Since $S$ is flat, we have an isomorphism. $H(A_S) \cong H(A) \otimes S \ . $  Since the homology $H(A)$ is of finite presentation, it induces an isomorphism, \[\tilde{\mathfrak{g}}_{H(A_S)} \cong  \tilde{\mathfrak{g}} \otimes S\ , \] which leads to an homology isomorphism \begin{equation}\label{iso descente}
H_{-1} \left(\tilde{\mathfrak{g}}_{H(A_S)}^{ \varphi_* \otimes 1  }  \right) \cong H_{-1} \left( \tilde{\mathfrak{g}}^{ \varphi_*   }  \right) \otimes S \ .
\end{equation} By faithful flatness, since $K^1_{\varphi_t \otimes 1}$ is zero on the left hand-side, so does $K_{\varphi_t}^1$. Suppose that \[K^{k-1}_{\varphi_t}=0 \quad \mbox{for}\quad  2\leqslant k \leqslant n \ .\] By Theorem \ref{B}, there exists an $\Cobar \C$-algebra $(H(A),\psi)$ such that \[\psi^{(1)} = \varphi_* \quad \mbox{and} \quad \psi^{(i)} = 0 \quad \mbox{for all } 2 \leqslant i \leqslant k\] and an $\infty$-isotopy $f : \varphi_t \rightsquigarrow \psi$.  Point (2) of Lemma \ref{piquant} implies that \[K_{\psi}^k = \mathrm{Ad}_{\mathfrak{D}(f)}^k \left(K_{\varphi_t}^k\right) \quad \mbox{and} \quad  K_{\psi \otimes 1}^k = \mathrm{Ad}_{\mathfrak{D}(f \otimes 1)}^k \left(K_{\varphi_t \otimes 1}^k\right) =0\ . \] By construction, we have \[K_{\psi}^k = \left[k \psi^{(k+1)} \hbar^{k-1} \right]  \in H_{-1}\left(\tilde{\mathfrak{g}}^{\varphi_*}[\![\hbar]\!] /(\hbar^{k})\right)  \quad \mbox{and} \quad K_{\psi \otimes 1}^k = 0  \in  H_{-1} \left( \tilde{\mathfrak{g}}^{ \varphi_*   } [\![\hbar]\!] /(\hbar^{k}) \right) \otimes S  \ .   \] The isomorphism (\ref{iso descente}) again implies that $K_{\psi}^k$ is zero, so does $K_{\varphi_t}^k$. By induction, the $n^{\text{th}}$-truncated Kaledin class is zero. Finally, the $\Cobar \C$-algebra $(A, \varphi)$ is gauge $n$-formal by Theorem \ref{B}. Suppose now that $R$ is a $\mathbb{Q}$-algebra and $(A_{S},\varphi \otimes 1)$ is gauge formal. The same induction can be performed higher-up and proves that $K^n_{\varphi_t}$ are zero for all $n \geqslant 1$. Thus, the Kaledin class $K_{\varphi_t}$ is zero by Proposition \ref{toutes nulles 2} and the $(A, \varphi)$ is also gauge formal by Theorem \ref{B}. 
\end{proof}

\noindent For example, suppose that $(R, \mathfrak{m})$ is a Noetherian local ring and let $\mathfrak{I} \subset \mathfrak{m}$ be an ideal. We denote $\hat{R}$ the completion of $R$ with respect to $\mathfrak{I}$. 
Then the ring morphism $R \to \hat{R}$ is faithfully flat and the result follows from Theorem \ref{formality descent}. Let $p$ be a prime number. Let us consider the commutative ring $\mathbb{Z}_{p}$ of $p$-adic integers. It is the completion of the local ring \[ R = \mathbb {Z} _{(p)}=\{{\tfrac {n}{d}}\mid n,d\in \mathbb {Z} ,\,d\not \in p\mathbb {Z} \},\] with respect to $\mathfrak{m} = \mathfrak{I} = (p)$. In order to establish for the first time formality results with coefficients in $\mathbb {Z} _{(p)}$, we apply Theorem \ref{formality descent} as follows.

\begin{theorem}\label{new}
Let $p$ be a prime number. Let $X$ be a complement of a hyperplane arrangement over $\mathbb{C}$. 
	Let $\mathbb{Q}_{p} \hookrightarrow K$ be a finite extension. We denote by $q$ the order of the residue field of the ring of integers of $K$. Suppose that $X$ is defined over $K$, i.e. there exists an embedding $K \hookrightarrow \mathbb{C}$ and there exists $\mathcal{X}$ a complement of a hyperplane arrangement over $K$ such that \[\mathcal{X} \times _{K} \mathbb{C} \cong X \ .\]  Let $\ell$ be a prime different from $p$. The algebra of singular cochains $C(X_{\mathrm{an}}, \mathbb{Z}_{(\ell)})$ is gauge $(s-1)$-formal, where $X_{\mathrm{an}}$ is the complex analytic space underlying $X_{\mathbb{C}} = X \times _{K} \mathbb{C}$ and where $s$ is the order of $q$ in $\mathbb{F}_{\ell}^{\times}$~.
\end{theorem}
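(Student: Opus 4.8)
The plan is to use the formality descent theorem to replace the non-complete coefficient ring $\mathbb{Z}_{(\ell)}$ by its $\ell$-adic completion $\mathbb{Z}_\ell$, over which an arithmetic Frobenius furnishes a chain-level symmetry. First I would note that the cohomology $H(A) = H^*(X_{\mathrm{an}}, \mathbb{Z}_{(\ell)})$ is the Orlik--Solomon algebra of the arrangement, hence a finitely generated free $\mathbb{Z}_{(\ell)}$-module, in particular of finite presentation. As $\mathbb{Z}_{(\ell)}$ is a Noetherian local ring with maximal ideal $(\ell)$ whose completion is $\mathbb{Z}_\ell$, the inclusion $\mathbb{Z}_{(\ell)} \hookrightarrow \mathbb{Z}_\ell$ is faithfully flat. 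Working with the associative (cup-product) algebra structure, so that the governing cooperad is non-symmetric and Assumptions \ref{assump} hold over any ring, Theorem \ref{formality descent}(1) reduces the claim to the gauge $(s-1)$-formality of $C(X_{\mathrm{an}}, \mathbb{Z}_\ell) \simeq C(X_{\mathrm{an}}, \mathbb{Z}_{(\ell)}) \otimes_{\mathbb{Z}_{(\ell)}} \mathbb{Z}_\ell$. The integrality hypothesis of that theorem is satisfied because $s \mid \ell - 1$ forces $s - 1 < \ell$, so $(s-1)!$ is a unit in both $\mathbb{Z}_{(\ell)}$ and $\mathbb{Z}_\ell$.

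Next I would produce, over $\mathbb{Z}_\ell$, a chain-level lift of the grading automorphism $\sigma_q$ acting by $q^k$ in degree $k$. After spreading the arrangement out to a smooth model over the ring of integers $\mathcal{O}_K$ and invoking the Artin comparison between Betti and $\ell$-adic étale cohomology afforded by $K \hookrightarrow \mathbb{C}$, the étale cochains carry a Frobenius action compatible with the multiplication. The essential geometric input is that the cohomology of a hyperplane-arrangement complement is of Tate type, so that geometric Frobenius acts on $H^k_{\text{\'et}}(X_{\bar K}, \mathbb{Z}_\ell)$ exactly by $q^k$; under the comparison this is the automorphism $\sigma_q$. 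Transporting this symmetry through the comparison $\infty$-isomorphism and the homotopy transfer theorem (Theorem \ref{HTT}) yields an $\infty$-automorphism of $(H(A), \varphi_t)$ with linear part $\sigma_q$. Equivalently, using the gauge action of Theorem \ref{action}, the structure $\sigma_q^{-1} \cdot \varphi_t$ is related to $\varphi_t$ by an $\infty$-isotopy $g$.

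I would then run the Kaledin-class argument, inducting on the weight. Because $\sigma_q$ is an algebra automorphism of $(H(A), \varphi_*)$, the conjugation $\mathrm{Ad}_{\sigma_q}$ multiplies a weight-$w$ operation by $q^{w-1}$, so $(\sigma_q^{-1} \cdot \varphi_t)^{(w)} = q^{-(w-1)} \varphi_t^{(w)}$. Suppose inductively that $\varphi_t$ has been gauged so that $\varphi_t^{(k)} = 0$ for $2 \leqslant k \leqslant w - 1$ with $w \leqslant s$; the first obstruction to killing weight $w$ is the class $[\varphi_t^{(w)}] \in H_{-1}(\mathfrak{g}^{\varphi_*})$, i.e. the lowest weight part of the truncated Kaledin class. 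The invariance of Kaledin classes under the $\infty$-isotopy $g$ (Lemma \ref{piquant}), combined with the fact that $\mathrm{Ad}_{\mathfrak{D}(g)}$ reduces to the identity at the lowest nonvanishing weight, forces $(q^{w-1} - 1)[\varphi_t^{(w)}] = 0$. Since $1 \leqslant w - 1 \leqslant s - 1$ and $s$ is the order of $q$ in $\mathbb{F}_\ell^\times$, we have $q^{w-1} \not\equiv 1 \pmod{\ell}$, so $q^{w-1} - 1$ is a unit in $\mathbb{Z}_\ell$ and $[\varphi_t^{(w)}] = 0$. Theorem \ref{B}(1) then lets us gauge $\varphi_t^{(w)}$ away, and iterating through $w = 2, \dots, s$ shows $K^{s-1}_{\varphi_t} = 0$, whence gauge $(s-1)$-formality over $\mathbb{Z}_\ell$; descent concludes. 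At $w = s+1$ one has $q^{s} \equiv 1 \pmod{\ell}$, so $q^{s} - 1$ is not a unit and the argument stops precisely at $(s-1)$-formality.

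The main obstacle, I expect, is the second step: realizing the Frobenius as an honest morphism of $A_\infty$-algebras lifting $\sigma_q$ over $\mathbb{Z}_\ell$. This requires choosing a model with good reduction (or a suitable localization of $\mathcal{O}_K$), promoting the Artin comparison to an $\infty$-isomorphism of cochain algebras that is functorial for the Galois action, and establishing the purity statement pinning the Frobenius eigenvalues to $q^k$. Once these are in hand, the eigenvalue bookkeeping converting the weight $w$ into the exponent $w - 1$ makes the order $s$ of $q$ modulo $\ell$ the exact numerical threshold governing the vanishing.
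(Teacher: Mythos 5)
Your proposal is correct and follows essentially the same route as the paper: the paper's proof of Theorem \ref{new} is precisely the descent step you describe, applying Theorem \ref{formality descent} along the faithfully flat completion $\mathbb{Z}_{(\ell)} \to \mathbb{Z}_{\ell}$, with the gauge $(s-1)$-formality of $C(X_{\mathrm{an}}, \mathbb{Z}_{\ell})$ as input. The only difference is that the paper simply cites \cite[Theorem 4.7]{DCH21} for that input (recalling in Example \ref{arrangements} that it rests on Kim's purity theorem and the grading-automorphism criterion), whereas you re-derive it in-house via the Frobenius lift and the Kaledin-class induction, i.e. by the paper's own Theorem \ref{C} and Corollary \ref{grading}; your bookkeeping (weights $2,\dots,s$, units $q^{k}-1$ for $k \leqslant s-1$, invertibility of $(s-1)!$) matches the paper's conventions.
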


\begin{proof}
Drummond-Cole and Horel proved in \cite[Theorem 4.7]{DCH21} that the algebra of singular cochains $C(X_{\mathrm{an}}, \mathbb{Z}_{\ell})$ is gauge $(s-1)$-formal, see Point (4) of Example \ref{arrangements} for more details. The results then follows directly from Theorem \ref{formality descent}. 
\end{proof}

\subsection{Formality in families}\label{4:3.4}

Kaledin was motivated by establishing formality criteria, for which the earlier methods do not apply. The work of Kaledin was extended by Lunts in \cite{Lun07} who established Theorem \ref{families0} and Theorem \ref{families} in the context of any $A_{\infty}$-algebras over a ring of characteristic zero. Using the present Kaledin classes, we obtain a straightforward generalization of these results for any algebra over a colored operad or properad. We refer the reader to \cite{CE24b} for applications. 

\begin{theorem}\label{families0}
	Under Assumptions \ref{assump}, suppose that $R$ is an integral domain, that $H(A)$ is an $R$-module of finite presentation and that the following $R$-module is torsion free: \[H_{-1}\left(\tilde{\mathfrak{g}}^{\varphi_*}\right) \ .\] Let $\eta \in \mathrm{Spec} R$ be a generic point. The $\Cobar \C$-algebra $(A , \varphi)$ is gauge $n$-formal if and only if \[A_{\eta} = (A \otimes \kappa(\eta) , \varphi \otimes 1) \] is gauge $n$-formal. In particular, $(A , \varphi)$ is gauge $n$-formal if and only if $A_x$ is gauge $n$-formal, for all points $x \in \mathrm{Spec} R$. If $R$ is a $\mathbb{Q}$-algebra, the same result holds for gauge formality.
\end{theorem}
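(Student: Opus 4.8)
The plan is to reduce everything, via Theorem \ref{B}, to the vanishing of the truncated Kaledin classes $K^n_{\varphi_t}$, and then to compare these classes over $R$ with their base change over the residue field $\kappa(\eta)$ at the generic point. Since $R$ is an integral domain, the generic point is the zero ideal and $\kappa(\eta) = \mathrm{Frac}(R)$ is the fraction field; the structure morphism $R \to \kappa(\eta)$ is a localization, hence flat. The essential difference with the faithfully flat descent of Theorem \ref{formality descent} is that this localization is \emph{not} faithfully flat, so vanishing after base change does not formally descend. The role of faithful flatness will be played by the torsion-free hypothesis on $H_{-1}(\tilde{\mathfrak{g}}^{\varphi_*})$: for any $R$-module $M$, the kernel of the localization map $M \to M \otimes \kappa(\eta)$ is exactly the torsion submodule of $M$, so a class that dies over $\kappa(\eta)$ is torsion, and a torsion class in a torsion-free module is zero.

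The forward implication is immediate, since a gauge $n$-formality witness base-changes to one over $\kappa(\eta)$. For the converse I would argue by induction on $1 \leqslant k \leqslant n$ that $K^k_{\varphi_t} = 0$, following the pattern of the proof of Theorem \ref{formality descent}. At each stage, Theorem \ref{B} provides an $\Cobar\C$-algebra $(H(A),\psi)$ with $\psi^{(1)} = \varphi_*$ and $\psi^{(i)} = 0$ for $2 \leqslant i \leqslant k$, together with an $\infty$-isotopy $f : \varphi_t \rightsquigarrow \psi$; by Lemma \ref{piquant} the class $K^k_{\varphi_t}$ is carried by the isomorphism $\mathrm{Ad}^k_{\mathfrak{D}(f)}$ to the normalized class $K^k_{\psi} = [k\,\psi^{(k+1)}\hbar^{k-1}]$, and the same holds after base change. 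Because $K^k_{\varphi_t \otimes 1} = 0$ over $\kappa(\eta)$ by hypothesis (again via Theorem \ref{B}), the class $K^k_\psi$ dies after tensoring with $\kappa(\eta)$, hence is torsion in the homology group where it lives.

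It therefore remains to identify that target homology group and to verify it is torsion-free; this is the crux of the argument and the step I expect to be the main obstacle, as it is where the torsion-free hypothesis must be leveraged at \emph{every} truncation level out of a single assumption at level one. Because $\psi^{(i)} = 0$ for $2 \leqslant i \leqslant k$, the truncated twisting $\overline{\Psi}^k$ reduces to the constant $\varphi_*$, so the differential of $\tilde{\mathfrak{g}}^{\varphi_*}[\![\hbar]\!]/(\hbar^k)$ is the $\hbar$-linear extension of $d^{\varphi_*}$. As $R[\![\hbar]\!]/(\hbar^k)$ is free of rank $k$ over $R$, this yields $H_{-1}(\tilde{\mathfrak{g}}^{\varphi_*}[\![\hbar]\!]/(\hbar^k)) \cong H_{-1}(\tilde{\mathfrak{g}}^{\varphi_*}) \otimes_R R[\![\hbar]\!]/(\hbar^k)$, a finite direct sum of copies of the torsion-free module $H_{-1}(\tilde{\mathfrak{g}}^{\varphi_*})$, hence torsion-free. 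Exactly as in Theorem \ref{formality descent}, the finite presentation of $H(A)$ gives $\tilde{\mathfrak{g}}_{H(A_\eta)} \cong \tilde{\mathfrak{g}} \otimes \kappa(\eta)$, and flatness of $\kappa(\eta)$ identifies the base-changed homology with $H_{-1}(\tilde{\mathfrak{g}}^{\varphi_*}[\![\hbar]\!]/(\hbar^k)) \otimes \kappa(\eta)$. Thus $K^k_\psi$, being torsion in a torsion-free module, vanishes, and applying the inverse of $\mathrm{Ad}^k_{\mathfrak{D}(f)}$ gives $K^k_{\varphi_t} = 0$, closing the induction; $(A,\varphi)$ is then gauge $n$-formal by Theorem \ref{B}. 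The normalization to $\psi$, which makes the twisted differential $\hbar$-linear, is what renders this torsion-free description available and is therefore indispensable.

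When $R$ is a $\mathbb{Q}$-algebra one runs the same induction for every $k \geqslant 1$, concluding that all $K^k_{\varphi_t}$ vanish; Proposition \ref{toutes nulles 2} then gives $K_{\varphi_t} = 0$, so $(A,\varphi)$ is gauge formal by Theorem \ref{B}. Finally, the \emph{in particular} statement follows formally: gauge $n$-formality of $(A,\varphi)$ implies that of every fiber $A_x$ by base change, and conversely if all fibers are gauge $n$-formal then so is the generic fiber $A_\eta$, whence $(A,\varphi)$ is gauge $n$-formal by the main equivalence.
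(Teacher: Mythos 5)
Your proposal is correct and takes essentially the same route as the paper: the paper's proof is exactly the argument of Theorem \ref{formality descent} with $S$ replaced by $\kappa(\eta)$ and faithful flatness replaced by the fact that a class killed by localization is torsion, hence zero in the torsion-free module $H_{-1}\left(\tilde{\mathfrak{g}}^{\varphi_*}\right)$. Your explicit identification $H_{-1}\left(\tilde{\mathfrak{g}}^{\varphi_*}[\![\hbar]\!]/(\hbar^{k})\right) \cong H_{-1}\left(\tilde{\mathfrak{g}}^{\varphi_*}\right) \otimes_R R[\![\hbar]\!]/(\hbar^{k})$, which shows that torsion-freeness propagates to every truncation level thanks to the normalization $\psi^{(i)}=0$ for $2 \leqslant i \leqslant k$, correctly spells out a point the paper leaves implicit.
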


\begin{proof}
	The proof is the same as the one of Theorem \ref{formality descent}, replacing $S$ by the residue field $\kappa(\eta)$, the faithful flatness by the fact $H_{-1}\left(\tilde{\mathfrak{g}}^{\varphi_*}\right)$ is torsion free. 
\end{proof}

\begin{theorem}\label{families}
	Under Assumptions \ref{assump}, suppose that $R$ is a noetherian ring, that $H(A)$ is an $R$-module of finite presentation and that the following $R$-module is projective: \[H_{-1}\left(\tilde{\mathfrak{g}}^{\varphi_*}\right) \ .\] 	\begin{enumerate} 
		\item The following subset is closed under specialization: \[F\left(A\right)\coloneqq \lbrace x \in \mathrm{Spec} R \mid  A_x = (A, \varphi) \otimes_{R} \kappa(x) \mbox{ is gauge $n$-formal }\rbrace . \] 
		\item Let  $I \subset R$ be an ideal such that $\cap_k I^k=0$~. The $\Cobar \C$-algebra $(A , \varphi)$ is gauge $n$-formal if and only if \[A/I^k = A \otimes_R R/I^k  \] is $n$-gauge formal, for all $k \geqslant1$~. 
		\item Assume that $R$ has trivial radical. The $\Cobar \C$-algebra $(A , \varphi)$ is gauge $n$-formal if and only if $A_x$ is gauge $n$-formal, for all closed point $x \in \mathrm{Spec}R$.
	\end{enumerate}
	If $R$ is a $\mathbb{Q}$-algebra, the same result holds for gauge formality.
\end{theorem}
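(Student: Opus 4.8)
The plan is to mirror the inductive straightening argument from the proof of Theorem \ref{formality descent}, replacing the faithful flatness of $S$ by the projectivity of the homology module $M\coloneqq H_{-1}\big(\tilde{\mathfrak{g}}^{\varphi_*}\big)$. By Theorem \ref{B} (or Theorem \ref{Kaledin properadic} in the properadic case) gauge $n$-formality of $(A,\varphi)$ and of each of its base changes is equivalent to the vanishing of the corresponding $n^{\text{th}}$-truncated Kaledin class. Since $n!$ is a unit in $R$, I first run the inductive straightening of Proposition \ref{Kaledin} over $R$: assuming $K^{k-1}_{\varphi_t}=0$, an $\infty$-isotopy carries $\varphi_t$ to a structure $\psi$ with $\psi^{(i)}=0$ for $2\leqslant i\leqslant k$, so that by Lemma \ref{piquant} the class $K^{k}_{\varphi_t}$ is zero if and only if $\kappa_k\coloneqq[k\,\psi^{(k+1)}\hbar^{k-1}]$ vanishes in the \emph{untwisted} homology $M[\![\hbar]\!]/(\hbar^{k})$, the $\hbar^{k-1}$-component of which is the projective module $M$. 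The whole problem is thereby reduced to detecting the vanishing of a class $\kappa_k\in M$ in a projective $R$-module, with the finite presentation of $H(A)$ ensuring the base-change comparison (\ref{iso descente}) used below.

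The key technical input, and the main obstacle, is the compatibility of $\kappa_k$ with base change along the non-flat maps $R\to\kappa(x)$ and $R\to R/I^{k'}$. As $H(A)$ is degreewise projective, the convolution complex $\tilde{\mathfrak{g}}^{\varphi_*}$ is degreewise flat, so for any $R$-algebra $T$ the universal coefficient map $M\otimes_R T\hookrightarrow H_{-1}\big(\tilde{\mathfrak{g}}_{H(A)\otimes T}^{\varphi_*\otimes 1}\big)$ is injective; this is the families analogue of the isomorphism (\ref{iso descente}). The straightening performed over $R$ base-changes to the straightening of $\varphi_t\otimes 1$ over $T$, and Lemma \ref{piquant} identifies the image of $\kappa_k\otimes 1$ with the corresponding truncated class of $A_T$. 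Consequently, whenever $A_T$ is gauge $n$-formal the injectivity above forces $\kappa_k\otimes 1=0$ in $M\otimes_R T$; conversely, base-changing a vanishing global class yields the easy implications in each statement.

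With this in hand the three conclusions follow from elementary properties of projective modules. Embedding $M$ as a direct summand of a free module $R^{(\Lambda)}$ gives $\bigcap_{\alpha}J_\alpha M=\big(\bigcap_{\alpha}J_\alpha\big)M$ for any family of ideals $J_\alpha$. For Point (3), $\kappa_k\otimes\kappa(x)=0$ at every closed point $x$ yields $\kappa_k\in\bigcap_{\mathfrak m}\mathfrak m M=(\mathrm{rad}\,R)\,M=0$, so $\kappa_k=0$; the induction then produces $K^{n}_{\varphi_t}=0$ and $(A,\varphi)$ is gauge $n$-formal by Theorem \ref{B}. For Point (2), gauge $n$-formality of every $A/I^{k'}$ gives $\kappa_k\in I^{k'}M$ for all $k'$, whence $\kappa_k\in\bigcap_{k'}I^{k'}M=\big(\bigcap_{k'}I^{k'}\big)M=0$, the forward implication being the easy base change. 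In both cases, when $R$ is a $\mathbb{Q}$-algebra the same induction carried out for every $n$, together with Proposition \ref{toutes nulles 2}, upgrades the statement from gauge $n$-formality to gauge formality.

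For Point (1), I would reduce to the integral domain $R'\coloneqq R/\mathfrak p_x$, whose generic point is $x$. Over $R'$ the summand $M\otimes_R R'$ is projective, hence torsion free, and $\kappa(x)=\mathrm{Frac}(R')$, so the localization $M\otimes_R R'\hookrightarrow M\otimes_R\kappa(x)$ is injective. Running the induction over $R'$, gauge $n$-formality of $A_x$ forces each straightened class to vanish at the generic point, hence, by torsion freeness, to vanish over $R'$, so that $(A_{R'},\varphi\otimes 1)$ is itself gauge $n$-formal; this is exactly the mechanism of Theorem \ref{families0} applied over $R'$. Gauge $n$-formality then propagates to every fiber of $\mathrm{Spec}\,R'$, in particular to $A_y$ for any specialization $y\in\overline{\{x\}}$, which proves that $F(A)$ is closed under specialization. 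Finally, in the properadic case of Assumptions \ref{assump} the ground ring is a field, all base changes are flat, and every statement is immediate.
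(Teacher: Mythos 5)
Your proposal reproduces the paper's proof in all essentials. For Point (1) you pass to the integral domain $R/\mathfrak{p}_x$, use projectivity of $H_{-1}\bigl(\tilde{\mathfrak{g}}^{\varphi_*}\bigr)$ to get torsion-freeness of its base change, and invoke Theorem \ref{families0} at the generic point before propagating to all fibers; this is exactly the paper's argument. For Points (2) and (3) you rerun the inductive straightening of Theorem \ref{formality descent}, with faithful flatness replaced by projectivity of $M\coloneqq H_{-1}\bigl(\tilde{\mathfrak{g}}^{\varphi_*}\bigr)$ combined with $\bigcap_k I^k=0$, respectively with the triviality of the Jacobson radical, via the identity $\bigcap_\alpha J_\alpha M=\bigl(\bigcap_\alpha J_\alpha\bigr)M$ for a direct summand of a free module; this is precisely the substitution the paper prescribes, and the upgrade to gauge formality over a $\mathbb{Q}$-algebra via Proposition \ref{toutes nulles 2} is also the paper's.

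The one place where you go beyond the paper is in trying to \emph{prove} the compatibility of the straightened truncated classes with the non-flat base changes $R\to R/I^k$ and $R\to\kappa(x)$, and there your argument is not valid as stated. First, ``$H(A)$ is degreewise projective'' is not among the hypotheses: the theorem assumes only the existence of a contraction, finite presentation of $H(A)$, and projectivity of the single module $M$. Second, even granting degreewise flatness of $\tilde{\mathfrak{g}}^{\varphi_*}$, the natural map $M\otimes_R T\to H_{-1}\bigl(\tilde{\mathfrak{g}}^{\varphi_*}\otimes T\bigr)$ need not be injective: in the K\"unneth spectral sequence it can be killed by differentials coming from higher Tor terms of the neighbouring homology modules, so injectivity requires flatness of boundaries (or similar), not just of the chain modules. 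To be fair, the paper's own proof uses the same identification implicitly, simply asserting the analogue of the isomorphism (\ref{iso descente}) for these quotient rings, so this is a gap shared with the source rather than a structural divergence; but your stated justification should be flagged as a claim requiring proof, not presented as established.
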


\begin{proof}
	Let us prove Point (1). If $F\left(A\right)$ is empty the result is immediate. Otherwise, let us fix $\eta \in F\left(A\right)$. We denote by $\bar{\eta} = \mathrm{Spec} \bar{R}$ its closure. The ring $\bar{R}$ is an integral domain and the following $\bar{R}$-module is torsion free  \[H_{-1} \left(\tilde{\mathfrak{g}}_{H(A_{\bar{R}})}^{ \varphi_* \otimes 1  }  \right) \cong H_{-1} \left( \tilde{\mathfrak{g}}^{ \varphi_*   }  \right) \otimes \bar{R}\ .  \]  By Theorem \ref{families0}, the $\Cobar \C$-algebra $(A_{\bar{R}}, \varphi \otimes 1)$ is gauge $n$-formal, so does $A_x$ for all $ x \in \mathrm{Spec} \bar{R}$. 
	
	\noindent The proof of Point (2) is similar to the one of Theorem \ref{formality descent}, replacing $S$ by $R/I^k$  and the faithful flatness by the facts that $\cap_k I^k=0$ and $ H_{-1}\left(\tilde{\mathfrak{g}}^{\varphi_*}\right)$ is projective. For Point (3), let $\mathfrak{m} \subset R$ be a maximal ideal. The proof is again the same as the one of Theorem \ref{formality descent}, replacing $S$ by $R/\mathfrak{m}$ and the faithful flatness by the facts that the radical of $\mathfrak{m}$ is trivial and the projectivity of $ H_{-1}\left(\tilde{\mathfrak{g}}^{\varphi_*}\right)$.
\end{proof}

\subsection{Intrinsic formality}\label{4:2.4}

\begin{definition}[Intrinsic formality]
	Let $H$ be a graded $R$-module and let $(H, \varphi_*)$ be a $\Omega \C$-algebra structure concentrated in weight $1$. It is said \emph{intrinsically formal} if for any $\Cobar \C$-algebra structure $(H,\psi)$ such that $\psi^{(1)} = \varphi_*$, there exists an $\infty$-isotopy \begin{center}
		\begin{tikzcd}[column sep=normal]
			(H, \psi) \ar[r,squiggly,"\sim"] 
			& (H, \varphi_*) \ .
		\end{tikzcd} 
	\end{center} 
\end{definition}

\begin{theorem}\label{intrinsec} 
Under Assumptions \ref{assump}, let $H$ be a graded $R$-module and let $(H, \varphi_*)$ be a $\Omega \C$-algebra structure concentrated in weight $1$. Let $(H,\psi)$ be any $\Cobar \C$-algebra such that $\psi^{(1)} = \varphi_*$.

\begin{enumerate}
	\item If $H_{-1}\left(\mathfrak{g}_H^{\varphi_*}/\mathcal{F}^n\mathfrak{g}_H\right)=0$, then $(H,\psi)$ is gauge $n$-formal. 
	
	\item If $R$ is a $\mathbb{Q}$-algebra and if $H_{-1}(\mathfrak{g}^{\varphi_*}_H)=0$, then $(H,\varphi_*)$ is intrinsically formal.  
\end{enumerate}

\end{theorem}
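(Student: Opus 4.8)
The plan is to reduce both points to the vanishing of Kaledin classes and then to read off these vanishings from the hypotheses on $H_{-1}(\mathfrak{g}_H^{\varphi_*})$. Throughout I write $\mathfrak{g} = \mathfrak{g}_H$ and note that $\psi$ itself, having $\psi^{(1)} = \varphi_*$, plays the role of the transferred structure, so that Theorem \ref{B} (operadic and colored cases) and Theorem \ref{Kaledin properadic} (properadic case) apply directly: $(H,\psi)$ is gauge $n$-formal if and only if its $n^{\text{th}}$-truncated Kaledin class $K^n_\psi$ vanishes, and, over a $\mathbb{Q}$-algebra, gauge formal if and only if $K_\psi$ vanishes. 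The key structural observation is that, since $H$ carries no differential, the twisted differential $d^{\varphi_*} = d + \mathrm{ad}_{\varphi_*}$ is homogeneous of weight degree $+1$: both $d = -(-1)^{|\cdot|}(\cdot)\circ d_{\C}$ and $\mathrm{ad}_{\varphi_*}$ raise the weight by one. Consequently $\mathfrak{g}^{\varphi_*}$ splits, as a complex, along the auxiliary grading (homological degree)$\,+\,$(weight), and its homology decomposes as a product of weight-slot homologies $H_{-1}(\mathfrak{g}^{\varphi_*}) \cong \prod_{w \geq 1} H_{-1}^{\langle w\rangle}(\mathfrak{g}^{\varphi_*})$.

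For Point (2) I would simply invoke rigidity. Over a $\mathbb{Q}$-algebra the hypothesis $H_{-1}(\mathfrak{g}^{\varphi_*}_H) = 0$ is exactly the rigidity hypothesis of Theorem \ref{A}(2) applied to the weight-$1$ Maurer--Cartan element $\varphi_*$ (with $\delta = 1$): every $\psi \in \mathrm{MC}(\mathfrak{g}_H)$ with $\psi^{(1)} = \varphi_*$ is gauge equivalent to $\varphi_*$. Transporting this gauge equivalence through the isomorphism of Theorem \ref{parallele} (operadic and colored case) or Theorem \ref{graphexp2} (properadic case) between the gauge group and the group of $\infty$-isotopies produces the required $\infty$-isotopy $(H,\psi) \rightsquigarrow (H,\varphi_*)$, which is precisely intrinsic formality.

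For Point (1) I would mimic the inductive killing in the proofs of Proposition \ref{passage} and Proposition \ref{Kaledin}(1), using $\infty$-isotopies of the form $1 + \lambda$ rather than exponentials so as to stay over an arbitrary $R$; this is where $n!$ being a unit enters, through the normalizations $\tfrac1k$ with $k \le n$. Starting from $\psi$ I would construct, for $m = 1, \ldots, n$, $\infty$-isotopies killing the weight-$(m+1)$ component once weights $2, \ldots, m$ have been killed; at such a stage the ambient structure has twist $\varphi_*$ and, by the Maurer--Cartan equation together with Lemma \ref{key lemma} (and its properadic analogue Lemma \ref{key lemma 2}), the component to be removed is a $d^{\varphi_*}$-cycle of weight $m+1$ whose only obstruction to being a $d^{\varphi_*}$-boundary is its class in the weight-slot group $H_{-1}^{\langle m+1\rangle}(\mathfrak{g}^{\varphi_*})$. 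The hypothesis $H_{-1}(\mathfrak{g}^{\varphi_*}_H/\mathcal{F}^n\mathfrak{g}_H) = 0$, combined with the weight-slot splitting above, forces the vanishing of exactly the obstruction groups governing the relevant weights, so each step is realized by a gauge concentrated in a single weight; composing the resulting isotopies (a well-defined composite, since each modifies strictly higher weight) yields an $\infty$-isotopy exhibiting gauge $n$-formality, equivalently $K^n_\psi = 0$ by Proposition \ref{Kaledin}(1) and Theorem \ref{B}.

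The main obstacle I anticipate is the bookkeeping identifying $H_{-1}(\mathfrak{g}^{\varphi_*}/\mathcal{F}^n\mathfrak{g})$ with the precise range of weight-slot obstruction groups $H_{-1}^{\langle w\rangle}(\mathfrak{g}^{\varphi_*})$ controlling the weights that must be killed: the weight filtration has to be matched against the $\hbar$-adic truncation defining $K^n_\psi$ through the prismatic decomposition and the total grading $w - j$ on $\mathfrak{g}[\![\hbar]\!]$, and one must check that the truncation passing to $\mathfrak{g}/\mathcal{F}^n$ does not interfere with the cycle and boundary conditions in that range. The second delicate point, already treated in Proposition \ref{Kaledin}, is that without rationality the exponential is unavailable, so the successive gauges must be taken of the form $1 + \lambda$ and their compatibility (Theorem \ref{action}, Lemma \ref{piquant}) invoked to ensure the composite remains an $\infty$-isotopy.
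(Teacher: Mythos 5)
Your route is the same as the paper's. For Point (2) the paper's proof is exactly what you propose: it is an immediate corollary of the rigidity statement, Point (2) of Theorem \ref{A}, with the resulting gauge equivalence converted into an $\infty$-isotopy through Theorem \ref{parallele} (resp.\ Theorem \ref{graphexp2} in the properadic case). For Point (1) the paper likewise argues by the inductive killing of weight components, ``using the Kaledin classes and Proposition \ref{Kaledin} in a way similar to the proof of Theorem \ref{intrinsec0}'', i.e.\ with isotopies of the form $1+\lambda$ exactly as you describe; and your observation that $d^{\varphi_*}$ is weight-homogeneous of weight $+1$ (since $H$ carries no differential), so that $H_{-1}(\mathfrak{g}_H^{\varphi_*})$ splits as a product of weight-slot groups $H^{\langle w\rangle}_{-1}$, is the right refinement for exploiting a truncated hypothesis.

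However, the step you flagged as the main anticipated obstacle and then asserted --- that $H_{-1}\bigl(\mathfrak{g}_H^{\varphi_*}/\mathcal{F}^{n}\mathfrak{g}_H\bigr)=0$ ``forces the vanishing of exactly the obstruction groups governing the relevant weights'' --- does not close as stated, and this is a genuine gap. With the paper's convention $\mathcal{F}^{n}\mathfrak{g}=\prod_{k\geqslant n}\mathfrak{g}^{(k)}$, the quotient complex contains only the weights $1,\dots,n-1$, so its $H_{-1}$ vanishing yields the vanishing of the slots $H^{\langle w\rangle}_{-1}$ only for $w\leqslant n-1$, and says literally nothing about weights $n$ and $n+1$. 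But gauge $n$-formality (Definition \ref{definition formality}) requires killing $\psi^{(2)},\dots,\psi^{(n+1)}$, and, as you correctly identify via Lemma \ref{key lemma} (resp.\ Lemma \ref{key lemma 2}) and the Maurer--Cartan equation, the successive obstructions are classes in the slots $w=2,\dots,n+1$. Hence your induction, run with the stated hypothesis, terminates at gauge $(n-2)$-formality; to reach gauge $n$-formality one needs the vanishing of $H_{-1}\bigl(\mathfrak{g}_H^{\varphi_*}/\mathcal{F}^{n+2}\mathfrak{g}_H\bigr)$, which is the truncation actually matching $K^{n}_{\psi}$: its prismatic coefficients $\psi^{(j+2)}\hbar^{j}$, $0\leqslant j\leqslant n-1$, occupy weights up to $n+1$ in degree $-1$, with potential primitives in weights up to $n$ in degree $0$. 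This off-by-two is arguably present in the statement as printed --- the paper's own two-line proof sketch never confronts it --- but since your write-up claims the bookkeeping matches ``exactly'', it must either correct the index to $\mathcal{F}^{n+2}$ or supply an argument, and no argument can manufacture the missing obstruction groups from a quotient in which those weights are identically zero.
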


\begin{proof}
	Point (2) is an immediate corollary of Point (2) of Theorem \ref{A}. The proof of Point (1) makes use of the Kaledin classes and Proposition \ref{Kaledin} in a way similar to the proofs of Theorem \ref{intrinsec0}. 
\end{proof}

\begin{example}[Tamarkin's proof of Kontsevich formality theorem]
	Theorem \ref{intrinsec} generalizes Theorem~4.1.3 of \cite{Hinich03bis}, established for a characteristic zero field. In his paper, Hinich presents Tamarkin's proof of Kontsevich formality theorem that can be stated as follows. For every polynomial algebra $A$ over a field of characteristic zero, the shifted Hochschild cochain complex $C(A; A)[1]$ is formal as a dg Lie algebra. \smallskip
	
	\noindent Recall that the Hochschild cohomology $H \coloneqq HH^{\bullet}(A)$ admits a natural Gerstenhaber algebra structure denoted $ \varphi_*$, after \cite{Gerst}. Let us denote by $Gerst$ the Koszul operad encoding the Gerstenhaber algebras. 
	Tamarkin's proof relies on the following two facts, 
	\begin{itemize}
		\item[$\centerdot$] the existence of a $Gerst_{\infty}$-algebra structure on the Hochschild cochain complex of an associative algebra, which induces the $Gerst$-algebra structure on $HH^{\bullet}(A)$~, see \cite{Tamarkin98}; 
		\item[$\centerdot$] the intrinsic formality as a $Gerst$-algebra of the Hochschild cohomology $HH^{\bullet}(A)$~. 	
	\end{itemize}
	Using exactly the same strategy as in \cite[Section~5.4]{Hinich03bis}, the last point we can proved by checking that $H_{-1}(\mathfrak{g}^{\varphi_*})=0$ where \[\mathfrak{g} = \Hom_{\mathbb{S}}\left(\overline{Gerst}^{\antishriek}, \End_H\right) \ .\] 
\end{example}

\begin{example}[The intrinsic formality of $E_n$-operads]
	In \cite{FW20}, Fresse and Willwacher establish an intrinsic formality result for $E_n$-operads. More precisely, let $Pois_n^c$ be the dual cooperad of the $n$-Poisson operad $Pois_n$.  When $n \geqslant 3$, the authors establish that a Hopf cooperad $\mathcal{K}$ is related to $Pois_n^c$ by a zig-zag of quasi-isomorphisms of Hopf cooperads as soon as there is an isomorphism \[H(\mathcal{K}) \cong Pois_n^c \ .\] When $4$ divides $n$, an extra assumption that $\mathcal{K}$ is equipped with an involutive isomorphism mimicking the action of a hyperplane reflection is also required. Using Bousfield obstruction theory, they prove that the result boils down to the vanishing of the cohomology group of degree $1$ of a certain deformation complex. This deformation complex is very likely to be isomorphic to the twisted convolution dg Lie algebra $\mathfrak{g}^{\varphi_*}$ where, \[\mathfrak{g} \coloneqq \mathrm{Hom}_{\mathbb{S}}(\overline{\mathcal{O}}^{\antishriek},\mathrm{End}_{Pois_n}) \ , \] where $\mathcal{O}$ is the colored properad encoding Hopf cooperad. One would conclude with a colored properadic version of Theorem \ref{intrinsec}. 
\end{example}

\subsection{Chain lifts of homology automorphisms} \label{critere}

Motivated by Weil’s conjectures insights, there has been a long tradition of using purity in order to prove formality. This goes back to the introduction of \cite{DGMS75} and the following observation. Let $\alpha$ be a unit of infinite order in $R$ and let $\sigma_{\alpha}$ be the grading automorphism defined as the multiplication by $\alpha^k$ in homological degree $k$. If the grading automorphism admits a chain level lift, this prevents the existence of non-trivial higher structures and thus implies formality, see \cite{Sul77}, \cite{GNPR05}, \cite{Petersen14}, \cite{Dup16}, \cite{CH20}, \cite{DCH21} and \cite{CH22}. In this section, we use Kaledin classes to generalize this result to properads and answer the following questions:
\begin{itemize}
	\item[$\centerdot$]  Is grading automorphism the only homology automorphism satisfying this property?
	\item[$\centerdot$]  Under which condition on an homology automorphism, the existence of a lift implies formality? 
\end{itemize}

\begin{definition}
	Under Assumptions \ref{assump}, let $u \in \mathrm{Aut}\left(H(A), \varphi_* \right)$ be a strict automorphism of the homology. We see it as an element of $\mathfrak{a}_{H(A)}$ through the map \[u : \mathcal{I} \to \End_{H(A)} \ , \quad 1 \mapsto u  \ .\] 
	\begin{enumerate}
		\item We say that it admits a \emph{chain level lift} if there exists an $\infty$-quasi-isomorphism $v$ of $(A,\varphi)$ such that $H\left(v^{(0)}\right) = u$.
		\item 	 We denote by $\mathrm{Ad}_u$ the dg Lie algebra automorphism of $\mathfrak{g}^{\varphi_*}$ defined by \[\mathrm{Ad}_u(x) = (u \circ x) \circledcirc u^{-1} \quad \left(\mbox{resp.} \; \; \mathrm{Ad}_{u}(x) = \left(u \rhd x \right) \lhd u^{-1} \; \right) \] in the operadic case (resp. properadic case).
	\end{enumerate}
\end{definition}

\begin{theorem}\label{C}
	Under Assumptions \ref{assump}, the following propositions hold true. Let $u \in \mathrm{Aut}\left(H(A), \varphi_* \right)$ be an automorphism that admits a chain level lift. 
	
	\begin{enumerate}
		\item  If $\mathrm{Ad}_u - \mathrm{id} $ is invertible modulo $\mathcal{F}^{n+2} \mathfrak{g}$, the $\Cobar \C$-algebra $(A, \varphi)$ is gauge $n$-formal.
		
		\item If $R$ is a $\mathbb{Q}$-algebra and $\mathrm{Ad}_u - \mathrm{id} $ is invertible, then
		\begin{itemize}
			\item[$\centerdot$] the $\Cobar \C$-algebra structure $(A, \varphi)$ is gauge formal; 
			\item[$\centerdot$] every automorphism of $\left(H(A), \varphi_* \right)$ admits a chain level lift.
		\end{itemize}
	\end{enumerate}
\end{theorem}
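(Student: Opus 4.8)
The plan is to prove Point (1) by induction on $m$, showing that $(A,\varphi)$ is gauge $m$-formal for every $0 \leqslant m \leqslant n$; gauge $0$-formality is automatic (take $\psi = \varphi_t$). Assume gauge $(m-1)$-formality for some $1 \leqslant m \leqslant n$, witnessed by an $\infty$-isotopy $f \colon \varphi_t \rightsquigarrow \psi$ with $\psi^{(k)} = 0$ for $2 \leqslant k \leqslant m$. The Maurer--Cartan equation for $\psi$ then shows that $\psi^{(m+1)}$ is a cycle in $\mathfrak{g}^{\varphi_*}$, and the truncated Kaledin class collapses to $K^m_\psi = \big[m\,\psi^{(m+1)}\hbar^{m-1}\big]$. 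Since $m!$ is a unit, its vanishing is equivalent, through Lemma \ref{piquant} and Theorem \ref{B}(1), to gauge $m$-formality, so it suffices to prove $[\psi^{(m+1)}] = 0$ in $H_{-1}(\mathfrak{g}^{\varphi_*})$.

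First I would turn the chain-level lift into a self-automorphism of the transferred structure. Composing the lift $v$ with the homotopy-transfer $\infty$-quasi-isomorphisms $p_\infty, i_\infty$ of Theorem \ref{HTT} yields an $\infty$-isomorphism $i_\infty \circledcirc v \circledcirc p_\infty \colon \varphi_t \rightsquigarrow \varphi_t$ whose weight-zero component induces $u$ on homology; since $H(A)$ carries no differential this component equals $u$. Conjugating by $f$ then produces an $\infty$-automorphism $w' \colon \psi \rightsquigarrow \psi$ with $(w')^{(0)} = u$. Applying the invariance of Kaledin classes under $\infty$-isomorphisms (Lemma \ref{piquant}) to $w'$ gives $K^m_\psi = \mathrm{Ad}^m_{\mathfrak{D}(w')}(K^m_\psi)$. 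The crucial observation is that, all lower weight components of $\psi$ being zero, $K^m_\psi$ is concentrated in the top power $\hbar^{m-1}$, on which $\mathrm{Ad}^m_{\mathfrak{D}(w')}$ only sees the weight-zero part $(w')^{(0)} = u$ and therefore acts as $\mathrm{Ad}_u$. This yields $[\psi^{(m+1)}] = \mathrm{Ad}_u([\psi^{(m+1)}])$. As $\mathrm{Ad}_u$ is a chain automorphism of $\mathfrak{g}^{\varphi_*}$ (Proposition \ref{Adf}) and $\mathrm{Ad}_u - \mathrm{id}$ is invertible modulo $\mathcal{F}^{n+2}\mathfrak{g}$, hence invertible on $H_{-1}(\mathfrak{g}^{\varphi_*})$ in weight $m+1 \leqslant n+1$, we conclude $[\psi^{(m+1)}] = 0$, closing the induction.

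For Point (2), over a $\mathbb{Q}$-algebra with $\mathrm{Ad}_u - \mathrm{id}$ invertible in every weight, the same inductive step runs for all $m \geqslant 1$, so every truncated class $K^m_{\varphi_t}$ vanishes; by Proposition \ref{toutes nulles 2} the full class $K_{\varphi_t}$ is zero and $(A,\varphi)$ is gauge formal by Theorem \ref{B}(2). For the lifting statement, gauge formality provides, through Proposition \ref{gauge formality}, an $\infty$-isotopy $g \colon \varphi_t \rightsquigarrow \varphi_*$; any strict automorphism $u'$ of $(H(A),\varphi_*)$ then transports to the $\infty$-automorphism $g \circledcirc u' \circledcirc g^{-1}$ of $\varphi_t$ (Theorem \ref{action}), and composing with $p_\infty$ and $i_\infty$ produces an $\infty$-quasi-isomorphism of $(A,\varphi)$ inducing $u'$ on homology, that is, a chain-level lift. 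In the properadic case (a characteristic zero field) one argues identically, replacing Theorem \ref{B} and the pre-Lie identities by Theorem \ref{Kaledin properadic} and the Lie-admissible calculus of Section \ref{3}.

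The main obstacle is the reduction in the second paragraph: establishing that on the top $\hbar$-weight the adjoint action of the full $\infty$-automorphism $w'$ degenerates to $\mathrm{Ad}_u$, since this is exactly what converts the qualitative statement \emph{``the lift exists''} into the linear-algebra condition on $\mathrm{Ad}_u - \mathrm{id}$. It relies on the inductive normalization (so that $K^m_\psi$ lives in a single weight) together with the functoriality of Kaledin classes; one must also verify that the passage from $v$ to $w'$ is well behaved under the homotopy transfer theorem, and, in the properadic setting, that the analogue of Lemma \ref{piquant} holds through the calculus of Section \ref{3}.
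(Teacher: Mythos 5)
Your argument for the operadic and groupoid-colored operadic cases is correct, but it takes a genuinely different route from the paper's at the key inductive step. The paper proves Theorem \ref{C} via Lemma \ref{induction}: it carries through the induction not only the partially trivialized structure $\psi$ but also a \emph{normalized} $\infty$-automorphism $\tilde s$ of $\psi$ with $\tilde s^{(0)}=u$ and $\tilde s^{(i)}=0$ for $1\leqslant i\leqslant k-1$; Lemma \ref{key lemma} then yields the chain-level identity $(\mathrm{Ad}_u-\mathrm{id})\left(\psi^{(k+1)}\right)=d^{\varphi_*}\left(\tilde s^{(k)}\circledcirc u^{-1}\right)$, which kills the obstruction directly, and the gauge $f=1+f^{(k)}$ is chosen to solve $(\mathrm{Ad}_u-\mathrm{id})\left(f^{(k)}\right)=\tilde s^{(k)}\circledcirc u^{-1}$ precisely so that the conjugated automorphism stays normalized at the next stage. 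You never normalize the automorphism: you conjugate the fixed lift by the current isotopy, obtaining $w'$ with uncontrolled higher components, and extract the homology-level fixed-point equation $(\mathrm{Ad}_u-\mathrm{id})\left[\psi^{(m+1)}\right]=0$ from Lemma \ref{piquant} plus the $\hbar$-degree count showing that on the top power $\hbar^{m-1}$ the action of $\mathrm{Ad}^m_{\mathfrak{D}(w')}$ collapses to $\mathrm{Ad}_u$. That count is correct: modulo $\hbar^m$ only the $\hbar^0$-components $u$ and $u^{-1}$ of $\mathfrak{D}(w')$ and $\mathfrak{D}(w')^{-1}$ can act on an $\hbar^{m-1}$-term. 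The concluding step deserves more care than your phrase ``invertible on $H_{-1}(\mathfrak{g}^{\varphi_*})$ in weight $m+1$'' suggests, since $d^{\varphi_*}$ shifts the weight by one, so the homology is not naively weight-graded: what one actually uses is that $\overline{\Psi}^m=\varphi_*$ (hence the truncated twisted complex splits $\hbar$-degreewise into copies of $\mathfrak{g}^{\varphi_*}$), that the bounding element can be chosen weight-homogeneous of weight $m$, and that $(\mathrm{Ad}_u-\mathrm{id})^{-1}$ exists in weights $m$ and $m+1\leqslant n+1$ and commutes with $d^{\varphi_*}$, so that $\psi^{(m+1)}$ is itself a boundary. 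With this filled in, your induction closes via Theorem \ref{B}(1) (or Proposition \ref{Kaledin}), and your Point (2), including the lifting of arbitrary automorphisms, coincides with the paper's. What your route buys is the elimination of Lemma \ref{key lemma}'s explicit computation and of the careful choice of gauge; what the paper's route buys is a chain-level, constructive argument that transports verbatim to properads.

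That last point is where your proposal has its one genuine gap: the properadic case. Your inductive step hinges on the invariance of truncated Kaledin classes under arbitrary $\infty$-isomorphisms, and the paper proves this (Lemma \ref{piquant}) only for operads. The properadic calculus of Section \ref{3} is deliberately restricted: Lemma \ref{compatibility prop} establishes the compatibility identities only when one factor is concentrated in weight $0$, so $\mathrm{Ad}_{\mathfrak{D}(w')}$ for a general $\infty$-isomorphism $w'$, and a fortiori the invariance statement, are not available there; ``one argues identically'' does not go through as written. This is exactly why the paper's proof routes through Lemma \ref{induction} and Lemma \ref{key lemma 2}, whose properadic versions \emph{are} proved. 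Your approach can be repaired over a characteristic zero field: factor $w'$ into a strict weight-$0$ automorphism (for which the weight-$0$ calculus of Lemma \ref{compatibility prop} applies and one even has $\mathfrak{D}(\mathrm{Ad}_u(\psi))=\mathrm{Ad}_u(\mathfrak{D}(\psi))$ on the nose, so the Kaledin class transforms by $\mathrm{Ad}_u$ at the level of cycles) composed with an $\infty$-isotopy, whose effect on Kaledin classes is controlled by gauge invariance (Lemma \ref{technique12}) through the graph exponential of Theorem \ref{graphexp2}. Without such an argument, the properadic half of the theorem remains unproved in your proposal.
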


\noindent The proof of Theorem \ref{C} relies on the following lemma. 

\begin{lemma}\label{induction}
Under Assumptions \ref{assump}, let $(H(A), \phi) $ be an $\Cobar \C$-algebra structure such that \[\phi \equiv \varphi_* \pmod{\mathcal{F}^{k+1} \mathfrak{g}} \ ,\] where $k \leqslant n$ is an integer. Let $s$ be an $\infty$-automorphism of $(H(A),\phi)$ such that \[s^{(0)} = u \quad  \mbox{and} \quad s^{(i)} = 0 \quad   \mbox{for all} \ 1 \leqslant i \leqslant k-1 \ .\] Let $u \in \mathrm{Aut}\left(H(A), \varphi_* \right) $ that admits a chain level lift. If $\mathrm{Ad}_u - \mathrm{id} $ is invertible modulo $\mathcal{F}^{k+2} \mathfrak{g}$,
\begin{enumerate}
	\item there exists an $\infty$-isotopy $\phi \rightsquigarrow \psi $ where $\psi \in \mathrm{MC}(\mathfrak{g})$ is $\Cobar \C$-algebra structure such that \[\psi \equiv \varphi_* \pmod{\mathcal{F}^{k+2} \mathfrak{g}} \ , \]
	\item there exists an $\infty$-automorphism $\tilde{s}$ of $(H, \psi)$ such that \[\tilde{s}^{(0)} = u \quad  \mbox{and} \quad \tilde{s}^{(i)} = 0 \quad   \mbox{for all} \ 1 \leqslant i \leqslant k \ .\]  
\end{enumerate}
\end{lemma}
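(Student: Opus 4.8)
The plan is to run a single induction step that, driven by the invertibility of $\mathrm{Ad}_u-\mathrm{id}$ together with the fact that $\mathrm{Ad}_u$ commutes with the twisted differential, simultaneously trivializes the next weight component of the structure and propagates the lift of $u$. First I would record that $\beta:=\phi^{(k+1)}$ is a $d^{\varphi_*}$-cocycle of degree $-1$: extracting the weight $k+2$ part of the Maurer--Cartan equation for $\phi$ and using $\phi^{(i)}=0$ for $2\leqslant i\leqslant k$ leaves only $d(\beta)+\varphi_*\star\beta+\beta\star\varphi_*=d^{\varphi_*}(\beta)=0$. Next I apply Lemma \ref{key lemma} to the $\infty$-automorphism $s\colon\phi\rightsquigarrow\phi$, with source and target both $\phi$, morphism $f:=s$ and $n:=k$: the hypotheses $\phi^{(i)}=0$ ($2\leqslant i\leqslant k$) and $s^{(i)}=0$ ($1\leqslant i\leqslant k-1$) are exactly those of the lemma, and its formula for the $(k+1)$-st component reads $\beta=\mathrm{Ad}_{u}(\beta)-d^{\varphi_*}\!\big(s^{(k)}\circledcirc u^{-1}\big)$, that is $(\mathrm{Ad}_u-\mathrm{id})(\beta)=d^{\varphi_*}\!\big(s^{(k)}\circledcirc u^{-1}\big)$.

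Since $u$ is a strict automorphism of $(H(A),\varphi_*)$, Proposition \ref{Adf} makes $\mathrm{Ad}_u$ a dg Lie algebra automorphism of $\mathfrak{g}^{\varphi_*}$; it preserves the weight grading and commutes with $d^{\varphi_*}$. The invertibility of $\mathrm{Ad}_u-\mathrm{id}$ modulo $\mathcal{F}^{k+2}\mathfrak{g}$ then yields a weight preserving inverse on $\mathfrak{g}^{(k)}$ and $\mathfrak{g}^{(k+1)}$ which again commutes with $d^{\varphi_*}$. Setting $\lambda:=(\mathrm{Ad}_u-\mathrm{id})^{-1}\!\big(s^{(k)}\circledcirc u^{-1}\big)\in\mathfrak{g}_0^{(k)}$ and commuting the inverse past $d^{\varphi_*}$ gives $d^{\varphi_*}(\lambda)=\beta$. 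For Point (1) I take the $\infty$-isotopy $f:=1+\lambda$ and set $\psi:=f\cdot\phi$; Lemma \ref{key lemma} (now with $f^{(0)}=\mathrm{id}$) gives $\psi^{(1)}=\varphi_*$, $\psi^{(i)}=0$ for $2\leqslant i\leqslant k$, and $\psi^{(k+1)}=\beta-d^{\varphi_*}(\lambda)=0$, so $\psi\equiv\varphi_*\pmod{\mathcal{F}^{k+2}\mathfrak{g}}$.

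For Point (2), since the group of Theorem \ref{action} acts on the left with $(a\circledcirc b)\cdot\phi=a\cdot(b\cdot\phi)$ and the $\infty$-automorphisms of a structure are the elements of its stabilizer, the conjugate $\tilde s:=f\circledcirc s\circledcirc f^{-1}$ stabilizes $\psi=f\cdot\phi$ and is the required $\infty$-automorphism. One reads off $\tilde s^{(0)}=u$ and $\tilde s^{(i)}=0$ for $1\leqslant i\leqslant k-1$ immediately, since $s^{(i)}$, $f^{(i)}$ and $(f^{-1})^{(i)}$ all vanish in that range. The crucial point is the weight $k$ component: expanding $f\circledcirc s\circledcirc f^{-1}$ with $f^{(k)}=\lambda$ and $(f^{-1})^{(k)}=-\lambda$ gives $\tilde s^{(k)}=s^{(k)}-\big(u\circledcirc\lambda-\lambda\circledcirc u\big)$, while the defining relation $(\mathrm{Ad}_u-\mathrm{id})(\lambda)=s^{(k)}\circledcirc u^{-1}$ rewrites, via $\mathrm{Ad}_u(\lambda)\circledcirc u=u\circledcirc\lambda$, as $s^{(k)}=u\circledcirc\lambda-\lambda\circledcirc u$; hence $\tilde s^{(k)}=0$.

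The main obstacle is precisely this last compatibility: the single element $\lambda$ must at once trivialize $\psi^{(k+1)}$ and $\tilde s^{(k)}$, and this succeeds only because the conjugation is carried out on the correct side, namely the stabilizer conjugation $f\circledcirc s\circledcirc f^{-1}$ forced by the left action of Theorem \ref{action}; the opposite conjugation would instead produce $2s^{(k)}$. In the properadic setting the same argument applies after replacing Lemma \ref{key lemma} by its properadic analogue Lemma \ref{key lemma 2} and the operations $\circ,\circledcirc$ by the properadic operations $\lhd,\rhd,\circledcirc$. Finally, the hypothesis that $u$ admits a chain level lift is what furnishes the initial $\infty$-automorphism $s$ lifting $u$ from which this induction starts.
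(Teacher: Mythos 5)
Your proposal is correct and follows essentially the same route as the paper: apply Lemma \ref{key lemma} to the $\infty$-automorphism $s$ to get $(\mathrm{Ad}_u-\mathrm{id})(\phi^{(k+1)})=d^{\varphi_*}\bigl(s^{(k)}\circledcirc u^{-1}\bigr)$, use invertibility of $\mathrm{Ad}_u-\mathrm{id}$ in weights $\leqslant k+1$ to produce the gauge $f=1+\lambda$ with $\lambda=(\mathrm{Ad}_u-\mathrm{id})^{-1}\bigl(s^{(k)}\circledcirc u^{-1}\bigr)$, and then conjugate, $\tilde{s}=f\circledcirc s\circledcirc f^{-1}$, verifying $\tilde{s}^{(k)}=0$ by the same identity. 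The only (harmless) difference is that you build the $\infty$-isotopy and $\psi=f\cdot\phi$ directly, making explicit that $(\mathrm{Ad}_u-\mathrm{id})^{-1}$ commutes with $d^{\varphi_*}$, whereas the paper phrases this step as the vanishing of the truncated Kaledin class $K^{k}_{\phi}$ together with (the proof of) Proposition \ref{Kaledin}, which amounts to exactly the same construction.
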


\begin{proof}
Let us prove the result in the operadic case. By Lemma \ref{key lemma}, we have \[ \left(\mathrm{Ad}_{u} - \mathrm{id} \right)\left(\phi^{(k+1)} \right) = d^{\varphi_*}\left(s^{(k)} \circledcirc u^{-1}\right)\ ,\] since $s$ is an $\infty$-automorphism of $\phi$.  Since $\mathrm{Ad}_{u} - \mathrm{id}$ is invertible modulo $\mathcal{F}^{k+2} \mathfrak{g}$, we have \[\phi^{(k+1)} \equiv d^{\varphi_*} \left( \left(\mathrm{Ad}_u - \mathrm{id} \right)^{-1} \left(s^{(k)} \circledcirc u^{-1} \right) \right) \ , \pmod{ \mathcal{F}^{k+2} \mathfrak{g}}  \]	and $K^{k}_{\phi} = \left[k \phi^{(k+1)} \hbar^{k-1} \right] = 0$~. By Proposition \ref{Kaledin}, there exist $\psi \in \mathrm{MC}(\mathfrak{g}_{H})$ of the desired form and an $\infty$-isotopy \[f : (H, \phi) \rightsquigarrow  (H, \psi) \ . \] More precisely, we can take $f$ of the form $f = 1 + f^{(k)}$ such that \[\left(\mathrm{Ad}_u - \mathrm{id} \right) \left(f^{(k)} \right) = s^{(k)} \circledcirc u^{-1} \ ,\]  by the proof of Proposition \ref{Kaledin}. Finally, let us set $\tilde{s} \coloneqq f \circledcirc s \circledcirc f^{-1} ~. $ The inverse $f^{-1}$ can be explicitly computed using \cite[Theorem~ 10.4.2]{LodayVallette12}. It is an $\infty$-isotopy of the form \[f^{-1} = 1 	- 	 f^{(k)} + \cdots  \] This implies that $\tilde{s}^{(0)} = u$ and $\tilde{s}^{(i)} = 0$ for $1 \leqslant i \leqslant k-1$. Furthermore, we have \[\tilde{s}^{(k)}  = s^{(k)} + f^{(k)} \circledcirc u - u \star f^{(k)}  =  \left( s^{(k)}\circledcirc u^{-1} -  \left(\mathrm{Ad}_u - \mathrm{id} \right) \left( f^{(k)} \right) \right) \circledcirc u = 0 \ .  \] 
The proof is similar in the properadic case, using Lemma \ref{key lemma 2} and the fact that  \[ \left(\mathrm{Ad}_{u} - \mathrm{id} \right)\left(\phi^{(k+1)} \right) = d^{\varphi_*}\left(s^{(k)} \lhd u^{-1}\right)\ .\qedhere\] 
\end{proof}

\begin{proof}[Proof of Theorem \ref{C}]
The transferred structure comes equipped with $\infty$-quasi-isomorphisms
	\[\hbox{
		\begin{tikzpicture}
			
			\def\upshift{0.075}
			\def\downshift{0.075}
			\pgfmathsetmacro{\midshift}{0.005}
			
			\node[left] (x) at (0, 0) {$(A, d,\varphi)$};
			\node[right=1.5 cm of x] (y) {$(H(A),0,\varphi_t) \ .$};
			
			\draw[->] ($(x.east) + (0.1, \upshift)$) -- node[above]{\mbox{\tiny{$p_{\infty}$}}} ($(y.west) + (-0.1, \upshift)$);
			\draw[->] ($(y.west) + (-0.1, -\downshift)$) -- node[below]{\mbox{\tiny{$i_{\infty}$}}} ($(x.east) + (0.1, -\downshift)$);
			
	\end{tikzpicture}}
	\] The $\infty$-quasi-isomorphism $p_{\infty}$ extends the projection $p : A  \to H(A)$ and $i_{\infty}$ extends the inclusion $H(A) \to A$~. Consider the $\infty$-automorphism \[s \coloneqq  p_{\infty} \circledcirc v \circledcirc i_{\infty}  \ . \] By definition of a contraction, we have $s^{(0)}= p \circ v^{(0)} \circ i = u\ .$  Let us prove the point (1). Starting from $(H(A), \varphi_t)$ and the $\infty$-automorphism $s$~, one can apply Lemma \ref{induction} recursively for $k \leqslant n$. We obtain an $\Cobar \C$-algebra structure $\psi \in \mathrm{MC}(\mathfrak{g})$ such that 
	\[\psi^{(1)}  = \varphi_* \quad \mbox{and} \quad \psi^{(i)} = 0 \quad \mbox{for} \quad 2 \leqslant i \leqslant n +1 \ , \] and an $\infty$-isotopy $f : (H,\varphi_t) \rightsquigarrow  (H, \psi)$.  Thus, the $\Cobar \C$-algebra $(A, \varphi)$ is gauge $n$-formal. 
	
	\noindent Let us prove the point (2). As before, starting from $(H(A), \varphi_t)$ and the $\infty$-automorphism $s = p_{\infty} \circledcirc v \circledcirc i_{\infty} $~, one can apply Lemma \ref{induction} recursively. For all $k$, we obtain an $\Cobar \C$-algebra structure $\psi_n \in \mathrm{MC}(\mathfrak{g})$ such that 
	\[\psi_n^{(1)}  = \varphi_* \quad \mbox{and} \quad \psi^{(k)}_n = 0 \quad \mbox{for} \quad 2 \leqslant k \leqslant n +1 \ , \] and an $\infty$-isotopy $f_n : (H,\varphi_t) \rightsquigarrow  (H, \psi_n)$. In the operadic case, all the truncations $K^{n}_{\varphi_t}$ are zero by Proposition \ref{Kaledin}. This implies that $K_{\varphi_t} = 0$ by Proposition \ref{toutes nulles 2} and $(A,\varphi)$ is formal by Theorem \ref{B}. In the properadic case, all the truncations $K^{n}_{\varphi_t}$ are zero by Theorem \ref{Kaledin properadic}. This implies that $K_{\varphi_t} = 0$ by Proposition \ref{toutes nulles} and $(A,\varphi)$ is gauge formal by Theorem \ref{Kaledin properadic}. Let $f : \varphi_t \rightsquigarrow \varphi_*$ be an $\infty$-isotopy. For all $\tilde{\omega} \in \mathrm{Aut}\left(H(A), \varphi_* \right)$, we have $H\left(\omega^{(0)}\right) = \tilde{\omega}$ with \[\omega \coloneqq i_{\infty} \circledcirc f^{-1} \circledcirc u \circledcirc f \circledcirc p_{\infty} \ . \qedhere\] 
\end{proof}

\begin{corollary}\label{grading}
	Under Assumptions \ref{assump}, suppose that the weight-grading of $\C$ is given by the homological degree. Let $\alpha$ be a unit in $R$ and let $\vartheta$ be a non-zero rational number. Let $\sigma_{(\alpha,\vartheta)}$ be the automorphism of $(H(A), \varphi_*)$ defined by \[\sigma_{(\alpha,\vartheta)}(x) = \alpha^{\vartheta k} x \quad \mbox{for all } x \in H_k(A) \ .\] Suppose that $(A, \varphi)$ is $\vartheta$-pure, i.e. that $\sigma_{(\alpha,\vartheta)}$ admits a chain level lift.  
\begin{enumerate}
		\item If $\alpha^{\vartheta k} -1$ is a unit in $R$ for all $k \leqslant n + 1$, then $(A,\varphi)$ is gauge $n$-formal;
		\item If $R$ is a $\QQ$-algebra and if $\alpha^{\vartheta k} -1$ is a unit in $R$ for all $k$, then $(A,\varphi)$ is gauge formal. 
	\end{enumerate}
\end{corollary}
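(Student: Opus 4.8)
The plan is to deduce both statements from Theorem~\ref{C} applied to the homology automorphism $u = \sigma_{(\alpha,\vartheta)}$. The $\vartheta$-purity assumption is by definition the existence of a chain level lift of $\sigma_{(\alpha,\vartheta)}$, so the only hypothesis of Theorem~\ref{C} left to verify is the invertibility of $\mathrm{Ad}_{\sigma_{(\alpha,\vartheta)}} - \mathrm{id}$ to the relevant order: modulo $\mathcal{F}^{n+2}\mathfrak{g}$ for Point~(1), and on the whole complex (over a $\QQ$-algebra) for Point~(2). The whole argument therefore reduces to computing the conjugation action of $\sigma_{(\alpha,\vartheta)}$ on $\mathfrak{g} = \mathfrak{g}_{H(A)}$.

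First I would record how $\mathrm{Ad}_{\sigma_{(\alpha,\vartheta)}}$ acts. Since $\sigma_{(\alpha,\vartheta)}$ is multiplication by $\alpha^{\vartheta j}$ on $H_j(A)$, for a homogeneous operation $\mu \in \End_{H(A)}$ of homological degree $d$ the scalars contributed by the inputs and by the output telescope: $\mathrm{Ad}_{\sigma_{(\alpha,\vartheta)}}(\mu) = \sigma_{(\alpha,\vartheta)} \circ \mu \circ \sigma_{(\alpha,\vartheta)}^{-1} = \alpha^{\vartheta d}\mu$, and similarly with $\lhd,\rhd$ in the properadic case. Under the hypothesis that the weight grading of $\C$ is the homological degree, a weight-$k$ element of $\mathfrak{g}$ of internal degree $m$ corresponds to operations of homological degree $k+m$. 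In particular the Maurer--Cartan components $\varphi^{(k)}$ (internal degree $-1$) and the degree-zero isotopy/automorphism components manipulated in Lemma~\ref{induction} at stage $k$ both land in operation degree $k$, so on these pieces $\mathrm{Ad}_{\sigma_{(\alpha,\vartheta)}} - \mathrm{id}$ is simply multiplication by $\alpha^{\vartheta k} - 1$.

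It then remains to feed this into the induction underlying Theorem~\ref{C}. For Point~(1), at each stage $k \leqslant n$ of the recursive application of Lemma~\ref{induction} the operator to be inverted is multiplication by $\alpha^{\vartheta k} - 1$, a unit by the hypothesis that $\alpha^{\vartheta k}-1 \in R^\times$ for $k \leqslant n+1$; hence the $\infty$-isotopy killing weights $2,\dots,n+1$ exists and $(A,\varphi)$ is gauge $n$-formal. For Point~(2), over a $\QQ$-algebra the scalars $\alpha^{\vartheta k} - 1$ are units for every $k \geqslant 1$, so Theorem~\ref{C}(2) yields gauge formality (equivalently, all truncated Kaledin classes $K^n_{\varphi_t}$ vanish, and then $K_{\varphi_t} = 0$). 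The main subtlety to be careful about is the degree bookkeeping: $\mathrm{Ad}_{\sigma_{(\alpha,\vartheta)}} - \mathrm{id}$ is \emph{not} invertible on all of $\mathfrak{g}$, since it annihilates the operation-degree-zero part — for instance it fixes $\varphi_* = \varphi^{(1)}$. One must therefore check that every inversion performed in Lemma~\ref{induction} involves only components of strictly positive operation degree, where the governing scalar is $\alpha^{\vartheta k} - 1$ with $k \geqslant 1$, so that the unit hypothesis is exactly what is needed and suffices.
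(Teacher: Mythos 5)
Your proof is correct and is essentially the paper's own argument: both reduce the corollary to Theorem \ref{C} via the computation that $\mathrm{Ad}_{\sigma_{(\alpha,\vartheta)}}$ acts on an operation of homological degree $k$ as multiplication by $\alpha^{\vartheta k}$, so that the relevant inverses exist precisely when the scalars $\alpha^{\vartheta k}-1$ are units in the stated range. If anything, your bookkeeping is more precise than the paper's two-line proof, which asserts outright that $\mathrm{Ad}_{\sigma_{(\alpha,\vartheta)}} - \mathrm{id}$ is invertible modulo $\mathcal{F}^{n+2}\mathfrak{g}$ (literally false, since it annihilates all operation-degree-zero elements, e.g. $\varphi_*$ itself), whereas you correctly observe that the inversions are only needed, and only available, on the positive-operation-degree components that actually arise at each stage of Lemma \ref{induction}.
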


\begin{proof}
For all $\psi$ of degree $k$ in $\mathfrak{g}^{\varphi_*}$, we have $\mathrm{Ad}_{\sigma_{(\alpha,\vartheta)}}(\psi) = \alpha^{ \vartheta k } \psi \ .$
\begin{itemize}
	\item[$\centerdot$]  If $\alpha^{\vartheta k} -1$ is a unit in $R$ for all $k \leqslant n$, then $\mathrm{Ad}_{\sigma_{(\alpha,\vartheta)}} - \mathrm{id} $ is invertible modulo $\mathcal{F}^{n+2} \mathfrak{g}$. 
	\item[$\centerdot$] 
	If $\alpha^{\vartheta k}  -1$ is a unit in $R$ for all $k$, then $\mathrm{Ad}_{\sigma_{(\alpha,\vartheta)}} - \mathrm{id} $ is invertible.
\end{itemize}
The result now follows from Theorem \ref{C}.  
\end{proof}

\begin{examples}\label{arrangements} Let us present examples where the formality is established using $1$-purity, i.e. the existence of a chain level lift of the grading automorphism $\sigma_{(\alpha,1)}$. 
	\begin{enumerate}
		\item \textbf{Little disks operad}. The above type of result is used by Petersen in \cite{Petersen14} to give a proof of the formality of the little $2$-disks operad. He use the action of the Grothendieck-Teichmüller group at the chain level to exhibit a lift of the grading automorphism . Similar arguments where used in \cite{BdBH21} and \cite{CH22} to prove the formality of higher dimensional little disks operads and with torsion coefficients respectively. \medskip
		
		\item \textbf{Formality of cochains on BG}. In \cite{BG23}, Benson and Greenlees prove that for a compact Lie group $G$ with maximal torus $T$, if $|N_G(T)/T|$ is invertible in the field $k$ then the algebra of cochains $C^*(BG;k)$ is formal as an $A_\infty$-algebra. The spirit of their proof with the approximations of \cite[Section~3]{BG23} boils down to finding a lift of a grading automorphism and can be rephrased in terms of Corollary \ref{grading}. \medskip
		
		\item \textbf{Modular Koszul duality}. In \cite{RSW14}, Riche, Soergel, and Williamson prove an analogue of Koszul duality for the category $\mathcal{O}$ of a reductive group $G$ in certain positive characteristic. Their proof is base on a formality result of the dg associative algebra of extensions of parity sheaves on the flag variety, which is established by constructing a lift of the grading automorphism. \medskip

		\item \textbf{Complement of hyperplane arrangements}. Let $p$ and $\ell$ be two different prime numbers. Let $X$ be a complement of a hyperplane arrangement over $\mathbb{C}$. Let $\mathbb{Q}_{p} \hookrightarrow K$ be a finite extension. We denote by $q$ order of the residue field of the ring of integers of $K$. Suppose that $X$ is defined over $K$, i.e. that there exists an embedding $K \hookrightarrow \mathbb{C}$ and that there exists $\mathcal{X}$ a complement of a hyperplane arrangement over $K$ such that $\mathcal{X} \times _{K} \mathbb{C} \cong X \ .$ Drummond-Cole and Horel prove in \cite{DCH21} that the algebra  \[C^*_{\mathrm{sing}}(X_{\mathrm{an}}, \mathbb{Z}_{\ell})\] is $(s-1)$-formal, where $X_{\mathrm{an}}$ denotes the complex analytic space underlying $X_{\mathbb{C}} = X \times _{K} \mathbb{C}$ and where $s$ is the order of $q$ in $\mathbb{F}_{\ell}^{\times}$~. The idea is the following one. There exists a zig-zag of quasi-isomorphisms of dg associative algebras
		\[C^*_{\mathrm{sing}}(X_{\mathrm{an}}, \mathbb{Z}_{\ell}) \xleftarrow{\sim} C^*_{\mathrm{\acute{e}t}}(\mathcal{X}_{\mathbb{C}}, \mathbb{Z}_{\ell}) \xrightarrow{\sim} C^*_{\mathrm{\acute{e}t}}(\mathcal{X}_{\overline{K}}, \mathbb{Z}_{\ell}) \ ,\] see the proof of Theorem \ref{smooth proj} for more details. It was proved in \cite[Theorem 1']{Kim94} that the action of a Frobenius lift on $H_{\mathrm{\acute{e}t}}^*(\mathcal{X}_{\overline{K}}, \mathbb{Z}_{\ell})$ is given by $\sigma_{(q,1)}$. Thus the grading automorphism admits a chain level lift and we get the desired formality result. This result is really specific to complement of a hyperplane arrangements: in general, a Frobenius lift on the étale cohomology of a $K$-scheme is not a grading automorphism. Nonetheless, using the work of Deligne in the context of Weil conjectures, we prove bellow in Theorem \ref{smooth proj} that the action of a Frobenius lift on the étale cohomology of any smooth and proper $K$-scheme satisfies Theorem \ref{C}.   
	\end{enumerate}
\end{examples}

\begin{examples} Let us present formality examples relying on $\vartheta$-purity with $\vartheta \neq 1$. 
	\begin{enumerate}
			\item[(5)] \textbf{Gravity operad} In \cite{DH18}, Dupont and Horel prove the formality
	of the gravity operad of Getzler, which is an operad structure on the collection  \[ \lbrace H_{*} (\mathcal{M}_{0, n+1}) \rbrace_{n \in \mathbb{N}} \ ,\] where $\mathcal{M}_{0, n+1}$ denotes the moduli spaces of stable algebraic curves of genus zero with $n+1$ marked points. To do so, they exhibit a model $\mathcal{G}rav^{W'}$ which comes with an action of the Grothendieck-Teichmüller group. By exploiting this action, one can prove that $\mathcal{G}rav^{W'}$ is $2$-pure, i.e. that  $\sigma_{(\alpha,2)}$ admits a chain level lift, see \cite[Section~7.4]{CH20}. Thus it is formal, so does the gravity operad.  \medskip
	
			\item[(6)] \textbf{Good arrangement of codimension $c$ subspaces} Let $p$ be a prime number. Let $\mathbb{Q}_{p} \hookrightarrow K$ be a finite extension.  We denote by $q$ order of the residue field of the ring of integers of $K$. Let $V$ be a $d$-dimensional $K$-vector space and let $\{W_i\}_{i \in I}$ be a good arrangement of codimension $c$ subspaces, i.e. a family of subspaces of $V$ such that for all $i \in I$, the subspace $W_i$ is of codimension $c$ and the subspaces of $W_i$ \[ \lbrace  W_i \cap W_j\rbrace_{i \neq j}\]  form a good arrangement of codimension $c$ subspaces. By \cite{BE97}, the étale cochains \[ C^*_{\mathrm{\acute{e}t}}\left(V - \cup_i W_i, \mathbb{F}_{\ell} \right)\] is $(c /(2c-1))$-pure and thus gauge $n$-formal by Corollary \ref{grading} with \[n \coloneqq \lfloor (s - 1 ) (2c - 1) / c\rfloor \ , \] where $s$ is the order of $q$ in $\mathbb{F}_{\ell}^{\times}$~. In \cite{CH22}, Horel and Cirici establish the $n$-formality of this algebra in the case $c \geqslant 1$. 
	\end{enumerate}
\end{examples}

\noindent The following corollary of Theorem \ref{C} offers a more manageable criterion over a field.

\begin{corollary}\label{condition}
Under Assumptions \ref{assump}, suppose that $R$ is a field and that for all $k \geqslant 0$, the vector space $H_k(A)$ is finite dimensional. Suppose that the weight-grading on $\C$ is the homological degree. Suppose that there exists \[u \in \mathrm{Aut}\left(H(A),\varphi_*\right)\] that admits a chain level lift. For all $k$, we denote by $u_k$ the restriction of $u$ to $H_k(A)$ and $\mathrm{Spec}(u_k)$ its set of eigenvalues. If we have 
\begin{equation}\label{incompatibilité}
\mathrm{Spec}(u_{\alpha_1} \otimes \dots \otimes u_{\alpha_p}) \cap \mathrm{Spec}(u_{\beta_1} \otimes \dots \otimes u_{\beta_q}) = \varnothing
\end{equation}
for all $k$ (resp. $k \leqslant n + 1$), for all $p$-tuples $(\alpha_1, \dots, \alpha_p)$ and all $q$-tuples $(\beta_1, \dots, \beta_q)$ such that \[k = \alpha_1 + \cdots + \alpha_p - \beta_1 - \cdots - \beta_q \ , \]  then $(A,\varphi)$ is gauge formal (resp gauge $n$-formal).  
\end{corollary}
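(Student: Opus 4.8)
The strategy is to deduce the statement from Theorem \ref{C}: it suffices to show that the spectral condition (\ref{incompatibilité}) forces $\mathrm{Ad}_u - \mathrm{id}$ to be invertible (for the gauge formal conclusion, which invokes Point (2) of Theorem \ref{C}, hence tacitly requires $R$ of characteristic zero), respectively invertible modulo $\mathcal{F}^{n+2}\mathfrak{g}$ (for the gauge $n$-formal conclusion, which invokes Point (1) under the standing hypothesis that $n!$ is a unit). Since $u$ is assumed to admit a chain level lift, once this invertibility is established Theorem \ref{C} applies verbatim. I would therefore devote the whole proof to analyzing the operator $\mathrm{Ad}_u$ on the convolution (pre-)Lie algebra $\mathfrak{g} = \mathfrak{g}_{H(A)}$.

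First I would decompose $\mathfrak{g}$ according to the homological degrees of the inputs and of the output(s). Because $u$ is a strict automorphism of $H(A) = \bigoplus_k H_k(A)$, it is homogeneous, $u = \bigoplus_k u_k$, so $\mathrm{Ad}_u$ preserves this decomposition and acts on the component
\[\Hom\left(H_{\beta_1}(A) \otimes \cdots \otimes H_{\beta_q}(A), H_{\alpha_1}(A) \otimes \cdots \otimes H_{\alpha_p}(A)\right)\]
(with $p = 1$ in the operadic and colored cases) by conjugation,
\[f \longmapsto \left(u_{\alpha_1} \otimes \cdots \otimes u_{\alpha_p}\right) \circ f \circ \left(u_{\beta_1} \otimes \cdots \otimes u_{\beta_q}\right)^{-1} \ .\]
The key bookkeeping point is that the internal degree $k = \alpha_1 + \cdots + \alpha_p - \beta_1 - \cdots - \beta_q$ of such a component, namely the homological degree of the underlying multilinear map, is preserved both by $\mathrm{Ad}_u$ and by the twisted differential $d^{\varphi_*}$: the differential is built from precomposition with $d_{\overline{\C}}$ and from $\mathrm{ad}_{\varphi_*}$, and since $\varphi_*$ itself has internal degree $0$, one checks that $d^{\varphi_*}$ preserves $k$ while raising the weight. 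Thus $\mathfrak{g}^{\varphi_*}$ splits as a product of subcomplexes indexed by $k$, compatibly with $\mathrm{Ad}_u$.

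On a fixed component, writing $A = u_{\alpha_1} \otimes \cdots \otimes u_{\alpha_p}$ and $B = u_{\beta_1} \otimes \cdots \otimes u_{\beta_q}$, the operator $\mathrm{Ad}_u - \mathrm{id}$ sends $f$ to $A f B^{-1} - f = (Af - fB)B^{-1}$; since each $H_k(A)$ is finite dimensional over the field $R$, this is an endomorphism of a finite dimensional vector space, and right multiplication by $B^{-1}$ is invertible. Its invertibility is therefore governed by the Sylvester operator $f \mapsto Af - fB$, which is invertible if and only if $A$ and $B$ share no eigenvalue over an algebraic closure, that is if and only if
\[\mathrm{Spec}(u_{\alpha_1} \otimes \cdots \otimes u_{\alpha_p}) \cap \mathrm{Spec}(u_{\beta_1} \otimes \cdots \otimes u_{\beta_q}) = \varnothing \ .\]
Hence hypothesis (\ref{incompatibilité}) is exactly the assertion that $\mathrm{Ad}_u - \mathrm{id}$ is invertible on every component of internal degree $k$ in the prescribed range, and assembling these block inverses over all arities yields a genuine inverse of $\mathrm{Ad}_u - \mathrm{id}$ of the form required by Theorem \ref{C}.

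The main obstacle I anticipate is the precise matching between the internal-degree index $k$ occurring in (\ref{incompatibilité}) and the weight filtration $\mathcal{F}^{\bullet}\mathfrak{g}$ that governs gauge $n$-formality: these two gradings do not coincide, and the honest verification must be traced through the recursion of Lemma \ref{induction}, which only ever inverts $\mathrm{Ad}_u - \mathrm{id}$ on the degree $-1$ components $\phi^{(k+1)}$ and on $d^{\varphi_*}$ of degree zero elements, all of a single fixed internal degree $k$. One then checks that the finitely many internal degrees reached before attaining gauge $n$-formality are covered by the range $k \leqslant n+1$ of (\ref{incompatibilité}), and, for the gauge formal case, that exhausting all $k$ yields invertibility on all of $\mathfrak{g}^{\varphi_*}$. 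A minor point to dispatch is that $R$ need not be algebraically closed: disjointness of spectra is read off over $\overline{R}$, and invertibility of the Sylvester operator, being the nonvanishing of a determinant, is insensitive to this field extension.
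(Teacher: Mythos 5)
Your proposal is correct and takes essentially the same route as the paper: the paper's proof likewise reduces to Theorem \ref{C} by observing that the eigenvalues of $\mathrm{Ad}_u$ restricted to degree-$k$ elements are the quotients $\lambda_{\alpha_1}\cdots\lambda_{\alpha_p}/(\lambda_{\beta_1}\cdots\lambda_{\beta_q})$ with $\lambda_i \in \mathrm{Spec}(u_i)$, so that hypothesis (\ref{incompatibilité}) forces $\mathrm{Ad}_u - \mathrm{id}$ to be invertible (resp.\ invertible in the truncated range). Your Sylvester-operator formulation, the passage to $\overline{R}$, and the weight-versus-internal-degree bookkeeping through Lemma \ref{induction} are simply more careful renderings of that same one-line eigenvalue computation.
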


\begin{proof}
The eigenvalues of the restrictions of $\mathrm{Ad}_u$ to  degree $k$ elements are given by quotients \[ \frac{\lambda_{\alpha_1} \dots   \lambda_{\alpha_p}}{\lambda_{\beta_1} \dots   \lambda_{\beta_q}} \ , \quad   \lambda_{i} \in \mathrm{Spec}(u_{i})\] where $k = \alpha_1 + \cdots + \alpha_p - \beta_1 - \cdots - \beta_q $. By hypothesis (\ref{incompatibilité}), none of these quotients is equal to 1. Thus $\mathrm{Ad}_u - \mathrm{id}$ is invertible and the result follows from Theorem \ref{C}. The case of gauge $n$-formality if similar. 
\end{proof}

\noindent In \cite{Deligne71}, Deligne proved the formality with coefficients in $\mathbb{Q}_{\ell}$ of smooth proper varieties defined over a finite field, using the weights of the Frobenius action in the $\ell$-adic
cohomology and his solution of the Riemann hypothesis. This result arises from Kaledin classes theory using Corollary \ref{condition} and admits the following generalization:

\begin{theorem}\label{smooth proj} Let $X$ be a smooth and proper scheme over $\mathbb{C}$ and let $X_{\mathrm{an}}$ denotes the associated complex analytic space.	The dg associative algebra of singular cochains $C^{*}_{\mathrm{sing}}(X_{\mathrm{an}}, \mathbb{Q})$ is formal. 
\end{theorem}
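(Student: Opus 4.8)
The plan is to deduce formality from the purity criterion of Corollary \ref{condition}, applied to the action of a Frobenius on $\ell$-adic étale cohomology whose eigenvalues are governed by the Riemann hypothesis over finite fields. Since $\mathbb{Q}$ is a field and $C^{*}_{\mathrm{sing}}(X_{\mathrm{an}}, \mathbb{Q})$ is a dg associative algebra, Proposition \ref{formality-gauge} identifies formality with gauge formality, so it suffices to prove the latter. First I would \emph{spread $X$ out}: a smooth and proper $\mathbb{C}$-scheme descends to a smooth and proper scheme over a finitely generated $\mathbb{Z}$-subalgebra $A \subset \mathbb{C}$; localizing $A$ and selecting a closed point of good reduction produces a finite extension $K/\mathbb{Q}_p$, an embedding $K \hookrightarrow \mathbb{C}$, and a smooth and proper $K$-scheme $\mathcal{X}$ admitting a smooth and proper model over $\mathcal{O}_K$, together with an isomorphism $\mathcal{X} \times_K \mathbb{C} \cong X$.

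Fix a prime $\ell \neq p$. As $H^{*}(X_{\mathrm{an}}, \mathbb{Q})$ is finite dimensional, hence of finite presentation, and $\mathbb{Q} \to \mathbb{Q}_{\ell}$ is faithfully flat, Theorem \ref{formality descent} reduces gauge formality over $\mathbb{Q}$ to gauge formality of $C^{*}_{\mathrm{sing}}(X_{\mathrm{an}}, \mathbb{Q}_{\ell})$. I would then use, exactly as in point (4) of Examples \ref{arrangements}, the chain of quasi-isomorphisms of dg algebras
\[ C^{*}_{\mathrm{sing}}(X_{\mathrm{an}}, \mathbb{Q}_{\ell}) \xleftarrow{\sim} \cdots \xrightarrow{\sim} C^{*}_{\mathrm{\acute{e}t}}(\mathcal{X}_{\mathbb{C}}, \mathbb{Q}_{\ell}) \xrightarrow{\sim} C^{*}_{\mathrm{\acute{e}t}}(\mathcal{X}_{\overline{K}}, \mathbb{Q}_{\ell}), \]
furnished by Artin's comparison theorem and by invariance of étale cohomology under extension of algebraically closed base fields. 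Since formality is invariant under zig-zags of quasi-isomorphisms, it suffices to prove that $C^{*}_{\mathrm{\acute{e}t}}(\mathcal{X}_{\overline{K}}, \mathbb{Q}_{\ell})$ is formal.

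Let $\mathcal{X}_0$ denote the special fibre over the residue field $\mathbb{F}_q$. Smooth and proper base change identifies $H^{*}_{\mathrm{\acute{e}t}}(\mathcal{X}_{\overline{K}}, \mathbb{Q}_{\ell})$ with $H^{*}_{\mathrm{\acute{e}t}}((\mathcal{X}_0)_{\overline{\mathbb{F}_q}}, \mathbb{Q}_{\ell})$, on which the geometric Frobenius $u = \mathrm{Frob}_q$ acts. Being induced by a genuine endomorphism of the scheme, $\mathrm{Frob}_q$ promotes to an endomorphism of the étale cochains that is a quasi-isomorphism, hence a chain-level lift of $u$ in the sense of Section \ref{critere}. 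By Deligne's solution of the Riemann hypothesis for finite fields, $H^{k}_{\mathrm{\acute{e}t}}$ is pure of weight $k$: each eigenvalue of $u_k$ has all its complex absolute values equal to $q^{k/2}$. Consequently, for $k = \alpha_1 + \cdots + \alpha_p - \beta_1 - \cdots - \beta_q$, every element of $\mathrm{Spec}(u_{\alpha_1} \otimes \cdots \otimes u_{\alpha_p})$ has absolute value $q^{(\alpha_1 + \cdots + \alpha_p)/2}$ while every element of $\mathrm{Spec}(u_{\beta_1} \otimes \cdots \otimes u_{\beta_q})$ has absolute value $q^{(\beta_1 + \cdots + \beta_q)/2}$; these differ by the factor $q^{k/2}$, so the two spectra are disjoint whenever $k \neq 0$. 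Since the higher products of the transferred structure live in nonzero internal degree, only such $k$ enter the hypothesis of Corollary \ref{condition}, which therefore yields gauge formality, hence formality, of $C^{*}_{\mathrm{\acute{e}t}}(\mathcal{X}_{\overline{K}}, \mathbb{Q}_{\ell})$, and the two previous steps carry it back to $C^{*}_{\mathrm{sing}}(X_{\mathrm{an}}, \mathbb{Q})$.

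The algebraic engine, namely the Kaledin classes and Corollary \ref{condition}, is already in place, so the substance of the work is geometric: producing a good-reduction model over a $p$-adic field with a compatible complex embedding, and, most delicately, upgrading the cohomological Frobenius to an honest chain-level automorphism compatible with the comparison quasi-isomorphisms. I expect this chain-level lift to be the main obstacle; once it is secured, the weight estimate coming from the Riemann hypothesis makes the purity hypothesis of Corollary \ref{condition} automatic, and the formality statement follows formally.
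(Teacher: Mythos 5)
Your proposal is correct and follows essentially the same route as the paper's proof: spreading out to a good-reduction model over a $p$-adic field, Artin comparison plus invariance under extension of algebraically closed fields, smooth and proper base change to the special fibre, the chain-level Frobenius lift, Deligne's Riemann hypothesis to verify the incompatibility condition of Corollary \ref{condition}, and formality descent along $\mathbb{Q} \to \mathbb{Q}_{\ell}$ (which the paper applies at the end rather than at the outset). The one step you flag as the main obstacle—producing the chain-level lift—is in fact immediate, exactly as you yourself assert, from functoriality of étale cochains under the Frobenius endomorphism of the special fibre (the paper phrases this via compatibility of the base-change quasi-isomorphism with the Galois actions), and no compatibility of this lift with the comparison quasi-isomorphisms is required, since formality transfers along any zig-zag of quasi-isomorphisms.
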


\begin{proof}
Let $\ell$ be a fixed prime number. Let $R \subset \mathbb{C}$ be the ring of finite type generated by the coefficients of equations defining $X$. By inverting elements $R$, there exists a smooth and proper model $X_R$ over $R$ such that $X = X_R \otimes_{R} \mathbb{C}$ and $R$ is a local ring. Let $\mathfrak{m}$ be the maximal ideal of $R$. The field $R/\mathfrak{m}$ is of the form $\mathbb{F}_{q}$ where $q = p^r$. Without loss of generality, we can suppose that $p$ is different from $\ell$, even if it means also inverting $\ell$ in $R$. There exists a finite extension $K$ of $\mathbb{Q}_p$ and an embedding $R \hookrightarrow  \mathcal{O}_K$, such that \[\mathfrak{m}_{\mathcal{O}_K} \cap R = \mathfrak{m} \ .\] Let us consider the smooth and proper model $\mathcal{X} \coloneqq X_R \otimes_{R} \mathcal{O}_K$. By construction,  $X_R$ can be seen as a $K$-scheme of \emph{good reduction} i.e. for which there exists a smooth and proper model $\mathcal{X}$ over ${\mathcal{O}_K}$. There exists a zig-zag of quasi-isomorphisms of dg associative algebras \[C_{\mathrm{sing}}^*(X_{\mathrm{an}}, \mathbb{Q}_{\ell}) \overset{\sim}{\longleftarrow} C_{\mathrm{\acute{e}t}}^*(X_{\mathbb{C}}, \mathbb{Q}_{\ell}) \overset{\sim}{\longrightarrow} C_{\mathrm{\acute{e}t}}^*(X_{\overline{K}},\mathbb{Q}_{\ell}) \ ,\] where the left quasi-isomorphism is given by Artin's Theorem \cite[XI, Theorem 4.4, (iii)]{SGA4} and the right one follows from invariance of étale cochains by extension of algebraically closed fields \cite[VIII, Corollary 1.6]{SGA4}. Since $K$ is a $p$-adic field with residue field $\mathbb{F}_q$, there exists a surjective map \[\mathrm{Gal}(\overline{K}/ K) \twoheadrightarrow \mathrm{Gal}(\overline{\mathbb{F}}_q/\mathbb{F}_q) \ . \] The Galois group $\mathrm{Gal}(\overline{\mathbb{F}_q}/\mathbb{F}_q) $ is isomorphic to $\hat{\mathbb{Z}}$ generated by the Frobenius. Let us consider the smooth and proper $\mathbb{F}_q$-scheme given by $Y \coloneqq \mathcal{X} \otimes_{\mathcal{O}_K} \mathbb{F}_q$. By smooth and proper base change \cite[XVI, Corollary 2.2]{SGA4}, there exists a quasi-isomorphism \[C_{\mathrm{\acute{e}t}}^*(Y_{\overline{\mathbb{F}}_q}, \mathbb{Q}_{\ell}) \overset{\sim}{\longrightarrow} C_{\mathrm{\acute{e}t}}^*(X_{\overline{K}}, \mathbb{Q}_{\ell}) \] compatible with the Galois group actions on both sides. Let $u_k$ be the endomorphism of \[H_{\mathrm{\acute{e}t}}^k (Y_{\overline{\mathbb{F}}_q} \mathbb{Q}_{\ell})\] induced by the Frobenius. By Deligne's theorem \cite{Deligne74bis}, every eigenvalue $x$ of $u_k$ is a Weil number of weight $k$ i.e. then for any embedding $\iota : \overline{\mathbb{Q}}_{\ell} \hookrightarrow \mathbb{C}$~, we have $|\iota(x)| = q^{k/2} \ . $ In particular, an eigenvalue $x$ of $u_{\alpha_1} \otimes \dots \otimes u_{\alpha_p}$ satisfies $|\iota(x)| = q^{k/2}$ where $k = \alpha_1 + \cdots + \alpha_p$. Thus, the Frobenius satisfies the incompatibility condition (\ref{incompatibilité}) and it follows from Corollary \ref{condition} that $C_{\mathrm{\acute{e}t}}^*(Y_{\overline{\mathbb{F}}_q}, \mathbb{Q}_{\ell})$ is formal and so is $C_{\mathrm{sing}}^*(X_{\mathrm{an}}, \mathbb{Q}_{\ell})$. The result with coefficients in $\mathbb{Q}$ follows from Theorem \ref{formality descent}, using the fact that $X_{\mathrm{an}}$ has finite dimensional cohomology, see e.g. \cite[Corollary~5.25]{Voi02}. 
\end{proof}

\noindent The previous results extends to prove the formality of algebraic structures in smooth and proper schemes. This leads to the following theorem.

\begin{theorem}\label{smooth proj2}
Let $\mathbb{V}$ be a groupoid and let $\P$ be a $\mathbb{V}$-colored operad in sets. Let $p$ be a prime number. Let $K$ be a finite extension of $\mathbb{Q}_p$ and let $K \hookrightarrow \mathbb{C}$ be an embedding. Let $X$ be a $\P$-algebra in the category of smooth and proper schemes over $K$ of good reduction, i.e. for which there exists a smooth and proper model $\mathcal{X}$ over the ring of integers $\mathcal{O}_K$. The dg $\P$-algebra of singular chains $C_*^{\mathrm{sing}}(X_{\mathrm{an}}, \mathbb{Q})$ is formal.		
\end{theorem}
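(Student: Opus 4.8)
The plan is to transport to the $\mathbb{V}$-colored setting the strategy used to prove Theorem \ref{smooth proj}, replacing the associative product by the $\P$-algebra structure and working throughout in the convolution dg Lie algebra of a $\mathbb{V}$-colored cooperad resolving $\P$. Fix a prime $\ell$ different from the residue characteristic of $\mathcal{O}_K$ and choose a $\mathbb{V}$-colored reduced weight-graded dg cooperad $\C$ together with a quasi-isomorphism $\Cobar \C \overset{\sim}{\to} \P$ (for instance the bar construction $\C = \Bar \P$ of the linearisation of $\P$), the weight-grading being the homological degree. Over the characteristic-zero fields $\mathbb{Q}_{\ell}$ and $\mathbb{Q}$, Assumptions \ref{assump} hold, a strict dg $\P$-algebra is in particular a $\Cobar \C$-algebra, and its formality as a $\P$-algebra is equivalent to its formality as a $\Cobar \C$-algebra. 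Since the Eilenberg--Zilber map makes singular chains into a lax symmetric monoidal functor, $C_*^{\mathrm{sing}}(X_{\mathrm{an}}, \mathbb{Q}_{\ell})$ is a genuine dg $\P$-algebra, and by Proposition \ref{formality-gauge} together with Theorem \ref{formal - gauge} its formality coincides with its gauge formality.

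First I would construct a zig-zag of quasi-isomorphisms of dg $\P$-algebras over $\mathbb{Q}_{\ell}$ connecting the singular chains of $X_{\mathrm{an}}$ with the \'etale chains of the special fibre. Let $\mathcal{X}$ be the good-reduction $\P$-algebra model over $\mathcal{O}_K$ and let $Y \coloneqq \mathcal{X} \otimes_{\mathcal{O}_K} \mathbb{F}_q$ be its special fibre, a $\P$-algebra in smooth and proper $\mathbb{F}_q$-schemes. As in the proof of Theorem \ref{smooth proj}, Artin's comparison theorem, the invariance of \'etale (co)homology under extensions of algebraically closed fields, and smooth and proper base change provide equivalences relating $C_*^{\mathrm{sing}}(X_{\mathrm{an}})$, $C^{\mathrm{\acute{e}t}}_*(X_{\mathbb{C}})$, $C^{\mathrm{\acute{e}t}}_*(X_{\overline{K}})$ and $C^{\mathrm{\acute{e}t}}_*(Y_{\overline{\mathbb{F}}_q})$. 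The main obstacle, I expect, is to upgrade these classical comparison and base-change isomorphisms to (co)monoidal natural equivalences compatible with external products, so that they respect the $\mathbb{V}$-colored $\P$-algebra structure and define $\infty$-quasi-isomorphisms of $\Cobar \C$-algebras; dually one may work with cochains as $\P$-coalgebras and use that $H^*(X_{\mathrm{an}})$ is finite-dimensional in each degree to pass between the $\P$-algebra and $\P$-coalgebra descriptions. As formality is invariant under zig-zags of quasi-isomorphisms of $\Cobar \C$-algebras, it then suffices to establish the formality of the dg $\P$-algebra $C^{\mathrm{\acute{e}t}}_*(Y_{\overline{\mathbb{F}}_q}, \mathbb{Q}_{\ell})$.

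Next I would apply Corollary \ref{condition}, which holds in the colored setting under Assumptions \ref{assump}, to the dg $\P$-algebra $C^{\mathrm{\acute{e}t}}_*(Y_{\overline{\mathbb{F}}_q}, \mathbb{Q}_{\ell})$ with $u$ the geometric Frobenius. Since $Y$ is defined over $\mathbb{F}_q$ and the $\P$-operations are morphisms of $\mathbb{F}_q$-schemes, the Frobenius acts on $Y_{\overline{\mathbb{F}}_q}$ as a $\P$-algebra endomorphism; it therefore induces a $\P$-algebra automorphism on \'etale chains and, in particular, a chain-level lift of the induced homology automorphism $u \in \Aut(H(A), \varphi_*)$ in the sense of Theorem \ref{C}. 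Exactly as in the proof of Theorem \ref{smooth proj}, Deligne's solution of the Riemann hypothesis over finite fields shows that the eigenvalues of the Frobenius on $H_k$ are Weil numbers, so that all eigenvalues of $u_{\alpha_1} \otimes \cdots \otimes u_{\alpha_p}$ share the same complex absolute value, depending only on $\alpha_1 + \cdots + \alpha_p$; this is distinct from the common absolute value of the eigenvalues of $u_{\beta_1} \otimes \cdots \otimes u_{\beta_q}$ whenever $k = \alpha_1 + \cdots + \alpha_p - \beta_1 - \cdots - \beta_q \neq 0$. Since $\overline{\C}$ is concentrated in positive weight, the incompatibility condition (\ref{incompatibilité}) is thereby satisfied, and Corollary \ref{condition} yields the gauge formality, hence the formality, of $C^{\mathrm{\acute{e}t}}_*(Y_{\overline{\mathbb{F}}_q}, \mathbb{Q}_{\ell})$.

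Finally I would descend the coefficients from $\mathbb{Q}_{\ell}$ to $\mathbb{Q}$. The two previous steps show that the dg $\P$-algebra $C_*^{\mathrm{sing}}(X_{\mathrm{an}}, \mathbb{Q}_{\ell})$ is formal. As $X_{\mathrm{an}}$ is a compact complex analytic space, its homology is finite-dimensional in each degree and hence of finite presentation over $\mathbb{Q}$, while $\mathbb{Q} \hookrightarrow \mathbb{Q}_{\ell}$ is faithfully flat; Theorem \ref{formality descent} then transfers formality back to $\mathbb{Q}$, yielding the formality of the dg $\P$-algebra $C_*^{\mathrm{sing}}(X_{\mathrm{an}}, \mathbb{Q})$.
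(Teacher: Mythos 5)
Your overall strategy --- compare the singular chains of $X_{\mathrm{an}}$ with the \'etale chains of the special fibre, verify the spectral condition of Corollary \ref{condition} for the Frobenius via Deligne's Riemann hypothesis, then descend from $\mathbb{Q}_{\ell}$ to $\mathbb{Q}$ by Theorem \ref{formality descent} --- is the same as the paper's, and the second and third steps of your argument are correct. But there is a genuine gap at the central technical point, which you flag (``the main obstacle, I expect'') and then bypass without resolving. For a single scheme, Artin's theorem, invariance under extension of algebraically closed fields, and smooth--proper base change give quasi-isomorphisms of cochain algebras; but these comparison maps are not strictly (co)monoidal natural transformations at the chain level, so for a $\P$-algebra in schemes they do not directly assemble into a zig-zag of quasi-isomorphisms of dg $\P$-algebras, nor into $\infty$-quasi-isomorphisms of $\Cobar \C$-algebras. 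Your sentence ``it then suffices to establish the formality of $C^{\mathrm{\acute{e}t}}_*(Y_{\overline{\mathbb{F}}_q}, \mathbb{Q}_{\ell})$'' therefore rests on an unproved claim, and that claim is precisely what distinguishes Theorem \ref{smooth proj2} from Theorem \ref{smooth proj}.

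The paper closes this gap with the $\infty$-categorical machinery of \cite[Section~3]{CH22}: the functor $C_{\mathrm{\acute{e}t}}^*(-, \mathbb{Q}_{\ell})$ descends to a symmetric monoidal $\infty$-functor out of the nerve of $K$-schemes; since finite type schemes have finitely generated \'etale cohomology, one composes with the duality symmetric monoidal $\infty$-functor to obtain a covariant symmetric monoidal $\infty$-functor of \'etale chains, which is equivalent to singular chains as lax symmetric monoidal $\infty$-functors. The Frobenius-weight argument is then run for the resulting $\P$-algebra in the $\infty$-category of chain complexes, and --- a step entirely absent from your proposal --- the rectification result of \cite[Corollary~2.4]{CH20} converts formality in the $\infty$-category into formality of the strict dg $\P$-algebra $C^{\mathrm{sing}}_*(X_{\mathrm{an}}, \mathbb{Q}_{\ell})$, after which Theorem \ref{formality descent} gives the statement over $\mathbb{Q}$. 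To repair your write-up you would either have to reproduce this monoidal-$\infty$-functor comparison (the paper's route) or exhibit strictly multiplicative models of the comparison maps compatible with all $\P$-operations, which is not available; merely asserting that the classical isomorphisms can be ``upgraded'' does not constitute a proof.
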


\begin{proof}
The proof is similar to the one of Theorem \ref{smooth proj}, using the same strategy than \cite[Section~3]{CH22}. The functor $C_{\mathrm{\acute{e}t}}^*(-, \mathbb{Q}_{\ell})$ descends to a symmetric monoidal $\infty$-functor
$\mathbf{C_{\mathrm{\acute{e}t}}^*}(-, \mathbb{Q}_{\ell})$ from the nerve $\mathbf{N}(\mathrm{Sch}_{K})$ to the $\infty$-category of chain complexes. Since finite type schemes have finitely generated étale cohomology, we can compose with the duality symmetric monoidal $\infty$-functor. This leads to a covariant
symmetric monoidal $\infty$-functor of étale chains $\mathbf{C^{\mathrm{\acute{e}t}}_*}(-, \mathbb{Q}_{\ell})$, which is equivalent to $\mathbf{C^{\mathrm{sing}}_*}(-, \mathbb{Q}_{\ell})$
as lax monoidal functors $\infty$-functor. The same arguments than before now apply to prove that for any $\P$-algebra $X$ in the category of smooth and proper schemes over $K$ of good reduction. Thus, the dg $\P$-algebra in the $\infty$-category of chain complexes $\mathbf{C^{\mathrm{sing}}_*}(X_{\mathrm{an}}, \mathbb{Q}_{\ell})$ is formal. By \cite[Corollary~2.4]{CH20}, this implies that $C^{\mathrm{sing}}_*(X_{\mathrm{an}}, \mathbb{Q}_{\ell})$ is formal as a dg $\P$-algebra, so does $C^{\mathrm{sing}}_*(X_{\mathrm{an}}, \mathbb{Q})$ by Theorem \ref{formality descent}. 
\end{proof}

\begin{examples}\textbf{Moduli spaces of stable algebraic curves}.  Let us consider the cyclic operad $\overline{\mathcal{M}}$ of moduli spaces of stable algebraic curves of genus $0$ with marked points. This is a cyclic operad in smooth and proper $\mathbb{Z}$-schemes. By Theorem \ref{smooth proj2}, the cyclic operad \[C_*^{\mathrm{sing}}(\overline{\mathcal{M}};\mathbb{Q})\] is formal, which allows us to recover \cite[Corollary 7.2.1]{GNPR05}. 
\end{examples}

\medskip

\bibliographystyle{alpha}
\bibliography{bib}

\end{document}